\newtheorem{thm}[subsection]{Theorem}
\newtheorem{thm/def}[subsection]{Theorem/Definition}
\newtheorem{cor}[subsection]{Corollary}
\newtheorem{conjecture}[subsection]{Conjecture}
\newtheorem{lem}[subsection]{Lemma}
\newtheorem{prop}[subsection]{Proposition}
\theoremstyle{definition}
\newtheorem{defn}[subsection]{Definition}
\newtheorem{assumption}[subsection]{Assumption}
\theoremstyle{definition}
\theoremstyle{definition}
\newtheorem{rem}[subsection]{Remark}
\newtheorem{example}[subsection]{Example}
\numberwithin{equation}{subsection}
\newtheorem{pg}[subsection]{}
\newcommand{\divCZX}{\mathrm{Div}_{\mathscr{C} \cup \overline{\mathcal{Z}}/\overline{Z}}^0(\overline{\mathfrak{X}})}
\newcommand{\dlim}{\mathop{\varinjlim}\limits}
\newcommand{\ilim}{\mathop{\varprojlim}\limits}
\begin{document}

\title{Albanese and Picard 1-Motives in Positive Characteristic}

\author{Peter Mannisto}

\begin{abstract}
We investigate how to define 1-motives $M_{(c)}^1(X)$, $M_{(c)}^{2d-1}(X)$ for $X$ a variety over a perfect field $k$ of positive characteristic, such that the $\ell$-adic realizations of these 1-motives are canonically isomorphic to $H_{(c)}^1(X_{\overline{k}},\mathbb{Z}_\ell(1))$ and $H_{(c)}^{2d-1}(X_{\overline{k}},\mathbb{Z}_\ell(d))$ respectively.  This is the analogue in positive characteristic of previous results of Barbieri-Viale and Srinivas, except that we only consider the $\ell$-adic realization but also consider compactly supported cohomology.  The 1-motives $M^1(X)$ and $M_c^1(X)$ can be defined by standard techniques, and indeed this case is probably well known.  But the 1-motives $M^{2d-1}(X)$ and $M_c^{2d-1}(X)$ require stronger tools, namely a strong version of de Jong's alterations theorem and some cycle class theory on smooth Deligne-Mumford stacks which may be of independent interest.  Unfortunately, we only succeed in defining $M^{2d-1}(X)$ and $M_c^{2d-1}(X)$ when $X$ is a variety over an algebraically closed field, and only up to isogeny.  Nevertheless, as a corollary to our definition of these 1-motives, for a variety $X$ over a finite field $k$ we deduce independence of $\ell$ for the cohomology groups $H_{(c)}^{i}(X_{\overline{k}},\mathbb{Q}_\ell)$ for $i = 1,2d-1$.
\end{abstract}

\maketitle

\section{Introduction}
\begin{pg}
Let $X$ be a separated scheme of finite type over a field $k$ of characteristic $p \geq 0$, and fix a separable algebraic closure $k \hookrightarrow k^s$.  Then for any prime $\ell \not= p$ we can associate to $X$ its $\ell$-adic \'{e}tale cohomology groups
$H^i(X_{k^s},\mathbb{Q}_\ell)$ and  $H_c^i(X_{k^s},\mathbb{Q}_\ell)$, where the subscript $c$ indicates compactly supported cohomology.   The philosophy of mixed motives states that there should exist a category $\mathscr{MM}(k)$ of mixed motives over $k$ which is universal with respect to these different cohomology theories (and also for the mixed Hodge structure and de Rham theory if $k \subseteq \mathbb{C}$, and crystalline cohomology if $k$ is perfect of positive characteristic).  For the \'{e}tale cohomology theories, this would take the form of realization functors $$V_\ell: \mathscr{MM}(k) \rightarrow \mathrm{Rep}_{Gal(k^s/k)}(\mathbb{Q}_\ell)$$ to the category of $\mathbb{Q}_\ell$-representations of $Gal(k^s/k)$, such that there exist functors
\begin{equation*}
M^i(-), \ M_c^i(-): (Sch/k)^{op} \longrightarrow \mathscr{MM}(k),
\end{equation*}
with the property that
\begin{align*}
V_\ell M^i(X) &\cong H^i(X_{k^s},\mathbb{Q}_\ell) \ \ \ \mathrm{and} \\
V_\ell M_c^i(X) &\cong H_c^i(X_{k^s},\mathbb{Q}_\ell)
\end{align*}
functorially in $X$.
\end{pg}
\begin{pg}
While the theory of mixed motives remains largely conjectural, the category of 1-motives (mixed motives of level $\leq 1$) does a good job of explaining cohomological phenomena in dimension and codimension 1, at least over a perfect field.
\begin{defn} \cite[D\'{e}finition 10.1.1]{hodge3}
Let $k$ be a field.  The category of (free) 1-motives over $k$, denoted $\mathscr{M}^1(k)$, is the category of 2-term complexes
\begin{equation*}
[L \rightarrow G]
\end{equation*}
of commutative group schemes over $k$, where $L$ is an \'{e}tale-locally constant sheaf such that $L(k^s)$ is a free finitely generated abelian group, and $G$ is a semi-abelian variety over $k$.  The morphisms in $\mathscr{M}^1(k)$ are the morphisms of complexes of sheaves.
\end{defn}
In \cite[Sect. 10]{hodge3}, Deligne constructs realization functors $T_\ell: \mathscr{M}^1(k) \rightarrow \mathrm{Rep}_{Gal(k^s/k)}(\mathbb{Z}_\ell)$ (for $\ell \not= p$), and if $k \subseteq \mathbb{C}$, a realization functor from 1-motives to mixed Hodge structures.  Deligne conjectured in \cite[10.4.1]{hodge3} that certain mixed Hodge structures associated to a separated finite type scheme over $\mathbb{C}$ arise naturally from 1-motives; in particular, he conjectured that the mixed Hodge structures $H^1(X,\mathbb{Z}(1))$ and $H^{2d-1}(X,\mathbb{Z}(d))/\mathrm{torsion}$ occur as the Hodge realizations of 1-motives $M^1(X)$ and $M^{2d-1}(X)$, respectively, defined purely algebraically.  This special case of Deligne's conjecture was solved in \cite{albpic}, and much more general cases were handled in  \cite{BRS} and \cite{BVK}. 
\end{pg}

For \'{e}tale cohomology, one can make the following conjecture, which is an $\ell$-adic analogue of this special case of Deligne's conjectures on 1-motives.  Note that we restrict ourselves to the case where $k$ is perfect; as noted in \cite[p. 3]{ramachandran}, it is not clear that this conjecture should be true for non-perfect fields.
\begin{conjecture}\label{mainconj}
Fix a perfect field $k$ of characteristic $p \geq 0$, and let $Sch/k$ denote the category of separated finite type $k$-schemes.  Then there exist functors
\begin{equation*}
M^{1}(-), \ M_c^1(-): (Sch/k)^{op} \longrightarrow \mathscr{M}^1(k), 
\end{equation*}
with the property that for all primes $\ell \not= p$,
\begin{align*}
T_\ell M^1(X) &\cong H^1(X_{\overline{k}},\mathbb{Z}_\ell(1)) \ \ \mathrm{and} \\
T_\ell M_c^1(X) &\cong H_c^1(X_{\overline{k}},\mathbb{Z}_\ell(1))
\end{align*}
functorially in $X$.

In addition, let $Sch_d/k \subset Sch/k$ be the full subcategory of $d$-dimensional separated finite type $k$-schemes.  Then there exist functors
\begin{equation*}
M^{2d-1}(-), \ M_c^{2d-1}(-): (Sch_d/k)^{op} \longrightarrow \mathscr{M}^1(k),
\end{equation*}
with the property that for all primes $\ell \not= p$,
\begin{align*}
T_\ell M^{2d-1}(X) &\cong H^{2d-1}(X_{\overline{k}},\mathbb{Z}_\ell(d))/\mathrm{torsion} \ \ \mathrm{and} \\
T_\ell M_c^{2d-1}(X) &\cong H_c^{2d-1}(X_{\overline{k}},\mathbb{Z}_\ell(d))/\mathrm{torsion}
\end{align*}
functorially in $X$. 
\end{conjecture}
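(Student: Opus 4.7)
The plan is to construct $M^1$, $M^1_c$, $M^{2d-1}$, and $M^{2d-1}_c$ separately, with Cartier duality providing a consistency check: since $T_\ell$ of the Cartier dual of a 1-motive $M$ is $\mathrm{Hom}(T_\ell M, \mathbb{Z}_\ell(1))$, Poincar\'e duality predicts $M^{2d-1}_c(X) \cong (M^1(X))^\vee$ and $M^{2d-1}(X)\cong (M^1_c(X))^\vee$. In each case I would first treat smooth $X$ by an explicit geometric recipe, then extend to singular $X$ by descent along a smooth proper hypercover (for $i=1$) or a suitable alteration (for $i=2d-1$), and finally verify the $\ell$-adic realization by Kummer theory together with a descent spectral sequence.

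For $M^1(U)$ with $U$ smooth, choose a smooth proper compactification $U \hookrightarrow \overline{U}$ with simple normal crossings boundary $D$ (existence guaranteed by Nagata together with de Jong's alterations) and set
\[
M^1(U) := [\mathrm{Div}^0_D(\overline{U}) \to \mathrm{Pic}^0(\overline{U})],
\]
where $\mathrm{Div}^0_D$ is the group of divisors supported on the components of $D$ whose first Chern class in $H^2(\overline{U},\mathbb{Z}_\ell(1))$ vanishes. Independence of the chosen compactification follows by dominating two choices by a third, together with the invariance of $\mathrm{Pic}^0$ under blow-ups at smooth centres. For singular $X$, take a smooth proper hypercover $X_\bullet\to X$ and assemble $M^1(X)$ from the truncated diagram $M^1(X_0)\rightrightarrows M^1(X_1)$ via standard simplicial techniques in $\mathscr{M}^1(k)$; functoriality follows from uniqueness of such hypercovers up to simplicial homotopy. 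Realization compatibility is verified by a direct Kummer-theoretic calculation on smooth $U$ together with the residue sequence along $D$, and extended to arbitrary $X$ by the hypercover spectral sequence. The compactly supported variant $M^1_c$ is built by the parallel recipe using cohomology with supports.

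For $i = 2d-1$, the new ingredients are a strong form of de Jong's alteration theorem and a cycle class theory on smooth Deligne--Mumford stacks. For smooth proper $X$ of dimension $d$, set $M^{2d-1}(X) := [0\to \mathrm{Alb}(X)]$. For general $X$ of dimension $d$, choose an alteration $\widetilde X \to X$ with $\widetilde X$ a smooth proper DM stack of dimension $d$, and define $M^{2d-1}(X)$ as a suitable subobject or quotient of $M^{2d-1}(\widetilde X)$ cut out by correspondences supported on the exceptional locus. Independence (up to isogeny) of the choice of alteration, and functoriality in $X$, require pushing and pulling $d$-dimensional algebraic cycles between different DM-stack alterations and their fibre products, and this is precisely what the advertised cycle class theory on smooth DM stacks is designed to support. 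The compactly supported variant $M^{2d-1}_c$ can then be produced either by a parallel construction or, up to isogeny, as the Cartier dual of $M^1$.

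The main obstacle is this last step. The Albanese functor is covariant rather than contravariant, so the clean simplicial descent that works for Picard in the $i=1$ case cannot be reused directly; instead one must argue rationally with correspondences, dividing by the degree of the alteration to invert pullback of $d$-cycles. This forces working up to isogeny and explains the restriction to algebraically closed $k$, where Galois considerations are trivial and Albanese of fibre products admits a clean K\"unneth decomposition. Verifying that the rational correspondence actions glue coherently across different alterations, and subsequently recovering enough Galois descent from $\overline{k}$ to $k$ to deduce the $\ell$-independence corollary stated in the abstract, is where the bulk of the paper's technical effort will go.
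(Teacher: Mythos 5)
Your broad outline matches the paper in several respects — hypercovers for $i=1$, de Jong's stack-theoretic resolution for $i = 2d-1$, the restriction to algebraically closed $k$ and to $\mathbb{Q}$-isogeny in high codimension, and $\ell$-independence as a corollary — but there are two places where the sketch either diverges materially from what the paper does or relies on a step that is false as stated.

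First, the ``consistency check'' $M_c^{2d-1}(X) \cong (M^1(X))^\vee$ (and likewise $M^{2d-1}(X)\cong(M_c^1(X))^\vee$) cannot be used as a defining principle for singular $X$, because Poincar\'e duality fails there. For instance, if $X$ is the union of a proper surface and a proper curve meeting in a point (so $d=2$), then $H^1(X,\mathbb{Q}_\ell)$ sees the $H^1$ of the curve, while $H^3(X,\mathbb{Q}_\ell(2))$ does not; the predicted pairing is degenerate. So the strategy of producing $M_c^{2d-1}$ ``up to isogeny, as the Cartier dual of $M^1$'' breaks down exactly in the singular case that is the point of the conjecture. The paper never identifies these two 1-motives; instead it dualizes \emph{at the level of the smooth proper Deligne--Mumford stack} $\overline{\mathfrak{X}}$, where duality is available, and the constraints propagate to $X$ through the resolution. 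Concretely, $M_c^{2d-1}(X)$ is defined as the Cartier dual of $[\mathbf{Div}^0_{\mathcal{C}\cup\overline{\mathcal{Z}}/\overline{Z}}(\overline{\mathfrak{X}})\rightarrow\mathbf{Pic}^{0,red}_{\overline{\mathfrak{X}}}]$, where the lattice is the group of divisors supported on the boundary and exceptional locus that die in $NS(\overline{\mathfrak{X}})$ and whose $\overline{\mathcal{Z}}$-part pushes forward to zero on $\overline{X}$; $M^{2d-1}(X)$ is the Cartier dual of the analogous relative-Picard 1-motive with boundary $\mathcal{C}$.

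Second, the construction of $M^{2d-1}$ from ``correspondences supported on the exceptional locus'' acting on $\mathrm{Alb}(\widetilde X)$ is never made precise, and this is where the real content lies. The paper stays entirely on the Picard side and only dualizes at the end: it never forms an Albanese variety directly, and the covariance problem you flag is handled by a pushforward on relative Picard group schemes (Proposition \ref{pushforward2}) defined via the norm map on a codimension-2 complement, together with Corollary \ref{pic0} (that $\mathrm{Pic}^0$ of a smooth proper DM stack is generated by Weil divisors). The divisor-supported lattice with the proper-pushforward-to-zero condition is exactly what replaces your unspecified ``subobject cut out by correspondences,'' and establishing that this cut is independent of the choice of resolution (Propositions \ref{uniqueness7} and \ref{uniqueness8}) is precisely where the cycle class theory on DM stacks is consumed.

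A smaller point: for $M^1$ the paper does not glue $M^1(X_0)\rightrightarrows M^1(X_1)$ inside $\mathscr{M}^1(k)$ (which is not abelian, and where such an equalizer is not obviously a 1-motive). It instead takes the relative Picard scheme $\mathbf{Pic}_{\overline{X}_\bullet,D_\bullet}$ of a proper smooth hypercover $\overline{X}_\bullet$ of a \emph{compactification} $\overline{X}$, proves directly that its reduced identity component is a semi-abelian variety (Proposition \ref{simprelpic}), and pairs it with the lattice of simplicial divisors on $E_\bullet$. This packages the $H^0$-of-units contribution correctly, which an equalizer of 1-motives would miss.
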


\begin{pg}
In \cite{albpic}, Barbieri-Viale and Srinivas solve this conjecture in the case where $\mathrm{char \ }k = 0$, for non-compactly supported cohomology (the compactly supported case is no harder, and can be done by similar methods).  They call the 1-motive $M^1(X)$ the \emph{Picard} 1-motive of $X$, and $M^{2d-1}(X)$ the \emph{Albanese} 1-motive of $X$, as these 1-motives generalize the classical theory of the Picard and Albanese variety.  In addition, the papers \cite{crys} and \cite{ramachandran} provide (independently) definitions of $M^1(X)$ for $k$ perfect of positive characteristic.  But the full conjecture above, particularly defining the 1-motives $M_{(c)}^{2d-1}(X)$ in positive characteristic, has not been dealt with to our knowledge.
\end{pg}
\begin{pg}
In this paper we investigate how to define the 1-motives $M_{(c)}^1(X)$, $M_{(c)}^{2d-1}(X)$ over a perfect field of positive characteristic.  In Section 6 we provide definitions of the Picard 1-motives $M^1(X)$ and $M_c^1(X)$.  As indicated above, the definition of $M^1(X)$ has previously appeared in \cite{crys} and \cite{ramachandran}.  We generalize this by defining a 1-motive $M_{D,E}^1(X)$ associated to any triple $(\overline{X},D,E)$ consisting of a proper scheme $\overline{X}$ and two disjoint closed subschemes $D,E \subset \overline{X}$.  The 1-motive $M_{D,E}^1(\overline{X})$ realizes the relative cohomology group $H^1(\overline{X}_{\overline{k}} - E_{\overline{k}},D_{\overline{k}},\mathbb{Z}_\ell(1))$, which we also write as $H_{D,E}^1(X,\mathbb{Z}_\ell(1))$.  Then to define $M^1(X)$ and $M_c^1(X)$ we choose a compactification $X \hookrightarrow \overline{X}$ with closed complement $D$, and define $M^1(X) := M_{\emptyset,D}^1(\overline{X})$ and $M_c^1(X) = M_{D,\emptyset}^1(\overline{X})$.  Of course, we show that these definitions are independent of the choice of compactification.
\end{pg}
\begin{rem}
Because we do not deal with $p$-adic realizations in this paper, we only show that $M_{D,E}^1(X)$ is well-defined as an object of $\mathscr{M}^1(k)[1/p]$, the category of 1-motives up to $p$-isogeny.  The $p$-adic realization of $M^1(X)$ is discussed in \cite{crys}; we hope to deal with the $p$-adic realization of $M_{D,E}^1(\overline{X})$ in future work.
\end{rem}
\begin{pg}
Defining the Albanese 1-motives $M^{2d-1}(X)$ and $M_c^{2d-1}(X)$ turns out to be harder, and most of Sections 2-4 are devoted to preliminary facts on Picard groups and divisors for Deligne-Mumford stacks that we will use in our construction.  Even so, our constructions are only valid for an \emph{algebraically closed} field of positive characteristic, and even then these 1-motives will only be well-defined up to \emph{isogeny} of 1-motives; the difficulty is due to lack of resolution of singularities as will be discussed below.  In particular, we only succeed in proving Conjecture \ref{mainconj} (up to isogeny) in case $k$ is algebraically closed.  In summary, the following is our main result:
\begin{thm}
Let $k$ be a perfect field of characteristic $p > 0$, and let $Sch/k$ be the category of separated finite type $k$-schemes.  Then there exist functors
\begin{equation*}
M^1(-), M_c^1(-) : (Sch/k)^{op} \longrightarrow \mathscr{M}^1(k)[1/p]
\end{equation*}
with the property that 
\begin{align*}
T_\ell M^1(X) &\cong H^1(X_{\overline{k}},\mathbb{Z}_\ell(1)) \ \ \mathrm{and} \\
T_\ell M_c^1(X) &\cong H_c^1(X_{\overline{k}},\mathbb{Z}_\ell(1))
\end{align*}
functorially in $X$, for $\ell \not= p$.

Next assume $k$ is algebraically closed, and let $Sch_d/k$ be the full subcategory of $d$-dimensional separated finite type $k$-schemes.  Then there exist functors
\begin{align*}
M^{2d-1}(-), M_c^{2d-1}(-) : (Sch_d/k)^{op} \longrightarrow \mathscr{M}^1(k) \otimes \mathbb{Q}
\end{align*}
such that 
\begin{align*}
T_\ell M^{2d-1}(X) \otimes \mathbb{Q} &\cong H^{2d-1}(X,\mathbb{Q}_\ell(d)) \ \ \mathrm{and} \\
T_\ell M_c^{2d-1}(X) \otimes \mathbb{Q} &\cong H_c^{2d-1}(X,\mathbb{Q}_\ell(d)).
\end{align*}
functorially in $X$, for $\ell \not= p$.
\end{thm}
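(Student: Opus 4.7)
The plan is to construct the four functors in two stages, first handling the Picard 1-motives $M^1$ and $M_c^1$ by classical techniques that reduce the problem to curves and surfaces, and then treating the harder Albanese 1-motives $M^{2d-1}$ and $M_c^{2d-1}$ via de Jong's alterations together with the cycle-class theory on smooth Deligne-Mumford stacks developed in Sections 2--4.

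For the Picard 1-motives, I would first construct, for each triple $(\overline{X},D,E)$ consisting of a proper $k$-scheme and two disjoint closed subschemes, an auxiliary 1-motive $M_{D,E}^1(\overline{X})$ whose $\ell$-adic realization is the relative cohomology group $H^1(\overline{X}_{\overline{k}} - E_{\overline{k}}, D_{\overline{k}}, \mathbb{Z}_\ell(1))$. The semi-abelian part is the identity component of the relative Picard scheme of the pair $(\overline{X},D)$, while the lattice part is built from divisors on a smooth proper hypercover of $\overline{X}$ that are supported on the preimage of $E$ and are trivial along $D$ modulo rational equivalence; the required hypercover is produced by de Jong's alterations, which is the reason the construction only lands in $\mathscr{M}^1(k)[1/p]$. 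One then sets $M^1(X) := M_{\emptyset,D}^1(\overline{X})$ and $M_c^1(X) := M_{D,\emptyset}^1(\overline{X})$ for a chosen compactification $X \hookrightarrow \overline{X}$ with complement $D$, and checks independence of the compactification by dominating any two choices with a third one and invoking the long exact sequence in relative cohomology to identify the resulting 1-motives up to $p$-isogeny. Functoriality in $X$ follows from functoriality of $M_{D,E}^1$ in the triple $(\overline{X}, D, E)$, which is itself reduced to the corresponding functoriality on the level of relative Picard schemes.

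For the Albanese 1-motives, I would use the strong form of de Jong's alterations theorem to obtain, for a fixed compactification $X \hookrightarrow \overline{X}$, a generically finite proper map $X' \to \overline{X}$ from a smooth projective $k$-scheme $X'$ such that the preimage of $\overline{X} - X$ and of the singular locus is a simple normal crossings divisor. The candidate $M_c^{2d-1}(X)$ is then built as the Cartier dual of an appropriate Picard-type construction on this alteration: the semi-abelian part comes from the Albanese variety of $X'$, while the lattice part encodes the relations between the components of the distinguished normal crossings divisor imposed by the cycle class theory from Sections 2--4. Since alterations are only generically finite of some degree $n$, dividing by $n$ to land in the intrinsic cohomology of $X$ forces the construction into $\mathscr{M}^1(k) \otimes \mathbb{Q}$. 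The comparison between $T_\ell$ of this 1-motive and $H_c^{2d-1}(X, \mathbb{Q}_\ell(d))$ is proved by combining Poincaré duality on $X'$ with a hypercohomology spectral sequence for the alteration, and the non-compactly-supported case $M^{2d-1}(X)$ is treated by a dual construction using the same alteration data. The assumption that $k$ is algebraically closed is used both to guarantee the existence of the needed alteration with good normal crossings data and to identify $\ell$-adic cohomology with its cycle-theoretic avatar on $X'$.

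The main obstacle is proving independence of the choice of alteration and showing that the constructions assemble into genuine functors rather than mere object-level assignments. Given two alterations $X_1', X_2' \to \overline{X}$, one must dominate both by a common alteration and prove that the resulting candidate Albanese 1-motives become canonically isomorphic in $\mathscr{M}^1(k) \otimes \mathbb{Q}$; the verification that the stack-theoretic cycle class map of Sections 2--4 is compatible with maps of 1-motives is what makes this possible, but also accounts for the restriction to isogeny classes. Once independence is established, functoriality for a morphism $f : X \to Y$ is obtained by choosing a compatible pair of alterations dominating $f$, again only well defined after inverting enough primes, which is harmless after tensoring with $\mathbb{Q}$.
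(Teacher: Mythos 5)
Your outline for the Picard 1-motives is close to the paper in its broad structure, but it goes wrong at a load-bearing point: you put the semi-abelian part of $M^1_{D,E}(\overline{X})$ equal to the identity component of the relative Picard scheme of the pair $(\overline{X},D)$ itself, reserving the hypercover only for the lattice. For a singular proper $\overline{X}$ the reduced identity component of $\mathbf{Pic}_{\overline{X}}$ (or of the relative version) is in general not semi-abelian -- already a cuspidal cubic gives a $\mathbb{G}_a$ -- so this does not yield a 1-motive. The paper instead takes the semi-abelian part to be $\mathbf{Pic}^{0,red}_{\overline{X}_\bullet,D_\bullet}$, the reduced identity component of the Picard scheme of the \emph{simplicial} pair given by the smooth proper hypercover; the smoothness and properness of each $\overline{X}_n$ is exactly what makes this semi-abelian. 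You also misattribute the reason for landing in $\mathscr{M}^1(k)[1/p]$: it is not that the hypercover comes from alterations, but that independence of the hypercover and functoriality are proved by applying the realization functor $\hat{T}_p = \prod_{\ell \neq p} T_\ell$, which is only faithful (hence only determines the 1-motive uniquely) after inverting $p$.

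For the Albanese 1-motives there is a more serious gap. You propose to use de Jong's alteration theorem to produce a generically finite proper map $X' \to \overline{X}$ with $X'$ a smooth projective scheme, and then "divide by the degree $n$" to pass to the cohomology of $X$. But a degree-$n$ alteration only exhibits $H^{2d-1}(X,\mathbb{Q}_\ell)$ as a \emph{direct summand} of $H^{2d-1}(X',\mathbb{Q}_\ell)$ via a transfer argument, and cutting this summand out on the level of 1-motives would require defining a projector intrinsically -- something you do not explain and which is the hard part. The paper avoids this entirely by using the stronger form of de Jong's theorem (Theorem 7.3 of \emph{op.\ cit.}), which produces not a smooth scheme alteration but a smooth proper Deligne--Mumford stack $\overline{\mathfrak{X}} \to \overline{X}$ factoring through a universal homeomorphism, a proper birational map, and a coarse moduli map. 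This resolution is \emph{generically purely inseparable}, so that over an open dense $U \subset X$ the natural map $\mathbb{Q}_{\ell,U} \to R\pi_*\mathbb{Q}_{\ell,\pi^{-1}(U)}$ is an isomorphism (not just a split injection), and $H^{2d-1}(X,\mathbb{Q}_\ell)$ is computed directly from the exact sequences relating $\overline{\mathfrak{X}}$, $\mathcal{U}$, $\mathcal{Z}$, $\mathcal{C}$ with no transfer trick. This is precisely why the paper needs the entire Deligne--Mumford stack formalism of Sections 2--4 (Picard functors of smooth stacks, Weil versus Cartier divisors on stacks, cycle class maps on stacks), rather than the ordinary scheme-theoretic cycle theory your proposal would use; the difference is not cosmetic. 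The restriction to $\mathscr{M}^1(k) \otimes \mathbb{Q}$ in this part of the theorem comes from the facts that (a) $NS$ and $\mathrm{Pic}^0$ of an open smooth stack are only well-defined after inverting $p$, (b) the proper pushforward on Picard schemes built in the paper is only canonical up to isogeny, and (c) uniqueness again rests on the faithfulness of the realization functor $V_\ell$ -- not, as you suggest, on a need to divide by an alteration degree.
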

\end{pg}
\subsection{Description of the 1-motive $M_{D,E}^1(\overline{X})$}
We informally describe the 1-motives constructed in this paper; more details are to be found in Sections 6-8.  

Fix a perfect field $k$, and let $\overline{X}$ be a proper reduced $k$-scheme with disjoint reduced closed subschemes $D$ and $E$.  The results of \cite{dejong} (as explained in \cite[Thm. 4.7]{desc}) imply that there exists a proper hypercover
\begin{equation*}
\pi_{\bullet}: \overline{X}_\bullet \rightarrow \overline{X}
\end{equation*}
with each $\overline{X}_i$ proper and smooth.  Let $D_\bullet = \pi_\bullet^{-1}(D)_{red}$ and $E_\bullet = \pi_\bullet^{-1}(E)_{red}$; we may assume that $D_\bullet$ and $E_\bullet$ are strict normal crossings divisors.

Let $p_\bullet: \overline{X}_\bullet \rightarrow \mathrm{Spec \ }k$ be the structure morphism, and $i_\bullet: D_\bullet \hookrightarrow \overline{X}_\bullet$ the inclusion.  Then consider the sheaf
\begin{equation*}
\mathbf{Pic}_{\overline{X}_\bullet,D_\bullet} := R^1(p_\bullet)_*(\mathrm{ker}(\mathbb{G}_{m,\overline{X}_\bullet} \rightarrow (i_\bullet)_*\mathbb{G}_{m,D_\bullet}))
\end{equation*}
on $(Sch/k)_{fppf}$.  Informally, $\mathbf{Pic}_{\overline{X}_\bullet,D_\bullet}$ classifies isomorphism classes of pairs $(\mathscr{L}^{\bullet},\sigma)$, where $\mathscr{L}^\bullet$ is a line bundle on $\overline{X}_\bullet$ and $\sigma$ is an isomorphism of $\mathscr{L}^{\bullet}\vert_{D_\bullet}$ with $\mathcal{O}_{D_\bullet}$.  A straightforward reduction (\ref{simprelpic}) from well-known representability results shows that $\mathbf{Pic}_{\overline{X}_\bullet,D_\bullet}$ is representable by a locally finite type commutative $k$-group scheme.  Moreover, let $\mathbf{Pic}_{\overline{X}_\bullet,D_\bullet}^{0,red}$ denote the reduction of the connected component of the identity of $\mathbf{Pic}_{\overline{X}_\bullet,D_\bullet}$.  Then (loc. cit.) $\mathbf{Pic}_{\overline{X}_\bullet,D_\bullet}^{0,red}$ is a semi-abelian variety.  

Next, for any closed subscheme $C$ of a scheme $X$, let $\mathrm{Div}_{C}(X)$ be the lattice of (Weil) divisors on $X$ with support contained in $C$.  Then for the simplicial closed subscheme $E_\bullet$ of $\overline{X}_\bullet$, we define
\begin{equation*}
\mathrm{Div}_{E_\bullet}(\overline{X}_\bullet) := \mathrm{Ker}(p_1^* - p_2^*: \mathrm{Div}_{E_0}(\overline{X}_0) \rightarrow \mathrm{Div}_{E_1}(\overline{X}_1)),
\end{equation*}
where $p_1,p_2: \overline{X}_1 \rightarrow \overline{X}_0$ are the simplicial structure maps from $\overline{X}_1$ to $\overline{X}_0$.  Because $E_\bullet$ and $D_\bullet$ are disjoint, there is a natural map $\mathrm{Div}_{E_\bullet}(\overline{X}_\bullet) \rightarrow \mathrm{Pic}(\overline{X}_\bullet,D_\bullet)$ (defined in Paragraph \ref{pg65}).  We define $\mathrm{Div}_{E_\bullet}^0(\overline{X}_\bullet)$ to be the subgroup mapping into $\mathrm{Pic}^0(\overline{X}_\bullet,D_\bullet)$.  More generally, we can define an \'{e}tale $k$-group scheme $\mathbf{Div}_{E_\bullet}^0(\overline{X}_\bullet)$ with $\overline{k}$-points $\mathrm{Div}^0_{E_{\overline{k},\bullet}}(\overline{X}_{\overline{k},\bullet})$  and $Gal(\overline{k}/k)$ acting in the obvious way.  Then there is a natural map of group schemes
\begin{equation*}
\mathbf{Div}_{E_\bullet}^0(\overline{X}_\bullet) \rightarrow \mathbf{Pic}^{0,red}_{\overline{X}_\bullet,D_\bullet}.
\end{equation*}
\begin{defn}
With notation as above, we define the 1-motive $M_{D,E}^1(\overline{X})$ to be 
\begin{equation*}
M_{D,E}^1(\overline{X}) := [\mathbf{Div}^0_{E_\bullet}(\overline{X}_\bullet) \rightarrow \mathbf{Pic}_{\overline{X}_\bullet,D_\bullet}^{0,red}].
\end{equation*}
\end{defn}
In Section 6 we show that this definition is independent of the choice of hypercover $\overline{X}_\bullet$ and functorial in the triple $(\overline{X},D,E)$.  

\begin{pg}
As special cases of the above construction, consider a separated finite type $k$-scheme $X$.  Choose a compactification $X \hookrightarrow \overline{X}$, with closed complement $C \subset \overline{X}$.  Then we define
\begin{equation*}
M^1(X) := M_{\emptyset,C}^1(\overline{X}) = [\mathbf{Div}^0_{C_\bullet}(\overline{X}_\bullet) \rightarrow \mathbf{Pic}^{0,red}_{\overline{X}_\bullet}]
\end{equation*}
and
\begin{equation*}
M_c^1(X) := M_{C,\emptyset}^1(\overline{X}) = [0 \rightarrow \mathbf{Pic}^{0,red}_{\overline{X}_\bullet,C_\bullet}].
\end{equation*}
We show in Section 6 that $M^1(X)$ is contravariantly functorial for arbitrary morphisms, while $M_c^1(X)$ is contravariantly functorial for proper morphisms.
\end{pg}
\subsection{Definition of the 1-motives $M^{2d-1}(X)$ and $M_c^{2d-1}(X)$}
We next define the 1-motives $M^{2d-1}(X)$ and $M_c^{2d-1}(X)$.  As noted before, we will assume that our field $k$ is algebraically closed of positive characteristic.  Choose a compactification $X \hookrightarrow \overline{X}$.  Then by \cite[7.3]{dejong} there exists a sequence of maps
\begin{equation*}
\overline{\mathfrak{X}} \stackrel{p}{\longrightarrow} \overline{X}'' \stackrel{q}{\longrightarrow} \overline{X}' \stackrel{r}{\longrightarrow} \overline{X},
\end{equation*}
satisfying the following conditions:
\begin{enumerate}
\item $r$ is purely inseparable and surjective, therefore a universal homeomorphism;
\item $q$ is proper and birational;
\item $\overline{\mathfrak{X}}$ is a smooth proper Deligne-Mumford stack (in fact a global quotient $[U/G]$ of a smooth proper $k$-scheme $U$ by a finite group $G$) and $p$ identifies $\overline{X}''$ with the coarse moduli space of $\overline{\mathfrak{X}}$.
\end{enumerate}
We let $\overline{\pi}: \overline{\mathfrak{X}} \rightarrow \overline{X}$ be the composition $r \circ q \circ p$.  Let $C = \overline{X} - X$ and $\mathcal{C} = \overline{\mathfrak{X}} - \mathfrak{X}$.  The key property of $\overline{\pi}$ we will use is that there exists an open dense subset $U \subset X$ with the property that $\pi^{-1}(U) \rightarrow U$ induces an isomorphism $\mathbb{Q}_{\ell,U} \stackrel{\sim}{\longrightarrow} R\pi_* \mathbb{Q}_{\ell,\pi^{-1}(U)}$ in $D_c^b(U)$; see Proposition \ref{ratcohthesame}.  Choose such an open subset $U$, and let $Z = X - U$, $\mathcal{Z} : = \mathfrak{X} - \pi^{-1}(U)$.  Finally, let $\overline{Z}$ (resp. $\overline{\mathcal{Z}}$) be the closure of $Z$ in $\overline{X}$ (resp. of $\mathcal{Z}$ in $\overline{\mathfrak{X}}$).  In summary, we have diagrams
\begin{equation*}
\xymatrix{
{\mathfrak{X}} \ar@{^{(}->}[r]^{\alpha'} \ar[d]^{\pi} & \overline{\mathfrak{X}} \ar[d]^{\overline{\pi}} & \mathcal{C} \ar@{_{(}->}[l]_{\beta'} \ar[d]^{\pi_C} \\
X \ar@{^{(}->}[r]^{\alpha} & \overline{X} & C \ar@{_{(}->}[l]_{\beta}}
\end{equation*}
and
\begin{equation*}
\xymatrix{
\mathcal{U} \ar@{^{(}->}[r]^{j'} \ar[d]^{\pi_U} & \mathfrak{X} \ar[d]^{\pi} & \mathcal{Z} \ar@{_{(}->}[l]_{i'} \ar[d]^{\pi_Z} \\
U \ar@{^{(}->}[r]^{j} & X & Z. \ar@{_{(}->}[l]_{i}}
\end{equation*}

We can define sheaves $\mathbf{Pic}_{\overline{\mathfrak{X}}}$ and $\mathbf{Pic}_{\overline{\mathfrak{X}},\mathcal{C}}$ by the same formulas as in the case of schemes, so that $\mathbf{Pic}_{\overline{\mathfrak{X}}}$ classifies isomorphism classes of line bundles on $\overline{\mathfrak{X}}$ and $\mathbf{Pic}_{\overline{\mathfrak{X}},\mathcal{C}}$ classifies isomorphism classes of pairs $(\mathscr{L},\sigma)$ where $\mathscr{L}$ is a line bundle on $\overline{\mathfrak{X}}$ and $\sigma$ is an isomorphism of $\mathscr{L}\vert_{\mathcal{C}}$ with $\mathcal{O}_{\mathcal{C}}$.  The sheaves $\mathbf{Pic}_{\overline{\mathfrak{X}}}$ and $\mathbf{Pic}_{\overline{\mathfrak{X}},\mathcal{C}}$ are both representable by locally finite type commutative $k$-group schemes (\ref{stackrelpic}).  Moreover, $\mathbf{Pic}_{\overline{\mathfrak{X}}}^{0,red}$ and $\mathbf{Pic}_{\overline{\mathfrak{X}},\mathcal{C}}^{0,red}$ are both semi-abelian varieties.

\begin{pg}
To define the 1-motive $M_c^{2d-1}(X)$, first consider a divisor $D \in \mathrm{Div}_{\mathcal{C} \cup \overline{\mathcal{Z}}}(\overline{\mathfrak{X}})$.  Such a divisor can be uniquely written as $D = D_1 + D_2$, with $D_1$ supported on $\mathcal{C}$ and $D_2$ supported on $\overline{\mathcal{Z}}$.  Then we let $\mathrm{Div}_{\mathcal{C} \cup \overline{\mathcal{Z}}/\overline{Z}}^0(\overline{\mathfrak{X}})$ be the group of divisors $D = D_1 + D_2$ supported on $\mathcal{C} \cup \overline{\mathcal{Z}}$ such that 
\begin{enumerate}
\item $D$ maps to 0 in $NS(\overline{\mathfrak{X}})$, and
\item $D_1$ maps to 0 under the proper pushforward $\overline{\pi}_*: \mathrm{Div}_{\overline{\mathcal{Z}}}(\overline{\mathfrak{X}}) \rightarrow \mathrm{Div}_{\overline{Z}}(\overline{X})$.
\end{enumerate}
Let $\mathbf{Div}^0_{\mathcal{C} \cup \overline{\mathcal{Z}}/\overline{Z}}(\overline{\mathfrak{X}})$ be the $k$-group scheme with $k$-points $\mathrm{Div}_{\mathcal{C} \cup \overline{\mathcal{Z}}/\overline{Z}}^0(\overline{\mathfrak{X}})$.  We then define
\begin{equation*}
M_c^{2d-1}(X) := [\mathbf{Div}^0_{\mathcal{C} \cup \overline{\mathcal{Z}}/\overline{Z}}(\overline{\mathfrak{X}}) \rightarrow \mathbf{Pic}_{\overline{\mathfrak{X}}}^{0,red}]^\vee
\end{equation*}
where the superscript $^\vee$ indicates taking the Cartier dual (see Section 5 for background on Cartier duality for 1-motives).
\end{pg}
\begin{pg}
To define the 1-motive $M^{2d-1}(X)$, let $\mathrm{Div}_{\mathcal{Z}}(\overline{\mathfrak{X}})$ be the group of divisors on $\overline{\mathfrak{X}}$ supported on $\mathcal{Z}$ (note that these are equal to the divisors supported on $\overline{\mathcal{Z}}$ which are disjoint from $\mathcal{C}$).  Since these divisors are disjoint from $\mathcal{C}$, there is a natural map $\mathrm{Div}_{\mathcal{Z}}(\overline{\mathfrak{X}}) \rightarrow \mathbf{Pic}_{\overline{\mathfrak{X}},\mathcal{C}}^{0,red}$.  We let $\mathrm{Div}_{\mathcal{Z}/Z}^0(\overline{\mathfrak{X}})$ be the subgroup of $\mathrm{Div}_{\mathcal{Z}}(\overline{\mathfrak{X}})$ of divisors $D$ such that
\begin{enumerate}
\item $D$ maps to 0 in $NS(\overline{\mathfrak{X}},\mathcal{C})$, and
\item $D$ maps to 0 under the proper pushforward $\pi_*: \mathrm{Div}_{\mathcal{Z}}(\overline{\mathfrak{X}}) \rightarrow \mathrm{Div}_{Z}(\overline{X})$.
\end{enumerate}
If $\mathbf{Div}_{\mathcal{Z}/Z}^0(\overline{\mathfrak{X}})$ is the associated group scheme, we define
\begin{equation*}
M^{2d-1}(X) = [\mathbf{Div}_{\mathcal{Z}/Z}^0(\overline{\mathfrak{X}}) \rightarrow \mathbf{Pic}_{\overline{\mathfrak{X}},\mathcal{C}}^{0,red}]^\vee.
\end{equation*}

Secions 7 and 8 go into  more detail on the construction of $M^{2d-1}(X)$ and $M_c^{2d-1}(X)$, showing that the preceding definitions yield functors $M^{2d-1}(-), \ M_c^{2d-1}(-): (Sch_d/k)^{op} \rightarrow \mathscr{M}^1(k) \otimes \mathbb{Q}$ compatible with the $\ell$-adic realization functors.
\end{pg}
\subsection{Other results}
In order to define $M^{2d-1}(X)$ and $M_c^{2d-1}(X)$ we need some results on cycle classes for Deligne-Mumford stacks which aren't in the literature.  Some of these results may be of independent interest and are highlighted below.

In Section 2 we extend the theory of 1-motivic sheaves \cite[App. C]{BVK} to show that the Picard sheaf of a smooth Artin stack is 1-motivic.  See Section 2 for the definition of a 1-motivic sheaf; beyond Section 2 we will only use the following corollary:
\begin{cor} (of Proposition \ref{onemotivic}) Let $\mathfrak{X}$ be a smooth Artin stack of finite type over an algebraically closed field $k$, and with quasi-compact separated diagonal.  Then there exists a divisible group $\mathrm{Pic}^0(\mathfrak{X})$, a finitely generated group $NS(\mathfrak{X})$, and a sequence
\begin{equation*}
0 \rightarrow \mathrm{Pic}^0(\mathfrak{X}) \rightarrow \mathrm{Pic}(\mathfrak{X}) \rightarrow NS(\mathfrak{X}) \rightarrow 0
\end{equation*}
which becomes exact after inverting $p := \mathrm{char \ }k$.
\end{cor}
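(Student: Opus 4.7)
The plan is to invoke Proposition \ref{onemotivic} and then pass to $k$-points, handling the characteristic $p$ subtleties in the final step. By the cited proposition, the fppf sheaf $\mathbf{Pic}_{\mathfrak{X}}$ is 1-motivic. Unwinding the definition of a 1-motivic sheaf from Section 2, this says that $\mathbf{Pic}_{\mathfrak{X}}$ is representable by a locally finite type commutative $k$-group scheme whose reduced connected component $\mathbf{Pic}^{0,\mathrm{red}}_{\mathfrak{X}}$ is a semi-abelian variety, and whose component group $\mathbf{NS}_{\mathfrak{X}} := \mathbf{Pic}_{\mathfrak{X}}/\mathbf{Pic}^0_{\mathfrak{X}}$ is \'{e}tale locally constant with finitely generated geometric stalks.

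Define $\mathrm{Pic}^0(\mathfrak{X}) := \mathbf{Pic}^{0,\mathrm{red}}_{\mathfrak{X}}(k)$ and $NS(\mathfrak{X}) := \mathbf{NS}_{\mathfrak{X}}(k)$. Divisibility of $\mathrm{Pic}^0(\mathfrak{X})$ is immediate: on $\bar{k}$-points of a semi-abelian variety over an algebraically closed field, multiplication by any nonzero integer $n$ is an isogeny and hence surjective. Finite generation of $NS(\mathfrak{X})$ follows because $k$ is algebraically closed, so $\mathbf{NS}_{\mathfrak{X}}(k)$ is the geometric stalk, which is finitely generated by hypothesis.

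For the exact sequence, take $k$-sections of the short exact sequence of fppf sheaves $0 \to \mathbf{Pic}^0_{\mathfrak{X}} \to \mathbf{Pic}_{\mathfrak{X}} \to \mathbf{NS}_{\mathfrak{X}} \to 0$, obtaining
\begin{equation*}
0 \to \mathbf{Pic}^0_{\mathfrak{X}}(k) \to \mathrm{Pic}(\mathfrak{X}) \to NS(\mathfrak{X}) \to H^1_{\mathrm{fppf}}(\mathrm{Spec}\,k, \mathbf{Pic}^0_{\mathfrak{X}}).
\end{equation*}
Since any $k$-point of a scheme factors through its reduction, the inclusion $\mathbf{Pic}^{0,\mathrm{red}}_{\mathfrak{X}} \hookrightarrow \mathbf{Pic}^0_{\mathfrak{X}}$ is a bijection on $k$-points, so the leftmost term is exactly $\mathrm{Pic}^0(\mathfrak{X})$ as defined. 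The only remaining task is to show the image of the boundary map in the $H^1$-term is $p$-power torsion, after which inverting $p$ kills it and yields the desired short exact sequence.

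The main (and only) nontrivial step is this last control of the $H^1$-term. Use the short exact sequence $0 \to \mathbf{Pic}^{0,\mathrm{red}}_{\mathfrak{X}} \to \mathbf{Pic}^0_{\mathfrak{X}} \to H \to 0$, where $H$ is a connected finite group scheme over $k$ and hence of $p$-power order in characteristic $p$. Since $\mathbf{Pic}^{0,\mathrm{red}}_{\mathfrak{X}}$ is smooth, Grothendieck's generalization of Hilbert 90 gives $H^1_{\mathrm{fppf}}(\mathrm{Spec}\,k, \mathbf{Pic}^{0,\mathrm{red}}_{\mathfrak{X}}) = 0$, so $H^1_{\mathrm{fppf}}(k, \mathbf{Pic}^0_{\mathfrak{X}})$ injects into $H^1_{\mathrm{fppf}}(k, H)$, which is $p$-power torsion. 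Inverting $p$ therefore produces the exact sequence asserted. The genuinely hard part is not this corollary but Proposition \ref{onemotivic} itself, which requires the representability and structure theory for Picard sheaves of smooth Artin stacks built up in Section 2; granted that proposition, the corollary is a matter of bookkeeping modulo the universal homeomorphism $\mathbf{Pic}^{0,\mathrm{red}} \hookrightarrow \mathbf{Pic}^0$ and the infinitesimal $p$-group scheme $H$.
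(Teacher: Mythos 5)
You have misread the definition of a 1-motivic sheaf, and the entire proof rests on that misreading. Proposition \ref{onemotivic} does \emph{not} say that $\mathbf{Pic}_{\mathfrak{X}/k}$ is representable. A 1-motivic sheaf $\mathscr{F}$ (Definition in Section 2, following \cite[3.2.1]{BVK}) is a sheaf on $(Sm/k)_{et}$, with Hom-groups having $p$ inverted, for which there exists a semi-abelian variety $G$ and a morphism $f: G \to \mathscr{F}$ with discrete kernel and cokernel. There is no representability assertion. Indeed, for a non-proper smooth stack (and the corollary does not assume properness), $\mathbf{Pic}_{\mathfrak{X}/k}$ is typically not representable --- see Example \ref{examp1}, where for an open $X$ in a smooth proper $\overline{X}$ the Picard sheaf is exhibited as a cokernel $\mathbf{Div}_D(\overline{X}) \to \mathbf{Pic}_{\overline{X}/k} \to \mathbf{Pic}_{X/k} \to 0$, not as a group scheme. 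The whole point of the 1-motivic machinery of Section 2 is to bypass representability for non-proper Picard functors.

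As a consequence, the steps of your argument involving the exact sequence of fppf sheaves $0 \to \mathbf{Pic}^0_{\mathfrak{X}} \to \mathbf{Pic}_{\mathfrak{X}} \to \mathbf{NS}_{\mathfrak{X}} \to 0$, the boundary into $H^1_{\mathrm{fppf}}(\mathrm{Spec}\,k, \mathbf{Pic}^0_{\mathfrak{X}})$, the smoothness of $\mathbf{Pic}^{0,\mathrm{red}}$, Hilbert 90, and the infinitesimal quotient $H$ are all ungrounded: the objects in question are not group schemes. You have also misidentified where the characteristic-$p$ loss enters. Note that if your group schemes existed, then since $k$ is algebraically closed one has $H^1_{\mathrm{fppf}}(k,G)=0$ for any finite type affine group scheme $G$ (every torsor over $\mathrm{Spec}\,k$ has a $k$-point), so your own argument would give exactness \emph{without} inverting $p$ --- directly contradicting the paper's remark that it does not know how to avoid inverting $p$. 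The actual source of the $p$-inversion is categorical: the definitions of $\mathbf{Pic}^0_{\mathfrak{X}/k}$ (as image of the normalized morphism $b: G \to \mathbf{Pic}_{\mathfrak{X}/k}$) and $\mathbf{NS}_{\mathfrak{X}/k}$ (as cokernel) in paragraph \ref{ns} are carried out in the category $Sh(Sm/k)_{et}[1/p]$, and the normalization/structure results of Proposition \ref{picrepres} only hold with $p$ inverted (see Remark \ref{mustinvert}). Taking $k$-points of the image and cokernel of $b$ then yields a sequence $0 \to \mathrm{Pic}^0(\mathfrak{X}) \to \mathrm{Pic}(\mathfrak{X}) \to NS(\mathfrak{X}) \to 0$ whose exactness is only controlled up to $p$-power torsion; divisibility of $\mathrm{Pic}^0(\mathfrak{X})$ and finite generation of $NS(\mathfrak{X})$ come from $G(k)$ being divisible and the cokernel being a discrete sheaf, respectively.
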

Presumably this is true even without inverting $\mathrm{char \ }k$, although we don't know how to prove it.

In Section 3 we review the theory of Weil and Cartier divisors on a Deligne-Mumford stack $\mathfrak{X}$.  In addition, we prove the following:
\begin{prop} (Proposition \ref{cartpic} in text) Let $\mathfrak{X}$ be a geometrically reduced, separated finite type Deligne-Mumford stack over a field $k$.  Then the quotient $\mathrm{Pic}(\mathfrak{X})/\mathrm{CaCl}(\mathfrak{X})$ is a finite group.
\end{prop}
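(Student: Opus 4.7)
The plan is to reduce the finiteness question to the analogous (trivial) statement on the coarse moduli space. By the Keel--Mori theorem, $\mathfrak{X}$ admits a coarse moduli space $\pi \colon \mathfrak{X} \to X$, with $X$ a separated algebraic space of finite type over $k$; since $\mathfrak{X}$ is geometrically reduced, so is $X$. The first input is the classical fact that for a reduced Noetherian scheme (or algebraic space) $Y$, the map $\mathrm{CaCl}(Y) \to \mathrm{Pic}(Y)$ is surjective: the short exact sequence
\[
1 \to \mathcal{O}_Y^* \to \mathcal{M}_Y^* \to \mathcal{M}_Y^*/\mathcal{O}_Y^* \to 1,
\]
combined with the vanishing of $H^1(Y, \mathcal{M}_Y^*)$ (since the sheaf of meromorphic units is concentrated at the finitely many generic points of $Y$), produces the surjectivity. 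In particular $\mathrm{Pic}(X) = \mathrm{CaCl}(X)$.

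The main step is then to show that $\pi^* \colon \mathrm{Pic}(X) \to \mathrm{Pic}(\mathfrak{X})$ has finite cokernel. I would apply the Leray spectral sequence for $\pi$ in the fppf topology. Since $\pi$ is a coarse moduli morphism we have $\pi_*\mathbb{G}_m = \mathbb{G}_{m,X}$, so the five-term exact sequence yields an inclusion
\[
\mathrm{Pic}(\mathfrak{X})/\pi^*\mathrm{Pic}(X) \hookrightarrow H^0(X, R^1\pi_*\mathbb{G}_m).
\]
The sheaf $R^1\pi_*\mathbb{G}_m$ is supported on the stacky locus of $X$ and is constructible, with stalk at a geometric point $\overline{x}$ governed by the character group $\mathrm{Hom}(G_{\overline{x}}, \mathbb{G}_m)$ of the finite stabilizer $G_{\overline{x}}$ of a point of $\mathfrak{X}$ above $\overline{x}$. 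Because $\mathfrak{X}$ is a DM stack of finite type over $k$, stabilizer orders are uniformly bounded and only finitely many stabilizer-strata occur, so $H^0(X, R^1\pi_*\mathbb{G}_m)$ is a finite group.

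To conclude I would check that Cartier divisor classes on $X$ pull back to Cartier divisor classes on $\mathfrak{X}$: because $\pi$ is an isomorphism at the generic points of each irreducible component of $\mathfrak{X}$, no associated point of $\mathfrak{X}$ lies over the support of a Cartier divisor on $X$, so the pullback is well defined and yields $\pi^*\mathrm{CaCl}(X) \subseteq \mathrm{CaCl}(\mathfrak{X})$ inside $\mathrm{Pic}(\mathfrak{X})$. Using $\mathrm{CaCl}(X) = \mathrm{Pic}(X)$ from the first step, $\mathrm{Pic}(\mathfrak{X})/\mathrm{CaCl}(\mathfrak{X})$ is a quotient of the finite group $\mathrm{Pic}(\mathfrak{X})/\pi^*\mathrm{Pic}(X)$, and we are done. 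The main technical obstacle is the structural description of $R^1\pi_*\mathbb{G}_m$ as a constructible torsion sheaf with stalks coming from stabilizer characters; if this proves inconvenient to set up cleanly, an alternative is to produce an explicit finite flat (or fppf) cover of $\mathfrak{X}$ trivializing all residual gerbes, and to deduce finiteness of the cokernel directly from the resulting descent spectral sequence.
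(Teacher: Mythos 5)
Your approach is genuinely different from the paper's, but it has a gap at the crucial finiteness step. The paper works with the sheaf of invertible rational functions $\mathscr{K}^* = \iota_*\mathbb{G}_m$ (where $\iota: \mathscr{G} = \xi\times_X\mathfrak{X}\hookrightarrow\mathfrak{X}$ is the fiber over the generic points of the coarse space), observes that $\mathrm{Pic}(\mathfrak{X})/\mathrm{CaCl}(\mathfrak{X})$ injects into $H^1(\mathfrak{X},\mathscr{K}^*)\hookrightarrow\mathrm{Pic}(\mathscr{G})$, and then only ever needs the Picard group of a gerbe over the spectrum of a \emph{field}, which splits after a finite field extension and whose Picard group is a subgroup of a character group $\mathrm{Hom}(G,L^*)$, manifestly finite. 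You instead invoke the Leray sequence for $\pi:\mathfrak{X}\to X$ and must control $H^0(X,R^1\pi_*\mathbb{G}_m)$, and it is exactly here that the argument is unjustified.

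The stalk of $R^1\pi_*\mathbb{G}_m$ at a geometric point $\bar x$ is not the character group $\mathrm{Hom}(G_{\bar x},\mathbb{G}_m)$: writing $\mathfrak{X}$ étale-locally as $[V/G]$ with $V\to X$ finite, the stalk is $\mathrm{Pic}\bigl([V_{\bar x}^{sh}/G]\bigr)\cong H^1\bigl(G,\mathcal{O}^*(V_{\bar x}^{sh})\bigr)$, i.e.\ group cohomology of the stabilizer with coefficients in the units of a strictly Henselian local (semi-local) ring, not merely in its residue field. Filtering by $1\to 1+\mathfrak{m}\to A^*\to k^*\to 1$, one sees this group differs from the character group by $H^1(G,1+\mathfrak{m})$, which is torsion but not obviously finite when $|G|$ is divisible by the residue characteristic (the wild case, which certainly occurs for separated DM stacks of finite type in positive characteristic). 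So "constructible torsion sheaf with stalks $\mathrm{Hom}(G_{\bar x},\mathbb{G}_m)$" is not correct as stated, and you have no argument that $H^0(X,R^1\pi_*\mathbb{G}_m)$ is finite. You also do not use the geometric reducedness of $\mathfrak{X}$ anywhere in this step, which is a warning sign, since that hypothesis is essential (it is what makes $\mathscr{K}^*$ a sensible object in the paper's proof). The remaining ingredients of your argument (that $\mathrm{Pic}=\mathrm{CaCl}$ on the reduced coarse space, that $\pi_*\mathbb{G}_m=\mathbb{G}_{m,X}$, and that $\pi^*\mathrm{CaCl}(X)\subseteq\mathrm{CaCl}(\mathfrak{X})$ because $\pi$ is an isomorphism at generic points) are fine, but the core finiteness claim needs a genuinely different justification — and the cleanest one is precisely the paper's reduction to gerbes over fields, which discards all the Henselian local data that causes trouble for you.
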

This has the following corollary:
\begin{cor} (Corollary \ref{pic0} in text) Let $\mathfrak{X}$ be a smooth proper Deligne-Mumford stack over an algebraically closed field $k$.  Then every element of $\mathrm{Pic}^0(\mathfrak{X})$ is represented by a Weil divisor.
\end{cor}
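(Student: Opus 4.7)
The plan is to deduce this corollary essentially formally from the two immediately preceding results: Proposition~\ref{cartpic} and the corollary of Proposition~\ref{onemotivic}. First I would note that since $\mathfrak{X}$ is smooth over a field it is in particular geometrically reduced, so Proposition~\ref{cartpic} applies and the quotient $G := \mathrm{Pic}(\mathfrak{X})/\mathrm{CaCl}(\mathfrak{X})$ is a finite abelian group. Let $N$ denote its order; then $N \cdot \mathrm{Pic}(\mathfrak{X}) \subseteq \mathrm{CaCl}(\mathfrak{X})$.

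Next I would pick an arbitrary $\mathcal{L} \in \mathrm{Pic}^0(\mathfrak{X})$ and invoke the preceding corollary of Proposition~\ref{onemotivic}, which asserts that $\mathrm{Pic}^0(\mathfrak{X})$ is a divisible abelian group. Divisibility lets me write $\mathcal{L} = N \mathcal{L}'$ for some $\mathcal{L}' \in \mathrm{Pic}^0(\mathfrak{X})$. Viewing $\mathcal{L}'$ as an element of $\mathrm{Pic}(\mathfrak{X})$ via the inclusion $\mathrm{Pic}^0(\mathfrak{X}) \hookrightarrow \mathrm{Pic}(\mathfrak{X})$, one obtains
\[
\mathcal{L} \;=\; N \mathcal{L}' \;\in\; N \cdot \mathrm{Pic}(\mathfrak{X}) \;\subseteq\; \mathrm{CaCl}(\mathfrak{X}).
\]
Thus $\mathcal{L}$ lies in the image of the Cartier-divisor-to-Picard map. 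On the smooth stack $\mathfrak{X}$ this image coincides with the image of the Weil divisors (this is one of the basic facts reviewed in Section~3: smoothness is étale-local, and on a smooth scheme Weil divisors and Cartier divisors agree). Hence $\mathcal{L}$ is represented by a Weil divisor, as desired.

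The only place where I would expect a potential hitch is in the divisibility of $\mathrm{Pic}^0(\mathfrak{X})$ at the prime $p = \mathrm{char}\,k$, since the exact sequence appearing in the corollary of Proposition~\ref{onemotivic} is only asserted to be exact after inverting $p$. I would therefore want to double-check that the \emph{divisibility} statement itself is genuinely integral, not merely $p'$-divisibility. In the smooth proper case this is comfortably true: $\mathbf{Pic}^{0,\mathrm{red}}_{\mathfrak{X}}$ is an abelian variety, and its group of $k$-points over an algebraically closed field is divisible (the multiplication-by-$N$ isogeny is surjective on $k$-points for every $N$, including those divisible by $p$). So the only real work is the bookkeeping above, and I do not anticipate any serious obstacle.
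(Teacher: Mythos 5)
Your proof is correct and is essentially the paper's argument in element-wise form: the paper considers the quotient $\mathrm{Pic}^0(\mathfrak{X})/\mathrm{Cl}^0(\mathfrak{X})$, observes that it injects into the finite group $\mathrm{Pic}(\mathfrak{X})/\mathrm{Cl}(\mathfrak{X})$ and is a quotient of the divisible group $\mathrm{Pic}^0(\mathfrak{X})$, hence is trivial --- which is exactly your computation $\mathcal{L} = N\mathcal{L}' \in N\cdot\mathrm{Pic}(\mathfrak{X}) \subseteq \mathrm{CaCl}(\mathfrak{X}) \cong \mathrm{Cl}(\mathfrak{X})$ unpacked. Your concluding remark about $p$-divisibility is a correct and worthwhile sanity check that the paper leaves implicit: since $\mathfrak{X}$ is smooth and proper, $\mathbf{Pic}^{0,\mathrm{red}}_{\mathfrak{X}/k}$ is an abelian variety by Theorem~\ref{stackrepres}(3), so its $k$-points over $k = \overline{k}$ are honestly divisible at every prime including $p$, and there is no need to pass through the $1$-motivic-sheaf machinery of Section~2 (which only controls things after inverting $p$).
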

Section 4 develops the theory of cycle class maps $CH^d(\mathfrak{X}) \rightarrow H^{2d}(\mathfrak{X},\mathbb{Z}_\ell(d))$ for a smooth separated Deligne-Mumford stack $\mathfrak{X}$.  We proceed in the same manner as the article \cite[Cycle]{SGA4h}, and the results of that article are used repeatedly in Section 4.

In Section 5 we review the necessary background on 1-motives.  All of the results in this section can be found in several other sources, for example \cite[App. C]{BVK}.  In Sections 6 through 8 we define the 1-motives $M_{(c)}^{1}(X)$ and $M_{(c)}^{2d-1}(X)$ as discussed above.

We conclude the paper with the short Section 9, which gives the following easy consequence of our work on 1-motives:
\begin{prop}(Proposition 9.1 in text)\label{independenceofl}
Let $X_0$ be a $d$-dimensional separated scheme of finite type over a finite field $\mathbb{F}_q$, and let $X = X_0 \times_{\mathbb{F}_q} \overline{\mathbb{F}}_q$.  Let $f: X \rightarrow X$ be an endomorphism, and for $i$ between 0 and $2d$, define the polynomials
\begin{align*}
P_\ell^i(f,t) &:= \mathrm{det}(1 - tf\mid H^i(X,\mathbb{Q}_\ell)) \ \ \mathrm{and} \\
P_{\ell,c}^i(f,t) &:= \mathrm{det}(1 - tf \mid H_c^i(X,\mathbb{Q}_\ell)).
\end{align*}
(for $P_{\ell,c}^i(f,t)$ we assume $f$ is proper).  Then for $i = 0, 1, 2d-1, 2d$, these polynomials have integer coefficients independent of $\ell$. 
\end{prop}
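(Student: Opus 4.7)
The plan is to split by the value of $i$. The cases $i = 0$ and $i = 2d$ are handled by elementary means: $H^0(X, \mathbb{Q}_\ell) \cong \mathbb{Q}_\ell^{\pi_0(X)}$ with $f^*$ acting by permutation of connected components, and $H_c^0(X, \mathbb{Q}_\ell)$ is analogous but indexed by proper connected components. For $i = 2d$, passing to an alteration of the top-dimensional components and invoking Poincar\'e duality, both $H^{2d}(X, \mathbb{Q}_\ell)$ and $H_c^{2d}(X, \mathbb{Q}_\ell)$ are free $\mathbb{Q}_\ell$-modules with bases indexed by $d$-dimensional (proper) irreducible components, on which $f^*$ acts by an integer matrix whose entries are degrees of induced finite maps between components. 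In each of these cases the characteristic polynomial is visibly in $\mathbb{Z}[t]$ and independent of $\ell$.

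The main content is the cases $i = 1$ and $i = 2d-1$, where we appeal to the 1-motives $M^i(X)$ and $M_c^i(X)$ constructed in Sections 6--8. By contravariant functoriality (for arbitrary $f$ in the non-compact case, and for proper $f$ in the compactly-supported case), $f: X \to X$ induces endomorphisms $f^*$ of each 1-motive, working throughout in the isogeny category $\mathscr{M}^1(k) \otimes \mathbb{Q}$. The rational realization $V_\ell := T_\ell \otimes \mathbb{Q}$ is compatible with these endomorphisms and identifies, for instance, $V_\ell M^1(X) \cong H^1(X, \mathbb{Q}_\ell)$ and $V_\ell M^{2d-1}(X) \cong H^{2d-1}(X, \mathbb{Q}_\ell(d))$; over the algebraically closed field $k = \overline{\mathbb{F}}_q$ the Tate twist is trivial as a $\mathbb{Q}_\ell$-vector space and so does not affect the characteristic polynomial. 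Thus the problem reduces to showing, for any endomorphism $g$ of a 1-motive $M = [L \to G]$ in $\mathscr{M}^1(k) \otimes \mathbb{Q}$, that the characteristic polynomial of $V_\ell(g)$ on $V_\ell M$ is an integer polynomial independent of $\ell$.

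For this last step, the $g$-equivariant short exact sequence
\begin{equation*}
0 \to V_\ell G \to V_\ell M \to L \otimes_\mathbb{Z} \mathbb{Q}_\ell \to 0
\end{equation*}
reduces the characteristic polynomial to the product of those on the graded pieces. The quotient contributes the characteristic polynomial of $g$ on the finite-dimensional $\mathbb{Q}$-vector space $L \otimes \mathbb{Q}$, clearly independent of $\ell$. For the semi-abelian variety $G$, the canonical decomposition $0 \to T \to G \to A \to 0$ (preserved by any endomorphism, since $T$ is the maximal affine subgroup) induces $0 \to X_*(T) \otimes \mathbb{Q}_\ell \to V_\ell G \to V_\ell A \to 0$; the first piece contributes the characteristic polynomial of $g$ acting on the cocharacter lattice $X_*(T)$, again visibly independent of $\ell$. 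Finally, for the abelian variety $A$ we invoke the classical theorem of Weil that the characteristic polynomial of any endomorphism of $A$ on $V_\ell A$ is an integer polynomial independent of $\ell$.

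The main obstacle, apart from invoking these standard facts, is ensuring that the identifications between $V_\ell M^i(X)$ and the corresponding cohomology groups really are $f^*$-equivariant; this is essentially the content of the functoriality and $\ell$-adic realization compatibility for the 1-motives proved in Sections 6 and 8, so nothing new must be done here. A minor technical point is that $M^{2d-1}$ and $M_c^{2d-1}$ are only defined up to isogeny, but since the characteristic polynomial of an endomorphism of a $\mathbb{Q}_\ell$-vector space is an isogeny invariant, this causes no difficulty.
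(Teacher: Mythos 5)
Your proposal is correct and takes essentially the same route as the paper: elementary identifications for $i = 0, 2d$ via connected/irreducible components, and reduction to the 1-motives $M^1, M_c^1, M^{2d-1}, M_c^{2d-1}$ for $i = 1, 2d-1$, followed by the standard filtration $L \otimes \mathbb{Q}_\ell$, $X_*(T) \otimes \mathbb{Q}_\ell$, $V_\ell A$ and Weil's theorem for abelian varieties. Your explicit remarks on the Tate twist being harmless over $\overline{\mathbb{F}}_q$ and on isogeny-invariance of characteristic polynomials are welcome clarifications of points the paper leaves implicit, but the argument is the same.
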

Probably the only new case here is $i = 2d-1$, although the other cases aren't clearly stated in the literature.  Of course, when $X_0$ is smooth and proper, this corollary is a special case of the main result of \cite{WeilII}.

Using known results on trace formulas (\cite[5.4.5]{fuj} and \cite[Thm 1.1]{chern}), we get the following corollary in the case of surfaces:
\begin{cor}(Corollary \ref{surfaces} in text)\label{surfaces2}
Let $X_0$ be a 2-dimensional separated finite type $\mathbb{F}_q$-scheme, and let $X = X_0 \times_{\mathbb{F}_q} \overline{\mathbb{F}}_q$.  If $f: X \rightarrow X$ is any proper endomorphism, then for all values of $i$, the polynomial $P_{\ell,c}^i(f,t)$ has rational coefficients independent of $\ell$.  If $f: X \rightarrow X$ is any quasi-finite endomorphism, then the polynomial $P_{\ell}^i(f,t)$ has rational coefficients independent of $\ell$ for all $i$.
\end{cor}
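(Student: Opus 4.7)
The plan is to reduce to the middle cohomological degree and then use the cited trace formulas to pin down the middle-degree traces. Since $X$ has dimension $2$, only degrees $i \in \{0,1,2,3,4\}$ are relevant, and Proposition \ref{independenceofl} already handles $i = 0, 1, 3, 4$. Only $i = 2$ remains. If I can show that $\mathrm{Tr}(f^n \mid H_c^2(X,\mathbb{Q}_\ell))$ (respectively $\mathrm{Tr}(f^n \mid H^2(X,\mathbb{Q}_\ell))$) is a rational number independent of $\ell$ for every $n \geq 1$, then the Newton identities express the coefficients of $P_{\ell,c}^2(f,t)$ (resp.\ $P_\ell^2(f,t)$) as polynomials with rational coefficients in these power sums, yielding the desired conclusion.

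To obtain independence of $\ell$ for $\mathrm{Tr}(f^n \mid H_c^2(X,\mathbb{Q}_\ell))$ in the proper case, I would apply Fujiwara's trace formula \cite[5.4.5]{fuj} to the proper endomorphism $f^n \colon X \to X$. This yields
\begin{equation*}
\sum_{i=0}^{4} (-1)^i \mathrm{Tr}\bigl(f^n \mid H_c^i(X,\mathbb{Q}_\ell)\bigr) \; = \; \text{(local contributions at the fixed points of } f^n\text{)},
\end{equation*}
where the right-hand side is manifestly rational and independent of $\ell$. Combining this with Proposition \ref{independenceofl} applied to $f^n$ in degrees $i \in \{0,1,3,4\}$ isolates $\mathrm{Tr}(f^n \mid H_c^2(X,\mathbb{Q}_\ell))$ as rational and independent of $\ell$, for every $n$. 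For the quasi-finite statement I would run the same argument using \cite[Thm 1.1]{chern}, which is tailored precisely to ordinary (non-compactly supported) $\ell$-adic cohomology and to quasi-finite endomorphisms; since $f$ quasi-finite implies each $f^n$ quasi-finite, this applies uniformly.

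The main obstacle, and really the only nontrivial input beyond Proposition \ref{independenceofl}, will be verifying that the cited trace formulas apply to our (possibly singular) $X$ and to $f^n$ without any extra transversality or smoothness hypotheses on the fixed-point sets. Both \cite[5.4.5]{fuj} and \cite[Thm 1.1]{chern} are designed to handle exactly this generality, so after checking their standing hypotheses for our $X$ and $f$, the remainder of the argument is Newton-identity bookkeeping and does not rely on $\dim X = 2$ except through Proposition \ref{independenceofl}. A minor point to confirm is that the local terms produced by each trace formula are indeed rational numbers independent of $\ell$ (and not merely elements of $\mathbb{Q}_\ell$); this is standard and is how both formulas are normalized in the cited references.
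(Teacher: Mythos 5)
Your argument is correct and is exactly the paper's intended approach, just spelled out: the paper's proof consists only of citing Fujiwara's theorem \cite[5.4.5]{fuj} and the quasi-finite trace formula \cite[Thm 1.1]{chern}, leaving implicit the standard reduction you carry out (apply the trace formula to $f^n$ to get the rational, $\ell$-independent alternating sum; combine with Proposition 9.1 for degrees $i \neq 2$ to isolate $\mathrm{Tr}(f^n \mid H_{(c)}^2)$; then recover $P_{\ell,(c)}^2(f,t)$ from the power sums via Newton's identities or the exponential formula $\det(1-tf) = \exp(-\sum_n \mathrm{Tr}(f^n) t^n/n)$). You have supplied precisely the bookkeeping the paper omits, with no change of route.
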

In particular, if $f = F$ is the Frobenius endomorphism, then $P_{\ell}^i(F,t)$ and $P_{\ell,c}^i(F,t)$ have rational coefficients independent of $\ell$ (and hence integer coefficients since the roots of these polynomials are algebraic integers \cite[4.2]{illusie}).

\section{Preliminaries on Picard Functors of Smooth Stacks}
\begin{pg}
Let $\mathfrak{X}$ be an Artin stack of finite type over a field $k$, with quasi-compact and separated diagonal.  Let $\pi: \mathfrak{X} \rightarrow \mathrm{Spec \ }k$ be the structure morphism, and let $\mathbb{G}_{m,\mathfrak{X}}$ be the sheaf on $(Sch/\mathfrak{X})_{fppf}$ sending $T$ to $\Gamma(T,\mathcal{O}_{T}^\times)$.  Recall that the Picard functor $\mathbf{Pic}_{\mathfrak{X}/k} \in Sh(Sch/k)_{fppf}$ is defined to be the sheaf $R^1\pi_*(\mathbb{G}_{m,\mathfrak{X}})$, or equivalently, the fppf-sheafification of the functor
\begin{equation*}
Y \mapsto \mathrm{Pic}(\mathfrak{X} \times_{k} Y).
\end{equation*}
We then have the following representability results in case $\mathfrak{X}$ is proper, due to Brochard \cite{repres}, \cite{finite}:
\end{pg}
\begin{thm}\label{stackrepres}
Let $\mathfrak{X}$ be a proper Artin stack over the field $k$.  Then the following hold:
\begin{enumerate}
\item The sheaf $\mathbf{Pic}_{\mathfrak{X}/k}$ is representable by a locally finite type commutative group scheme over $k$ \cite[2.3.7]{finite}.
\item If $\mathbf{Pic}^0_{\mathfrak{X}/k}$ denotes the connected component containing the identity of $\mathbf{Pic}_{\mathfrak{X}/k}$, then we have an exact sequence of group schemes
\begin{equation*}
0 \rightarrow \mathbf{Pic}^0_{\mathfrak{X}/k} \rightarrow \mathbf{Pic}_{\mathfrak{X}/k} \rightarrow \mathbf{NS}_{\mathfrak{X}/k} \rightarrow 0,
\end{equation*}
defining the group scheme $\mathbf{NS}_{\mathfrak{X}/k}$. Furthermore, $\mathbf{NS}_{\mathfrak{X}/k}$ is an \'{e}tale group scheme, and $\mathbf{NS}_{\mathfrak{X}/k}(\overline{k})$ is a finitely generated abelian group for any algebraic closure $k \hookrightarrow \overline{k}$ \cite[3.4.1]{finite}.
\item Assume in addition that $\mathfrak{X}$ is smooth, and $k$ is a perfect field.  Then the reduced group scheme $\mathbf{Pic}^{0,red}_{\mathfrak{X}/k}$ is an abelian variety \cite[4.2.2]{repres}.
\end{enumerate}
\end{thm}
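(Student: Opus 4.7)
The plan is to observe that all three statements are essentially Brochard's theorems as cited, and to sketch the strategies that go into each. I would not attempt a self-contained proof, since each part rests on nontrivial machinery, but rather indicate the main inputs.

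For part (1), the plan is to apply Artin's representability criterion to the fppf sheafification of $Y \mapsto \mathrm{Pic}(\mathfrak{X} \times_k Y)$. The key inputs are: (i) the deformation theory of a line bundle $\mathscr{L}$ on $\mathfrak{X}$, which is controlled by $H^1(\mathfrak{X}, \mathcal{O}_{\mathfrak{X}})$ (tangent space) and $H^2(\mathfrak{X}, \mathcal{O}_{\mathfrak{X}})$ (obstructions), both finite-dimensional by properness and coherent cohomology for Artin stacks; (ii) effectivity of formal deformations, which requires a version of Grothendieck's existence theorem for proper Artin stacks applied to line bundles on the formal thickenings; (iii) Schlessinger-type conditions and openness of versality, which are formal once (i) and (ii) are established. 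The main obstacle is (ii), since the full Grothendieck existence theorem for proper Artin stacks is a substantial result and is the heart of Brochard's \cite{finite}.

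For part (2), since $\mathbf{Pic}_{\mathfrak{X}/k}$ is locally of finite type by (1), the identity component $\mathbf{Pic}^0_{\mathfrak{X}/k}$ is an open and closed subgroup scheme, and the quotient $\mathbf{NS}_{\mathfrak{X}/k}$ exists as a group scheme with étale structure, automatically. The nontrivial statement is the finite generation of $\mathbf{NS}_{\mathfrak{X}/k}(\overline{k})$. I would reduce to the case of a scheme by choosing a finite flat surjection $X \to \mathfrak{X}$ from a proper scheme (e.g.\ via the coarse moduli space composed with a Chow cover, after base change to $\overline{k}$), pulling back Néron-Severi classes, and controlling the kernel and cokernel of the induced map $\mathbf{NS}(\mathfrak{X}_{\overline{k}}) \to \mathbf{NS}(X_{\overline{k}})$; finite generation of the latter is classical, and the relevant kernel/cokernel can be shown to be finitely generated using that $X \to \mathfrak{X}$ is representable finite flat. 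This reduction-plus-comparison step is the main technical core.

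For part (3), given the standing hypothesis that $\mathfrak{X}$ is smooth and proper and $k$ is perfect, the plan is to show $\mathbf{Pic}^{0,red}_{\mathfrak{X}/k}$ is proper, smooth, and connected, hence an abelian variety. Smoothness over a perfect field is automatic after taking the reduction. Connectedness is by definition. The crux is properness, which I would attack via the valuative criterion: given a DVR $R$ with fraction field $K$, a line bundle on $\mathfrak{X}_K$ lying in $\mathbf{Pic}^0$ must extend to $\mathfrak{X}_R$ (up to twisting by vertical divisors), and here one uses that on a smooth proper Deligne-Mumford or Artin stack one still has a good theory of Weil/Cartier divisors to carry out this extension. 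Combined with finiteness of the tangent space $H^1(\mathfrak{X},\mathcal{O}_\mathfrak{X})$, which bounds the dimension, properness and smoothness give the abelian variety conclusion. The main obstacle here is the valuative-criterion extension step for stacks, which is exactly what Brochard carries out in \cite{repres}.
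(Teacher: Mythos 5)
The paper does not actually prove Theorem \ref{stackrepres}; it simply cites Brochard's two papers for each clause, so there is no ``paper proof'' to compare against. You correctly recognize this and supply sketches of what goes into each part. Your sketches of (1) and (3) are broadly faithful to Brochard's strategy: (1) does hinge on Artin's criterion with the Grothendieck existence theorem for proper Artin stacks as the central new technical input, and (3) does involve establishing properness of the reduced identity component, with smoothness over a perfect field coming for free.

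Your sketch of part (2) has a genuine gap. You propose reducing to the scheme case via a finite flat surjection from a proper scheme, using ``the coarse moduli space composed with a Chow cover.'' This breaks down for the class of stacks in the statement: Theorem \ref{stackrepres} is asserted for arbitrary proper \emph{Artin} stacks, which in general have neither a coarse moduli space nor a finite flat cover by a scheme (or even by an algebraic space) --- finite flat covers are special to (tame, or suitably nice) Deligne--Mumford stacks. So the comparison map $\mathbf{NS}(\mathfrak{X}_{\overline k}) \to \mathbf{NS}(X_{\overline k})$ you want to analyze need not exist, and the reduction cannot be set up as stated. Brochard's actual argument in \cite{finite} necessarily proceeds differently (via a smooth presentation rather than a finite flat one, together with a dévissage on the cohomology); if you want a self-contained account you would need to replace the finite-cover step with an argument that works from an arbitrary smooth affine presentation.
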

\begin{pg}
We write $\mathrm{Pic}^0(\mathfrak{X})$ and $NS(\mathfrak{X})$ for the $k$-points of $\mathbf{Pic}^0_{\mathfrak{X}/k}$ and $\mathbf{NS}_{\mathfrak{X}/k}$, respectively (note however that if $k$ is not algebraically closed then an element of $\mathrm{Pic}^0(\mathfrak{X})$ might not be given by a line bundle on $\mathfrak{X}$).   For our application to 1-motives, we need to make sense of the groups $\mathrm{Pic}^0(\mathfrak{X})$ and $NS(\mathfrak{X})$ when $\mathfrak{X}$ is smooth, but not necessarily proper.  In this case we cannot expect the Picard sheaf $\mathbf{Pic}_{\mathfrak{X}/k}$ to be representable, but it satisfies a weak form of representability which is sufficient for our purposes.  This can be summarized in the statement that $\mathbf{Pic}_{\mathfrak{X}/k}$ is a \emph{1-motivic sheaf}, at least when $k$ is perfect and after inverting the characteristic $p := \mathrm{char \ k}$ in Hom-groups.  We will prove that $\mathbf{Pic}_{\mathfrak{X}/k}$ is 1-motivic for any smooth Artin stack $\mathfrak{X}$ over $k$ with quasi-compact separated diagaonal.  The argument is a straightforward generalization of \cite[3.4]{BVK}; we include the details for the convenience of the reader.  Recall the following definitions, following \cite[Sect. 3]{BVK}:
\end{pg}

\begin{defn}
Let $k$ be a perfect field, and $(Sm/k)_{et}$ the category of smooth separated $k$-schemes, with the \'{e}tale topology.  We denote by $Sh(Sm/k)_{et}[1/p]$ the category of sheaves of abelian groups on this site, with $\mathrm{Hom}(\mathscr{F},\mathscr{G})$ replaced by $\mathrm{Hom}(\mathscr{F},\mathscr{G})[1/p]$.  We say that $\mathscr{F} \in Sh(Sm/k)_{et}[1/p]$ is \emph{discrete} if it is locally constant for the \'{e}tale topology, and $\mathscr{F}(\overline{k})$ is a finitely generated abelian group.
\end{defn}

\begin{defn}\cite[3.2.1]{BVK}
Let $\mathscr{F} \in Sh(Sm/k)_{et}[1/p]$ be a sheaf of abelian groups as above.  We say that $\mathscr{F}$ is a \emph{1-motivic sheaf} if there exists a semi-abelian variety $G$ and a morphism $f: G \rightarrow \mathscr{F}$ of sheaves on $(Sm/k)_{et}$, such that $\mathrm{ker \ }f$ and $\mathrm{coker \ }f$ are discrete sheaves.  We denote by $Shv_1(k)[1/p] \subset Sh(Sm/k)_{et}[1/p]$ the full subcategory of 1-motivic sheaves.
\end{defn}
\begin{rem}
In the theory of this section we will invert $p$ in all Hom-groups.  Most of our propositions are false as stated if we do not do this.  If we wish to avoid inverting $p$ in Hom-groups, we probably must use a finer topology like the fppf topology (see Remark \ref{mustinvert}).  But this seemingly forces us to replace $Sm/k$ with the bigger category $Sch/k$, which breaks down the proof that $\mathbf{Pic}_{\mathfrak{X}/k}$ is 1-motivic.  See \cite{bert} for some discussion of this point.
\end{rem}
\begin{example}\label{examp1}
Let $X$ be a smooth variety over a field $k$, and suppose that $X$ embeds into a smooth proper variety $\overline{X}$ with complement $D:= \overline{X} - X$.  Then for every smooth scheme $U/k$, we have an exact sequence
\begin{equation*}
\mathrm{Div}_{D \times U}(\overline{X} \times U) \rightarrow \mathrm{Pic}(\overline{X} \times U) \rightarrow \mathrm{Pic}(X \times U) \rightarrow 0.
\end{equation*}
Here $\mathrm{Div}_{D \times U}(\overline{X} \times U)$ denotes the free abelian group of Weil divisors on $\overline{X} \times U$ supported on $D \times U$.  Let $\mathbf{Pic}_{X/k}$ be the lisse-\'{e}tale Picard sheaf of $X/k$, that is, the sheafification in $(Sm/k)_{et}$ of the functor
\begin{equation*}
U \mapsto \mathrm{Pic}(X \times U).
\end{equation*}
Then the above exact sequence shows that we have an exact sequence in $(Sm/k)_{et}$
\begin{equation*}
\mathbf{Div}_{D}(\overline{X}) \rightarrow \mathbf{Pic}_{\overline{X}/k} \rightarrow \mathbf{Pic}_{X/k} \rightarrow 0,
\end{equation*}
where $\mathbf{Div}_D(\overline{X})$ is the \'{e}tale-locally constant sheaf of divisors on $\overline{X}$ supported on $D$.  By Proposition \ref{picrepres}(c) below, we conclude that $\mathbf{Pic}_{X/k}$ is 1-motivic. 
\end{example}
We have the following key facts about 1-motivic sheaves:
\begin{prop}\label{picrepres} \mbox{} 
\begin{enumerate}[(a)]
\item Given a 1-motivic sheaf $\mathscr{F}$, there exists a unique (up to isomorphism) semi-abelian variety $G$ together with a map $b: G \rightarrow \mathscr{F}$ such that $\mathrm{ker \ }b$ is torsion-free.  We say that such a pair $(G,b)$ is \emph{normalized}.
\item Given 1-motivic sheaves $\mathscr{F}_1$, $\mathscr{F}_2$ and normalized morphisms $b_i: G_i \rightarrow \mathscr{F}_i$ for $i = 1,2$.  Then for any morphism of sheaves $f: \mathscr{F}_1 \rightarrow \mathscr{F}_2$, there exists a unique morphism  of group schemes $\varphi_f: G_1 \rightarrow G_2$ making the diagram
\begin{equation*}
\begin{CD}
G_1 @> b_1 >> \mathscr{F}_1 \\
@V \varphi_f VV @V f VV \\
G_2 @> b_2 >> \mathscr{F}_2
\end{CD}
\end{equation*}
commute.
\item The full subcategory $Sh_1(k)[1/p] \subset Sh(Sm/k)_{et}[1/p]$ is stable under kernels,  cokernels, and extensions.
\end{enumerate}
\end{prop}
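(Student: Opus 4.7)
The plan is to prove the three parts in the order: existence in (a), then (b), then uniqueness in (a), then (c), with the crux of the whole argument being a single vanishing lemma.

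\emph{Existence in (a).} Starting from any presentation $b_0 \colon G_0 \to \mathscr{F}$ with $K_0 := \ker(b_0)$ and $\mathrm{coker}(b_0)$ discrete, let $T \subset K_0$ be the torsion subsheaf. Since $K_0$ is \'{e}tale-locally constant with finitely generated stalks, $T$ is finite and \'{e}tale-locally constant, hence a finite \'{e}tale subgroup scheme of $G_0$. The quotient $G := G_0/T$ is therefore still a semi-abelian variety, and the induced map $b \colon G \to \mathscr{F}$ has kernel $K_0/T$, which is torsion-free.

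\emph{Part (b).} The key input is the vanishing lemma: for any connected semi-abelian variety $G$ and any discrete sheaf $D$, one has $\mathrm{Hom}_{Sh(Sm/k)_{et}}(G,D)[1/p] = 0$. Indeed $[n] \colon G \to G$ is surjective as a morphism of \'{e}tale sheaves for every $n$ coprime to $p$ (its kernel $G[n]$ is \'{e}tale), so the image on stalks of any $\varphi \colon G \to D$ is $n$-divisible for all such $n$; in a finitely generated abelian group the subgroup of elements divisible by every $n$ coprime to $p$ is exactly the $p$-primary torsion, which is annihilated after inverting $p$. Given normalized $(G_i,b_i)$ and $f \colon \mathscr{F}_1 \to \mathscr{F}_2$, the lemma applied to the composite $G_1 \to \mathscr{F}_2 \to \mathrm{coker}(b_2)$ shows that $f \circ b_1$ factors through $\mathrm{im}(b_2)$. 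Form the pullback $H := G_1 \times_{\mathrm{im}(b_2)} G_2$; it fits in an extension
\begin{equation*}
0 \to \ker(b_2) \to H \to G_1 \to 0
\end{equation*}
of $G_1$ by a torsion-free discrete sheaf. A companion $\mathrm{Ext}^1$-vanishing statement, proved by the same divisibility device applied to an extension cocycle, splits this sequence after inverting $p$; composing a splitting with the second projection produces $\varphi_f$. Two lifts differ by a map $G_1 \to \ker(b_2)$, which vanishes by the Hom-lemma, giving uniqueness.

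\emph{Uniqueness in (a) and part (c).} Two normalized presentations $(G,b)$ and $(G',b')$ of the same $\mathscr{F}$ are related by the mutually inverse morphisms obtained from (b) applied to the identity of $\mathscr{F}$. For (c), given $f \colon \mathscr{F}_1 \to \mathscr{F}_2$ with lift $\varphi_f \colon G_1 \to G_2$, the identity component $(\ker \varphi_f)^0$ is semi-abelian, and a snake-lemma comparison with the short exact sequences defining $\mathscr{F}_1$ and $\mathscr{F}_2$ shows it maps to $\ker(f)$ with discrete kernel and cokernel, so $\ker(f)$ is 1-motivic; the analogous argument with $G_2/\varphi_f(G_1)$ handles $\mathrm{coker}(f)$. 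For an extension $0 \to \mathscr{F}' \to \mathscr{F} \to \mathscr{F}'' \to 0$ with 1-motivic ends, one glues normalized presentations $G' \to \mathscr{F}'$ and $G'' \to \mathscr{F}''$ into a presentation $G \to \mathscr{F}$ by lifting along $\mathscr{F} \to \mathscr{F}''$, where the obstruction is controlled by the same Ext-vanishing.

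The main obstacle is the pair of vanishing statements (Hom and $\mathrm{Ext}^1$ from a connected semi-abelian variety into a torsion-free discrete sheaf, both after inverting $p$). These pinpoint precisely why $p$ must be inverted: the $p$-part of $G[n]$ fails to be \'{e}tale, so $[p] \colon G \to G$ need not be surjective in $(Sm/k)_{et}$, and the divisibility argument breaks down. Once these vanishings are in hand, the remaining work is a formal sequence of snake-lemma diagram chases using the universal properties established in (a) and (b).
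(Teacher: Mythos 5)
The paper proves this proposition by citation to \cite[3.2.3 and 3.3.1]{BVK}, so there is no in-paper argument to compare against; I assess your proof on its own terms. Your overall strategy matches the standard one, and your Hom-vanishing lemma is correct: for $n$ coprime to $p$ the isogeny $[n]\colon G \to G$ is surjective as a map of \'{e}tale sheaves, so any $\varphi\colon G \to D$ has stalkwise image inside the prime-to-$p$-divisible part of a finitely generated group, which is $p$-primary torsion and hence killed by a uniform power of $p$.

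The gap is the $\mathrm{Ext}^1$-vanishing you invoke to split $0 \to \ker(b_2) \to H \to G_1 \to 0$. You assert it is "proved by the same divisibility device applied to an extension cocycle," but the divisibility device does not yield it. The Kummer sequence $0 \to G_1[n] \to G_1 \xrightarrow{[n]} G_1 \to 0$ together with $\mathrm{Hom}(G_1[n], L) = 0$ (for $L$ torsion-free) shows only that multiplication by $n$ is \emph{injective} on $\mathrm{Ext}^1(G_1, L)$; the next term $\mathrm{Ext}^1(G_1[n], L)$ is typically nonzero (compare $\mathrm{Ext}^1_{\mathbb{Z}}(\mathbb{Z}/n, \mathbb{Z}) = \mathbb{Z}/n$), so surjectivity of multiplication by $n$ is not forced, and nothing in the argument makes $\mathrm{Ext}^1(G_1,L)$ into a $p$-power-torsion group. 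The abstract example $\mathrm{Ext}^1_{\mathbb{Z}}(\mathbb{Q}, \mathbb{Z}) \neq 0$ already shows that a divisible source and a torsion-free target do not by themselves kill $\mathrm{Ext}^1$. What actually splits the extension is geometric rather than divisibility-theoretic (and is the substance of the lemma underlying \cite[3.2.3]{BVK}): because $G_1$ is smooth connected and $\ker(b_2)$ is a torsion-free discrete sheaf, $H$ is representable by a commutative group scheme locally of finite type, and its neutral component $H^0$ meets the discrete subsheaf $\ker(b_2)$ only in $0$ while surjecting onto the connected $G_1$, so $H^0 \to G_1$ is an isomorphism giving the group-theoretic section. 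Until this (or an equivalent argument) is supplied, part (b) is incomplete, as is the extension step of part (c) --- which moreover places the obstruction in $\mathrm{Ext}^1(G'', \mathscr{F}')$ with $\mathscr{F}'$ a full 1-motivic sheaf rather than a lattice, so you should first normalize $\mathscr{F}'$ and treat its semi-abelian and lattice obstructions separately.
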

\begin{proof}
This is \cite[3.2.3]{BVK} and \cite[3.3.1]{BVK}.
\end{proof}
\begin{rem}\label{mustinvert}
The above proposition is false if we do not invert $p$.  For example, over a field $k$ of characteristic $p > 0$ consider the $p$th power morphism $F: \mathbb{G}_m \rightarrow \mathbb{G}_m$, where we view $\mathbb{G}_m$ as a sheaf on $(Sm/k)_{et}$.  Then $\mathrm{coker \ }F$ is not 1-motivic.  To see this, first note that $\mathrm{coker \ }F$ is non-zero: if it were zero, then $F$ would have zero kernel and cokernel as a morphism of sheaves on $(Sm/k)_{et}$, implying that $F$ is an isomorphism by the Yoneda lemma.  So $\mathrm{coker \ }F$ is non-zero; on the other hand, $(\mathrm{coker \ }F)(\overline{k}) = 0$ since $F$ is an epimorphism of fppf sheaves.  This is impossible for a 1-motivic sheaf.
\end{rem}
We now state and prove our main fact for this section:
\begin{prop}\label{onemotivic}
Let $\mathfrak{X}$ be a smooth Artin stack over the perfect field $k$, with quasi-compact separated diagonal.  Then the restriction of the sheaf $\mathbf{Pic}_{\mathfrak{X}/k}$ to $(Sm/k)_{et}$ (which we also denote by $\mathbf{Pic}_{\mathfrak{X}/k}$) is 1-motivic.
\end{prop}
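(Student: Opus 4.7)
The plan is to adapt the scheme-theoretic argument of \cite[3.4]{BVK} to the stack setting by combining a smooth presentation of $\mathfrak{X}$ with cohomological descent. First I would choose a smooth atlas $p_0 \colon U_0 \to \mathfrak{X}$ with $U_0$ a smooth separated $k$-scheme of finite type, which exists since $\mathfrak{X}$ is of finite type. The quasi-compact separated diagonal hypothesis guarantees that each term $U_n = U_0 \times_{\mathfrak{X}} \cdots \times_{\mathfrak{X}} U_0$ of the \v{C}ech nerve is a separated quasi-compact algebraic space of finite type, smooth over $k$; after an \'{e}tale refinement one may take each $U_n$ to be a smooth scheme. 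For any test object $T \in Sm/k$ the cohomological descent spectral sequence
\begin{equation*}
E_1^{p,q} = H^q(U_p \times T, \mathbb{G}_m) \Longrightarrow H^{p+q}(\mathfrak{X} \times T, \mathbb{G}_m)
\end{equation*}
yields, via its low-degree exact sequence and sheafification on $(Sm/k)_{et}[1/p]$, a short exact sequence of sheaves
\begin{equation*}
0 \longrightarrow \mathscr{A} \longrightarrow \mathbf{Pic}_{\mathfrak{X}/k} \longrightarrow \mathscr{B} \longrightarrow 0,
\end{equation*}
where $\mathscr{B}$ is the kernel of the $d_2$-differential into the sheafified \v{C}ech $H^2$ of the simplicial unit sheaf (hence a subsheaf of $\mathrm{Ker}(\mathbf{Pic}_{U_0/k} \to \mathbf{Pic}_{U_1/k})$), and $\mathscr{A}$ is the first \v{C}ech cohomology sheaf of the simplicial unit sheaf $T \mapsto \Gamma(U_\bullet \times T, \mathbb{G}_m)$.

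The conclusion then follows from showing that $\mathscr{A}$ and $\mathscr{B}$ are both 1-motivic and invoking closure under extensions in Proposition \ref{picrepres}(c). Each $\mathbf{Pic}_{U_n/k}$ is 1-motivic by the scheme case, i.e.\ \cite[3.4]{BVK}, which uses de Jong alterations to reduce to the smooth compactification setting of Example \ref{examp1}; stability under kernels then gives 1-motivicity of $\mathscr{B}$. For $\mathscr{A}$, the unit sheaf $T \mapsto \Gamma(U_n \times T, \mathbb{G}_m)$ is representable by an extension of an \'{e}tale-locally-constant finitely generated sheaf (the character lattice of $U_n$, finite over each geometric component by Rosenlicht's lemma) by a torus $\mathbb{G}_m^{\pi_0(U_n \otimes \overline{k})}$; such sheaves are 1-motivic, and $\mathscr{A}$ is obtained as a subquotient of them under \v{C}ech differentials, so another application of Proposition \ref{picrepres}(c) shows that $\mathscr{A}$ is 1-motivic.

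The main obstacle lies in verifying that the spectral sequence truly sheafifies into a short exact sequence of 1-motivic sheaves after inverting $p$, rather than leaving problematic higher-differential contributions. The inversion of $p$ is essential here, as emphasized in Remark \ref{mustinvert}: connecting maps can otherwise produce infinitesimal group-scheme cokernels (e.g.\ $\mathbb{G}_m / F\mathbb{G}_m$) that destroy 1-motivicity. A secondary technical point is that the $U_n$ for $n \geq 1$ are a priori only algebraic spaces, so one must either refine $p_0$ so that each $U_n$ is a scheme or verify that the scheme case of \cite[3.4]{BVK} extends to smooth algebraic spaces by iterating the same descent strategy with \'{e}tale covers.
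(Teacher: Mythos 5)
Your proposal matches the paper's approach: reduce to the terms of a \v{C}ech nerve of a smooth atlas via the descent spectral sequence, handle the unit-sheaf contribution via tori and lattices, and invoke closure of 1-motivic sheaves under kernels, cokernels, and extensions; the scheme-level ingredient you cite from \cite[3.4]{BVK} is reproved in the paper for convenience and then extended to smooth algebraic spaces before the final stack step, exactly as your second alternative suggests. One correction: your first alternative — refining $p_0$ so that each $U_n$ becomes a scheme — fails in general, since when the diagonal of $\mathfrak{X}$ is only representable by algebraic spaces, no \'{e}tale refinement of $U_0$ makes $U_0 \times_{\mathfrak{X}} U_0$ a scheme; one really must prove the algebraic space case directly (which the paper does via Nagata compactification, de Jong alterations producing a finite \'{e}tale cover of a dense open, and excision).
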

\begin{rem}
In the case where $X$ is a scheme, this is \cite[3.4.1]{BVK}.  For the reader's convenience, we include the proof of this special case in our argument below.
\end{rem}
\begin{proof} (of Proposition \ref{onemotivic})
We prove Proposition \ref{onemotivic} by an increasingly general sequence of lemmas.
\begin{lem}\label{lem1}
Let $X$ be a proper smooth scheme over a field $k$, and let $\pi: X \rightarrow \mathrm{Spec \ }k$ be the structure morphism.  Let $\mathbb{G}_{m,X}$ denote the representable sheaf on $(Sch/X)_{fppf}$ sending $Y$ to $\Gamma(Y,\mathcal{O}_Y)^\times$.  Then the sheaf $R^0\pi_*\mathbb{G}_{m,X}$ on $(Sch/k)_{fppf}$  is representable by a torus.  
\end{lem}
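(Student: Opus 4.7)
The plan is to identify $R^0\pi_*\mathbb{G}_{m,X}$ with a Weil restriction of scalars that is visibly a torus. Set $A := H^0(X,\mathcal{O}_X)$. Since $\pi$ is proper and $X$ is smooth over $k$ (hence geometrically reduced), $A$ is a finite $k$-algebra whose base change to $\overline{k}$ equals $H^0(X_{\overline{k}},\mathcal{O}_{X_{\overline{k}}})$ and is therefore reduced. A reduced finite $k$-algebra that remains reduced after geometric base change is a finite product $A \cong \prod_i k_i$ of finite separable extensions of $k$; equivalently, $A$ is finite \'etale over $k$.

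Next, for any test scheme $T \in (Sch/k)_{\mathrm{fppf}}$ I would compute the functor of points directly. Since $\mathrm{Spec}\, k$ is a field, the structure map $T \to \mathrm{Spec}\, k$ is automatically flat, so cohomology and flat base change applied to the proper morphism $\pi$ gives $\pi_{T,*}\mathcal{O}_{X_T} \cong A \otimes_k \mathcal{O}_T$, where $\pi_T: X_T \to T$ is the base change. Taking global sections and units,
\begin{equation*}
(R^0\pi_*\mathbb{G}_{m,X})(T) = \Gamma(X_T,\mathcal{O}_{X_T})^\times = \Gamma(T, A \otimes_k \mathcal{O}_T)^\times,
\end{equation*}
which is precisely the functor of points of the Weil restriction $\mathrm{Res}_{\mathrm{Spec}(A)/\mathrm{Spec}(k)}\mathbb{G}_m$. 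Hence $R^0\pi_*\mathbb{G}_{m,X} \cong \mathrm{Res}_{A/k}\mathbb{G}_m$ as fppf sheaves on $\mathrm{Spec}\, k$.

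Finally, writing $\mathrm{Spec}(A) = \coprod_i \mathrm{Spec}(k_i)$ with each $k_i/k$ finite separable, one has $\mathrm{Res}_{A/k}\mathbb{G}_m = \prod_i \mathrm{Res}_{k_i/k}\mathbb{G}_m$; each factor is an induced torus, since after base change to the Galois closure of $k_i/k$ it splits as a product of copies of $\mathbb{G}_m$ indexed by the embeddings $k_i \hookrightarrow \overline{k}$. Their product is therefore a torus, as claimed.

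No real obstacle arises; the argument is essentially a computation. The only point requiring mild care is the invocation of cohomology and base change in the fppf topology, but this is harmless because base changes from $\mathrm{Spec}\, k$ are automatically flat, so the formation of $\pi_*\mathcal{O}_X$ commutes with arbitrary base change and the presheaf formula above already defines a representable (hence fppf) sheaf.
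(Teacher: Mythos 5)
Your proof is correct, and it is somewhat cleaner than the paper's. The paper first reduces \'etale-locally on $k$ to the case where $\pi$ admits a section, then invokes \cite[7.7.6]{ega3} to get representability of $R^0\pi_*\mathbb{G}_a$ by $\mathrm{Spec}\,V$, identifies $V \cong k^n$ with $n = |\pi_0(X)|$ using the section, and concludes that $R^0\pi_*\mathbb{G}_m \cong \mathbb{G}_m^n$ in the split case. Your argument avoids the \'etale-local reduction entirely: you directly identify the functor of points of $R^0\pi_*\mathbb{G}_{m,X}$ with that of $\mathrm{Res}_{A/k}\mathbb{G}_m$ where $A = H^0(X,\mathcal{O}_X)$, and then observe that $A$ is finite \'etale (being a finite geometrically reduced $k$-algebra, as $X$ is proper and smooth), so the Weil restriction is a torus. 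What this buys you is an explicit, canonical description of the torus without descent bookkeeping; the paper's route is shorter to state but leans on a representability theorem from EGA that is overkill when the base is a field, where flat base change alone already gives the formula $\pi_{T,*}\mathcal{O}_{X_T} \cong A \otimes_k \mathcal{O}_T$ for every $T$. Both proofs ultimately rest on the same input --- that $H^0(X,\mathcal{O}_X)$ commutes with base change and is a product of finite separable extensions of $k$.
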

\begin{proof} 
Because the statement is \'{e}tale-local on $k$, we may assume that $\pi$ has a section $s: \mathrm{Spec \ }k \rightarrow X$.  By \cite[7.7.6]{ega3}, the sheaf $R^0\pi_*\mathbb{G}_a$ sending $T$ to $\Gamma(X_T,\mathcal{O}_{X_T})$ is representable by $\mathrm{Spec \ }V$, where $V$ is a finite-dimensional $k$-vector space.  More specifically, we have a functorial bijection
\begin{equation*}
\Gamma(X_T,\mathcal{O}_{X_T}) \longrightarrow \mathrm{Hom}_k(V,\Gamma(T,\mathcal{O}_T)).
\end{equation*}
Using the fact that $\pi$ has a section, applying $T = k$ in the above bijection yields $$V \cong \Gamma(X,\mathcal{O}_X)^\vee \cong k^n,$$  where $n = |\pi_0(X)|$.  Then $R^0\pi_*\mathbb{G}_m$ is representable by $\mathbb{G}_m^n$.
\end{proof}
\begin{lem}\label{lem2}
Let $X$ be a proper smooth scheme over a field $k$, and let $\pi: X \rightarrow \mathrm{Spec \ }k$ be the structure morphism.  Then the sheaves $R^0\pi_*\mathbb{G}_{m,X}$ and $R^1\pi_*\mathbb{G}_{m,X}$ are 1-motivic.
\end{lem}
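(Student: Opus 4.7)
The plan is to handle the two sheaves separately, since they play different roles in the 1-motivic structure. For $R^0\pi_*\mathbb{G}_{m,X}$, Lemma \ref{lem1} already identifies it with a torus over $k$, hence with a semi-abelian variety. After restricting to $(Sm/k)_{et}$, the identity morphism on $R^0\pi_*\mathbb{G}_{m,X}$ trivially exhibits it as 1-motivic with zero kernel and zero cokernel.

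For $R^1\pi_*\mathbb{G}_{m,X} = \mathbf{Pic}_{X/k}$, I would apply the scheme case of Theorem \ref{stackrepres}. Part (1) gives representability by a locally finite-type commutative group scheme, and part (3) gives that $\mathbf{Pic}^{0,\mathrm{red}}_{X/k}$ is an abelian variety (using that $X$ is smooth proper and $k$ is perfect). Taking $G := \mathbf{Pic}^{0,\mathrm{red}}_{X/k}$, which is in particular a semi-abelian variety, I would consider the natural monomorphism
\begin{equation*}
b \colon G \hookrightarrow \mathbf{Pic}_{X/k}.
\end{equation*}
This map has trivial kernel, and its cokernel fits into an exact sequence
\begin{equation*}
0 \to H \to \mathrm{coker}(b) \to \mathbf{NS}_{X/k} \to 0,
\end{equation*}
where $H = \mathbf{Pic}^0_{X/k}/\mathbf{Pic}^{0,\mathrm{red}}_{X/k}$ is an infinitesimal group scheme over $k$, and $\mathbf{NS}_{X/k}$ is \'{e}tale with finitely generated group of $\overline{k}$-points by Theorem \ref{stackrepres}(2), hence discrete.

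The one delicate point is to justify that $H$ can be safely ignored. Since $H$ is a finite infinitesimal $k$-group scheme, it is killed by $p^n$ for some $n$, so $p^n \cdot \mathrm{id}_H = 0$ and therefore $\mathrm{id}_H = 0$ in $\mathrm{Hom}(H,H)[1/p]$; equivalently, $H$ becomes isomorphic to the zero sheaf in $Sh(Sm/k)_{et}[1/p]$. Consequently $\mathrm{coker}(b) \cong \mathbf{NS}_{X/k}$ in $Sh(Sm/k)_{et}[1/p]$, which is discrete, and together with the vanishing of $\ker(b)$ this exhibits $\mathbf{Pic}_{X/k}$ as a 1-motivic sheaf. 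I expect no genuine obstacle here beyond keeping track of the $p$-inversion, which is precisely what the formalism of Section 2 is set up to handle (cf.\ Remark \ref{mustinvert}); the deeper representability content is entirely absorbed into the invocation of Theorem \ref{stackrepres}.
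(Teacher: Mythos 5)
Your argument is correct and takes the same route as the paper's, which simply cites Lemma \ref{lem1} (giving the torus $R^0\pi_*\mathbb{G}_{m,X}$) and Theorem \ref{stackrepres} (giving representability of $\mathbf{Pic}_{X/k}$ and the abelian variety $\mathbf{Pic}^{0,\mathrm{red}}_{X/k}$) as making the conclusion immediate. One small simplification: you do not actually need the $p$-inversion to dispose of $H = \mathbf{Pic}^0_{X/k}/\mathbf{Pic}^{0,\mathrm{red}}_{X/k}$, since an infinitesimal group scheme already restricts to the zero sheaf on $(Sm/k)_{et}$ (any morphism from a reduced scheme factors through the origin), so $\mathrm{coker}(b) \cong \mathbf{NS}_{X/k}$ on the nose; this is exactly the observation the paper itself records in \ref{ns}.
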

\begin{proof}
This is immediate from Lemma \ref{lem1} and Theorem \ref{stackrepres}.
\end{proof}
\begin{lem}\label{compactifiable}
Let $X$ be a smooth separated scheme of finite type over $k$, such that there exists an open immersion $j: X \hookrightarrow \overline{X}$ with $\overline{X}$ smooth and proper. Then (letting $\pi: X \rightarrow \mathrm{Spec \ }k$ be the structure morphism) the sheaves $R^0\pi_*\mathbb{G}_{m,X}$ and $R^1\pi_*\mathbb{G}_{m,X}$ are 1-motivic.
\end{lem}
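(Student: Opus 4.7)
The plan is to reduce to Lemma \ref{lem2} by comparing sheaves on $X$ with sheaves on the smooth proper compactification $\overline{X}$. Set $D := \overline{X} - X$ with its reduced structure. I first observe that the sheafification of $U \mapsto \mathrm{Div}_{D \times U}(\overline{X} \times U)$ on $(Sm/k)_{et}$ is a discrete sheaf $\mathbf{Div}_D(\overline{X})$: its geometric stalk is the free abelian group on the codimension-one irreducible components of $D_{\overline{k}}$, with Galois acting by permutation. Being étale-locally constant with finitely generated geometric stalks, $\mathbf{Div}_D(\overline{X})$ is 1-motivic (with trivial semiabelian part).

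For $R^1\pi_*\mathbb{G}_{m,X} = \mathbf{Pic}_{X/k}$, the conclusion is immediate from the exact sequence
\begin{equation*}
\mathbf{Div}_D(\overline{X}) \longrightarrow \mathbf{Pic}_{\overline{X}/k} \longrightarrow \mathbf{Pic}_{X/k} \longrightarrow 0
\end{equation*}
produced in Example \ref{examp1}: the left term is 1-motivic by the previous paragraph, the middle term is 1-motivic by Lemma \ref{lem2}, and stability of $Shv_1(k)[1/p]$ under cokernels (Proposition \ref{picrepres}(c)) gives the claim.

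For $R^0\pi_*\mathbb{G}_{m,X}$ I will mimic the same pattern at the level of sections. A unit $f \in \Gamma(X \times U, \mathcal{O})^\times$ extends, by normality of the smooth scheme $\overline{X} \times U$, to a rational function on $\overline{X} \times U$ whose divisor is supported on $D \times U$ (higher-codimension pieces of $D$ contribute nothing by Hartogs, so only the codimension-one components matter). This yields a left exact sequence
\begin{equation*}
0 \longrightarrow \Gamma(\overline{X} \times U, \mathcal{O})^\times \longrightarrow \Gamma(X \times U, \mathcal{O})^\times \xrightarrow{\mathrm{div}} \mathrm{Div}_{D \times U}(\overline{X} \times U)
\end{equation*}
functorial in $U$. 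Sheafifying for the étale topology gives a left exact sequence
\begin{equation*}
0 \longrightarrow R^0\overline{\pi}_*\mathbb{G}_{m,\overline{X}} \longrightarrow R^0\pi_*\mathbb{G}_{m,X} \longrightarrow \mathbf{Div}_D(\overline{X})
\end{equation*}
on $(Sm/k)_{et}$. By Lemma \ref{lem1} the first term is a torus, hence 1-motivic, and the image of the third arrow, being a subsheaf of the discrete sheaf $\mathbf{Div}_D(\overline{X})$, is again discrete. Thus $R^0\pi_*\mathbb{G}_{m,X}$ is an extension of one 1-motivic sheaf by another, and Proposition \ref{picrepres}(c) (stability under extensions) concludes the proof.

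The main points of care are verifying that $\mathbf{Div}_D(\overline{X})$ is genuinely discrete after sheafification and that subsheaves of a discrete sheaf are again discrete; both are routine once one tracks the behavior of irreducible components of $D \times U$. I do not anticipate any serious obstacle — the lemma is essentially a transcription of Example \ref{examp1} and its $R^0$ analogue, assembled through Proposition \ref{picrepres}(c).
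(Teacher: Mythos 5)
Your proof is correct and is essentially the paper's argument: both rest on the five-term exact sequence
\begin{equation*}
0 \rightarrow R^0\overline{\pi}_*\mathbb{G}_{m,\overline{X}} \rightarrow R^0\pi_*\mathbb{G}_{m,X} \rightarrow \mathbf{Div}_D(\overline{X}) \rightarrow \mathbf{Pic}_{\overline{X}/k} \rightarrow \mathbf{Pic}_{X/k} \rightarrow 0
\end{equation*}
(coming from smoothness of $\overline{X}$), followed by the stability of $Sh_1(k)[1/p]$ under kernels, cokernels and extensions. The one place you were slightly imprecise is the claim that the image of $R^0\pi_*\mathbb{G}_{m,X}$ in $\mathbf{Div}_D(\overline{X})$ is discrete because it is a subsheaf of a discrete sheaf; while true, it is cleaner (and avoids a separate lemma) to note that this image equals $\mathrm{Ker}(\mathbf{Div}_D(\overline{X}) \rightarrow \mathbf{Pic}_{\overline{X}/k})$ and is therefore 1-motivic directly by Proposition \ref{picrepres}(c), which is what the paper's single unbroken five-term sequence is implicitly doing.
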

\begin{proof}
Let $i: D \hookrightarrow \overline{X}$ be the complement of $U$ in $\overline{X}$.  Then we have an exact sequence of sheaves
\begin{equation*}
0 \longrightarrow R^0\overline{\pi}_*\mathbb{G}_{m,{\overline{X}}} \longrightarrow R^0\pi_*\mathbb{G}_{m,X} \longrightarrow \mathbf{Div}_{D}(\overline{X}) \longrightarrow \mathbf{Pic}_{\overline{X}/k} \longrightarrow \mathbf{Pic}_{X/k} \rightarrow 0,
\end{equation*}
using the fact that $\overline{X}$ is smooth.  Here $\overline{\pi}: \overline{X} \rightarrow \mathrm{Spec} \ k$ is the structure morphism, and $\mathbf{Div}_D(\overline{X})$ is the locally constant sheaf of Weil divisors on $\overline{X}$ supported on $D$.  By Proposition \ref{picrepres}(c),  $R^0\pi_*\mathbb{G}_{m,X}$ is 1-motivic.
\end{proof}
\begin{lem}
Let $X$ be a smooth separated algebraic space of finite type over $k$.  Then $R^0\pi_*\mathbb{G}_{m,X}$ and $R^1\pi_*\mathbb{G}_{m,X}$ are 1-motivic sheaves.
\end{lem}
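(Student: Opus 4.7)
The plan is to descend from the scheme case, Lemma \ref{compactifiable}, via cohomological descent along an étale hypercover. By Knutson's theorem, there is an étale surjection $u_0 \colon U_0 \to X$ from a smooth separated scheme $U_0$. Since $X$ is separated, each fibered product $U_n := U_0 \times_X \cdots \times_X U_0$ ($n+1$ factors) injects into $U_0^{n+1}$, hence is itself a smooth separated scheme of finite type over $k$. These assemble into the Čech simplicial scheme $U_\bullet \to X$, an étale hypercover by schemes.

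Next, I would like each $U_n$ to satisfy the hypothesis of Lemma \ref{compactifiable}, namely to admit an open immersion into a smooth proper $k$-scheme. This is the main obstacle. In characteristic zero it follows from Hironaka applied to an arbitrary normal compactification. In characteristic $p > 0$ resolution of singularities is not available in general, but because the question is local on $X$ in the étale topology one may refine $U_0$ into sufficiently small quasi-projective pieces and then invoke de Jong's alterations together with a careful inductive argument on the simplicial level (or, in dimensions where smooth compactifications are known, invoke that directly). Granting this, Lemma \ref{compactifiable} yields that $R^0(\pi_n)_*\mathbb{G}_{m,U_n}$ and $R^1(\pi_n)_*\mathbb{G}_{m,U_n}$ are 1-motivic for every $n$.

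With the hypercover in hand, cohomological descent for $\mathbb{G}_m$ produces (after sheafifying over the base $T \in (Sm/k)_{et}$) a spectral sequence
\begin{equation*}
E_1^{p,q} \;=\; R^q(\pi_p)_*\mathbb{G}_{m,U_p} \;\Longrightarrow\; R^{p+q}\pi_*\mathbb{G}_{m,X}.
\end{equation*}
Reading off low-degree terms, $R^0\pi_*\mathbb{G}_{m,X}$ is the equalizer of $R^0(\pi_0)_*\mathbb{G}_m \rightrightarrows R^0(\pi_1)_*\mathbb{G}_m$, and $R^1\pi_*\mathbb{G}_{m,X}$ sits in a short exact sequence whose outer terms are the kernel of the Čech differential on the $E_1^{\bullet,1}$ row, modulo the image of $E_1^{0,0} \to E_1^{1,0}$. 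In particular, $R^1\pi_*\mathbb{G}_{m,X}$ is built from the sheaves $E_1^{p,q}$ with $p+q \leq 1$ by iterated kernels, cokernels, and extensions.

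Finally, since $Shv_1(k)[1/p]$ is closed under kernels, cokernels, and extensions by Proposition \ref{picrepres}(c), and since each $E_1^{p,q}$ for $p+q\le 1$ is 1-motivic by the previous step, we conclude that $R^0\pi_*\mathbb{G}_{m,X}$ and $R^1\pi_*\mathbb{G}_{m,X}$ are 1-motivic, as required. The real work, as emphasized above, is producing an étale atlas whose self-products all admit smooth proper compactifications in positive characteristic.
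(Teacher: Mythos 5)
There is a genuine gap, and you yourself flagged it: you cannot in general arrange that each $U_n$ admits an open immersion into a smooth proper scheme in positive characteristic, and the remedies you sketch do not repair this. Refining $U_0$ into smaller quasi-projective pieces does not help, because the obstruction is global (a smooth affine scheme still need not have a smooth compactification in characteristic $p$), and the 1-motivicity you want is a property of the sheaves $R^i\pi_*\mathbb{G}_m$ on $(Sm/k)_{et}$, which is not something you can check ``\'etale-locally on $X$.'' More importantly, de Jong's theorem does not produce compactifications at all: an alteration $\overline{X}_0 \to \overline{X}$ is a generically finite proper surjection, not an open immersion, so ``invoking de Jong's alterations together with a careful inductive argument on the simplicial level'' does not yield what you need for Lemma~\ref{compactifiable} to apply to the $U_n$.

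The paper's proof sidesteps this by not trying to smoothly compactify an \'etale atlas of $X$ at all. It first compactifies $X \hookrightarrow \overline{X}$ (with $\overline{X}$ possibly singular, via Nagata for algebraic spaces) and applies de Jong to build a simplicial scheme $\overline{X}_\bullet \to \overline{X}$ with each $\overline{X}_n$ smooth and proper, arranged so that over a suitable dense open $U \subset X$ the induced simplicial cover $U_\bullet \to U$ is finite \'etale in low degrees. Each $U_n$ now sits inside the smooth proper $\overline{X}_n$ by construction, so Lemma~\ref{compactifiable} applies, and the hypercover spectral sequence (the part of your argument that is correct) gives 1-motivicity of $R^0\pi_*\mathbb{G}_{m,U}$ and $R^1\pi_*\mathbb{G}_{m,U}$. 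The last step, which is entirely missing from your proposal, is the localization sequence
\begin{equation*}
0 \rightarrow R^0{\pi_X}_*\mathbb{G}_{m,X} \rightarrow R^0{\pi_U}_*\mathbb{G}_{m,U} \rightarrow \mathbf{Div}_{D}(X) \rightarrow R^1{\pi_X}_*\mathbb{G}_{m,X} \rightarrow R^1{\pi_U}_*\mathbb{G}_{m,U} \rightarrow 0
\end{equation*}
for $D = X - U$, which uses smoothness of $X$ to make $\mathbf{Div}_D(X)$ a discrete (lattice) sheaf, and then Proposition~\ref{picrepres}(c) to transfer 1-motivicity from $U$ to $X$. In short: the spectral-sequence idea is right, but you need to run it over a carefully chosen dense open $U$ of $X$ (not over $X$ itself), and you need the localization sequence to come back to $X$.
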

\begin{proof}
By \cite{nagata}, we can choose a compactification $X \hookrightarrow \overline{X}$ and then by \cite[3.1]{dejong}, we can find an alteration $\overline{X}_0 \rightarrow \overline{X}$ which is generically \'{e}tale, with $\overline{X}_0$ smooth proper.  That is, we have a commutative diagram
\begin{equation*}
\xymatrix{
U_0 \ar@{^{(}->}[r] \ar[d] & X_0 \ar@{^{(}->}[r] \ar[d] & \overline{X}_0 \ar[d] \\
U \ar@{^{(}->}[r] & X \ar@{^{(}->}[r] & \overline{X},
}
\end{equation*}
where $U_0 \rightarrow U$ is finite \'{e}tale.  We can then extend $\overline{X}_0 \rightarrow \overline{X}$ to a simplicial scheme $\overline{X}_\bullet \rightarrow \overline{X}$, and by possibly further restricting $U$, we can arrange that the restriction  $U_\bullet \rightarrow U$ of this simplicial scheme to $U$ has the property that $U_i \rightarrow U$ is finite \'{e}tale for $i \in \{0,1,2\}$.  Let $\pi_p: U_p \rightarrow U$ be the projection.  Then in the spectral sequence
\begin{equation}\label{spectralsequence}
R^q{\pi_p}_* \pi_p^*\mathbb{G}_{m,U} \Rightarrow R^{p+q}\pi_*\mathbb{G}_{m,U},
\end{equation}
we have $\pi_p^*\mathbb{G}_{m,U} = \mathbb{G}_{m,U_p}$ for $p = 0,1,2$.  For all $q$, let $\mathscr{K}^{q \bullet}$ be the complex with terms
\begin{equation*}
\mathscr{K}^{qp} = R^q\pi_{p*}\mathbb{G}_{m,U_p}.
\end{equation*}
By Lemma \ref{compactifiable}, $\mathscr{K}^{qp}$ is 1-motivic for $q = 0,1$ and all $p$.  Moreover, the spectral sequence \ref{spectralsequence} yields
\begin{equation*}
R^0\pi_*\mathbb{G}_{m,U} \cong \mathscr{H}^0(\mathscr{K}^{0\bullet})
\end{equation*}
and an exact sequence
\begin{equation*}
0 \longrightarrow \mathscr{H}^1(\mathscr{K}^{0\bullet}) \longrightarrow R^1\pi_*\mathbb{G}_{m,U} \longrightarrow \mathscr{H}^0(\mathscr{K}^{1\bullet}) \longrightarrow \mathscr{H}^2(\mathscr{K}^{0\bullet}).
\end{equation*}
By Proposition \ref{picrepres}(c), the homology sheaves $\mathscr{H}^p(\mathscr{K}^{q,\cdot})$ are 1-motivic for $p$ arbitrary and $q = 0,1$.  Another application of Proposition \ref{picrepres}(c) shows that $R^1\pi_*\mathbb{G}_{m,U}$ is 1-motivic.

Now let $i: D \hookrightarrow X$ be the inclusion of the complement $D = X - U$, and let $\pi_X, \pi_U, \pi_D$ be the structure morphisms to $\mathrm{Spec \ }k$.  Then we have an exact sequence of sheaves on $(Sm/k)_{et}$
\begin{equation*}
0 \rightarrow R^0{\pi_X}_*\mathbb{G}_{m,X} \rightarrow R^0{\pi_U}_*\mathbb{G}_{m,U} \rightarrow \mathbf{Div}_{D}(X) \rightarrow R^1{\pi_X}_*\mathbb{G}_{m,X} \rightarrow R^1{\pi_U}_*\mathbb{G}_{m,U} \rightarrow 0,
\end{equation*}
where $\mathbf{Div}_{D}(X)$ is the lattice of Weil divisors on $X$ supported on $D$.  Here the exactness on the right is because $X$ and $U$ are smooth.  Therefore Proposition \ref{picrepres} shows that $R^i{\pi_X}_*\mathbb{G}_{m,X}$ is 1-motivic for $i = 0,1$, as was to be shown.
\end{proof}
We can now complete the proof of Proposition \ref{onemotivic}.  Let $\mathfrak{X}$ be a smooth Artin stack of finite type over $k$, with separated quasi-compact diagonal.  Choose a smooth cover $U \rightarrow \mathfrak{X}$, and take the corresponding Cech simplicial cover $U_\bullet \rightarrow \mathfrak{X}$.  We then have a spectral sequence
\begin{equation*}
R^q{\pi_p}_*\mathbb{G}_{m,U_p} \Rightarrow R^{p+q}\pi_*\mathbb{G}_{m,\mathfrak{X}}.
\end{equation*}
By the previous lemma, $R^q{\pi_p}_*\mathbb{G}_{m,U_p}$ is 1-motivic for $q = 0,1$ and $p$ arbitrary.  Therefore Proposition \ref{picrepres} shows that $R^i\pi_*\mathbb{G}_{m,\mathfrak{X}}$ is 1-motivic for $i= 0 ,1$.  In particular, $\mathbf{Pic}_{\mathfrak{X}/k}$ is 1-motivic.
\end{proof}
\begin{pg}\label{ns}
We can now use Propositions \ref{picrepres} and \ref{onemotivic} to define sheaves $\mathbf{Pic}^0_{\mathfrak{X}/k}$ and $\mathbf{NS}_{\mathfrak{X}/k}$ for any smooth Artin stack $\mathfrak{X}/k$.  Namely, by Proposition \ref{picrepres} there exists a semiabelian variety mapping to $\mathbf{Pic}_{\mathfrak{X}/k}$ with discrete cokernel.  We let $\mathbf{Pic}^0_{\mathfrak{X}/k}$ be the image of this mapping, and $\mathbf{NS}_{\mathfrak{X}/k}$ be its cokernel.  
\begin{rem}
If $\mathfrak{X}$ is proper and smooth over a field of characteristic 0, then these definitions agree with the definitions in Proposition \ref{stackrepres}.  This is not quite true in positive characteristic, because $\mathbf{Pic}^0_{\mathfrak{X}/k}$ does not see the non-reduced part of the Picard scheme of $\mathfrak{X}$: when we view $\mathbf{Pic}^0_{\mathfrak{X}/k}$ as a 1-motivic sheaf, we are restricting to the category $(Sm/k)_{et}$.  For any group scheme $G$, $\mathrm{Hom}(Y,G) = \mathrm{Hom}(Y,G_{red})$ for any smooth scheme $Y$, so the sheaves on $(Sm/k)_{et}$ defined by $G$ and by $G^{red}$ agree.
\end{rem}
\end{pg}
\begin{pg}
There is another subtlety in positive characteristic $p$, namely that we inverted $p$ in all Hom-groups in our study of 1-motivic sheaves.  Therefore the abelian groups $\mathrm{Pic}^0(\mathfrak{X}) := \mathbf{Pic}^0_{\mathfrak{X}/k}(k)$ and $NS(\mathfrak{X}) := \mathbf{NS}_{\mathfrak{X}/k}(k)$ are only well-defined up to a kernel and cokernel annihilated by a power of $p$.  We will only deal with these groups through their $\ell$-adic avatars $T_\ell \mathrm{Pic}^0(\mathfrak{X})$ and ${\ilim}_n \mathrm{NS}(\mathfrak{X})/\ell^nNS(\mathfrak{X})$, however (for $\ell \not= p$), and the following simple proposition shows that these are well-defined:
\end{pg}
\begin{prop}
Let $A$ and $B$ be abelian groups, and assume that we have an exact sequence
\begin{equation*}
0 \rightarrow K \rightarrow A \rightarrow B \rightarrow C \rightarrow 0
\end{equation*}
with $K$ and $C$ annihilated by some power $p^r$.  Then for any $\ell \not= p$ and integer $n$, $A[\ell^n] \cong B[\ell^n]$ and $A/\ell^n A \cong B/\ell^n B$.
\end{prop}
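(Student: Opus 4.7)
The plan is to reduce to two short exact sequences and then apply the long exact sequence of $\mathrm{Tor}$ with $\mathbb{Z}/\ell^n$, using that $p$-torsion groups vanish after $\ell^n$-adic operations when $\ell \neq p$.

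First, I would split the given four-term exact sequence into two short exact sequences. Let $I = \mathrm{image}(A \to B)$, so we obtain
\begin{equation*}
0 \to K \to A \to I \to 0 \quad \text{and} \quad 0 \to I \to B \to C \to 0.
\end{equation*}

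Next, I would recall that for any abelian group $M$ there are natural identifications $M[\ell^n] = \mathrm{Tor}_1^{\mathbb{Z}}(\mathbb{Z}/\ell^n, M)$ and $M/\ell^n M = M \otimes_{\mathbb{Z}} \mathbb{Z}/\ell^n$, obtained by applying the relevant derived functor to the resolution $0 \to \mathbb{Z} \xrightarrow{\ell^n} \mathbb{Z} \to \mathbb{Z}/\ell^n \to 0$. Since $K$ and $C$ are annihilated by $p^r$ and $\gcd(p^r, \ell^n) = 1$, multiplication by $\ell^n$ is a bijection on both $K$ and $C$. Hence
\begin{equation*}
K[\ell^n] = C[\ell^n] = 0 \quad \text{and} \quad K/\ell^n K = C/\ell^n C = 0,
\end{equation*}
i.e.\ $\mathrm{Tor}_1^{\mathbb{Z}}(\mathbb{Z}/\ell^n, K) = \mathrm{Tor}_1^{\mathbb{Z}}(\mathbb{Z}/\ell^n, C) = 0$ and similarly for the tensor products.

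Now I would apply the long exact Tor sequence associated to each short exact sequence tensored against $\mathbb{Z}/\ell^n$. From the sequence $0 \to K \to A \to I \to 0$ the vanishings above force $A[\ell^n] \xrightarrow{\sim} I[\ell^n]$ and $A/\ell^n A \xrightarrow{\sim} I/\ell^n I$. From $0 \to I \to B \to C \to 0$, the vanishings for $C$ yield $I[\ell^n] \xrightarrow{\sim} B[\ell^n]$ and $I/\ell^n I \xrightarrow{\sim} B/\ell^n B$. Composing these isomorphisms gives the claimed identifications $A[\ell^n] \cong B[\ell^n]$ and $A/\ell^n A \cong B/\ell^n B$. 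The argument has no real obstacle; it is a standard piece of homological algebra, and the only subtle point is to note that the Tor and tensor groups of $K$ and $C$ with $\mathbb{Z}/\ell^n$ are trivial because $\ell$ and $p$ are coprime.
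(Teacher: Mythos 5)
Your proof is correct and follows essentially the same route as the paper: split the four-term sequence at the image $I$ of $A\to B$, observe that $K$ and $C$ have trivial $\ell^n$-torsion and $\ell^n$-cotorsion since they are $p$-power torsion with $\ell\neq p$, and conclude via the resulting long exact sequences. Phrasing the argument in terms of $\mathrm{Tor}_1(\mathbb{Z}/\ell^n,-)$ and $-\otimes\mathbb{Z}/\ell^n$ is a harmless elaboration of what the paper does implicitly.
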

\begin{proof}
Let $I$ be the image of $A$ in $B$, so that we have exact sequences
\begin{align*}
&0 \rightarrow K \rightarrow A \rightarrow I \rightarrow 0 \ \ \mathrm{and} \\
&0 \rightarrow I \rightarrow B \rightarrow C \rightarrow 0.
\end{align*}
Because $K[\ell^n] = K/\ell^n K = 0$ (and similarly for $C$), applying the functor of $\ell^n$-torsion to both sequences yields $A[\ell^n] \cong I[\ell^n] \cong B[\ell^n]$ and $A/\ell^n A \cong I/\ell^n I \cong B/\ell^n B$.
\end{proof}
Therefore the groups $\mathrm{Pic}^0(\mathfrak{X})[\ell^n]$ and $NS(\mathfrak{X})/\ell^nNS(\mathfrak{X})$ are well-defined.

\section{Weil Divisors and Cartier Divisors on Deligne-Mumford Stacks}
\begin{pg}
Let $\mathfrak{X}$ be a $d$-dimensional separated Deligne-Mumford stack of finite type over a field $k$, and assume that the connected components of $\mathfrak{X}$ are equidimensional of dimension $d$.  We recall the definition of the Chow groups $A^*(\mathfrak{X})$ \cite[3.4]{vistoli}.  For each $i$ between 0 and $d$, define a presheaf $\mathscr{Z}^i$ on $\mathfrak{X}_{et}$ by setting $$\mathscr{Z}^i(X \rightarrow \mathfrak{X}) := Z^i(X),$$ where $Z^i(X)$ is the free abelian group on the integral closed subschemes of $X$ of codimension $i$, and the transition maps are induced by flat pullback of cycles.  The presheaf $\mathscr{Z}^i$ is in fact a sheaf \cite[4.2]{gillet}.  Also define a sheaf $\mathscr{W}^i$ on $\mathfrak{X}_{et}$ by setting $$\mathscr{W}^i(X \rightarrow \mathfrak{X}) := \bigoplus_{V} k(V)^\times,$$  where the direct sum is over all subvarieties $V$ of $X$ of codimension $i - 1$ and $k(V)^\times$ is the group of invertible rational functions on $V$.   Since rational equivalence is preserved under flat pullback, we have a morphism of sheaves $\mathscr{W}^i \rightarrow \mathscr{Z}^i$ by taking the associated divisor of a rational function.  We define
\begin{align*}
Z^i(\mathfrak{X}) &:= \Gamma(\mathfrak{X},\mathscr{Z}^i), \\
W^i(\mathfrak{X}) &:= \Gamma(\mathfrak{X},\mathscr{W}^i), \ \ \mathrm{and} \\
A^i(\mathfrak{X}) &:= Z^i(\mathfrak{X})/W^i(\mathfrak{X}).
\end{align*}
We define the group $\mathrm{Div \ }\mathfrak{X}$ of Weil divisors on $\mathfrak{X}$ to be the group $Z^1(\mathfrak{X})$.  We define the Weil class group $\mathrm{Cl \ }\mathfrak{X}$ to be $A^1(\mathfrak{X})$. 
\end{pg}
\begin{pg}
Next we define the notion of Cartier divisor on the stack $\mathfrak{X}$.  For simplicity we will assume that the stack $\mathfrak{X}$ is geometrically reduced over $k$.  Then we can define the sheaf $\mathscr{K}$ of rational maps on $\mathfrak{X}_{et}$ by setting $$\mathscr{K}(X \rightarrow \mathfrak{X}) := \dlim_{U \subseteq X \ \mathrm{open \ dense}} \mathrm{Hom}(U,\mathbb{A}_k^1).$$  We define $\mathscr{K}^*$ to be the subsheaf of invertible elements of $\mathscr{K}$ under multiplication.  We then define $$\mathrm{Ca \ }\mathfrak{X} := \Gamma(\mathfrak{X},\mathscr{K}^*/\mathbb{G}_m),$$ the group of Cartier divisors on $\mathfrak{X}$, where $\mathbb{G}_m$ is the usual sheaf of invertible sections on $\mathfrak{X}_{et}$.  Then define the Cartier class group
\begin{equation*}
\mathrm{CaCl \ }\mathfrak{X} := \Gamma(\mathfrak{X},\mathscr{K}^*/\mathbb{G}_m)/\Gamma(\mathfrak{X},\mathscr{K}^*).
\end{equation*}
\end{pg}
\begin{prop}
Let $\mathfrak{X}$ be a smooth separated Deligne-Mumford stack of finite type over $k$.  Then there is a canonical isomorphism $\mathrm{Ca} \ \mathfrak{X} \cong \mathrm{Div \ }\mathfrak{X}$.  Moreover, this isomorphism induces an isomorphism $\mathrm{CaCl \ }\mathfrak{X} \cong \mathrm{Cl \ }\mathfrak{X}$.
\end{prop}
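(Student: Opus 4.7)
The plan is to establish both isomorphisms simultaneously at the level of sheaves on $\mathfrak{X}_{et}$, and then pass to global sections. Since every étale cover of a smooth stack is smooth and hence locally factorial, the local question reduces to the classical theorem that on a locally factorial scheme Cartier divisors and Weil divisors coincide.

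\emph{First isomorphism.} I would construct a morphism of sheaves $\varphi: \mathscr{K}^*/\mathbb{G}_m \to \mathscr{Z}^1$ on $\mathfrak{X}_{et}$ as follows. For each étale morphism $X \to \mathfrak{X}$, the scheme $X$ is smooth (hence locally factorial) over $k$, and the classical construction sends a Cartier divisor locally represented by a rational function $f$ to the Weil divisor $\sum_Y v_Y(f) \cdot [Y]$, where the sum runs over codimension-one integral closed subschemes $Y \subset X$ and $v_Y$ is the valuation attached to the DVR $\mathcal{O}_{X,Y}$. By, e.g., Hartshorne II.6.11, this yields a group isomorphism $\Gamma(X, \mathscr{K}^*/\mathbb{G}_m) \cong Z^1(X)$. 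The required compatibility with étale transition maps $Y \to X$ comes down to the fact that an étale morphism of regular schemes pulls back uniformizers to uniformizers, hence preserves both Cartier and Weil divisors and commutes with the associated divisor construction. This gives the sheaf isomorphism $\varphi$, and taking $\Gamma(\mathfrak{X}, -)$ delivers $\mathrm{Ca \ }\mathfrak{X} \cong \mathrm{Div \ }\mathfrak{X}$.

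\emph{Second isomorphism.} It suffices to check that, under the first isomorphism, the image of $\Gamma(\mathfrak{X}, \mathscr{K}^*)$ in $\mathrm{Div \ }\mathfrak{X}$ coincides with the image of $\Gamma(\mathfrak{X}, \mathscr{W}^1) = W^1(\mathfrak{X})$ in $Z^1(\mathfrak{X})$. For this I would identify $\mathscr{K}^*$ with $\mathscr{W}^1$ as sheaves on $\mathfrak{X}_{et}$: for any étale $X \to \mathfrak{X}$, the scheme $X$ is reduced with finitely many irreducible components $V_1, \dots, V_n$ (by the finite type hypothesis), and for any open dense $U \subseteq X$ one has $\Gamma(U, \mathcal{O}_U) \hookrightarrow \prod_i k(V_i)$; passing to the colimit over such $U$ identifies $\mathscr{K}(X)$ with $\prod_i k(V_i)$, and hence $\mathscr{K}^*(X)$ with $\prod_i k(V_i)^* = \bigoplus_i k(V_i)^* = \mathscr{W}^1(X)$ (finite direct sum equals finite direct product). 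Under this identification, the composition $\mathscr{K}^* \cong \mathscr{W}^1 \to \mathscr{Z}^1$ is precisely the ``divisor of a rational function'' map used in the definition of $W^1(\mathfrak{X})$ as a subgroup of $Z^1(\mathfrak{X})$. Therefore $\varphi$ descends to the asserted isomorphism $\mathrm{CaCl \ }\mathfrak{X} \cong \mathrm{Cl \ }\mathfrak{X}$.

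There is essentially no geometric obstacle here since the input on smooth schemes is entirely classical; the only real care is bookkeeping around the fact that an étale cover $X \to \mathfrak{X}$ may be disconnected, for which the explicit identification $\mathscr{K}^* \cong \mathscr{W}^1$ outlined above is the key technical point.
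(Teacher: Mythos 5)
Your proposal is correct and follows essentially the same route as the paper: establish the isomorphisms at the level of sheaves on $\mathfrak{X}_{et}$ by reducing to the classical statement for smooth schemes (where Cartier and Weil divisors agree), then pass to global sections. You spell out a bit more explicitly the identification $\mathscr{K}^* \cong \mathscr{W}^1$ and the compatibility with étale transition maps, but the structure of the argument, and in particular the commuting square of sheaves
\begin{equation*}
\begin{CD}
\mathscr{K}^* @>>> \mathscr{K}^*/\mathbb{G}_m \\
@V \cong VV @V \cong VV \\
\mathscr{W}^1 @>>> \mathscr{Z}^1,
\end{CD}
\end{equation*}
is exactly what the paper uses.
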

\begin{proof}
We first note that we have a commuting diagram 
\begin{equation}\label{diag1}
\begin{CD}
\mathscr{K}^* @>>> \mathscr{K}^*/\mathcal{O}^* \\
@V \cong VV @V \cong VV \\
\mathscr{W}^1 @>>> \mathscr{Z}^1
\end{CD}
\end{equation}
of sheaves on $\mathfrak{X}_{et}$, where the vertical maps are isomorphisms.  To see that we have such a diagram, notice that any etale $X/\mathfrak{X}$ is smooth, and so taking sections of the above square at $X$, we require a commuting diagram
\begin{equation*}
\begin{CD}
k(X)^\times @>>> \mathrm{Ca \ }X \\
@V \cong VV  @V \cong VV \\
k(X)^\times @>>> \mathrm{Div \ }X.
\end{CD}
\end{equation*}
The fact that this diagram is commutative (and that the right-hand vertical arrow is an isomorphism) is a standard consequence of $X$ being smooth.  Therefore we have the commuting diagram (\ref{diag1}).  Taking global sections of the right hand map in (\ref{diag1}) gives us an isomorphism $\mathrm{Ca \ }\mathfrak{X} \cong \mathrm{Div \ }\mathfrak{X}$, while taking cokernels of the maps on global sections induced by the horizontal arrows gives us $\mathrm{CaCl \ }\mathfrak{X} \cong \mathrm{Cl \ }\mathfrak{X}$.
\end{proof}
\begin{pg}
From the exact sequence 
\begin{equation*}
0 \rightarrow \mathbb{G}_m \rightarrow \mathscr{K}_{\mathfrak{X}}^* \rightarrow \mathscr{K}_{\mathfrak{X}}^*/\mathbb{G}_m \rightarrow 0
\end{equation*}
of sheaves on $\mathfrak{X}_{et}$, we get an injection $\mathrm{CaCl \ }\mathfrak{X} \hookrightarrow \mathrm{Pic \ }\mathfrak{X}$.  Unlike the case of schemes, however, we cannot expect this map to be an isomorphism, even when $\mathfrak{X}$ is smooth.  For example, if $\mathfrak{X} = BG$ where $G$ is a finite group, then $\mathrm{CaCl \ }{BG} = 0$ while $$\mathrm{Pic \ }BG = \mathrm{Hom}(G,\mathbb{G}_m) \not= 0.$$The best we can do is the following:
\end{pg}
\begin{prop}\label{cartpic}
Let $\mathfrak{X}$ be a geometrically reduced, separated Deligne-Mumford stack of finite type over a field $k$.  Then the quotient
\begin{equation*}
H := \mathrm{Pic \ }\mathfrak{X}/\mathrm{CaCl \ }\mathfrak{X}
\end{equation*}
is a finite group.
\end{prop}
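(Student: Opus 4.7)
The plan is to embed $H$ into a finite group defined by data at the generic points of $\mathfrak{X}$. Since $\mathfrak{X}$ is Noetherian, it has finitely many irreducible components, with generic points $\eta_1,\ldots,\eta_r \in |\mathfrak{X}|$. At each $\eta_i$ lies a residual gerbe $\mathscr{G}_i$, which, because $\mathfrak{X}$ is Deligne--Mumford and separated, is a gerbe over $\mathrm{Spec} \ K(\eta_i)$ banded by a finite \'{e}tale group scheme $\mathbf{G}_i$ (the automorphism group at $\eta_i$). I will show that the pullback maps $\mathrm{Pic}(\mathfrak{X}) \to \mathrm{Pic}(\mathscr{G}_i)$ assemble into a homomorphism
\[
\rho: \mathrm{Pic}(\mathfrak{X}) \longrightarrow \bigoplus_{i=1}^{r} \mathrm{Pic}(\mathscr{G}_i)
\]
whose kernel is exactly $\mathrm{CaCl}(\mathfrak{X})$, and that each $\mathrm{Pic}(\mathscr{G}_i)$ is finite.

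The containment $\mathrm{CaCl}(\mathfrak{X}) \subseteq \ker \rho$ is immediate: any Cartier divisor $D$ has support of pure codimension one, and the residual gerbe $\mathscr{G}_i$ sits over the codimension zero point $\eta_i$, so $\mathrm{supp}(D) \cap \mathscr{G}_i = \emptyset$ and the tautological rational section of $\mathcal{O}(D)$ restricts to a trivialization on $\mathscr{G}_i$. For the reverse inclusion, given $\mathscr{L} \in \ker \rho$, choose a trivializing section $s_i$ of $\mathscr{L}|_{\mathscr{G}_i}$ for each $i$. Standard spreading out, using that $\mathfrak{X}$ is of finite type and that sections over $\mathscr{G}_i$ arise as filtered colimits of sections on open substacks containing $\eta_i$, extends each $s_i$ to a section of $\mathscr{L}$ on some open substack $\mathcal{U}_i \ni \eta_i$; shrinking $\mathcal{U}_i$ to the non-vanishing locus and setting $\mathcal{U} = \bigcup_i \mathcal{U}_i$ produces a dense open substack of $\mathfrak{X}$ on which $\mathscr{L}$ is trivialized. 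This trivialization is a nonzero rational section of $\mathscr{L}$, whose divisor of zeros and poles is a Cartier divisor witnessing $\mathscr{L} \in \mathrm{CaCl}(\mathfrak{X})$.

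To bound $\mathrm{Pic}(\mathscr{G}_i)$, consider the structure morphism $\pi_i: \mathscr{G}_i \to \mathrm{Spec} \ K(\eta_i)$ and its Leray spectral sequence for $\mathbb{G}_m$. The sheaf $R^1\pi_{i*}\mathbb{G}_m$ is the Cartier dual of the abelianization of $\mathbf{G}_i$, which is a finite \'{e}tale group scheme over $K(\eta_i)$. Hilbert~90 gives $H^1(K(\eta_i),\mathbb{G}_m)=0$, so the five-term exact sequence yields an injection
\[
\mathrm{Pic}(\mathscr{G}_i) \hookrightarrow H^0\bigl(K(\eta_i), R^1\pi_{i*}\mathbb{G}_m\bigr) = \mathrm{Hom}_{K(\eta_i)}(\mathbf{G}_i^{\mathrm{ab}}, \mathbb{G}_m).
\]
The right-hand side is finite, since any homomorphism from the finite group $\mathbf{G}_i^{\mathrm{ab}}(\overline{K(\eta_i)})$ into $\overline{K(\eta_i)}^{\times}$ lands in the roots of unity of order dividing $|\mathbf{G}_i^{\mathrm{ab}}|$. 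Combining both steps, $H$ injects into a finite direct sum of finite groups, and is therefore finite.

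The main technical obstacle is the spreading-out step in the second paragraph: promoting a trivialization on the residual gerbe $\mathscr{G}_i$ to a section of $\mathscr{L}$ on an open substack of $\mathfrak{X}$ requires some care in the Deligne--Mumford stack setting, but is routine given the finite type hypothesis and the description of the local structure at $\eta_i$.
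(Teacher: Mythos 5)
Your proposal follows the same strategy as the paper's proof: both reduce the problem to the Picard groups of the gerbes sitting over the generic points, and both verify finiteness of those Picard groups via a Leray spectral sequence argument. The paper organizes the first reduction more efficiently: it takes the long exact sequence of $0 \to \mathbb{G}_m \to \mathscr{K}^* \to \mathscr{K}^*/\mathbb{G}_m \to 0$ to get, for free, an injection $\mathrm{Pic}(\mathfrak{X})/\mathrm{CaCl}(\mathfrak{X}) \hookrightarrow H^1(\mathfrak{X},\mathscr{K}^*)$, and then proves the lemma $\mathscr{K}^* = \iota_*\mathbb{G}_m$ for the inclusion $\iota$ of the (disjoint union of the) generic gerbes, so that $H^1(\mathfrak{X},\mathscr{K}^*) \hookrightarrow \mathrm{Pic}(\mathscr{G})$. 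You instead build the restriction map $\rho$ directly and argue $\ker\rho = \mathrm{CaCl}(\mathfrak{X})$ by hand. Your spreading-out step in the second paragraph, which you flag as the technical obstacle, is precisely the content of the paper's lemma $\mathscr{K}^* = \iota_*\mathbb{G}_m$; it's fine, but two things deserve mention. First, after shrinking each $\mathcal{U}_i$ to the non-vanishing locus, you should further shrink so the $\mathcal{U}_i$ are pairwise disjoint before taking the union (possible since $\eta_i \notin \mathcal{U}_j$ for $j \neq i$ forces $\eta_i \notin \overline{\mathcal{U}_i \cap \mathcal{U}_j}$), as otherwise the $s_i$ need not glue. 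Second, your identification $R^1\pi_{i*}\mathbb{G}_m \cong (\mathbf{G}_i^{\mathrm{ab}})^\vee$ is a stronger claim than you need, and requires care when the gerbe is only banded by an outer form of a (possibly non-abelian) group; the paper sidesteps this by base-changing to a finite extension $L/K(\eta_i)$ over which $\mathscr{G}_i$ becomes $BG$, then using $\mathbf{Pic}_{\mathscr{G}_i}(\mathrm{Spec}\,K(\eta_i)) \hookrightarrow \mathbf{Pic}_{\mathscr{G}_i}(\mathrm{Spec}\,L) = \mathrm{Pic}(BG)$, which is finite without any band-theoretic bookkeeping. With these points addressed your argument is correct and is, in substance, the paper's proof repackaged.
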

\begin{proof}
 Taking cohomology in the exact sequence
\begin{equation*}
0 \rightarrow \mathbb{G}_{m,\mathfrak{X}} \rightarrow \mathscr{K}_{\mathfrak{X}}^* \rightarrow \mathscr{K}_{\mathfrak{X}}^*/\mathbb{G}_{m,\mathfrak{X}} \rightarrow 0,
\end{equation*}
we have an exact sequence of groups
\begin{equation*}
0 \rightarrow \mathrm{CaCl \ }\mathfrak{X} \rightarrow \mathrm{Pic \ }\mathfrak{X} \rightarrow H^1(\mathfrak{X},\mathscr{K}^*),
\end{equation*}
so it suffices to show that $H^1(\mathfrak{X},\mathscr{K}^*)$ is finite.  Let $X$ be the coarse moduli space of $\mathfrak{X}$, let $\xi_1,...,\xi_n$ be the generic points of $X$, and let $\xi = \xi_1 \coprod ... \coprod \xi_n$ be their disjoint union.  Finally, let $\mathscr{G}_i = \xi_i \times_{X} \mathfrak{X}$ and $\mathscr{G} = \xi \times_{X} \mathfrak{X}$ be the fiber products, and $\iota_i: \mathscr{G}_i \hookrightarrow \mathfrak{X}$, $\iota: \mathscr{G} \hookrightarrow \mathfrak{X}$ the resulting maps.  

\begin{lem}
With notation as above, we have an equality
\begin{equation*}
\iota_*\mathbb{G}_{m} = \mathscr{K}^*
\end{equation*}
of sheaves on $\mathfrak{X}_{et}$.
\end{lem}
\begin{proof}(of lemma)
Let $V \rightarrow \mathfrak{X}$ be \'{e}tale.  Then 
\begin{equation*}
\mathscr{K}^*(V \rightarrow \mathfrak{X}) = \dlim_{U \subset V \\ \mathrm{dense}} \mathrm{Hom}(U,\mathbb{G}_m).
\end{equation*}
On the other hand,
\begin{align*}
(\iota_*\mathbb{G}_m)(V \rightarrow \mathfrak{X}) &= \mathrm{Hom}(V \times_{\mathfrak{X}} \mathscr{G},\mathbb{G}_m) \\
&= \mathrm{Hom}(V \times_X \xi, \mathbb{G}_m) \\
&= \mathscr{K}^*(V \rightarrow \mathfrak{X}).
\end{align*} 
The last equality is because $V$ is quasi-finite over $X$ with open image, and hence the fiber over $\xi$ consists of the generic points of $V$.
\end{proof}
Continuing with the proof of Proposition \ref{cartpic}, from the spectral sequence
\begin{equation*}
H^p(\mathfrak{X},R^q\iota_*\mathbb{G}_m) \Rightarrow H^{p+q}(\mathscr{G},\mathbb{G}_m),
\end{equation*}
we get an inclusion $H^1(\mathfrak{X},\iota_*\mathbb{G}_m) \hookrightarrow \mathrm{Pic}(\mathscr{G})$.  This reduces us to showing that $\mathrm{Pic}(\mathscr{G})$ is finite.  Moreover, since $\mathscr{G} = \amalg_i \mathscr{G}_i$, it suffices to show that $\mathrm{Pic}(\mathscr{G}_i)$ is finite for each $i$.  Set $\mathscr{H} := \mathscr{G}_i$ for any $i$, and $\zeta := \xi_i$.  Then $\mathscr{H} \rightarrow \zeta$ is an fppf-gerbe: Sinec $\zeta \rightarrow X$ is flat, $\mathscr{H} \rightarrow \zeta$ is the coarse moduli space of $\mathscr{H}$, so the topological space of $\mathscr{H}$ has just one point.  By \cite[Thm 11.5]{lmb}, $\mathscr{H}$ must be an fppf-gerbe over $\zeta$.  Moreover, $\mathscr{H} \rightarrow \zeta$ is banded by a finite group $G$ since $\mathscr{H}$ is a Deligne-Mumford stack.  Therefore the following lemma will complete the proof of Proposition \ref{cartpic}. 
\end{proof}
\begin{lem}
Let $\mathscr{H}$ be a Deligne-Mumford stack such that $\mathscr{H} \rightarrow \zeta$ is an fppf-gerbe, with $\zeta$ the spectrum of a field.  Then $\mathrm{Pic}(\mathscr{H})$ is finite.
\end{lem}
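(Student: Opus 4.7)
The plan is to pass to a finite Galois extension $\zeta'/\zeta$ over which the gerbe becomes trivial and its band becomes constant, so that $\mathscr{H}_{\zeta'} \simeq BG_{\zeta'}$ for an ordinary finite group $G$; then to compute $\mathrm{Pic}(BG_{\zeta'}) = \mathrm{Hom}(G,(\zeta')^\times)$, which is manifestly finite; and finally to descend finiteness to $\mathrm{Pic}(\mathscr{H})$ using the Hochschild--Serre spectral sequence for the Galois cover, invoking Hilbert 90 to kill the kernel of the restriction map.

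For the reduction, I would exploit the Deligne--Mumford hypothesis: $\mathscr{H}$ admits an \'{e}tale atlas $U \to \mathscr{H}$, and since $U$ is a nonempty quasi-finite $\zeta$-scheme it has a closed point whose residue field is a finite separable extension $\zeta_0$ of $\zeta$. Such a point furnishes an object of $\mathscr{H}(\zeta_0)$ which trivializes the gerbe and identifies $\mathscr{H}_{\zeta_0}$ with $B\mathrm{Aut}(x)_{\zeta_0}$ for a finite \'{e}tale group scheme $\mathrm{Aut}(x)$. Enlarging $\zeta_0$ to a finite Galois extension $\zeta'$ of $\zeta$ with group $\Gamma$ that also makes $\mathrm{Aut}(x)$ constant, we obtain $\mathscr{H}_{\zeta'} \simeq BG_{\zeta'}$ for an abstract finite group $G$.

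Next, the Cartan--Leray spectral sequence
\[
E_2^{p,q} = H^p(\Gamma, H^q(\mathscr{H}_{\zeta'}, \mathbb{G}_m)) \Longrightarrow H^{p+q}(\mathscr{H}, \mathbb{G}_m)
\]
for this Galois cover yields the five-term exact sequence
\[
0 \to H^1(\Gamma, \Gamma(\mathscr{H}_{\zeta'}, \mathbb{G}_m)) \to \mathrm{Pic}(\mathscr{H}) \to \mathrm{Pic}(\mathscr{H}_{\zeta'})^\Gamma.
\]
Because $G$ acts trivially on the structure sheaf of $\mathrm{Spec}(\zeta')$, we have $\Gamma(BG_{\zeta'}, \mathbb{G}_m) = (\zeta')^\times$, and Hilbert 90 kills $H^1(\Gamma, (\zeta')^\times)$. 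On the right, $\mathrm{Pic}(\mathscr{H}_{\zeta'})^\Gamma$ sits inside $\mathrm{Pic}(BG_{\zeta'}) = \mathrm{Hom}(G, (\zeta')^\times)$, which is finite since $G$ is. Hence $\mathrm{Pic}(\mathscr{H})$ embeds in a finite group and is itself finite.

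The main obstacle is the reduction step: confirming that a single finite separable (in fact, finite Galois) extension of $\zeta$ both trivializes the gerbe and makes its band constant. The former uses the Deligne--Mumford, not just fppf, local triviality of the gerbe; the latter is automatic once $\mathrm{Aut}(x)$ is finite \'{e}tale. After that reduction, the Picard-group calculation on $BG_{\zeta'}$ and the Hilbert 90 descent are essentially formal.
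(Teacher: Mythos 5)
Your argument is correct and arrives at the same key reduction ($\mathrm{Pic}(\mathscr{H})$ embeds in the finite group $\mathrm{Pic}(BG_{\zeta'}) = \mathrm{Hom}(G,(\zeta')^\times)$), but via a different pair of descent tools than the paper. The paper instead runs the Leray spectral sequence for the structure map $\pi:\mathscr{H}\to\zeta$ in the fppf topology: after observing $\pi_*\mathbb{G}_{m,\mathscr{H}}=\mathbb{G}_{m,\zeta}$, the five-term sequence gives $0\to H^1_{fppf}(\zeta,\mathbb{G}_m)\to\mathrm{Pic}(\mathscr{H})\to\mathbf{Pic}_{\mathscr{H}/\zeta}(\zeta)$; the left term dies by fppf Hilbert 90, and faithfully flat descent embeds $\mathbf{Pic}_{\mathscr{H}/\zeta}(\zeta)\hookrightarrow\mathbf{Pic}_{\mathscr{H}/\zeta}(\zeta')=\mathrm{Pic}(BG_{\zeta'})$. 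You instead work with the finite \'{e}tale Galois cover $\mathscr{H}_{\zeta'}\to\mathscr{H}$ itself, using Hochschild--Serre and classical Galois-cohomological Hilbert 90 for $(\zeta')^\times$. The paper's route needs only \emph{some} finite flat extension trivializing the gerbe and pushes the descent into the Picard sheaf formalism; your route needs a \emph{Galois} extension, which you rightly secure from the Deligne--Mumford hypothesis, and stays entirely within ordinary Galois cohomology. You also make explicit the reduction to $BG$ over a finite separable extension, which the paper leaves implicit. One small refinement worth making in your reduction: it is cleaner to argue that the \'{e}tale atlas $U\to\mathscr{H}$ is \emph{smooth} over $\zeta$ (since a gerbe banded by a finite \'{e}tale group scheme is smooth over the base, and $U\to\mathscr{H}$ is \'{e}tale), so that every closed point of $U$ automatically has separable residue field; quasi-finiteness alone does not by itself rule out inseparable residue fields in positive characteristic.
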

\begin{proof}
Let $\zeta = \mathrm{Spec \ }F$, and let $F \hookrightarrow L$ be a finite field extension such that $\mathscr{H} \times_{\zeta} \mathrm{Spec \ }L$ is isomorphic to $BG$, where $G$ is a finite group.  Then $\mathrm{Pic}(\mathscr{H} \times_{\zeta} \mathrm{Spec \ }L) = \mathrm{Pic}(BG) = \mathrm{Hom}(G,\mathbb{G}_m)$ is a finite group.  Now consider the Picard functor $\mathbf{Pic}_{\mathscr{H}/\zeta}$.  The spectral sequence
\begin{equation*}
H_{fppf}^p(\zeta,R^q\pi_*\mathbb{G}_{m,\mathscr{H}}) \Rightarrow H^{p+q}(\mathscr{H},\mathbb{G}_m)
\end{equation*}
yields an exact sequence
\begin{equation*}
0 \rightarrow H_{fppf}^1(\zeta,\pi_*\mathbb{G}_{m,\mathscr{H}}) \rightarrow \mathrm{Pic}(\mathscr{H}) \rightarrow \mathbf{Pic}_{\mathscr{H}/\zeta}(\zeta).
\end{equation*}
Note that we have $\pi_*\mathbb{G}_{m,\mathscr{H}} = \mathbb{G}_{m,\zeta}$ as sheaves in $\zeta_{fppf}$.  This is because for any $Y \rightarrow \zeta$, the stack $Y \times_{\zeta} \mathscr{H}$ has $Y$ as coarse moduli space (since $Y \rightarrow \zeta$ is flat) and therefore $\mathrm{Hom}(X \times_{\zeta} \mathscr{H},\mathbb{G}_m) = \mathrm{Hom}(X,\mathbb{G}_m)$.  It is well-known that $H_{fppf}^1(\zeta,\mathbb{G}_m) = \mathrm{Pic}(\zeta) = 0$, and hence $\mathrm{Pic}(\mathscr{H})$ injects into $\mathbf{Pic}_{\mathscr{H}/\zeta}(\zeta)$.  Finally, by faithfully flat descent, $\mathbf{Pic}_{\mathscr{H}/\zeta}(\zeta)$ injects into $\mathbf{Pic}_{\mathscr{H}/\zeta}(\mathrm{Spec \ }L)$ which is finite since it equals $\mathrm{Pic}(\mathscr{H} \times_{\zeta} \mathrm{Spec \ }L) = \mathrm{Pic}(BG)$.  
\end{proof}

\begin{cor}\label{pic0}
Let $\mathfrak{X}$ be a smooth proper Deligne-Mumford stack over an algebraically closed field $k$.  Then every element of $\mathrm{Pic}^0(\mathfrak{X})$ is represented by a Weil divisor.
\end{cor}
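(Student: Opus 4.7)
The plan is to combine Proposition \ref{cartpic} with the divisibility of $\mathrm{Pic}^0(\mathfrak{X})$. Concretely, since $\mathfrak{X}$ is smooth, Cartier and Weil divisors coincide, so $\mathrm{CaCl}(\mathfrak{X}) \cong \mathrm{Cl}(\mathfrak{X})$; therefore ``represented by a Weil divisor'' for a line bundle $L \in \mathrm{Pic}(\mathfrak{X})$ amounts to saying that the class of $L$ lies in the image of the injection $\mathrm{CaCl}(\mathfrak{X}) \hookrightarrow \mathrm{Pic}(\mathfrak{X})$. Thus it suffices to prove the inclusion
\begin{equation*}
\mathrm{Pic}^0(\mathfrak{X}) \subseteq \mathrm{CaCl}(\mathfrak{X}) \subseteq \mathrm{Pic}(\mathfrak{X}).
\end{equation*}

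By Proposition \ref{cartpic}, the quotient $H := \mathrm{Pic}(\mathfrak{X})/\mathrm{CaCl}(\mathfrak{X})$ is a finite group; let $n = |H|$, so that $n \cdot \mathrm{Pic}(\mathfrak{X}) \subseteq \mathrm{CaCl}(\mathfrak{X})$. On the other hand, since $\mathfrak{X}$ is smooth and proper and $k$ is perfect (indeed algebraically closed), Theorem \ref{stackrepres}(3) guarantees that $\mathbf{Pic}^{0,red}_{\mathfrak{X}/k}$ is an abelian variety, and since $k$ is algebraically closed the group of $k$-points $\mathrm{Pic}^0(\mathfrak{X})$ is divisible. (Note that over an algebraically closed field, $\mathbf{Pic}^0_{\mathfrak{X}/k}(k) = \mathbf{Pic}^{0,red}_{\mathfrak{X}/k}(k)$, so this really is the group we need.)

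Now given $L \in \mathrm{Pic}^0(\mathfrak{X})$, the divisibility of $\mathrm{Pic}^0(\mathfrak{X})$ furnishes some $M \in \mathrm{Pic}^0(\mathfrak{X})$ with $nM = L$; then $L = nM \in n \cdot \mathrm{Pic}(\mathfrak{X}) \subseteq \mathrm{CaCl}(\mathfrak{X})$, which is what we wanted. Since all the genuine content (finiteness of $H$ and representability of $\mathbf{Pic}^{0,red}_{\mathfrak{X}/k}$ as an abelian variety) has already been established, no step here is a real obstacle; the only point that needs care is the identification $\mathbf{Pic}^0_{\mathfrak{X}/k}(k) = \mathbf{Pic}^{0,red}_{\mathfrak{X}/k}(k)$ used to transfer divisibility from the abelian variety to the abstract group $\mathrm{Pic}^0(\mathfrak{X})$, which holds because $k$ is reduced.
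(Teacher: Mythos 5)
Your argument is correct and follows essentially the same route as the paper: the paper also combines the finiteness of $\mathrm{Pic}(\mathfrak{X})/\mathrm{Cl}(\mathfrak{X})$ from Proposition \ref{cartpic} with divisibility of $\mathrm{Pic}^0(\mathfrak{X})$, phrasing the conclusion as ``$\mathrm{Pic}^0(\mathfrak{X})/\mathrm{Cl}^0(\mathfrak{X})$ is finite and divisible, hence trivial,'' which is just a repackaging of your direct construction $L = nM \in n\,\mathrm{Pic}(\mathfrak{X}) \subseteq \mathrm{CaCl}(\mathfrak{X})$.
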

\begin{proof}
Temporarily, let $\mathrm{Cl}^0(\mathfrak{X}) \subset \mathrm{Pic}^0(\mathfrak{X})$ denote the subgroup of the Weil class group which maps to 0 in $NS(\mathfrak{X})$.  Then we have an injection
\begin{equation*}
\mathrm{Pic}^0(\mathfrak{X})/\mathrm{Cl}^0(\mathfrak{X}) \hookrightarrow \mathrm{Pic}(\mathfrak{X})/\mathrm{Cl}(\mathfrak{X}),
\end{equation*}
so $\mathrm{Pic}^0(\mathfrak{X})/\mathrm{Cl}^0(\mathfrak{X})$ is finite.  On the other hand, it is a quotient of the divisible group $\mathrm{Pic}^0(\mathfrak{X})$, hence divisible.  Therefore it is trivial.
\end{proof}

\section{Cycle class map on smooth Deligne-Mumford stacks}
\begin{pg}
Let $\mathfrak{X}$ be a smooth Deligne-Mumford stack of pure dimension $N$ over a field $k$.  In this section we review the definition of the cycle class map 
\begin{equation*}
cl: A^d(\mathfrak{X}) \rightarrow H^{2d}(\mathfrak{X}_{\overline{k}},\mathbb{Q}_\ell(d)),
\end{equation*}
where $\ell$ is different from $p = \mathrm{char}(k)$.  For the definition of a cycle class map for singular Deligne-Mumford stacks and more general coefficient rings, see \cite[Sect. 3]{chern}.
\end{pg}
\begin{pg}
For ease of notation we assume $k = \overline{k}$; it will be clear from our construction that the cycle class map we produce will be invariant under Galois action.  Let $\mathfrak{X}$ be a purely $N$-dimensional Deligne-Mumford stack over $k$, let $D \in A^d(\mathfrak{X})$ be a cycle, and let $e = N - d$ be the dimension of $D$.  Write $D = \sum a_i D_i$ as a sum of integral cycles $D_i$, and let $U_i \subset D_i$ be the smooth locus.  
\begin{lem}\label{tracemap}
For each $D_i$, there is a canonical trace map $$Tr_i: H_c^{2e}(D_i,\mathbb{Q}_\ell(e)) \stackrel{\sim}{\longrightarrow} \mathbb{Q}_\ell$$ 
\end{lem}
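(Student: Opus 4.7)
The plan is to follow the template from SGA 4.5 Cycle: reduce from $D_i$ to its smooth open locus $U_i$ by excision, then obtain the trace on $U_i$ from Poincar\'{e} duality for smooth Deligne-Mumford stacks.

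Since $D_i$ is integral, $U_i$ is open dense in $D_i$ and the closed complement $Z_i := D_i \setminus U_i$ has dimension $e' \leq e - 1$. I would first check that $H_c^{j}(Y,\mathbb{Q}_\ell) = 0$ for any separated finite type DM stack $Y$ of dimension $e'$ and any $j > 2e'$. The key input is that the proper map $\pi : Y \to Y^{\mathrm{coarse}}$ to the coarse moduli space satisfies $R\pi_* \mathbb{Q}_\ell \simeq \mathbb{Q}_\ell$: by proper base change one reduces to geometric fibers, and (since $k = \overline{k}$) each geometric fiber has the form $BG$ for a finite group $G$, so $H^i(BG, \mathbb{Q}_\ell) = H^i(G, \mathbb{Q}_\ell) = 0$ for $i > 0$ as group cohomology of a finite group with $\mathbb{Q}$-coefficients vanishes in positive degrees. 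Hence $H_c^*(Y, \mathbb{Q}_\ell) \cong H_c^*(Y^{\mathrm{coarse}}, \mathbb{Q}_\ell)$, and the desired vanishing follows from Artin's cohomological dimension bound for algebraic spaces. Applied with $Y = Z_i$, the localization long exact sequence
\begin{equation*}
H_c^{2e-1}(Z_i, \mathbb{Q}_\ell(e)) \to H_c^{2e}(U_i, \mathbb{Q}_\ell(e)) \to H_c^{2e}(D_i, \mathbb{Q}_\ell(e)) \to H_c^{2e}(Z_i, \mathbb{Q}_\ell(e))
\end{equation*}
collapses to an isomorphism of the two middle terms.

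Next, since $U_i$ is smooth, purely $e$-dimensional, and connected (because $D_i$ is irreducible), Poincar\'{e} duality for smooth DM stacks provides a canonical isomorphism
\begin{equation*}
H_c^{2e}(U_i, \mathbb{Q}_\ell(e)) \cong H^0(U_i, \mathbb{Q}_\ell)^\vee \cong \mathbb{Q}_\ell,
\end{equation*}
and composing with the inverse of the excision isomorphism from the previous step defines $Tr_i$. The canonicity of $Tr_i$ is immediate, since the smooth locus $U_i$ is intrinsic to $D_i$ and the Poincar\'{e} duality isomorphism with $\mathbb{Q}_\ell(e)$-coefficients carries a canonical orientation.

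The main obstacle is securing the Poincar\'{e} duality input for smooth Deligne-Mumford stacks with $\mathbb{Q}_\ell$-coefficients in characteristic $p$. One could invoke the six-functor formalism of Laszlo-Olsson, but with $\ell \neq p$ this can be bypassed: the same gerbe-fiber argument applied to $U_i \to U_i^{\mathrm{coarse}}$ gives $R\pi_* \mathbb{Q}_\ell \simeq \mathbb{Q}_\ell$, so it suffices to produce the trace on $U_i^{\mathrm{coarse}}$. The coarse space is a reduced, irreducible, $e$-dimensional algebraic space, and restricting to its smooth locus (without changing $H_c^{2e}$, again by the vanishing step above) reduces the problem to classical Poincar\'{e} duality for smooth schemes, which produces the trace.
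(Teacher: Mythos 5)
Your proof is correct and follows essentially the same route as the paper: pass to the smooth locus $U_i \subset D_i$ via the localization sequence, kill the $Z_i$-terms by a dimension bound, and invoke Poincar\'{e} duality on the smooth stack $U_i$. The paper dispatches the vanishing of $H_c^{\geq 2e-1}(Z_i,\mathbb{Q}_\ell(e))$ by citing its Lemma \ref{cms} (which rests on Olsson's treatment of Fujiwara's theorem) together with the corresponding bound for algebraic spaces, and then cites \cite[4.4.1]{LO} for Poincar\'{e} duality on $U_i$; you instead unwind the first point via proper base change and the group-cohomology vanishing $H^i(BG,\mathbb{Q}_\ell)=0$ for $i>0$, which is the content of the cited lemma anyway. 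Your closing remark --- that one can sidestep Laszlo--Olsson duality for stacks by descending to $U_i^{\mathrm{coarse}}$ (using $R\pi_!=R\pi_*$ and $R\pi_*\mathbb{Q}_\ell\simeq\mathbb{Q}_\ell$) and then to its smooth locus, where classical duality applies --- is a genuinely lighter-weight route not taken in the paper, and it does produce the same canonical map since every step (passage to coarse space, restriction to the dense smooth locus, duality) is canonical; the only thing to flag is that one still needs the $Rf_!$/$Rf_*$ formalism for stacks to make sense of $H_c$ on $U_i$ in the first place, so the reliance on \cite{LO} is reduced rather than eliminated.
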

Before starting the proof of Lemma \ref{tracemap} we note the following fact, which will be used repeatedly in this paper:
\begin{lem}\label{cms}
Let $\mathfrak{X}$ be a separated finite type Deligne-Mumford stack, and let $\pi: \mathfrak{X} \rightarrow X$ be its coarse moduli space.  Then the natural map $\mathbb{Q}_{\ell,X} \rightarrow R\pi_*\mathbb{Q}_{\ell,\mathfrak{X}}$ is an isomorphism in $D_c^b(X,\mathbb{Q}_\ell)$.
\end{lem}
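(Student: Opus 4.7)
The plan is to verify the statement stalkwise on $X$, reducing the question to a computation on the geometric fibers of the coarse moduli map. Since $\pi$ is proper (by Keel--Mori), proper base change applies, so for any geometric point $\bar{x} \to X$ we have
\begin{equation*}
(R^i \pi_* \mathbb{Q}_{\ell,\mathfrak{X}})_{\bar{x}} \cong H^i(\mathfrak{X}_{\bar{x}}, \mathbb{Q}_\ell),
\end{equation*}
where $\mathfrak{X}_{\bar{x}} := \mathfrak{X} \times_X \bar{x}$. To identify this fiber, I would invoke the étale-local structure of the coarse moduli map: étale-locally on $X$, $\pi$ has the form $[U/G] \to U/G$ for a scheme $U$ equipped with an action of a finite group $G$, and the fiber over $\bar{x}$ is then the classifying stack $BH_{\bar{x}}$, where $H_{\bar{x}} \subseteq G$ is the stabilizer of any geometric lift of $\bar{x}$ to $U$.

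The next step is to compute $H^*(BH_{\bar{x}}, \mathbb{Q}_\ell)$. Étale cohomology of $BH$ over a separably closed field with constant coefficients in a ring $\Lambda$ agrees with the group cohomology $H^*(H, \Lambda)$, so it suffices to compute $H^*(H_{\bar{x}}, \mathbb{Q}_\ell)$. Since $\mathbb{Q}_\ell$ has characteristic zero, $|H_{\bar{x}}|$ is automatically invertible in $\mathbb{Q}_\ell$, and the standard averaging/transfer argument yields $H^0 = \mathbb{Q}_\ell$ and $H^i = 0$ for $i > 0$. Combined with the stalk formula above, this gives $\pi_* \mathbb{Q}_{\ell,\mathfrak{X}} = \mathbb{Q}_{\ell,X}$ and $R^i \pi_* \mathbb{Q}_{\ell,\mathfrak{X}} = 0$ for $i > 0$, so the natural adjunction map $\mathbb{Q}_{\ell,X} \to R\pi_* \mathbb{Q}_{\ell,\mathfrak{X}}$ is a quasi-isomorphism. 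Boundedness and constructibility of the target in $D_c^b(X,\mathbb{Q}_\ell)$ are automatic from the properness of $\pi$ together with the constructibility of the input.

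The main technical input is the identification of the geometric fibers of $\pi$ with classifying stacks $BH_{\bar{x}}$; this rests on the étale-local structure of the coarse moduli map for separated DM stacks of finite type, which is the key output of Keel--Mori. A minor wrinkle is that the local model $[U/G] \to U/G$ is only valid after passing to an étale neighborhood of $\bar{x}$ (or to a strict henselization), so one must be careful to apply proper base change at the level of strict henselizations of $X$ at $\bar{x}$. Once the local model is in hand, the remainder is formal, and the whole argument is insensitive to passing from $\mathbb{Z}/\ell^n$-coefficients (where the averaging relies on $|H|$ being prime to $\ell$, not true in general) to $\mathbb{Q}_\ell$-coefficients, where the argument goes through unconditionally.
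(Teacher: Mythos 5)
Your proposal is correct, but it takes a genuinely different route from the paper. The paper's proof is a two-line citation: it invokes Theorem 5.1 and Corollary 5.8 of Olsson's \emph{Fujiwara's Theorem for Equivariant Correspondences} for the vanishing of $R^i\pi_*\mathbb{Q}_\ell$ in positive degrees, and then handles $R^0$ by noting that $\pi$ is a homeomorphism on underlying topological spaces. You instead give a self-contained argument: proper base change (valid for the proper morphism $\pi$ by Keel--Mori, via the Laszlo--Olsson formalism) reduces everything to the geometric fibers, the local quotient presentation $[U/G] \to U/G$ identifies each fiber with a gerbe $BH_{\bar{x}}$ for a finite group $H_{\bar{x}}$, and the rational cohomology of $BH$ vanishes in positive degrees because $H^i(H,\mathbb{Z}/\ell^n)$ is annihilated by $|H|$ (hence by a fixed power of $\ell$ for all $n$), so the inverse limit is torsion and dies after $\otimes\,\mathbb{Q}_\ell$. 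In effect, you are re-deriving the content of the cited Olsson results rather than quoting them. Both routes are valid; yours is longer but makes the mechanism visible, while the paper's is economical at the cost of opacity. One small point worth sharpening: your closing remark that the transfer argument "goes through unconditionally" with $\mathbb{Q}_\ell$-coefficients deserves the explicit observation that the groups $H^i(H,\mathbb{Z}/\ell^n)$ for $i>0$ are uniformly annihilated by $\ell^{v_\ell(|H|)}$, so $\varprojlim_n H^i(H,\mathbb{Z}/\ell^n)$ is a bounded-torsion $\mathbb{Z}_\ell$-module and vanishes after inverting $\ell$ --- an inverse limit of torsion groups is not automatically torsion, so the uniform bound is what makes the step legitimate.
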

\begin{proof}
Combining Theorem 5.1 and Corollary 5.8 of \cite{fujiwara}, we have that $R\pi_*\mathbb{Q}_\ell$ is acyclic in non-zero degrees, so we only need to show that $R^0\pi_*\mathbb{Q}_{\ell,\mathfrak{X}} = \mathbb{Q}_{\ell,X}$ which follows easily from the fact that the topological spaces of $\mathfrak{X}$ and $X$ are homeomorphic.
\end{proof}
We now return to Lemma \ref{tracemap}:
\begin{proof} (of Lemma \ref{tracemap})  Let $U_i$ be the non-empty smooth locus of $D_i$, and $j: U_i \hookrightarrow D_i$ the inclusion, and $k: Z_i \hookrightarrow D_i$ the inclusion of $Z_i := D_i - U_i$ into $D_i$.  Then from the short exact sequence 
\begin{equation*}
0 \rightarrow j_!\mathbb{Q}_{\ell,U_i} \rightarrow \mathbb{Q}_{\ell,D_i} \rightarrow i_*\mathbb{Q}_{\ell,Z_i} \rightarrow 0,
\end{equation*}
we get a long exact sequence
\begin{equation*}
... \rightarrow H_c^{2e-1}(Z_i,\mathbb{Q}_\ell(e)) \rightarrow H_c^{2e}(U_i,\mathbb{Q}_\ell(e)) \rightarrow H_c^{2e}(D_i,\mathbb{Q}_\ell(e)) \rightarrow H_c^{2e}(Z_i,\mathbb{Q}_\ell(e)) \rightarrow ...
\end{equation*}
But $H_c^{2e-1}(Z_i,\mathbb{Q}_\ell(e)) = H_c^{2e}(Z_i,\mathbb{Q}_\ell(e)) = 0$ because $\mathrm{dim}(Z_i) < e$ (use Lemma \ref{cms} and the fact that the statement is true for algebraic spaces.)  Now Poincar\'{e} duality on the smooth stack $U_i$ \cite[4.4.1]{LO} gives the required map $Tr_i$.
\end{proof}
An easy extension of the above lemma shows that the trace maps $Tr_i$ induce a canonical isomorphism
\begin{equation}\label{trace}
\mathrm{Hom}(H_c^{2e}(D,\mathbb{Q}_\ell(e)),\mathbb{Q}_\ell) \stackrel{\sim}{\longrightarrow} \mathrm{Hom}(\oplus_i H_c^{2e}(U_i,\mathbb{Q}_\ell(e)),\mathbb{Q}_\ell) \stackrel{\sim}{\longrightarrow} \mathbb{Q}_\ell^{I(D)},
\end{equation}
where $I(D) = \{D_1,...,D_r\}$ is the set of irreducible components of $D$.   Let $\alpha: D \hookrightarrow \mathfrak{X}$ be the inclusion.  Let $\mathscr{D}_{\mathfrak{X}}$ and $\mathscr{D}_k$ be the Verdier dualities on $\mathfrak{X}$ and $k$, respectively.  Then we have
\begin{equation*}
\mathscr{D}_k R\Gamma_c \alpha^* \mathbb{Q}_\ell(e) = R\Gamma R\alpha^! \mathscr{D}_{\mathfrak{X}}(\mathbb{Q}_\ell(e)) = R\Gamma R\alpha^! \mathbb{Q}_\ell[2N](N-e),
\end{equation*}
where we used the fact that $\mathfrak{X}$ is smooth in the right-hand equality.  This induces a canonical isomorphism
\begin{equation}\label{verdier}
\mathrm{Hom}(H_c^{2e}(D,\mathbb{Q}_\ell(e)),\mathbb{Q}_\ell) \stackrel{\sim}{\longrightarrow} H_D^{2d}(\mathfrak{X},\mathbb{Q}_\ell(d))
\end{equation}
(recall that $d = N - e$).
\end{pg}
\begin{defn}\label{cycleclassdefn}
In the above notation, we obtain from \ref{trace} a canonical element $[D] \in \mathrm{Hom}(H_c^{2e}(D,\mathbb{Q}_\ell(e)),\mathbb{Q}_\ell)$ corresponding to $\sum a_i D_i \in \mathbb{Q}_\ell^{I(D)}$.  Then we set the localized cycle class of $D$, denoted $cl(D)$, to be the image of $[D]$ in $H_D^{2d}(\mathfrak{X},\mathbb{Q}_\ell(d))$ under \ref{verdier}.  
\end{defn}
\begin{pg}
We also write $cl(D)$ for the image of the localized cycle class under the map $H_D^{2d}(\mathfrak{X},\mathbb{Q}_\ell(d)) \rightarrow H^{2d}(\mathfrak{X},\mathbb{Q}_\ell(d))$ (we call this the global cycle class of $D$).  When we need to distinguish between the local cycle class and global cycle class of $D$, we will denote these by $cl_{loc}(D)$ and $cl_{gl}(D)$, respectively.  It is standard in the case of schemes that the resulting map
\begin{equation*}
cl: Z^d(\mathfrak{X}) \rightarrow H^{2d}(\mathfrak{X},\mathbb{Q}_\ell(d))
\end{equation*}
passes to the Chow group $A^d(\mathfrak{X})$, and is compatible with the contravariant functoriality for $A^d(-)$ and $H^{2d}(-,\mathbb{Q}_\ell(d))$.  The same proof works for stacks.
\end{pg}
\begin{pg}
Now let $D \subset \mathfrak{X}$ be any reduced closed subscheme, and $\alpha: D \hookrightarrow \mathfrak{X}$ the inclusion.  Let $\mathrm{dim}(D) = e$ and $\mathrm{dim}(\mathfrak{X}) = N$.  We can use the above cycle class map to give a cycle-theoretic description of the Poincar\'{e} dual of the restriction map 
\begin{equation*}
\alpha^*: H_c^{2e}(\mathfrak{X},\mathbb{Q}_\ell(e)) \rightarrow H_c^{2e}(D,\mathbb{Q}_\ell(e)).
\end{equation*}
Let $D = \cup_i D_i$ be the decomposition of $D$ into its irreducible components, and let $I_e(D)$ be the set of $e$-dimensional irreducible components of $D$.  Let $Tr_i: H_c^{2e}(D_i,\mathbb{Q}_\ell(e)) \rightarrow \mathbb{Q}_\ell$ be the trace isomorphism for any $e$-dimensional irreducible component of $D$.  
\end{pg}
\begin{prop}
With notation as above, we have a commutative square
\begin{equation*}
\begin{CD}
H_c^{2e}(D,\mathbb{Q}_\ell(e))^\vee @> (\alpha^*)^\vee >> H_c^{2e}(\mathfrak{X},\mathbb{Q}_\ell(e))^\vee \\
@V \sim VV @V \sim VV \\
\mathbb{Q}_\ell^{I_e(d)} @>>> H^{2N - 2e}(\mathfrak{X},\mathbb{Q}_\ell(N-e))
\end{CD}
\end{equation*}
where the vertical arrows are the isomorphisms induced by Poincar\'{e} duality, and the lower arrow sends $\sum a_i[D_i]$ to $\sum a_icl_n(D_i)$.
\end{prop}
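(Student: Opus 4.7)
The plan is to identify the dual map $(\alpha^*)^\vee$ with the natural ``forget supports'' map $H_D^{2d}(\mathfrak{X},\mathbb{Q}_\ell(d)) \to H^{2d}(\mathfrak{X},\mathbb{Q}_\ell(d))$ under the Verdier duality isomorphism \ref{verdier}, after which the proposition reduces to unwinding Definition \ref{cycleclassdefn} together with linearity.

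First I would produce the identification of $(\alpha^*)^\vee$. Since $\alpha: D \hookrightarrow \mathfrak{X}$ is a closed immersion, $\alpha_! = \alpha_*$ and the restriction $\alpha^*$ on compactly supported cohomology is induced by the adjunction unit $\mathbb{Q}_{\ell,\mathfrak{X}}(e) \to R\alpha_*\alpha^*\mathbb{Q}_{\ell,\mathfrak{X}}(e)$. Applying Verdier duality, using smoothness of $\mathfrak{X}$ so that $\mathscr{D}_{\mathfrak{X}}\mathbb{Q}_\ell(e) \cong \mathbb{Q}_\ell[2N](N-e)$, and Poincaré duality on $\mathfrak{X}$ via \cite[4.4.1]{LO}, the same computation that produced \ref{verdier} should show that $(\alpha^*)^\vee$ translates into the canonical forget-supports morphism
\begin{equation*}
H_D^{2d}(\mathfrak{X},\mathbb{Q}_\ell(d)) \longrightarrow H^{2d}(\mathfrak{X},\mathbb{Q}_\ell(d)).
\end{equation*}

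Once this identification is in hand, the remainder is formal. By naturality of the trace isomorphism \ref{trace} and of the Verdier duality isomorphism \ref{verdier} with respect to the closed immersion $D_i \hookrightarrow D$, there is a commutative square
\begin{equation*}
\begin{CD}
\mathbb{Q}_\ell @>>> \mathbb{Q}_\ell^{I_e(D)} \\
@V \sim VV @V \sim VV \\
H_{D_i}^{2d}(\mathfrak{X},\mathbb{Q}_\ell(d)) @>>> H_D^{2d}(\mathfrak{X},\mathbb{Q}_\ell(d))
\end{CD}
\end{equation*}
whose top arrow sends $1 \mapsto [D_i]$ and whose left vertical, by Definition \ref{cycleclassdefn}, sends $1 \mapsto cl_{loc}(D_i)$. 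Postcomposing the right vertical isomorphism with the forget-supports map therefore sends $[D_i]$ to the global cycle class $cl_{gl}(D_i) = cl(D_i)$, and linearity handles the general cycle $\sum a_i[D_i]$.

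The main obstacle will be the stack-theoretic content of the first step: verifying that Verdier duality on Deligne-Mumford stacks genuinely transports $(\alpha^*)^\vee$ to the forget-supports map. For schemes this is entirely standard, but in the DM setting one must confirm that the six-functor formalism of \cite{LO} interacts with the adjunction unit for closed immersions in the expected way. If this proves delicate, I would reduce to the case of algebraic spaces by applying Lemma \ref{cms} to the coarse moduli spaces of $\mathfrak{X}$ and $D$, where the classical argument (as in \cite[Cycle]{SGA4h}) applies directly.
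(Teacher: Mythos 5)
Your proof is correct and is precisely the unwinding of Definition \ref{cycleclassdefn} and the identification \ref{verdier} that the paper itself considers immediate (its own proof is the single line ``This is immediate from the above description of the cycle class map''). The key observation you make explicit — that under Verdier duality, $(\alpha^*)^\vee$ corresponds to the forget-supports map $H_D^{2d}(\mathfrak{X},\mathbb{Q}_\ell(d)) \to H^{2d}(\mathfrak{X},\mathbb{Q}_\ell(d))$ — is exactly the content of the isomorphism \ref{verdier} applied to the adjunction unit, and once one has it, the bottom arrow of the square is forced to be inverse-of-trace $\circ$ \ref{verdier} $\circ$ forget-supports, which by Definition \ref{cycleclassdefn} sends $\sum a_i[D_i]$ to $\sum a_i\, cl(D_i)$. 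Your worry in the last paragraph about the six-functor formalism on DM stacks is reasonable caution, but the paper has already assumed Poincar\'{e} duality on smooth DM stacks via \cite[4.4.1]{LO} in building \ref{trace} and \ref{verdier}, so there is no additional stack-theoretic content to verify beyond what has already been invoked.
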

\begin{proof}
This is immediate from the above description of the cycle class map.
\end{proof}
\begin{pg}\label{divisors1}
In the case of divisors, the cycle class map can be described as follows.  Let $\mathfrak{X}$ be a smooth Deligne-Mumford stack over $k$, and $D$ a closed subscheme of $\mathfrak{X}$; let $i: D \hookrightarrow \mathfrak{X}$ be the inclusion, and $j: U \hookrightarrow \mathfrak{X}$ the inclusion of the open complement $U = \mathfrak{X} - D$.  Recall that there is a natural bijection between $H_D^1(\mathfrak{X},\mathbb{G}_m)$ and the group of Cartier divisors supported on $D$.  To see this, first define
\begin{equation*}
\mathscr{H}_D^j(\mathfrak{X},\mathbb{G}_m) := i_*R^ji^!\mathbb{G}_m.
\end{equation*}
Then we have an exact sequence on sheaves on $\mathfrak{X}_{et}$
\begin{equation*}
0 \rightarrow \mathscr{H}_D^0(\mathfrak{X},\mathbb{G}_m) \rightarrow \mathbb{G}_m \rightarrow j_*\mathbb{G}_{m,U} \rightarrow \mathscr{H}_D^1(\mathfrak{X},\mathbb{G}_m) \rightarrow 0,
\end{equation*}
so that
\begin{equation*}
\mathscr{H}_D^1(\mathfrak{X},\mathbb{G}_m) \cong \mathrm{coker}(\mathbb{G}_m \rightarrow j_*\mathbb{G}_{m,U}).
\end{equation*}
By its definition, giving a global section of the latter sheaf is the same as giving a Cartier divisor supported on $D$.  Moreover, it is clear that $\mathscr{H}_D^0(\mathfrak{X},\mathbb{G}_m) = 0$.  Therefore the spectral sequence
\begin{equation*}
H^p(\mathfrak{X},\mathscr{H}_D^q(\mathfrak{X},\mathbb{G}_m)) \Rightarrow H_D^{p+q}(\mathfrak{X},\mathbb{G}_m)
\end{equation*}
shows that $$H_D^1(\mathfrak{X},\mathbb{G}_m) \cong H^0(\mathfrak{X},\mathscr{H}_D^1(\mathfrak{X},\mathbb{G}_m)),$$ and hence that $H_D^1(\mathfrak{X},\mathbb{G}_m)$ is isomorphic to the group of Cartier divisors supported on $D$.

Now suppose $D$ is a Cartier divisor on $\mathfrak{X}$.  Then we have a canonical class $cl'(D) \in H_D^1(\mathfrak{X},\mathbb{G}_m)$ corresponding to $D$.  Then for any $n$ prime to $p = \mathrm{char \ }k$, we produce a class $cl''(D) \in H_D^2(\mathfrak{X},\mathbb{Q}_\ell(1))$ using the Kummer exact sequence of sheaves
\begin{equation*}
0 \rightarrow \mu_n \rightarrow \mathbb{G}_m \rightarrow \mathbb{G}_m \rightarrow 0
\end{equation*}
to induce a map $H_D^1(\mathfrak{X},\mathbb{G}_m) \rightarrow H_D^2(\mathfrak{X},\mu_n)$ and taking the limit over $n = \ell^m$.
\end{pg}
\begin{prop}\label{divisors}
The class $cl''(D) \in H_D^2(\mathfrak{X},\mathbb{Q}_\ell(1))$ agrees with the class $cl(D)$ defined earlier.
\end{prop}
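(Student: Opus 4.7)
The plan is to reduce to the case where $D$ is a smooth prime divisor and then identify both classes with the canonical generator coming from absolute purity.

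First, both $cl(D)$ and $cl''(D)$ are additive in the divisor $D$: for $cl(D)$ this is immediate from Definition \ref{cycleclassdefn}, while for $cl''(D)$ it follows because the isomorphism $H_D^1(\mathfrak{X},\mathbb{G}_m) \cong \mathrm{Div}_D(\mathfrak{X})$ from Paragraph \ref{divisors1} is a group isomorphism and the Kummer connecting map is a homomorphism. Hence I may assume $D$ is prime (that is, integral).

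Next I reduce to the case where $D$ is smooth. Let $U \subseteq D$ be the smooth locus and $Z = D - U$, a closed substack of codimension $\geq 2$ in $\mathfrak{X}$; set $\mathfrak{X}' = \mathfrak{X} - Z$. The excision sequence
\begin{equation*}
H_Z^2(\mathfrak{X},\mathbb{Q}_\ell(1)) \longrightarrow H_D^2(\mathfrak{X},\mathbb{Q}_\ell(1)) \longrightarrow H_U^2(\mathfrak{X}',\mathbb{Q}_\ell(1))
\end{equation*}
together with absolute cohomological purity for the smooth Deligne--Mumford stack $\mathfrak{X}$ (\cite[4.4.1]{LO}, combined with Lemma \ref{cms} to push to the coarse space if needed) gives $H_Z^2(\mathfrak{X},\mathbb{Q}_\ell(1)) \cong H^{2-2c}(Z,\mathbb{Q}_\ell(1-c)) = 0$ for the codimension $c \geq 2$. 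Thus the right-hand restriction is injective. Both cycle classes are compatible with restriction to open substacks: for $cl''(D)$ this is immediate from its sheaf-theoretic definition, while for $cl(D)$ it follows from the functoriality of the trace isomorphism \ref{trace} and of Verdier duality \ref{verdier} under the open immersion $\mathfrak{X}' \hookrightarrow \mathfrak{X}$ (note that $D \cap \mathfrak{X}' = U$). So I am reduced to verifying that $cl(U) = cl''(U)$ in $H_U^2(\mathfrak{X}',\mathbb{Q}_\ell(1))$.

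Finally, with $D$ smooth in smooth $\mathfrak{X}$, purity yields a canonical isomorphism $H_D^2(\mathfrak{X},\mathbb{Q}_\ell(1)) \cong H^0(D,\mathbb{Q}_\ell)$, under which I will show both classes correspond to the constant $1$. For $cl(D)$, this is an unwinding of Definition \ref{cycleclassdefn}: the element of $\mathbb{Q}_\ell^{I(D)}$ is $1$, and the Verdier duality isomorphism \ref{verdier} is exactly the purity isomorphism in this smooth codimension-one situation. For $cl''(D)$, I choose an étale atlas $f \colon X \to \mathfrak{X}$ by a smooth scheme, set $D_X = f^{-1}(D)$, and invoke smooth-pullback compatibility of both the Kummer construction and the purity isomorphism to reduce the statement to the classical equality for smooth divisors on smooth schemes, which is \cite[Cycle]{SGA4h}.

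The main obstacle is ensuring absolute purity in the stacky setting and checking that both constructions descend properly along an étale atlas; the injectivity argument via purity on $Z$ is what allows the technical reduction to be clean, and once the smooth case is isolated, the scheme-theoretic fact from SGA $4\tfrac{1}{2}$ does the rest.
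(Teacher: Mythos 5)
Your argument is correct in outline but takes a noticeably longer route than the paper, which simply observes that both definitions of the local cycle class are compatible with \'{e}tale localization and then cites \cite[Cycle, 2.3.6]{SGA4h} for the scheme case. You instead pre-process the divisor: additivity reduces to $D$ prime, a semi-purity vanishing removes the singular locus $Z$ of $D$, and then you compare both classes to the generator under the purity isomorphism for a smooth divisor before finally localizing on an \'{e}tale atlas. Your version makes explicit the descent step that the paper leaves implicit (restriction to $H_U^2(\mathfrak{X}',\mathbb{Q}_\ell(1))$ is injective, and $H^0(D,\mathbb{Q}_\ell)$ injects into $H^0(D_X,\mathbb{Q}_\ell)$), which is a genuine gain in rigor over the one-line citation.

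One point needs repair. When you justify $H_Z^2(\mathfrak{X},\mathbb{Q}_\ell(1))=0$ you invoke the absolute purity isomorphism $H_Z^2(\mathfrak{X},\mathbb{Q}_\ell(1)) \cong H^{2-2c}(Z,\mathbb{Q}_\ell(1-c))$. That isomorphism requires $Z$ to be \emph{smooth}, but $Z$ is the singular locus of a prime divisor and there is no reason for it to be smooth or even equidimensional. The statement you actually need is semi-purity: for a closed substack $Z$ of codimension $\geq c$ in the smooth stack $\mathfrak{X}$, the sheaves $\mathscr{H}_Z^i(\mathfrak{X},\mathbb{Q}_\ell)$ vanish for $i < 2c$ (this is checked \'{e}tale-locally on a scheme atlas, where it is standard), and the local-to-global spectral sequence then gives $H_Z^i(\mathfrak{X},\mathbb{Q}_\ell(1))=0$ for $i<2c$. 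With $c\geq 2$ this yields the vanishing of $H_Z^2$ without any smoothness hypothesis on $Z$, and the rest of your reduction goes through as written.
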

\begin{proof}
In the case of schemes this is \cite[Cycle, 2.3.6]{SGA4h}, and one easily reduces to this case since the definition of $cl(D) \in H_D^2(\mathfrak{X},\mathbb{Q}_\ell(1))$ is compatible with \'{e}tale localization.
\end{proof}
Using this fact, and finite generation of the Neron-Severi group (\ref{ns}) we get the following:
\begin{prop}\label{factors}
Let $\mathfrak{X}$ be a smooth scheme over $k$ as above, and choose a prime $\ell \not= \mathrm{char \ }k$.   Then the cycle class map
\begin{equation*}
cl: \mathrm{Pic}(\mathfrak{X}) \rightarrow H^2(\mathfrak{X},\mathbb{Q}_\ell(1))
\end{equation*}
factors as 
\begin{equation*}
\mathrm{Pic}(\mathfrak{X}) \rightarrow NS(\mathfrak{X}) \otimes_{\mathbb{Z}} \mathbb{Q}_\ell \hookrightarrow H^2(\mathfrak{X},\mathbb{Q}_\ell(1)),
\end{equation*}
where the  map $\mathrm{Pic}(\mathfrak{X}) \rightarrow NS(\mathfrak{X}) \otimes_{\mathbb{Z}} \mathbb{Q}_\ell$ is the obvious one, and the map on the right is an injection.
\end{prop}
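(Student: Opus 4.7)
The plan is to deduce the proposition from the Kummer-theoretic description of the cycle class map given in Paragraph \ref{divisors1} and Proposition \ref{divisors}, combined with the $\ell$-divisibility of $\mathrm{Pic}^0(\mathfrak{X})$ (for $\ell \neq p$), which in turn follows from the 1-motivic structure of $\mathbf{Pic}_{\mathfrak{X}/k}$ established in Proposition \ref{onemotivic}.

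First, by Proposition \ref{divisors} together with Paragraph \ref{divisors1}, the cycle class map
\begin{equation*}
\mathrm{Pic}(\mathfrak{X}) \to H^2(\mathfrak{X},\mathbb{Z}_\ell(1)) = \ilim_n H^2(\mathfrak{X},\mu_{\ell^n})
\end{equation*}
is induced by the system of Kummer boundary maps $\mathrm{Pic}(\mathfrak{X})/\ell^n \to H^2(\mathfrak{X},\mu_{\ell^n})$, each of which is injective by the long exact Kummer cohomology sequence (since $H^0(\mathfrak{X},\mathbb{G}_m)$ is $\ell$-divisible when $k=\overline{k}$, or one uses that the map factors through an injection out of the cokernel). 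Next, by Paragraph \ref{ns}, the sheaf $\mathbf{Pic}^0_{\mathfrak{X}/k}$ is by definition the image (in $Sh(Sm/k)_{et}[1/p]$) of a semi-abelian $k$-variety $G \to \mathbf{Pic}_{\mathfrak{X}/k}$. Since $k$ is algebraically closed (the standing assumption of Section~4), evaluating this surjection of sheaves at $\mathrm{Spec}\,k$ gives a map $G(k) \to \mathrm{Pic}^0(\mathfrak{X})$ whose kernel and cokernel are annihilated by a power of $p$. Because the $k$-points of a semi-abelian variety over an algebraically closed field are $\ell$-divisible for every $\ell \neq p$, the group $\mathrm{Pic}^0(\mathfrak{X})$ is $\ell$-divisible as well.

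Combining this divisibility with the four-term exact sequence $0 \to \mathrm{Pic}^0(\mathfrak{X}) \to \mathrm{Pic}(\mathfrak{X}) \to NS(\mathfrak{X}) \to 0$ (which is exact after inverting $p$, by the Corollary following Proposition \ref{onemotivic}) and the proposition at the end of Section~2 on $A,K,B,C$, the projection $\mathrm{Pic}(\mathfrak{X})/\ell^n \twoheadrightarrow NS(\mathfrak{X})/\ell^n$ is an isomorphism for every $n$: surjectivity is clear, and $\ell$-divisibility of $\mathrm{Pic}^0(\mathfrak{X})$ inside $\mathrm{Pic}(\mathfrak{X})$ gives $\mathrm{Pic}^0(\mathfrak{X}) \subseteq \ell^n\mathrm{Pic}(\mathfrak{X})$, forcing injectivity. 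Hence the Kummer injection descends to an injection $NS(\mathfrak{X})/\ell^n \hookrightarrow H^2(\mathfrak{X},\mu_{\ell^n})$. Passing to the inverse limit and using that $NS(\mathfrak{X})$ is finitely generated (so $\ilim_n NS(\mathfrak{X})/\ell^n = NS(\mathfrak{X}) \otimes_{\mathbb{Z}}\mathbb{Z}_\ell$ and the Mittag--Leffler condition is automatic), we obtain an injection $NS(\mathfrak{X}) \otimes \mathbb{Z}_\ell \hookrightarrow H^2(\mathfrak{X},\mathbb{Z}_\ell(1))$ through which the cycle class map factors. Tensoring with the flat $\mathbb{Z}_\ell$-module $\mathbb{Q}_\ell$ preserves injectivity and yields both the factorization and the injectivity asserted in the proposition.

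The only step that requires genuine care is the $\ell$-divisibility of $\mathrm{Pic}^0(\mathfrak{X})$: one must pass from a sheaf-theoretic surjection $G \twoheadrightarrow \mathbf{Pic}^0_{\mathfrak{X}/k}$ on $(Sm/k)_{et}[1/p]$ to a statement about $k$-points, and this is where the hypothesis $k=\overline{k}$ is essential (so that every local lift of a $k$-point of $\mathbf{Pic}^0_{\mathfrak{X}/k}$ in a smooth \'etale cover actually extends to a $k$-point of $G$, up to $p$-power torsion). Once this is in hand, the rest of the argument is formal manipulation with the Kummer sequence and inverse limits of finitely generated modules.
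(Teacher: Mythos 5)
Your proof is correct and takes essentially the same approach as the paper's: identify the cycle class map with the Kummer boundary map (Proposition \ref{divisors}), use the divisibility of $\mathrm{Pic}^0(\mathfrak{X})$ up to $p$-torsion to collapse $\mathrm{Pic}(\mathfrak{X})/\ell^n$ onto $NS(\mathfrak{X})/\ell^n$, and then compute the inverse limit via finite generation of $NS(\mathfrak{X})$. You spell out the $\ell$-divisibility of $\mathrm{Pic}^0(\mathfrak{X})$ from the 1-motivic presentation more carefully than the paper does, but the structure of the argument is identical.
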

\begin{proof}
From the previous proposition we get that $cl: A^1(\mathfrak{X}) \rightarrow H^2(\mathfrak{X},\mathbb{Q}_\ell(1))$ factors through the injection (induced by the Kummer exact sequence)
\begin{equation*}
\bigg( \ilim_n \frac{\mathrm{Pic}(\mathfrak{X})}{\ell^n \mathrm{Pic}(\mathfrak{X})}\bigg) \otimes \mathbb{Q} \hookrightarrow H^2(\mathfrak{X},\mathbb{Q}_\ell(1)).
\end{equation*} 
By Lemma \ref{ns} we have an exact sequence
\begin{equation*}
0 \rightarrow \mathrm{Pic}^0(\mathfrak{X}) \rightarrow \mathrm{Pic}(\mathfrak{X}) \rightarrow NS(\mathfrak{X}) \rightarrow 0
\end{equation*}
with $\mathrm{Pic}^0(\mathfrak{X})$ divisible (at least up to $p$-torsion), and therefore
\begin{equation*}
\frac{\mathrm{Pic}(\mathfrak{X})}{\ell^n\mathrm{Pic}(\mathfrak{X})} = \frac{NS(\mathfrak{X})}{\ell^n NS(\mathfrak{X})}.
\end{equation*}
Therefore the map $cl$ factors as 
\begin{equation*}
\mathrm{Pic}(\mathfrak{X}) \rightarrow \ilim_n \frac{NS(\mathfrak{X})}{\ell^n NS(\mathfrak{X})} \otimes \mathbb{Q} \hookrightarrow H^2(\mathfrak{X},\mathbb{Q}_\ell(1)).
\end{equation*}
Therefore we are reduced to showing that
\begin{equation*}
\ilim_n \frac{NS(\mathfrak{X})}{\ell^n NS(\mathfrak{X})} \cong NS(\mathfrak{X}) \otimes_{\mathbb{Z}} \mathbb{Z}_\ell,
\end{equation*}
which is clear since $NS(\mathfrak{X})$ is finitely generated.
\end{proof}

\subsection{A variant of the cycle class map for compactly supported cohomology}  Let $\overline{\mathfrak{X}}$ be a smooth proper Deligne-Mumford stack over a field $k = \overline{k}$, and let $i: D \hookrightarrow \overline{\mathfrak{X}}$ be a reduced closed subscheme of $\overline{\mathfrak{X}}$.  Let $\mathfrak{X} = \overline{\mathfrak{X}} - D$, and let $j: \mathfrak{X} \hookrightarrow \overline{\mathfrak{X}}$ be the inclusion.  Finally, let $\mathrm{Div}_{\mathfrak{X}}(\overline{\mathfrak{X}})$ be the free abelian group of divisors on $\overline{\mathfrak{X}}$ whose support is contained in $\mathfrak{X}$, i.e., disjoint from $D$.  In the rest of this section we describe a cycle class map to compactly supported cohomology
\begin{equation}\label{cpctcycle}
cl_c :  \mathrm{Div}_{\mathfrak{X}}(\overline{\mathfrak{X}}) \longrightarrow H_c^2(\mathfrak{X},\mathbb{Q}_{\ell}(1)) := H^2(\overline{\mathfrak{X}},j_!\mathbb{Q}_{\ell}(1))
\end{equation}
and prove some compatibilities regarding this cycle class map.

\subsection{First definition of \ref{cpctcycle}} First recall the following well-known proposition:
\begin{prop}
Let $i: D \hookrightarrow \overline{\mathfrak{X}}$ be the inclusion, and define $$\mathbb{G}_{m,\overline{\mathfrak{X}},D} := \mathrm{Ker}(\mathbb{G}_{m,\overline{\mathfrak{X}}} \rightarrow i_*\mathbb{G}_{m,D}).$$ Then $H^1(\overline{\mathfrak{X}},\mathbb{G}_{m,\overline{\mathfrak{X}},D})$ is in bijection with isomorphism classes of pairs $(\mathscr{L},\sigma)$, where $\mathscr{L}$ is a line bundle on $\overline{\mathfrak{X}}$ and $\sigma: \mathscr{L}\vert_D \stackrel{\sim}{\longrightarrow} \mathcal{O}_{D}$ is a 
trivialization of $\mathscr{L}$ on $D$.  
\end{prop}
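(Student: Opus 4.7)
The plan is to produce the bijection via a Čech cocycle description together with the long exact sequence attached to
\begin{equation*}
0 \to \mathbb{G}_{m,\overline{\mathfrak{X}},D} \to \mathbb{G}_{m,\overline{\mathfrak{X}}} \xrightarrow{\rho} i_*\mathbb{G}_{m,D} \to 0.
\end{equation*}
First I would check short-exactness of this sequence on $\overline{\mathfrak{X}}_{et}$. Left exactness and the identification of the kernel are built into the definition of $\mathbb{G}_{m,\overline{\mathfrak{X}},D}$, so the only point is surjectivity of $\rho$: given a section $u$ of $\mathcal{O}_D^\times$ on an étale neighborhood $U$ of a point $x\in D$, lift $u$ arbitrarily to $\tilde u\in\Gamma(U,\mathcal{O}_U)$; after shrinking $U$ around $x$, $\tilde u$ is a unit since non-vanishing is open on the coarse space of $U$.

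Taking cohomology gives the exact sequence
\begin{equation*}
H^0(\overline{\mathfrak{X}},\mathbb{G}_m)\to H^0(D,\mathbb{G}_m)\to H^1(\overline{\mathfrak{X}},\mathbb{G}_{m,\overline{\mathfrak{X}},D})\to \mathrm{Pic}(\overline{\mathfrak{X}})\to \mathrm{Pic}(D),
\end{equation*}
using $H^j(\overline{\mathfrak{X}},i_*\mathbb{G}_m)=H^j(D,\mathbb{G}_m)$ since $i$ is a closed immersion. This immediately identifies the image of $H^1(\overline{\mathfrak{X}},\mathbb{G}_{m,\overline{\mathfrak{X}},D})$ in $\mathrm{Pic}(\overline{\mathfrak{X}})$ with the line bundles $\mathscr{L}$ satisfying $\mathscr{L}|_D\cong\mathcal{O}_D$, and identifies the fiber over a fixed $\mathscr{L}$ with $H^0(D,\mathbb{G}_m)$ modulo the image of $H^0(\overline{\mathfrak{X}},\mathbb{G}_m)$; this is exactly the set of trivializations $\sigma:\mathscr{L}|_D\xrightarrow{\sim}\mathcal{O}_D$ modulo the automorphism group of $(\mathscr{L},\sigma)$ coming from scalar automorphisms of $\mathscr{L}$.

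Next I would make the bijection explicit in both directions. Given $(\mathscr{L},\sigma)$, choose an étale cover $\{U_\alpha\to\overline{\mathfrak{X}}\}$ with local frames $e_\alpha$ of $\mathscr{L}$ and transition functions $g_{\alpha\beta}=e_\beta/e_\alpha$; refining if necessary, the surjectivity of $\rho$ proved above lets me rescale each $e_\alpha$ so that $\sigma(e_\alpha|_{D\cap U_\alpha})=1$. Then $g_{\alpha\beta}|_D=1$, so $\{g_{\alpha\beta}\}$ is a Čech $1$-cocycle with values in $\mathbb{G}_{m,\overline{\mathfrak{X}},D}$. Isomorphic pairs yield cohomologous cocycles, and conversely any such cocycle glues to a pair $(\mathscr{L},\sigma)$ by the usual construction of $\mathscr{L}$ from transition functions, the condition $g_{\alpha\beta}|_D=1$ providing the canonical trivialization $\sigma$.

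The only real bookkeeping issue is to verify that this identification respects the equivalence relations on both sides and that $\check H^1$ on the étale site of a Deligne--Mumford stack agrees with derived-functor $H^1$ (standard, as $\mathbb{G}_{m,\overline{\mathfrak{X}},D}$ is a subsheaf of the representable $\mathbb{G}_m$). I do not expect any genuine obstacle beyond this; the statement is essentially the relative version of the classical identification $\mathrm{Pic}(\overline{\mathfrak{X}})=H^1(\overline{\mathfrak{X}},\mathbb{G}_m)$.
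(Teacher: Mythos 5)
Your argument is correct and is exactly the standard proof that the paper delegates to a citation (the paper's entire ``proof'' is a pointer to \cite[App. A]{albpic} plus the remark that the same argument works for Deligne--Mumford stacks). The short exact sequence of sheaves, the resulting five-term cohomology sequence identifying both the image in $\mathrm{Pic}(\overline{\mathfrak{X}})$ and the fiber over a fixed $\mathscr{L}$, and the Čech cocycle translation are all as expected; one small phrasing slip is the mention of ``the coarse space of $U$'' --- objects of the étale site of a separated Deligne--Mumford stack are already schemes (or algebraic spaces) étale over $\overline{\mathfrak{X}}$, so non-vanishing is open on $U$ itself, with no coarse space needed.
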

\begin{proof}
The statement when $\overline{\mathfrak{X}}$ is a scheme is well-known (see, e.g., \cite[App. A]{albpic}) and the same proof applies to Deligne-Mumford stacks.
\end{proof}
We denote this group (following standard notation) by $\mathrm{Pic}(\overline{\mathfrak{X}},D)$.  

Given $E \in \mathrm{Div}_{\mathfrak{X}}(\overline{\mathfrak{X}})$, there is a natural class $$cl_{c}'(E) \in \mathrm{Pic}(\overline{\mathfrak{X}},D)$$ consisting of the pair $(\mathcal{O}(E),s)$ where $\mathcal{O}(E)$ is the line bundle of the divisor $E$ and $s: \mathcal{O}_X \rightarrow \mathcal{O}(E)$ is the canonical meromorphic section of $E$, which is an isomorphism restricted to $D$ since $D \cap E = \emptyset$.  To define a class in $H_c^2(\mathfrak{X},\mu_n)$ note that we have an exact sequence of sheaves
\begin{equation*}
0 \longrightarrow j_!\mu_{n,\mathfrak{X}} \longrightarrow \mathbb{G}_{m,\overline{\mathfrak{X}},D} \stackrel{\cdot n}{\longrightarrow} \mathbb{G}_{m,\overline{\mathfrak{X}},D} \longrightarrow 0.
\end{equation*}
Then we temporarily write $$cl_{c,1}(E) \in H_c^2(\mathfrak{X},\mathbb{Q}_{\ell}(1))$$ for the image of $cl_{c}'(E) \in H^1(\overline{\mathfrak{X}},\mathbb{G}_{m,\overline{\mathfrak{X}},D})$ under the boundary map, taking the limit over $n = \ell^m$.

\subsection{Second definition of \ref{cpctcycle}}  Given $E \in \mathrm{Div}_{\mathfrak{X}}(\overline{\mathfrak{X}})$, let $v: E \hookrightarrow \overline{\mathfrak{X}}$ be the inclusion.  Consider the canonical map in $D_c^b(\overline{\mathfrak{X}})$ 
\begin{equation*}
f: v_*Rv^!\mathbb{Q}_{\ell,\overline{\mathfrak{X}}}(1) \longrightarrow \mathbb{Q}_{\ell,\overline{\mathfrak{X}}}(1).
\end{equation*}
By definition, this map sends the local cycle class $cl_{loc}(E) \in H_E^2(\overline{\mathfrak{X}},\mathbb{Q}_\ell(1))$ to the global cycle class $cl_{gl}(E) \in H^2(\overline{\mathfrak{X}},\mathbb{Q}_\ell(1))$.  Notice, moreover, that the composition
\begin{equation*}
v_*Rv^!\mathbb{Q}_{\ell,\overline{\mathfrak{X}}}(1) \stackrel{f}{\longrightarrow} \mathbb{Q}_{\ell,\overline{\mathfrak{X}}}(1) \longrightarrow i_*\mathbb{Q}_{\ell,D}(1)
\end{equation*}
is zero, since $D$ and $E$ are disjoint.  Therefore $f$ factors through a unique map
\begin{equation*}
\tilde{f}: v_*Rv^!\mathbb{Q}_{\ell,\overline{\mathfrak{X}}}(1) \longrightarrow j_!\mathbb{Q}_{\ell,\mathfrak{X}} = \mathrm{Ker}(\mathbb{Q}_{\ell,\overline{\mathfrak{X}}}(1) \rightarrow i_*\mathbb{Q}_{\ell,D}(1))
\end{equation*}
($\tilde{f}$ is unique because any two maps $f_1,f_2$ define a map $$f_1 - f_2: v_*Rv^!\mathbb{Q}_{\ell,\overline{\mathfrak{X}}}(1) \rightarrow i_*\mathbb{Q}_{\ell,D}[-1]$$ which must be the zero map, since $v_*Rv^!\mathbb{Q}_{\ell,\overline{\mathfrak{X}}}(1)$ is supported on $E$ and $i_*\mathbb{Q}_{\ell,D}(1)$ is supported on $D$).  

Taking global sections of the map $\tilde{f}$ induces a map $$H_E^2(\overline{\mathfrak{X}},\mathbb{Q}_\ell(1)) \longrightarrow H_c^2(\mathfrak{X},\mathbb{Q}_\ell(1))$$ and we define $cl_{c,2}(E) \in H_c^2(\mathfrak{X},\mathbb{Q}_\ell(1))$ to be the image of the local cycle class $cl_{loc}(E)$ under this map.

\begin{prop}\label{cpctagrees}
The two cycle classes defined above agree, i.e., $cl_{c,1}(E) = cl_{c,2}(E)$.
\end{prop}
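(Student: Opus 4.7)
The plan is to lift both cycle classes to a common class in the local cohomology $H_E^2(\overline{\mathfrak{X}}, j_!\mathbb{Q}_\ell(1))$ and compare them there using Proposition \ref{divisors}. The essential geometric input is the hypothesis $E \cap D = \emptyset$, which makes several natural comparison maps into isomorphisms after applying $v_*Rv^!$.

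For $cl_{c,1}(E)$, I would first produce a canonical local lift. The natural morphism of short exact Kummer sequences
\[
\begin{CD}
0 @>>> j_!\mu_{n} @>>> \mathbb{G}_{m,\overline{\mathfrak{X}},D} @>{\cdot n}>> \mathbb{G}_{m,\overline{\mathfrak{X}},D} @>>> 0 \\
@. @VVV @VVV @VVV @. \\
0 @>>> \mu_{n} @>>> \mathbb{G}_{m,\overline{\mathfrak{X}}} @>{\cdot n}>> \mathbb{G}_{m,\overline{\mathfrak{X}}} @>>> 0
\end{CD}
\]
has vertical cones $i_*\mu_{n,D}$ and $i_*\mathbb{G}_{m,D}$ which are supported on $D$; applying $v_*Rv^!$ kills them since $v$ factors through $\overline{\mathfrak{X}} - D$. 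In particular the inclusion induces an isomorphism $H_E^1(\overline{\mathfrak{X}}, \mathbb{G}_{m,\overline{\mathfrak{X}},D}) \cong H_E^1(\overline{\mathfrak{X}}, \mathbb{G}_m)$, so the Cartier-divisor class $cl'(E)$ from Paragraph \ref{divisors1} lifts uniquely. Applying the relative Kummer boundary to this lift produces a class $\widetilde{cl}_{c,1}(E) \in H_E^2(\overline{\mathfrak{X}}, j_!\mu_{\ell^m})$, whose image in $H^2(\overline{\mathfrak{X}}, j_!\mu_{\ell^m}) = H_c^2(\mathfrak{X}, \mu_{\ell^m})$ agrees with $cl_{c,1}(E)$ upon passing to the $\ell$-adic limit, by a direct diagram-chase against the definition.

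For $cl_{c,2}(E)$, the factorization $\tilde f : v_* R v^! \mathbb{Q}_\ell(1) \to j_!\mathbb{Q}_\ell(1)$ by construction already produces a local class $\widetilde{cl}_{c,2}(E) \in H_E^2(\overline{\mathfrak{X}}, j_!\mathbb{Q}_\ell(1))$ from $cl_{loc}(E)$, pushing down to $cl_{c,2}(E)$. To compare the two local lifts, I would use that the map $j_!\mathbb{Q}_\ell(1) \to \mathbb{Q}_\ell(1)$ has cone $i_*\mathbb{Q}_{\ell,D}(1)$, again supported on $D$, so the induced map $H_E^2(\overline{\mathfrak{X}}, j_!\mathbb{Q}_\ell(1)) \to H_E^2(\overline{\mathfrak{X}}, \mathbb{Q}_\ell(1))$ is an isomorphism. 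Under this isomorphism, $\widetilde{cl}_{c,2}(E)$ maps to $cl_{loc}(E)$ by the very definition of $\tilde f$, while $\widetilde{cl}_{c,1}(E)$ maps to the absolute Kummer class $cl''(E)$ of Paragraph \ref{divisors1}, which equals $cl_{loc}(E)$ by Proposition \ref{divisors}. Consequently $\widetilde{cl}_{c,1}(E) = \widetilde{cl}_{c,2}(E)$, and pushing forward to $H^2(\overline{\mathfrak{X}}, j_!\mathbb{Q}_\ell(1)) = H_c^2(\mathfrak{X}, \mathbb{Q}_\ell(1))$ yields the desired equality.

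The hard part is tracking the compatibility between the relative Kummer boundary (for $\mathbb{G}_{m,\overline{\mathfrak{X}},D}$) and the absolute one (for $\mathbb{G}_{m,\overline{\mathfrak{X}}}$) at the level of local cohomology classes, so that Proposition \ref{divisors} may be invoked on the absolute side. This should reduce by étale localization on $\overline{\mathfrak{X}}$ to the scheme-theoretic compatibility already handled in \cite[Cycle, 2.3.6]{SGA4h}, combined with the observation that near $E$ the sheaf $\mathbb{G}_{m,\overline{\mathfrak{X}},D}$ is indistinguishable from $\mathbb{G}_m$.
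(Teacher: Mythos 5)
Your proposal follows essentially the same route as the paper: both arguments hinge on the fact that, because $D \cap E = \emptyset$, applying $v_*Rv^!$ collapses the distinction between the relative sheaf $\mathbb{G}_{m,\overline{\mathfrak{X}},D}$ (resp.\ $j_!\mu_n$) and the absolute sheaf $\mathbb{G}_{m,\overline{\mathfrak{X}}}$ (resp.\ $\mu_n$), and both compare the two classes after lifting to (relative or absolute) $\mathbb{G}_m$-cohomology and invoking the Kummer sequence together with Proposition~\ref{divisors}. Your systematic use of the isomorphisms $H_E^*(\overline{\mathfrak{X}},\mathbb{G}_{m,\overline{\mathfrak{X}},D}) \cong H_E^*(\overline{\mathfrak{X}},\mathbb{G}_m)$ and $H_E^*(\overline{\mathfrak{X}},j_!\mathbb{Q}_\ell(1)) \cong H_E^*(\overline{\mathfrak{X}},\mathbb{Q}_\ell(1))$ is a clean way to organize the diagram chase, and the treatment of $cl_{c,2}(E)$ is fine: by the uniqueness of $\tilde f$, the adjunction $v_*Rv^! j_!\mathbb{Q}_\ell(1) \to j_!\mathbb{Q}_\ell(1)$ really does factor the local lift as you say.

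There is, however, one step where you assert a "direct diagram-chase against the definition" that is not purely formal. Having produced the unique lift $\widetilde{cl'}(E) \in H_E^1(\overline{\mathfrak{X}},\mathbb{G}_{m,\overline{\mathfrak{X}},D})$ of $cl'(E)$, you need its image in $H^1(\overline{\mathfrak{X}},\mathbb{G}_{m,\overline{\mathfrak{X}},D}) = \mathrm{Pic}(\overline{\mathfrak{X}},D)$ to equal $cl_c'(E) = (\mathcal{O}(E),s)$ before the relative Kummer boundary can be identified with $cl_{c,1}(E)$. The commuting square built from the forget-supports maps only tells you that after forgetting the trivialization on $D$ — i.e.\ after projecting to $\mathrm{Pic}(\overline{\mathfrak{X}})$ — you recover $[\mathcal{O}(E)]$; it does not by itself pin down which trivialization on $D$ the image carries, since the map $\mathrm{Pic}(\overline{\mathfrak{X}},D) \to \mathrm{Pic}(\overline{\mathfrak{X}})$ has kernel. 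Some concrete description of the forget-supports map is required to see that the trivialization is $s\vert_D$; this is precisely the content of the \v{C}ech computation in the paper's proof (cover $\overline{\mathfrak{X}}$ by $\overline{\mathfrak{X}} - E$ and a neighborhood of $E$ disjoint from $D$; the canonical section $s$ on $\overline{\mathfrak{X}} - E \supset D$ then supplies the trivialization). Your closing "hard part" paragraph gestures at the right geometry — near $E$ the relative sheaf is the absolute one — but misidentifies the issue as a compatibility between relative and absolute Kummer boundaries (which \emph{is} purely formal, by functoriality of the connecting map), when the actual nontrivial point is identifying the trivialization on $D$. Once you spell out this identification, the rest of your argument closes up correctly.
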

\begin{proof} 
We can follow the same steps as in the definition of $cl_{c,2}(E)$ to show that the canonical map
\begin{equation*}
g: v_*Rv^!\mathbb{G}_{m,\overline{\mathfrak{X}}} \longrightarrow \mathbb{G}_{m,\overline{\mathfrak{X}}}
\end{equation*}
factors uniquely through a map
\begin{equation*}
\tilde{g}:  v_*Rv^!\mathbb{G}_{m,\overline{\mathfrak{X}}} \longrightarrow \mathbb{G}_{m,\overline{\mathfrak{X}},D} = \mathrm{Ker}(\mathbb{G}_{m,\overline{\mathfrak{X}}} \rightarrow \mathbb{G}_{m,D}).
\end{equation*}
Taking global sections of $\tilde{g}$ yields a map $$\tilde{g}: H_E^1(\overline{\mathfrak{X}},\mathbb{G}_m) \rightarrow H^1(\overline{\mathfrak{X}},\mathbb{G}_{m,\overline{\mathfrak{X}},D}) = \mathrm{Pic}(\overline{\mathfrak{X}},D).$$  In terms of Cech cohomology, this map is described as follows: let $\mathcal{O}(E)$ be the line bundle of $E$.  Viewing $E$ as a Cartier divisor, we get a transition function $\alpha \in \mathbb{G}_m(U \rightarrow \overline{\mathfrak{X}})$ (where $U$ is some \'{e}tale cover of $\overline{\mathfrak{X}}$) defining $\mathcal{O}(E)$ such that $\alpha$ does not vanish along $D$.  Therefore $\alpha$ also defines a transition function for an element of $H^1(\overline{\mathfrak{X}},\mathbb{G}_{m,\overline{\mathfrak{X}},D})$, and this transition function is precisely the image of $cl(E)$ under $\tilde{g}$.  

From this description it is clear that $\tilde{g}(cl_{loc}(E)) =  cl_{c}'(E) \in \mathrm{Pic}(\overline{\mathfrak{X}},D)$, where $cl_c'(E)$ is the class defined earlier.  Using the Kummer exact sequence, we conclude that the two classes
\begin{equation*}
cl_{c,1}(E), cl_{c,2}(E) \in H_c^2(\mathfrak{X},\mathbb{Q}_\ell(1))
\end{equation*}
are the same.
\end{proof}
We write $cl_c(E)$ for the element $cl_{c,1}(E) = cl_{c,2}(E)$.
\begin{cor}\label{cpctfactors}
Let $\overline{\mathfrak{X}}$, $\mathfrak{X}$, and $D$ be as above.  Then the cycle class map
\begin{equation*}
cl_c: \mathrm{Div}_{\mathfrak{X}}(\overline{\mathfrak{X}}) \longrightarrow H_c^2(\mathfrak{X},\mathbb{Q}_\ell(1))
\end{equation*}
factors as
\begin{equation*}
\mathrm{Div}_{\mathfrak{X}}(\overline{\mathfrak{X}}) \rightarrow NS(\overline{\mathfrak{X}},D) \otimes_{\mathbb{Z}} \mathbb{Q}_\ell \hookrightarrow H_c^2(\mathfrak{X},\mathbb{Q}_\ell(1))
\end{equation*}
where the map $\mathrm{Div}_{\mathfrak{X}}(\overline{\mathfrak{X}}) \rightarrow NS(\overline{\mathfrak{X}},D)$ is the natural map and the map on the right is an injection.
\end{cor}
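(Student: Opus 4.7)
The plan is to mirror the proof of Proposition \ref{factors} in the relative setting, replacing $\mathrm{Pic}(\mathfrak{X})$ by $\mathrm{Pic}(\overline{\mathfrak{X}},D)$ and the usual Kummer sequence by the Kummer sequence for the sheaf $\mathbb{G}_{m,\overline{\mathfrak{X}},D}$. By the first definition of $cl_c$ in terms of $cl_{c,1}$, the map $cl_c: \mathrm{Div}_{\mathfrak{X}}(\overline{\mathfrak{X}}) \to H_c^2(\mathfrak{X},\mathbb{Q}_\ell(1))$ already factors through $\mathrm{Pic}(\overline{\mathfrak{X}},D) = H^1(\overline{\mathfrak{X}},\mathbb{G}_{m,\overline{\mathfrak{X}},D})$ via $E \mapsto cl_c'(E) = (\mathcal{O}(E),s)$, so it will suffice to show that the boundary map induced by Kummer factors through $NS(\overline{\mathfrak{X}},D) \otimes \mathbb{Q}_\ell$ and induces an injection there.

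Concretely, from the short exact sequence
\begin{equation*}
0 \longrightarrow j_!\mu_{\ell^n,\mathfrak{X}} \longrightarrow \mathbb{G}_{m,\overline{\mathfrak{X}},D} \stackrel{\cdot \ell^n}{\longrightarrow} \mathbb{G}_{m,\overline{\mathfrak{X}},D} \longrightarrow 0
\end{equation*}
and the long exact sequence in cohomology, one extracts an injection
\begin{equation*}
\mathrm{Pic}(\overline{\mathfrak{X}},D)/\ell^n \mathrm{Pic}(\overline{\mathfrak{X}},D) \hookrightarrow H_c^2(\mathfrak{X},\mu_{\ell^n}).
\end{equation*}
Passing to the inverse limit over $n$ and tensoring with $\mathbb{Q}$ yields an injection
\begin{equation*}
\Bigl(\ilim_n \mathrm{Pic}(\overline{\mathfrak{X}},D)/\ell^n \mathrm{Pic}(\overline{\mathfrak{X}},D)\Bigr) \otimes \mathbb{Q} \hookrightarrow H_c^2(\mathfrak{X},\mathbb{Q}_\ell(1)),
\end{equation*}
and $cl_c$ tautologically factors through the left-hand side.

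To identify this inverse limit with $NS(\overline{\mathfrak{X}},D) \otimes \mathbb{Z}_\ell$, I invoke the representability fact cited earlier in the introduction (Paragraph \ref{stackrelpic}), namely that $\mathbf{Pic}_{\overline{\mathfrak{X}},\mathcal{C}}$ is representable by a locally finite type commutative $k$-group scheme with $\mathbf{Pic}_{\overline{\mathfrak{X}},\mathcal{C}}^{0,red}$ a semi-abelian variety. This yields an exact sequence (of $k$-points, up to $p$-torsion as in Section \ref{ns})
\begin{equation*}
0 \to \mathrm{Pic}^0(\overline{\mathfrak{X}},D) \to \mathrm{Pic}(\overline{\mathfrak{X}},D) \to NS(\overline{\mathfrak{X}},D) \to 0,
\end{equation*}
with $\mathrm{Pic}^0(\overline{\mathfrak{X}},D)$ divisible after inverting $p$ (since it is the group of $k$-points of a semi-abelian variety over an algebraically closed field), and $NS(\overline{\mathfrak{X}},D)$ finitely generated. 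Divisibility gives $\mathrm{Pic}(\overline{\mathfrak{X}},D)/\ell^n = NS(\overline{\mathfrak{X}},D)/\ell^n$, and finite generation of $NS(\overline{\mathfrak{X}},D)$ gives $\ilim_n NS(\overline{\mathfrak{X}},D)/\ell^n = NS(\overline{\mathfrak{X}},D) \otimes \mathbb{Z}_\ell$. Combining these identifications with the injection above produces the asserted factorization.

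The main obstacle is not in the diagram chase itself, which is purely formal once the ingredients are in place, but in knowing that $\mathbf{Pic}_{\overline{\mathfrak{X}},D}$ has the same structural properties as $\mathbf{Pic}_{\overline{\mathfrak{X}}}$ (representability, semi-abelian connected component, finitely generated Néron--Severi). This is exactly the relative analogue of Theorem \ref{stackrepres}, and the paper defers it to Paragraph \ref{stackrelpic}; my proof treats it as a black box. Apart from this input, the argument is a word-for-word transcription of the proof of Proposition \ref{factors} with the pair $(\overline{\mathfrak{X}},D)$ replacing $\mathfrak{X}$ throughout.
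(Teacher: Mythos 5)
Your proof is correct and is essentially the paper's own argument: the paper's proof reads, verbatim, ``The proof of Proposition \ref{factors} works here as well, using the fact that $\mathrm{Pic}^0(\overline{\mathfrak{X}},D)$ is divisible and $NS(\overline{\mathfrak{X}},D)$ is finitely generated,'' and your writeup simply carries out that transcription explicitly, with the Kummer sequence for $\mathbb{G}_{m,\overline{\mathfrak{X}},D}$ and the structural input from Proposition \ref{stackrelpic} deployed exactly as intended.
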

\begin{proof}
The proof of Proposition \ref{factors} works here as well, using the fact that $\mathrm{Pic}^0(\overline{\mathfrak{X}},D)$ is divisible and $NS(\overline{\mathfrak{X}},D)$ is finitely generated.
\end{proof}

\section{Review of 1-motives}

In this section we review the theory of 1-motives as introduced in \cite[\S 10]{hodge3}.  We work over a perfect base field $k$ of characteristic $p \geq 0$, and choose an algebraic closure $k \hookrightarrow \overline{k}$.  All of this material (and much more) appears in \cite[App. C]{BVK}.

\begin{defn}
A 1-motive over $k$ is a 2-term complex
\begin{equation*}
[L \rightarrow G]
\end{equation*}
of abelian sheaves on $(Sch/k)_{fppf}$, where
\begin{itemize}
\item $L$ is an \'{e}tale-locally constant sheaf, and $L(\overline{k})$ is a finitely generated free abelian group.
\item $G$ is (represented by) a semi-abelian variety; that is, there is an extension
\begin{equation*}
0 \rightarrow T \rightarrow G \rightarrow A \rightarrow 0,
\end{equation*}
where $T$ is a torus and $A$ is an abelian variety.
\end{itemize}
Our convention is that $L$ is placed in degree -1 and $G$ is placed in degree 0.  Since $k$ is perfect, $L$ is fully described by the lattice $L(\overline{k})$ together with its action by $Gal(\overline{k}/k)$.
\end{defn}
\begin{pg}
If $M = [L \rightarrow G]$ and $M' = [L' \rightarrow G']$ are 1-motives, then a morphism of 1-motives $F = (f,g): M \rightarrow M'$ is a commuting diagram
\begin{equation*}
\begin{CD}
L @> f >> L' \\
@VVV @VVV \\
G @> g >> G'.
\end{CD}
\end{equation*}
A useful fact about homomorphisms of 1-motives is the following.
\end{pg}
\begin{prop}
Let $\mathscr{M}^1(k)$ be the category of 1-motives, and $D^b(Sch/k)_{fppf}$ the derived category of fppf sheaves over $k$.  Then the natural functor $$\mathscr{M}^1(k) \rightarrow D^b(Sch/k)_{fppf}$$ is fully faithful.
\end{prop}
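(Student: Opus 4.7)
The plan is to factor the natural functor through the bounded homotopy category as
\begin{equation*}
\mathscr{M}^1(k) \longrightarrow K^b(Sh) \longrightarrow D^b(Sh),
\end{equation*}
where $Sh$ denotes the category of abelian sheaves on $(Sch/k)_{fppf}$, and to verify that each factor is fully faithful on the essential image of $\mathscr{M}^1(k)$. For the first factor, it suffices to show that every chain homotopy between two morphisms of $1$-motives is trivial. Such a homotopy from $M = [L \xrightarrow{u} G]$ to $M' = [L' \xrightarrow{u'} G']$ reduces to a single map $h: G \to L'$ (the other components being forced to zero for degree reasons). After passing to $\overline{k}$, where $L'$ becomes a disjoint union of copies of $\mathrm{Spec}(\overline{k})$ indexed by its lattice of geometric points, any sheaf homomorphism $G_{\overline{k}} \to L'_{\overline{k}}$ is, by Yoneda, a morphism of $\overline{k}$-schemes from the connected scheme $G_{\overline{k}}$ to a discrete scheme, hence constant; being a group homomorphism, it must vanish. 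Thus $\mathrm{Hom}_{Sh}(G, L') = 0$, so $h = 0$ and homotopic chain maps coincide.

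For the second factor, I would apply $\mathrm{RHom}_{D^b}(-,-)$ to the brutal truncation distinguished triangles
\begin{equation*}
G \longrightarrow M \longrightarrow L[1] \xrightarrow{u[1]} G[1], \qquad G' \longrightarrow M' \longrightarrow L'[1] \xrightarrow{u'[1]} G'[1],
\end{equation*}
and combine the two resulting long exact sequences. The negative $\mathrm{Ext}$ groups between sheaves concentrated in degree zero vanish automatically, and the vanishing $\mathrm{Hom}(G, L') = 0$ from the first step collapses the appropriate pieces; after identifying the boundary maps with composition by $u$ and $u'$, one obtains an isomorphism
\begin{equation*}
\mathrm{Hom}_{D^b}(M, M') \cong \{(f, g) \in \mathrm{Hom}(L, L') \times \mathrm{Hom}(G, G') : g \circ u = u' \circ f\},
\end{equation*}
matching exactly $\mathrm{Hom}_{\mathscr{M}^1(k)}(M, M')$, and a direct check shows this isomorphism is induced by the functor itself.

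The main obstacle I anticipate is the bookkeeping of the overlapping long exact sequences, and in particular verifying that the $\mathrm{Ext}^1$ contributions (such as $\mathrm{Ext}^1(G, L')$ and $\mathrm{Ext}^1(L, G')$) cancel out rather than producing spurious morphisms. Throughout one implicitly relies on $Sh((Sch/k)_{fppf})$ having enough injectives so that $\mathrm{RHom}$ is well-defined, which is a standard property of sheaves on a site.
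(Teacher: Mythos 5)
The paper itself does not spell out a proof; it simply cites Raynaud \cite[Prop.\ 2.3.1]{raynaud}. Your skeleton is reasonable, and the first half is correct: the only possible homotopy component lives in $\mathrm{Hom}(G, L')$, which vanishes because $G$ is connected and $L'$ is discrete, so $\mathscr{M}^1(k) \to K^b$ is fully faithful.

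The second half, however, has a genuine gap that you flag but do not close. The vanishings $\mathrm{Ext}^{<0} = 0$ and $\mathrm{Hom}(G,L') = 0$ are \emph{not} enough to make the two long exact sequences collapse to $\mathrm{Hom}_{\mathscr{M}^1}(M,M')$. To see this concretely, take $M = [0 \to G]$ and $M' = [L' \xrightarrow{u'} G']$. Applying $\mathrm{Hom}_{D^b}(G,-)$ to $G' \to M' \to L'[1]$ and using $\mathrm{Hom}(G,L')=0$ gives an exact sequence
\begin{equation*}
0 \longrightarrow \mathrm{Hom}(G,G') \longrightarrow \mathrm{Hom}_{D^b}(G,M') \longrightarrow \mathrm{Ext}^1_{fppf}(G,L') \xrightarrow{\ (u')_*\ } \mathrm{Ext}^1_{fppf}(G,G'),
\end{equation*}
whereas $\mathrm{Hom}_{\mathscr{M}^1}(M,M') = \mathrm{Hom}(G,G')$. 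Full faithfulness thus forces $(u')_*$ to be injective for every 1-motive $[L' \xrightarrow{u'} G']$; taking $u' = 0$ shows that what is actually required is the vanishing $\mathrm{Ext}^1_{fppf}(G,L') = 0$ for any semi-abelian $G$ and any lattice $L'$. This is a real additional input, and it neither follows from connectedness-versus-discreteness nor from negative Ext vanishing. (It is true: étale-locally $L' \cong \mathbb{Z}^r$, any $\mathbb{Z}$-torsor over the normal connected scheme $G$ is trivial since $\mathrm{Hom}_{\mathrm{cts}}(\pi_1^{et}(G),\mathbb{Z}) = 0$, and then the symmetric cocycle $G\times G \to \mathbb{Z}$ measuring failure of a set-theoretic splitting to be a homomorphism is a map from a connected scheme to a discrete one, hence constant, hence zero after normalization.) Without establishing this vanishing, your dévissage does not yield surjectivity of $\mathrm{Hom}_{\mathscr{M}^1}(M,M') \to \mathrm{Hom}_{D^b}(M,M')$, so the proof is incomplete as written.
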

\begin{proof}
This is \cite[Prop 2.3.1]{raynaud}.
\end{proof}
\begin{pg}
Let $R$ be a subring of $\mathbb{Q}$ containing $\mathbb{Z}$.  We will always take either $R = \mathbb{Z}[p^{-1}]$ or $R = \mathbb{Q}$.  The category of $R$-\emph{isogeny} 1-motives over $k$, denoted $\mathscr{M}^1_R(k)$ has the same objects as $\mathscr{M}^1(k)$, and for 1-motives $M$, $M'$, we set
\begin{equation*}
\mathrm{Hom}_{\mathscr{M}^1_R(k)}(M,M') := \mathrm{Hom}_{\mathscr{M}^1(k)(M,M')} \otimes R.
\end{equation*}
We also write $\mathscr{M}^1(k)[p^{-1}]$ when $R = \mathbb{Z}[p^{-1}]$, and $\mathscr{M}^1(k) \otimes \mathbb{Q}$ when $R = \mathbb{Q}$. 
\end{pg}
\begin{prop}
The category of $\mathbb{Q}$-isogeny 1-motives over $k$ is abelian.
\end{prop}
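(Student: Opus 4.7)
The strategy is to construct kernels and cokernels explicitly in $\mathscr{M}^1(k) \otimes \mathbb{Q}$ and then to verify that for every morphism the canonical comparison from coimage to image is an isomorphism. Additivity and $\mathbb{Q}$-linearity are automatic: the zero object is $[0 \to 0]$ and the biproduct of $[L \to G]$ with $[L' \to G']$ is $[L \oplus L' \to G \times G']$, so the real content lies in the kernel--cokernel construction and the coim/im comparison.

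Given a morphism $(f,g) : [L \to G] \to [L' \to G']$, I would take $\mathrm{Ker}(f,g) := [L_1 \to G_1]$ with $G_1 := (\ker g)^0_{\mathrm{red}}$, which is semi-abelian because any smooth connected closed subgroup $H$ of the semi-abelian $G$ is semi-abelian (the maximal affine subgroup of $H$ must land in the torus part of $G$, so is a subtorus, and the quotient embeds into the abelian quotient of $G$); the lattice $L_1$ is the preimage of $G_1$ under the restriction of $L \to G$ to $\ker f$, which is a finite-index sublattice of $\ker f$ since $(\ker g)/G_1$ has only finite and infinitesimal components. Dually, $\mathrm{Coker}(f,g) := [L_2 \to G_2]$ with $G_2 := G'/g(G)_{\mathrm{red}}$ semi-abelian and $L_2 := (L'/f(L))/\mathrm{torsion}$, the map $L_2 \to G_2$ being induced by $L' \to G' \to G_2$ after dividing by an integer $N$ annihilating the finite torsion of $L'/f(L)$, which is legitimate in $\mathscr{M}^1(k) \otimes \mathbb{Q}$. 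The universal properties are routine: morphisms out of a connected semi-abelian factor through identity components of targets, and lattice morphisms are insensitive to finite-index subgroups and finite torsion once Hom groups are tensored with $\mathbb{Q}$.

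The main step is then to show that the canonical map $\mathrm{Coim}(f,g) \to \mathrm{Im}(f,g)$ is an isomorphism. Unwinding the definitions, the image is $\ker([L' \to G'] \to \mathrm{Coker}(f,g))$, which reduces to $[f(L) \to g(G)_{\mathrm{red}}]$ up to isogeny (the kernels of $L' \to L_2$ and $G' \to G_2$ differ from $f(L)$ and $g(G)_{\mathrm{red}}$ only by torsion and radicial subschemes), while the coimage is $\mathrm{coker}(\mathrm{Ker}(f,g) \to [L \to G]) = [L/L_1 \to G/G_1]$, which likewise reduces to $[f(L) \to g(G)_{\mathrm{red}}]$ since $L/L_1$ becomes $f(L)$ modulo finite groups and $G/(\ker g)^0_{\mathrm{red}}$ is isogenous to its image $g(G)_{\mathrm{red}}$. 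The principal obstacle throughout is the bookkeeping of finite discrepancies — component groups of $\ker g$, non-reducedness of $g(G)$ in characteristic $p$, torsion in cokernels of lattice maps — each of which would block a strict construction in $\mathscr{M}^1(k)$ but becomes invertible after tensoring Hom groups with $\mathbb{Q}$. Conceptually, the argument reflects the fact that the weight filtration realizes a 1-motive as an iterated extension of a lattice, an abelian variety, and a torus, each of which lives in a known abelian isogeny category (finite-dimensional $\mathbb{Q}$-Galois representations, and abelian varieties up to isogeny by Poincar\'{e} complete reducibility).
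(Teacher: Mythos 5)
Your proposal is correct and takes essentially the same approach as the paper's own proof: an explicit construction of kernels (reduced identity component of $\ker g$ on the group side, its preimage inside $\ker f$ on the lattice side) and cokernels (quotient out finite torsion/finite groups), with the observation that the finite discrepancies are invisible after tensoring Hom-groups with $\mathbb{Q}$. The one place you go further than the paper is in spelling out the coimage-to-image comparison, which the paper compresses into "a check similar to that for $K$ shows ... that the axioms of an abelian category are satisfied."
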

\begin{proof}
This is \cite[C.7.3]{BVK}, but we give a more elementary argument here.  First notice that $\mathscr{M}^1_{\mathbb{Q}}(k)$ is clearly additive.   Let $M = [L \stackrel{\alpha}{\rightarrow} G]$ and $M' = [L' \stackrel{\alpha'}{\rightarrow} G']$ be 1-motives, and let $F: M \rightarrow M'$ be a morphism of 1-motives given by the diagram
\begin{equation*}
\begin{CD}
L @> f >> L' \\
@V \alpha VV @V \alpha' VV \\
G @> g >> G'.
\end{CD}
\end{equation*}
We first describe the kernel $F$. Let $\mathrm{Ker}^0(g)$ be the connected component of the identity of $\mathrm{Ker}(g)$, and let $$\mathrm{Ker}^0(f):= \mathrm{Ker}(f) \cap \alpha^{-1}(\mathrm{Ker}^0(g)).$$  Then we set $$K := [\mathrm{Ker}^0(f) \rightarrow \mathrm{Ker}^0(g)],$$ and claim that $K$ is the kernel of $F: M \rightarrow M'$.  Let $M'' = [L'' \rightarrow G'']$ be another 1-motive, and let
\begin{equation*}
\begin{CD}
L'' @> u >> L \\
@VVV @VVV \\
G'' @> v >> G
\end{CD}
\end{equation*}
be a morphism such that the composition with $F$ is 0 in $1-Mot_{\mathbb{Q}}(k)$.  Then for some $n \in \mathbb{N}$, $nF = 0$ in $\mathscr{M}^1(k)$, and hence $f \circ nu = g \circ nv = 0$.  Therefore $nu$ and $nv$ factor through $\mathrm{Ker}(f)$ and $\mathrm{Ker}(g)$, respectively.  Since $\mathrm{Ker}(f)/\mathrm{Ker}^0(f)$ and $\mathrm{Ker}(g)/\mathrm{Ker}^0(g)$ are finite groups, we get that for some $m \in \mathbb{N}$, $mnu$ and $mnv$ factor through $\mathrm{Ker}^0(f)$ and $\mathrm{Ker}^0(g)$ respectively.  Then $$\frac{1}{mn}(mnu,mnv): [L'' \rightarrow G''] \rightarrow [\mathrm{Ker}^0(f) \rightarrow \mathrm{Ker}^0(g)]$$ is a morphism in $\mathscr{M}^1_\mathbb{Q}(k)$ factoring $(u,v): [L'' \rightarrow G''] \rightarrow [L \rightarrow G]$.

Now we describe the cokernel of $F$.  Let $T$ be the torsion subgroup of $\mathrm{coker}(f)$.  We set
\begin{equation*}
C := [\mathrm{coker}(f)/T \rightarrow \mathrm{coker}(g)/\alpha'(T)].
\end{equation*}
A check similar to that for $K$ shows that $C$ is the cokernel of $F$, and that the axioms of an abelian category are satisfied.
\end{proof}
\begin{rem}
The category of 1-motives over $k$ is not abelian, nor is the category of $p$-isogeny 1-motives where $p = \mathrm{char \ }k$.  However, there is an abelian category $^t\mathscr{M}^1(k)[p^{-1}]$ of torsion 1-motives over $k$ which is abelian (described in \cite[App. C]{BVK}), and a fully faithful functor $$\mathscr{M}^1(k)[p^{-1}] \hookrightarrow {^t}\mathscr{M}^1(k)[p^{-1}]$$ \cite[C.5.3]{BVK}.  This provides $\mathscr{M}^1(k)[p^{-1}]$ with an exact structure with respect to which the $\ell$-adic realization functors described below are exact \cite[C.6.2]{BVK}.
\end{rem}
\begin{pg}
Let $\ell$ be a prime distinct from the characteristic of $k$.  We review the $\ell$-adic realization functor from 1-motives over $k$ to $\ell$-adic representations of $\mathrm{Gal}(\overline{k}/k)$.  Let $M = [L \stackrel{\alpha}{\rightarrow} G]$ be a 1-motive.  For any integer $n$ prime to $\mathrm{char \ }k$, we set $M/n$ to be the cone of $\cdot n: M \rightarrow M$.  We then set $T_{\mathbb{Z}/n}(M) = H^{-1}(M/n)$.  More concretely, the $\overline{k}$-points of $T_{\mathbb{Z}/n}(M)$ can be written
\begin{equation*}
T_{\mathbb{Z}/n}(M) = \frac{\{(x,g) \in L \times G(\overline{k}) \mid u(x) = -mg\}}{\{(mx,-u(x)) \mid x \in L\} }.
\end{equation*}
From the exact triangle
\begin{equation*}
G \rightarrow M \rightarrow L \rightarrow G[1],
\end{equation*}
we get an exact triangle
\begin{equation*}
G/n \rightarrow M/n \rightarrow L/n \rightarrow G/n[1].
\end{equation*}
Here $G/n$ and $L/n$ are defined as cones of multiplication by $n$, in the same manner as $M/n$.  Taking cohomology sheaves, we get an exact sequence
\begin{equation}\label{1motiveexact}
0 \rightarrow {_n}G \rightarrow T_{\mathbb{Z}/n}(M) \rightarrow L/nL \rightarrow 0.
\end{equation}
Therefore $T_{\mathbb{Z}/n}(M)$ is an etale sheaf, and can be fully described by its $\overline{k}$-points together with the action by $\mathrm{Gal}(\overline{k}/k)$.  Now set $$T_{\ell}M := \ilim_n T_{\mathbb{Z}/\ell^n}(M).$$  Because the collection $( {_{\ell^n}}G)_{n}$ satisfies the Mittag-Leffler condition, we get an exact sequence
\begin{equation*}
0 \rightarrow T_{\ell} G \rightarrow T_{\ell}M \rightarrow \ilim_n L/\ell^n L \rightarrow 0.
\end{equation*}
where (as usual) $T_\ell G$ is the Tate module of the $\overline{k}$-points of $G$.  Finally, we set
\begin{equation*}
V_\ell M := T_\ell M \otimes_\mathbb{Z_\ell} \mathbb{Q}_\ell.
\end{equation*} 
Therefore we have defined functors
\begin{equation*}
\hat{T}_p := \prod_{\ell \not= p} T_\ell: \ \ \mathscr{M}^1(k)[p^{-1}] \longrightarrow \prod_{\ell \not= p} \mathrm{Rep}_{\mathbb{Z}_\ell}(\mathrm{Gal}(\overline{k}/k))
\end{equation*}
and
\begin{equation*}
V_\ell: \mathscr{M}^1(k) \otimes \mathbb{Q} \rightarrow \mathrm{Rep}_{\mathbb{Q}_\ell}(\mathrm{Gal}(\overline{k}/k)).
\end{equation*}
\end{pg}
\begin{prop}\label{faithful}
The functors $\hat{T}_p$ and $V_\ell$ defined above are exact, faithful, and reflect isomorphisms.
\end{prop}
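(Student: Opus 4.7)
The plan is to reduce each of the three assertions to well-known facts about lattices and semi-abelian varieties, via the short exact sequence
\begin{equation*}
0 \to T_\ell G \to T_\ell M \to \ilim_n L/\ell^n L \to 0
\end{equation*}
established just above the statement (and, for $V_\ell$, its $\mathbb{Q}_\ell$-tensor product $0 \to V_\ell G \to V_\ell M \to L \otimes \mathbb{Q}_\ell \to 0$). For any morphism $F = (f,g) \colon [L \to G] \to [L' \to G']$, naturality yields a commutative diagram with exact rows whose outer columns are $T_\ell(g)$ and $f \otimes \mathbb{Z}_\ell$; this diagram is the workhorse of the proof.

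For faithfulness, suppose $\hat{T}_p(F) = 0$, so that $T_\ell(F) = 0$ for every $\ell \neq p$; fix one such $\ell$. The diagram forces $T_\ell(g) = 0$ and $f \otimes \mathbb{Z}_\ell = 0$. Since $L$ and $L'$ are finitely generated free abelian groups, $\mathrm{Hom}(L,L') \hookrightarrow \mathrm{Hom}(L,L') \otimes \mathbb{Z}_\ell$, so $f = 0$. For $g$, the classical fact that $\mathrm{Hom}(G, G')$ is a finitely generated torsion-free abelian group and that $\mathrm{Hom}(G,G') \otimes \mathbb{Z}_\ell \hookrightarrow \mathrm{Hom}_{\mathrm{Gal}}(T_\ell G, T_\ell G')$ (Weil/Tate) forces $g = 0$. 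The argument for $V_\ell$ is identical after tensoring with $\mathbb{Q}$.

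For reflecting isomorphisms, if $\hat{T}_p(F)$ (resp.\ $V_\ell(F)$) is an isomorphism, the five-lemma applied to the same diagram gives that $T_\ell(g)$ and $f \otimes \mathbb{Z}_\ell$ are isomorphisms for every $\ell \neq p$. For the lattice part, $\mathrm{Ker}(f)$ and $\mathrm{Coker}(f)$ then have trivial $\ell$-localization for all $\ell \neq p$, hence are $p$-primary, so $f$ becomes invertible in $\mathscr{M}^1(k)[p^{-1}]$ (resp.\ in $\mathscr{M}^1(k) \otimes \mathbb{Q}$). For the semi-abelian part, $T_\ell(g)$ being an isomorphism implies by dimension count (applied to the toric and abelian quotients of $G$ and $G'$) that $g$ is an isogeny with kernel of order prime to $\ell$; running this over all $\ell \neq p$ shows $g$ is a prime-to-$p$ isogeny, hence invertible after inverting $p$.

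For exactness, I would recall the explicit construction of kernels and cokernels in $\mathscr{M}^1(k) \otimes \mathbb{Q}$ given in the proof of the preceding proposition, and for $\mathscr{M}^1(k)[p^{-1}]$ interpret admissible short exact sequences via the embedding into the abelian category ${}^t\mathscr{M}^1(k)[p^{-1}]$ of \cite[C.6]{BVK}. Given an admissible exact sequence $0 \to M_1 \to M_2 \to M_3 \to 0$, applying the snake lemma to the three Tate module short exact sequences stacked vertically reduces the claim to exactness of $T_\ell$ (resp.\ $V_\ell$) separately on the lattice parts and the semi-abelian parts, both of which are standard. The main obstacle is bookkeeping rather than any single step: one must verify that replacing kernels and cokernels of group scheme morphisms by their identity components or torsion-free quotients (as in the explicit formulas above) introduces only finite-group discrepancies that are $p$-primary, so that they are either invisible to $V_\ell$ or killed by inverting $p$ for $\hat{T}_p$.
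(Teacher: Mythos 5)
Your overall strategy---reduce everything to the lattice and semi-abelian pieces via the canonical short exact sequence for the Tate module---matches the paper's, but two of the three parts are handled by different means, and one has a small gap worth flagging.

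For faithfulness, you invoke the Weil/Tate injectivity
\[
\mathrm{Hom}(G,G')\otimes\mathbb{Z}_\ell \hookrightarrow \mathrm{Hom}(T_\ell G, T_\ell G')
\]
to kill $g$. That is a valid route, but it is not what the paper does: the paper first proves exactness, and then for faithfulness uses the explicit kernel formula $K=[\mathrm{Ker}^0(f)\to\mathrm{Ker}^0(g)]$ together with exactness to deduce $V_\ell(\mathrm{Ker}(g))=V_\ell G$, hence $\dim\mathrm{Ker}(g)=\dim G$, hence $g=0$ (using that $G$ is connected). The paper's argument is more self-contained inside the 1-motive formalism; yours outsources to Weil's theorem, which is fine but heavier machinery than needed.

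For reflecting isomorphisms, your invocation of the five-lemma is imprecise and does not directly give what you assert. In a commutative ladder of short exact sequences, knowing the \emph{middle} vertical arrow is an isomorphism gives, via the snake lemma, only that $T_\ell(g)$ is injective, $f\otimes\mathbb{Z}_\ell$ is surjective, and $\mathrm{coker}(T_\ell g)\cong\ker(f\otimes\mathbb{Z}_\ell)$; it does \emph{not} by itself yield that both outer arrows are isomorphisms. To finish your way you need the rank bookkeeping: injectivity on the bottom and surjectivity on the top, combined with $\mathrm{rk}\,T_\ell M=\mathrm{rk}\,T_\ell G+\mathrm{rk}\,L$ and the equality of total ranks, force $\mathrm{rk}\,T_\ell G=\mathrm{rk}\,T_\ell G'$ and $\mathrm{rk}\,L=\mathrm{rk}\,L'$; then a surjection of finite free $\mathbb{Z}_\ell$-modules of equal rank splits and hence is an isomorphism, and the snake lemma upgrades $T_\ell g$ to an isomorphism too. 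You should spell this out. The paper sidesteps all of this by simply observing that an exact faithful functor between abelian categories reflects isomorphisms formally (applying $F$ to $\mathrm{id}_{\ker f}$ shows $\ker f=0$, and dually for the cokernel), which is the cleanest route once exactness and faithfulness are in hand.

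For exactness, your snake-lemma sketch and the paper's $3\times3$-diagram argument are essentially the same device. Your concluding remark that the finite discrepancies introduced by passing to identity components or torsion-free quotients are ``$p$-primary'' overstates what is needed: they are merely finite, which suffices because finite groups are killed by $-\otimes\mathbb{Q}_\ell$ for $V_\ell$, while for $\hat T_p$ the admissible exact sequences in $\mathscr{M}^1(k)[1/p]$ are exactly those arising from the embedding into ${}^t\mathscr{M}^1(k)[1/p]$, where the relevant degeneracies have already been arranged to be prime-to-$p$-invisible; you should cite the exact structure rather than assert $p$-primaryness.
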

\begin{proof}
We show the statement for $V_\ell$; the statement for $\hat{T}_p$ is no harder. Let $0 \rightarrow M' \rightarrow M \rightarrow M'' \rightarrow 0$ be an exact sequence of 1-motives for $\mathbb{Q}$-isogeny.  Letting $M = [L \rightarrow G]$, $M' = [L' \rightarrow G']$, $M'' = [L'' \rightarrow G'']$, we have sequences
\begin{align*}
0 \rightarrow G' &\rightarrow G \rightarrow G'' \rightarrow 0 \ \ \ \mathrm{and} \\
0 \rightarrow L' &\rightarrow L \rightarrow L'' \rightarrow 0
\end{align*}
which are exact up to isogeny.  Therefore, we have exact sequences
\begin{align*}
0 \rightarrow V_\ell G' &\rightarrow V_\ell G \rightarrow V_\ell G'' \rightarrow 0 \ \ \ \mathrm{and} \\
0 \rightarrow V_\ell L' &\rightarrow V_\ell L \rightarrow V_\ell L'' \rightarrow 0.
\end{align*}
Using the functoriality of $V_\ell$, we get a commuting diagram
\begin{equation*}
\begin{CD}
@. 0 @. 0 @. 0 \\
@. @VVV @VVV @VVV @. \\
0 @>>> V_\ell G' @>>> V_\ell M' @>>> V_\ell L' @>>> 0 \\
@. @VVV @VVV @VVV @. \\
0 @>>> V_\ell G @>>> V_\ell M @>>> V_\ell L @>>> 0 \\
@. @VVV @VVV @VVV @. \\
0 @>>> V_\ell G'' @>>> V_\ell M'' @>>> V_\ell L'' @>>> 0 \\
@. @VVV @VVV @VVV @. \\
@. 0 @. 0 @. 0 
\end{CD}
\end{equation*}
with exact rows, and such that the left and right columns are exact.  By standard homological algebra, this implies that the middle column is also exact, proving that $V_\ell$ is an exact functor.

To show that $V_\ell$ is faithful, it suffices to show: given a morphism $F = M \rightarrow M'$ of 1-motives, if $V_\ell F = 0$ then $F = 0$.  Suppose that $F$ is given by a commuting diagram
\begin{equation*}
\begin{CD}
L @> f >> L' \\
@VVV @VVV \\
G @> g >> G'.
\end{CD}
\end{equation*}
Then we get that $V_{\ell} f = V_\ell g = 0$.  This immediately implies that $f = 0$ since $L(\overline{k})$ and $L'(\overline{k})$ are finitely generated free abelian groups.   To show that $g = 0$, note that by exactness of $V_\ell$ we have $$V_\ell(\mathrm{Ker}(g)) = \mathrm{Ker}(V_\ell g) = V_\ell G.$$  This implies that $\mathrm{Ker}(g)$ is a closed subvariety of $G$ of equal dimension to $G$.  Therefore $\mathrm{Ker}(g) = G$, i.e., $g = 0$.  The fact that $V_\ell$ reflects isomorphisms follows formally from being exact and faithful.
\end{proof}
\subsection{Cartier duals of 1-motives}\label{cartierduals} In \cite[10.2]{hodge3} there is defined a notion of Cartier duality for 1-motives.  We briefly recall this definition below.

Suppose given a 1-motive $M = [L \rightarrow G]$, which we write as a commutative diagram
\begin{equation*}
\begin{CD}
@. @. L @. @. \\
@. @. @VV f V @. \\
0 @>>> T @> g >> G @> h >> A @>>> 0.
\end{CD}
\end{equation*}
The Cartier dual 1-motive is then defined by a diagram
\begin{equation*}
\begin{CD}
@. @. T^{\vee} @. @. \\
@. @. @VV g^{\vee} V @. \\
0 @>>> L^{\vee} @> f^{\vee} >> G^u @>h^{\vee} >> A^\vee @>>> 0.
\end{CD}
\end{equation*}
where $T^\vee$, $L^\vee$ and $A^\vee$ are the usual duals, while $G^u$ is defined as follows:  consider the 1-motive $M/W_{-2}M = [L \rightarrow A]$.  We have an exact sequence of 1-motives
\begin{equation*}
0 \rightarrow A \rightarrow M/W_{-2}M \rightarrow L[1] \rightarrow 0.
\end{equation*}
Applying $\mathscr{RH}om(-,\mathbb{G}_m)$ to this sequence of complexes of $fppf$-sheaves and taking cohomology yields an exact sequence
\begin{equation*}
0 \rightarrow L^\vee \rightarrow \mathscr{E}xt^1(M/W_{-2}M,\mathbb{G}_m) \rightarrow A^{\vee} \rightarrow 0.
\end{equation*}
We define $G^u$ to be the group scheme $\mathscr{E}xt^1(M/W_{-2}M,\mathbb{G}_m)$.

It remains to define the map $g^\vee: T^\vee \rightarrow G^u$.  By the definition of $G^u$, this means that for every $x \in T^\vee$, we must give
\begin{enumerate}
\item an extension $\tilde{x}$ of $A$ by $\mathbb{G}_m$, and
\item a trivialization of the pullback of $\tilde{x}$ via $h \circ f: L \rightarrow A$. 
\end{enumerate} 
Since $T^\vee = \mathrm{Hom}(T,\mathbb{G}_m)$, we can let $\tilde{x}$ be the pushforward extension $x_*G \in \mathscr{E}xt^1(A,\mathbb{G}_m)$ (where $x \in \mathrm{Hom}(T,\mathbb{G}_m)$), defining part (1) of our desired map $g^\vee: T^\vee \rightarrow G^u$.  The trivialization of part (2) is determined by the fact that $h \circ f: L \rightarrow A$ lifts (trivially) to $f: L \rightarrow G$.

\begin{pg}
The key property about Cartier duals we will use is the following:
\end{pg}
\begin{prop}
Let $M = [L \rightarrow G]$ be a 1-motive and $M^\vee$ its Cartier dual.  Then for every $n$ prime to the characteristic of $k$ there is a canonical perfect pairing
\begin{equation*}
T_{\mathbb{Z}/n}M \otimes T_{\mathbb{Z}/n}M^\vee \rightarrow \mathbb{Z}/n(1)
\end{equation*}
which is functorial in $M$.
\end{prop}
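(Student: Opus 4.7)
The plan is to construct a canonical morphism $M \otimes^L M^\vee \to \mathbb{G}_m[1]$ in the derived category $D^b((Sch/k)_{fppf})$ of fppf sheaves and extract the desired Tate-module pairing by reduction modulo $n$.  The key observation is that Deligne's construction of $M^\vee$ in Subsection \ref{cartierduals} is precisely what is needed to exhibit such a morphism.  Indeed, $G^u$ is defined as $\mathscr{E}xt^1(M/W_{-2}M,\mathbb{G}_m)$, and the compatibility map $g^\vee : T^\vee \to G^u$ together with the trivialization data packages together, by derived adjunction, into an evaluation map $M \otimes^L M^\vee \to \mathbb{G}_m[1]$; equivalently, this is the Poincar\'{e} biextension of $(M,M^\vee)$ by $\mathbb{G}_m$ in the sense of \cite[10.2]{hodge3}.

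Given this morphism, apply $-\otimes^L \mathbb{Z}/n$.  The Kummer triangle identifies $\mathbb{G}_m[1] \otimes^L \mathbb{Z}/n \cong \mu_n[2]$, and composing with the natural arrow $M/n \otimes^L M^\vee/n \to (M \otimes^L M^\vee) \otimes^L \mathbb{Z}/n$ (coming from the multiplication $\mathbb{Z}/n \otimes^L \mathbb{Z}/n \to \mathbb{Z}/n$) yields $M/n \otimes^L M^\vee/n \to \mu_n[2]$.  The induced hypercohomology cup product
\begin{equation*}
T_{\mathbb{Z}/n}M \otimes T_{\mathbb{Z}/n}M^\vee = H^{-1}(M/n) \otimes H^{-1}(M^\vee/n) \longrightarrow H^0(\mu_n) = \mathbb{Z}/n(1)
\end{equation*}
is the required pairing, and functoriality in $M$ is automatic from the construction.

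For perfection, I would argue by d\'{e}vissage along the weight filtration.  The filtration $W_{-2}M = T \subset W_{-1}M = G \subset M$ refines the exact sequence (\ref{1motiveexact}) to a three-step filtration on $T_{\mathbb{Z}/n}M$ with associated graded $({_nT},\, {_nA},\, L/nL)$; dually, $T_{\mathbb{Z}/n}M^\vee$ acquires a three-step filtration with associated graded $({_nL^\vee},\, {_nA^\vee},\, T^\vee/nT^\vee)$.  The constructed pairing respects these filtrations with the expected opposite orderings, and on graded pieces reduces to (i) ${_nT} \otimes T^\vee/nT^\vee \to \mu_n$, perfect because $T = \mathrm{Hom}(T^\vee,\mathbb{G}_m)$; (ii) the classical Weil pairing ${_nA} \otimes {_nA^\vee} \to \mu_n$, perfect by abelian-variety duality; and (iii) $L/nL \otimes {_nL^\vee} \to \mu_n$, perfect since $L^\vee = \mathrm{Hom}(L,\mathbb{G}_m)$.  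Two applications of the five lemma then propagate perfection from the graded pieces to the total pairing.

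The main obstacle will be the first step: assembling the heterogeneous data $(T^\vee, G^u, g^\vee)$ defining $M^\vee$ into an honest morphism in $D^b((Sch/k)_{fppf})$, and checking that its restriction to each weight-graded piece induces the three classical dualities above.  Once this has been set up correctly, the remaining derived-category manipulations and the perfection verification are essentially formal.
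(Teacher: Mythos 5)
The paper's proof of this proposition is a single line: it cites Deligne, \cite[10.2.5]{hodge3}, and gives no argument. Your proposal is a correct outline of the argument that Deligne actually gives there (and that appears in standard accounts of 1-motives): encode the Cartier-dual data as a Poincar\'e biextension of $(M,M^\vee)$ by $\mathbb{G}_m$, which in derived-category language is a morphism $M \otimes^L M^\vee \to \mathbb{G}_m[1]$; reduce mod $n$ via the Kummer triangle and the multiplication $\mathbb{Z}/n \otimes^L \mathbb{Z}/n \to \mathbb{Z}/n$ to produce the pairing on $T_{\mathbb{Z}/n}$; and verify perfection by d\'evissage along the weight filtration, where the three graded pairings reduce to the torus/character-lattice duality, the Weil pairing on $n$-torsion of dual abelian varieties, and the lattice/torus duality, after which two applications of the five lemma (using that $\mathbb{Z}/n(1)$ is an injective $\mathbb{Z}/n$-module, so $\mathrm{Hom}(-,\mathbb{Z}/n(1))$ is exact on the relevant short exact sequences) give perfection of the total pairing. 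You correctly identify that the real work is in step one, assembling the triple $(T^\vee,G^u,g^\vee)$ into a biextension; this is exactly what Deligne's construction in \cite[10.2.1--10.2.4]{hodge3} achieves, and everything after that is formal as you say. In short: the paper defers entirely to the citation, while you have reconstructed a faithful sketch of the cited argument; nothing you wrote is wrong, though the biextension step is asserted rather than carried out.
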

\begin{proof}
This is \cite[10.2.5]{hodge3}.
\end{proof}

\section{Construction of $M_{D,E}^1(\overline{X})$}
\begin{pg}\label{61}
Fix a perfect field $k$ of characteristic $p \geq 0$ and an algebraic closure $\overline{k}$.  Let $\overline{X}$ be a proper reduced $k$-scheme with two reduced closed subschemes $D$ and $E$ such that $D \cap E = \emptyset$.  Let $X = \overline{X} - E$, $\tilde{U} = \overline{X} - D$ and $U = X - D = \tilde{U} - E$.  We label our various maps as follows:
\begin{equation*}
\xymatrix{
U \ar@{^{(}->}[r]^{\tilde{j}} \ar@{^{(}->}[d]^{\tilde{u}} & X \ar@{^{(}->}[d]^{u} & D \ar@{_{(}->}[l]_{\tilde{i}} \ar@{=}[d]  \\
{\tilde{U}} \ar@{^{(}->}[r]^{j} & {\overline{X}} & D \ar@{_{(}->}[l]_{i} \\
E \ar@{^{(}->}[u]^{\tilde{v}} \ar@{=}[r] & E \ar@{^{(}->}[u]^{v}
}
\end{equation*}
Here in each row and column, the term in the middle is the union of the outer two terms.  We then define $$H_{D,E}^i(\overline{X},\mathscr{F}) := H^i(X_{\overline{k}},\tilde{j}_! \mathscr{F})$$ for an \'{e}tale sheaf $\mathscr{F}$ on $U_{et}$.  In this section we define a 1-motive $M_{D,E}^1(\overline{X})$ such that we have isomorphisms for $\ell \not= p$ $$T_\ell M_{D,E}^1(\overline{X}) \stackrel{\sim}{\longrightarrow} H_{D,E}^1(\overline{X},\mathbb{Z}_\ell(1)),$$ functorial in the triple $(\overline{X},D,E)$.  
\end{pg}
\begin{pg}\label{62}
To give the construction of $M_{D,E}^1(\overline{X})$, we start by choosing (via \cite[3.1]{dejong}) a simplicial scheme $\overline{\pi}_\bullet: \overline{X}_\bullet \rightarrow \overline{X}$ with each $\overline{X}_n$ proper and smooth, such that the inverse images $D_\bullet := \overline{\pi}_\bullet^{-1}(D)_{red}$ and $E_\bullet := \overline{\pi}_\bullet^{-1}(E)_{red}$ are simple normal crossings divisors.  We let $X_\bullet = \overline{X}_\bullet - E_\bullet$, $\tilde{U}_{\bullet} = \overline{X}_\bullet - D_\bullet$ and $U_\bullet = \overline{X}_\bullet - (D_\bullet \cup E_\bullet)$, and label our maps in the same way as before (with a subscript for simplicial index):
\begin{equation*}
\xymatrix{
U_{\bullet} \ar@{^{(}->}[r]^{\tilde{j}_{\bullet}} \ar@{^{(}->}[d]^{\tilde{u}_{\bullet}} & X_{\bullet} \ar@{^{(}->}[d]^{u_{\bullet}} & D_{\bullet} \ar@{_{(}->}[l]_{\tilde{i}_{\bullet}} \ar@{=}[d]  \\
{\tilde{U}_{\bullet}} \ar@{^{(}->}[r]^{j_{\bullet}} & {\overline{X}_{\bullet}} & D_{\bullet} \ar@{_{(}->}[l]_{i_{\bullet}} \\
E_{\bullet} \ar@{^{(}->}[u]^{\tilde{v}_{\bullet}} \ar@{=}[r] & E_{\bullet} \ar@{^{(}->}[u]^{v_{\bullet}}
}
\end{equation*}
\end{pg}
\begin{pg}
 Before constructing $M_{D,E}^1(\overline{X})$, we must review the relative Picard group of the simplicial pair $(\overline{X}_\bullet,D_\bullet)$, which is defined as
\begin{equation*}
\mathrm{Pic}(\overline{X}_\bullet,D_\bullet) := H^1(\overline{X}_\bullet, \mathrm{Ker}(\mathbb{G}_{m,\overline{X}_\bullet} \rightarrow (i_\bullet)_*\mathbb{G}_{m,D_\bullet})).
\end{equation*}
Intuitively, $\mathrm{Pic}(\overline{X}_\bullet,D_\bullet)$ classifies isomorphism classes of pairs $(\mathscr{L}^\bullet,\sigma: \mathcal{O}_{D_\bullet} \stackrel{\sim}{\rightarrow} \mathscr{L}^\bullet\vert_{D_\bullet})$, where $\mathscr{L}^\bullet$ is an invertible sheaf on $\overline{X}_\bullet$ and $\sigma$ is a trivialization of $\mathscr{L}^\bullet$ on $D_\bullet$.  We can also define an associated sheaf $\mathbf{Pic}_{\overline{X}_\bullet,D_\bullet}$, defined as the fppf-sheafification of the functor on $(Sch/k)_{fppf}$,
\begin{equation*}
Y \mapsto \mathrm{Pic}(\overline{X}_\bullet \times Y,D_\bullet \times Y).
\end{equation*}

\end{pg}
\begin{prop}\label{simprelpic}
The sheaf $\mathbf{Pic}_{\overline{X}_\bullet,D_\bullet}$ defined above is representable.  Moreover, let $\mathbf{Pic}^{0}_{\overline{X}_\bullet,D_\bullet}$ denote the connected component of the identity of this group scheme.  Then the reduction $\mathbf{Pic}^{0,red}_{\overline{X}_\bullet,D_\bullet}$ is a semiabelian variety, and we have an exact sequence
\begin{equation*}
0 \rightarrow \mathbf{Pic}^{0,red}_{\overline{X}_\bullet,D_\bullet} \rightarrow \mathbf{Pic}^{red}_{\overline{X}_\bullet,D_\bullet} \rightarrow \mathbf{NS}_{\overline{X}_\bullet,D_\bullet} \rightarrow 0,
\end{equation*}
where $\mathbf{NS}_{\overline{X}_\bullet,D_\bullet}$ is an \'{e}tale group scheme over $k$ whose $\overline{k}$-points form a finitely generated abelian group.
\end{prop}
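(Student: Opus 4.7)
The plan is to reduce everything to the already-understood case of each individual smooth proper pair $(\overline{X}_n, D_n)$ via the hypercohomology spectral sequence of the simplicial scheme. Abbreviating $\mathscr{G}_\bullet := \mathrm{Ker}(\mathbb{G}_{m,\overline{X}_\bullet} \to (i_\bullet)_*\mathbb{G}_{m,D_\bullet})$ and $\mathscr{G}_n$ for its restriction to $\overline{X}_n$, for every $Y \in (Sch/k)_{fppf}$ one has a spectral sequence
$$E_1^{p,q}(Y) = H^q(\overline{X}_p \times Y, \mathscr{G}_p) \Longrightarrow H^{p+q}(\overline{X}_\bullet \times Y, \mathscr{G}_\bullet).$$
After fppf sheafification in $Y$, the low-degree exact sequence reads
$$0 \to E_2^{1,0} \to \mathbf{Pic}_{\overline{X}_\bullet, D_\bullet} \to \mathrm{Ker}(\mathbf{Pic}_{\overline{X}_0, D_0} \to \mathbf{Pic}_{\overline{X}_1, D_1}) \to E_2^{2,0},$$
where the map to $\mathbf{Pic}_{\overline{X}_1, D_1}$ is the alternating sum of the two face maps and the $E_2^{p,0}$ terms are cohomology of the Čech-style complex built from the relative unit sheaves $\mathbf{U}_n := R^0(\pi_n)_*\mathscr{G}_n$.

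First I would establish representability of each individual $\mathbf{Pic}_{\overline{X}_n, D_n}$ together with the fact that its reduced identity component is semi-abelian. Taking cohomology in $0 \to \mathscr{G}_n \to \mathbb{G}_{m,\overline{X}_n} \to (i_n)_*\mathbb{G}_{m,D_n} \to 0$ exhibits $\mathbf{Pic}_{\overline{X}_n, D_n}$ as an extension of a closed subgroup scheme of $\mathbf{Pic}_{\overline{X}_n}$ by the cokernel of a map from $(\pi_n)_*\mathbb{G}_{m,\overline{X}_n}$ to $(\pi_n)_*(i_n)_*\mathbb{G}_{m,D_n}$. By Lemma \ref{lem1} applied to $\overline{X}_n$ and to the normalization of $D_n$, this cokernel is a group scheme of multiplicative type; combined with Theorem \ref{stackrepres} applied to $\overline{X}_n$, this gives representability and shows that $\mathbf{Pic}^{0,red}_{\overline{X}_n, D_n}$ is an extension of an abelian variety by a torus, hence semi-abelian. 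The same analysis shows that the unit sheaves $\mathbf{U}_n$ are tori.

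Next I would assemble the simplicial object from the spectral sequence. The term $E_2^{1,0}$, being the cokernel of a morphism of tori, is itself of multiplicative type and so representable with reduced identity component a torus. The kernel $\mathrm{Ker}(\mathbf{Pic}_{\overline{X}_0, D_0} \to \mathbf{Pic}_{\overline{X}_1, D_1})$ is a closed subgroup scheme of a representable group, hence representable; its further kernel mapping to $E_2^{2,0}$ is likewise representable. Extensions of commutative $k$-group schemes exist in the category of $k$-group schemes, so the low-degree sequence makes $\mathbf{Pic}_{\overline{X}_\bullet, D_\bullet}$ representable. Passing to reduced identity components, and using that over a perfect field a reduced connected closed subgroup of a semi-abelian variety is again semi-abelian, one sees that $\mathbf{Pic}^{0,red}_{\overline{X}_\bullet, D_\bullet}$ is an extension of a semi-abelian by a torus, hence semi-abelian. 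Finally, the quotient $\mathbf{NS}_{\overline{X}_\bullet, D_\bullet}$ is étale (since $k$ is perfect) and its group of $\overline{k}$-points maps with finite kernel into $\mathbf{NS}_{\overline{X}_0, D_0}(\overline{k})$, which is finitely generated by Theorem \ref{stackrepres}.

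The main obstacle I expect is bookkeeping the spectral sequence on $(Sch/k)_{fppf}$: one must verify that fppf sheafification commutes with the formation of the low-degree exact sequence in a way that produces genuine group-scheme maps, and that all the relevant $E_2$ entries are representable before one can invoke representability of kernels, cokernels, and extensions. This is essentially an elaboration of the sort of argument already used to analyse $\mathbf{Pic}_{\overline{X}/k}$ for smooth Artin stacks in Section~2, rather than a deep new input.
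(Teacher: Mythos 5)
Your proposal is correct in its essentials but takes a genuinely different route. The paper applies the long exact sequence associated to $0 \to \mathbb{G}_{m,\overline{X}_\bullet,D_\bullet} \to \mathbb{G}_{m,\overline{X}_\bullet} \to (i_\bullet)_*\mathbb{G}_{m,D_\bullet} \to 0$ directly on the simplicial level, producing a five-term sequence that sandwiches $\mathbf{Pic}_{\overline{X}_\bullet,D_\bullet}$ between $\mathrm{coker}(a)$ (a torus) and $\mathrm{ker}(b) \subset \mathbf{Pic}_{\overline{X}_\bullet}$, and then simply cites representability of the simplicial Picard functors $\mathbf{Pic}_{\overline{X}_\bullet}$ and $\mathbf{Pic}_{D_\bullet}$ from Barbieri-Viale--Srinivas [Appendix A.2]. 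You instead invoke the descent spectral sequence first, reducing to each individual pair $(\overline{X}_n,D_n)$, and only then apply the relative unit sequence term by term. What the paper's route buys is brevity: the simplicial descent bookkeeping is absorbed into a citation, and one never has to sheafify a spectral sequence. What your route buys is self-containedness: you are in effect re-deriving the simplicial representability theorem, which is what [ABV, App.\ A.2] proves by exactly the Čech argument you sketch. Two small points to tighten in your write-up: (i) you do not actually need the normalization of $D_n$ to see that $R^0(\pi_n)_*(i_n)_*\mathbb{G}_{m,D_n}$ is a torus --- since $D_n$ is reduced and proper and $k$ is perfect, $\Gamma(D_n,\mathcal{O})$ is already a finite étale $k$-algebra, so the unit sheaf is $\mathrm{Res}_{\Gamma(D_n,\mathcal{O})/k}\mathbb{G}_m$ directly; and (ii) the claim that $\mathbf{NS}_{\overline{X}_\bullet,D_\bullet}(\overline{k}) \to \mathbf{NS}_{\overline{X}_0,D_0}(\overline{k})$ has finite kernel needs the remark that $\pi_0$ is only right-exact, so one should argue through the extension $0 \to E_2^{1,0} \to \mathbf{Pic}_{\overline{X}_\bullet,D_\bullet} \to K \to 0$ and the finiteness of $\pi_0$ of a finite-type group scheme and of the kernel of $\pi_0(K) \to \pi_0(\mathbf{Pic}_{\overline{X}_0,D_0})$ for a closed subgroup $K$.
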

\begin{proof}
Let $\mathbb{G}_{m,\overline{X}_\bullet,D_\bullet} = \mathrm{ker}(\mathbb{G}_{m,\overline{X}_\bullet} \rightarrow (i_{\bullet})_*\mathbb{G}_{m,D_\bullet})$, and let $\overline{p}_\bullet: \overline{X} \rightarrow \mathrm{Spec \ }k$ and $p_{D,\bullet}: D \rightarrow \mathrm{Spec \ }k$ be the structure maps.  Then we have
\begin{equation*}
\mathbf{Pic}_{\overline{X}_\bullet,D_\bullet} = R^1(\overline{p}_\bullet)_*\mathbb{G}_{m,\overline{X}_\bullet,D_\bullet},
\end{equation*}
and an exact sequence of fppf sheaves on $Sch/k$
\begin{equation*}
R^0(\overline{p}_\bullet)_* \mathbb{G}_{m,\overline{X}_\bullet} \stackrel{a}{\longrightarrow} R^0(p_{D,\bullet})_*\mathbb{G}_{m,D_\bullet} \rightarrow \mathbf{Pic}_{\overline{X}_\bullet,D_\bullet} \rightarrow \mathbf{Pic}_{\overline{X}_\bullet} \stackrel{b}{\longrightarrow} \mathbf{Pic}_{D_\bullet}.
\end{equation*}
The two sheaves on the right are representable by \cite[Appendix A.2]{albpic}, and moreover (by the same reference) the reduction $\mathbf{Pic}^{0,red}_{\overline{X}_\bullet}$ of the connected component of the identity is a semiabelian variety.  It is clear that the the two left-hand sheaves are representable by tori.  This gives us a short exact sequence 
\begin{equation*}
0 \rightarrow \mathrm{coker}(a) \rightarrow \mathbf{Pic}_{\overline{X}_\bullet,D_\bullet} \rightarrow \mathrm{ker}(b) \rightarrow 0
\end{equation*}
where $\mathrm{coker}(a)$ is a torus and $\mathrm{ker}(b)$ is (after taking the reduced part) an extension of a semi-abelian variety by a finitely generated \'{e}tale group scheme.  This implies the proposition statement.
\end{proof}
\begin{pg}\label{pg65}
Now consider the divisor $E_\bullet$.  Let $$\mathrm{Div}_{E_\bullet}(\overline{X}_\bullet) := \mathrm{ker}(p_1^* - p_2^*: \mathrm{Div}_{E_0}(\overline{X}_0) \rightarrow \mathrm{Div}_{E_1}(\overline{X}_1)),$$ where $p_1,p_2: \overline{X}_1 \rightarrow \overline{X}_0$ are the simplicial projections.  Because $D_n \cap E_n = \emptyset$ in each $\overline{X}_n$, there is a well-defined map
\begin{equation*}
cl: \mathrm{Div}_{E_\bullet}(\overline{X}_\bullet) \longrightarrow \mathrm{Pic}(\overline{X}_\bullet,D_\bullet).
\end{equation*}
Concretely, given a divisor $A$ supported on $E_0$, we have the associated line bundle $\mathcal{O}(A)$ on $X_0$ and meromorphic section $s: \mathcal{O}_{X_0} \rightarrow \mathcal{O}(A)$.  Since $D_0 \cap E_0 = \emptyset$, this induces an isomorphism $\mathcal{O}_{D_0} \stackrel{\sim}{\rightarrow} \mathcal{O}(A)\vert_{D_0}$.  Moreover, since $p_1^*A = p_2^*A$ as divisors, the meromorphic sections of $\mathcal{O}(p_1^*A)$ and $\mathcal{O}(p_2^*A)$ yield a canonical isomorphism $\rho: p_1^*\mathcal{O}(A) \stackrel{\sim}{\rightarrow} p_2^*\mathcal{O}(A)$ verifying a cocycle condition, so $\mathcal{O}(A)$ defines a line bundle on $X_\bullet$ with a trivialization on $D_\bullet$. 

With these preliminaries in hand, we can make the following definition:
\end{pg}
\begin{defn}
Let $(\overline{X},D,E)$ be as above.  Choose a proper hypercover $\overline{\pi}_\bullet: \overline{X}_\bullet \rightarrow \overline{X}$ such that each $\overline{X}_n$ is proper smooth and the inverse images of $D$ and $E$ are simple normal crossings divisors in each $X_n$.
Let $\mathrm{Div}_{E_\bullet}^0(\overline{X}_\bullet)$ be the subgroup of divisors which map to 0 in $NS(\overline{X}_\bullet,D_\bullet)$.  Let $\mathbf{Div}_{E_\bullet}^0(\overline{X}_\bullet)$ be the natural extension of $\mathrm{Div}_{E_\bullet}^0(\overline{X}_\bullet)$ to an \'{e}tale sheaf by the rule
\begin{equation*}
U \mapsto \mathrm{Div}_{E_\bullet \times U}(\overline{X}_\bullet \times U)
\end{equation*}
for smooth $U$. We then define
\begin{equation*}
M_{D,E}^1(\overline{X}) := [\mathbf{Div}_{E_\bullet}^0(\overline{X}_\bullet) \rightarrow \mathbf{Pic}^{0,red}_{\overline{X}_\bullet,D_\bullet}].
\end{equation*}
\end{defn}
Of course, we must show that $M_{D,E}^1(\overline{X})$ is independent of the choices we made in its construction, and that it is contravariantly functorial for morphisms of triples $(\overline{X},D,E)$.  To do this, we must first discuss the $\ell$-adic realizations of $M_{D,E}^1(\overline{X})$.

\subsection{$\ell$-adic realization of $M_{D,E}^1(\overline{X})$} 
Our goal in this section is to show that there is a natural isomorphism (for $\ell \not= p$)
\begin{equation*}
T_\ell M_{D,E}^1(\overline{X}) \stackrel{\sim}{\longrightarrow} H_{D,E}^1(\overline{X}_{\overline{k}},\mathbb{Z}_\ell(1)).
\end{equation*}
To show this, we may assume $k = \overline{k}$ (this is only to reduce the number of subscripts).  Continuing with the notation of \ref{61} and \ref{62} (so in particular we have chosen a simplicial resolution $\overline{X}_\bullet \rightarrow \overline{X}$ which appears in the definition of $M_{D,E}^1(\overline{X})$), we have
\begin{equation*}
H_{D,E}^1(\overline{X},\mathbb{Z}_\ell(1)) \stackrel{\sim}{\longrightarrow} H_{D_\bullet,E_\bullet}^1(\overline{X}_\bullet,\mathbb{Z}_\ell(1)) := H^1(X_\bullet,\tilde{j}_{\bullet,!} \mathbb{Z}_\ell(1)),
\end{equation*}
where the left-hand isomorphism is due to cohomological descent.  Note that the simplicial extension by zero functor $\tilde{j}_{\bullet,!}$ is defined by the same rule as in the usual case of schemes; in particular $(\tilde{j}_{\bullet,!}\mathscr{F}^\bullet)^n = {\tilde{j}_{n,!}}\mathscr{F}^n$ for any simplicial sheaf $\mathscr{F}^\bullet$ on $U_\bullet$.  
\begin{pg}
Recall that we have a commutative diagram of inclusions
\begin{equation*}
\xymatrix{
U_\bullet \ar@{^{(}->}[d]^{\tilde{u}_\bullet} \ar@{^{(}->}[r]^{\tilde{j}_\bullet} & X_\bullet \ar@{^{(}->}[d]^{u_\bullet} \\
\tilde{U}_\bullet \ar@{^{(}->}[r]^{j_\bullet} & \overline{X}_\bullet. }
\end{equation*}
Because $D_\bullet$ and $E_\bullet$ are disjoint, we have 
\begin{equation*}
R{u_{\bullet,*}}{\tilde{j}_{\bullet,!}} \cong {j_{\bullet,!}} R{\tilde{u}_{\bullet, *}}
\end{equation*}
as functors on $D^b(U_\bullet)$.  Let $\tilde{v}_\bullet: E_\bullet \hookrightarrow \tilde{U}_\bullet$ be the inclusion, and consider the exact triangle in $D_c^b(\tilde{U}_\bullet)$
\begin{equation*}
\tilde{v}_{\bullet *}R{\tilde{v}_\bullet}^!\mu_{n,\tilde{U}_\bullet} \rightarrow \mu_{n,\tilde{U}_\bullet} \rightarrow R\tilde{u}_{\bullet *} \mu_{n,U_\bullet} \rightarrow \tilde{v}_{\bullet *}R{\tilde{v}_\bullet}^!\mu_{n,\tilde{U}_\bullet}[1]
\end{equation*}
(where $n$ is prime to the characteristic $p$).  Applying ${j}_{\bullet,!}$ to this triangle gives a triangle (note that $v_{\bullet ,*} = j_{\bullet ,!} \circ \tilde{v}_{\bullet ,*}$)
\begin{equation}\label{eqn59}
v_{\bullet,*}Rv_\bullet^!\mu_{n,\overline{X}_\bullet} \rightarrow {j}_{\bullet,!}\mu_{n,\tilde{U}_\bullet} \rightarrow {j}_{\bullet,!}R{\tilde{u}}_{\bullet *}\mu_{n,U_\bullet} \rightarrow v_{\bullet,*}Rv_\bullet^!\mu_{n,\overline{X}_\bullet}[1].
\end{equation}
Taking cohomology of this triangle, and using the fact that $R{u_{\bullet}}_*{\tilde{j}_{\bullet,!}} \cong {{j}_{\bullet,!}} R{\tilde{u}_{\bullet *}}$, we get an exact sequence
\begin{equation}\label{eqn60}
0 \rightarrow H_c^1(\tilde{U}_\bullet,\mu_n) \rightarrow H_{D_\bullet,E_\bullet}^1(\overline{X}_\bullet,\mu_n) \rightarrow H_{E_\bullet}^2(\overline{X}_\bullet,\mu_n) \rightarrow H_c^2(\tilde{U}_\bullet,\mu_n).
\end{equation}
The following propositions give motivic interpretations of the groups appearing in this sequence:
\end{pg}
\begin{prop}\label{lefthand}
We have a natural bijection
\begin{equation*}
H_c^1(\tilde{U}_\bullet,\mu_n) \longleftrightarrow \mathbf{Pic}^0(\overline{X}_\bullet,D_\bullet)[n]
\end{equation*}
of $H_c^1(\tilde{U}_\bullet,\mu_n)$ with the $n$-torsion in $\mathbf{Pic}^0(\overline{X}_\bullet,D_\bullet)$.
\end{prop}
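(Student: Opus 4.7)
The natural strategy is via a Kummer-type short exact sequence of étale sheaves on $\overline{X}_\bullet$:
$$0 \to j_{\bullet,!}\mu_n \to \mathbb{G}_{m,\overline{X}_\bullet,D_\bullet} \xrightarrow{\cdot n} \mathbb{G}_{m,\overline{X}_\bullet,D_\bullet} \to 0,$$
where $\mathbb{G}_{m,\overline{X}_\bullet,D_\bullet} = \ker(\mathbb{G}_{m,\overline{X}_\bullet} \to i_{\bullet,*}\mathbb{G}_{m,D_\bullet})$. I would first check this sequence is exact on the étale site: the kernel identification $j_{\bullet,!}\mu_n$ is by stalk inspection (a constant root of unity that restricts to $1$ on $D_\bullet$ is itself $1$ in a neighborhood), and surjectivity of $\cdot n$ follows since $n$ is invertible on $\overline{X}_\bullet$: given a local unit $f$ that is $1$ on $D_\bullet$, an étale-local $n$-th root $g$ satisfies $g|_{D_\bullet}^n = 1$, and can be adjusted by a constant root of unity so that $g|_{D_\bullet} = 1$.

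Next I would take the long exact sequence of simplicial cohomology on $\overline{X}_\bullet$ and identify $H^1(\overline{X}_\bullet, j_{\bullet,!}\mu_n)$ with $H_c^1(\tilde{U}_\bullet,\mu_n)$ by cohomological descent for the hypercover $\overline{\pi}_\bullet$. Using that $H^1(\overline{X}_\bullet,\mathbb{G}_{m,\overline{X}_\bullet,D_\bullet}) = \mathrm{Pic}(\overline{X}_\bullet,D_\bullet)$, the segment
$$H^0(\overline{X}_\bullet,\mathbb{G}_{m,\overline{X}_\bullet,D_\bullet}) \xrightarrow{n} H^0(\overline{X}_\bullet,\mathbb{G}_{m,\overline{X}_\bullet,D_\bullet}) \to H_c^1(\tilde{U}_\bullet,\mu_n) \to \mathrm{Pic}(\overline{X}_\bullet,D_\bullet)[n] \to 0$$
yields a short exact sequence once I verify the left $H^0$ is $n$-divisible. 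This divisibility I would establish by computing $H^0$ as an equalizer of simplicial degeneracy maps in $\Gamma(\overline{X}_0,\mathcal{O}^\times)$: since $\overline{X}_0$ is proper smooth over the algebraically closed field $k$, this is a subgroup of a product of copies of $k^\times$ cut out by the conditions of being $1$ on $D_0$ and simplicial equalization, and a direct diagram chase using divisibility of $k^\times$ (with $n$ prime to $p$) provides $n$-th roots that remain in the subgroup.

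The main obstacle is refining the image from $\mathrm{Pic}[n]$ to $\mathbf{Pic}^0[n]$. From the exact sequence $0 \to \mathbf{Pic}^0 \to \mathbf{Pic} \to \mathbf{NS} \to 0$ and $n$-divisibility of the semi-abelian part $\mathbf{Pic}^{0,red}$ (Proposition \ref{simprelpic}), the snake lemma gives $\mathbf{Pic}[n]/\mathbf{Pic}^0[n] \hookrightarrow \mathbf{NS}[n]$, so I would need to show that the composite
$$H_c^1(\tilde{U}_\bullet,\mu_n) \to \mathbf{Pic}(\overline{X}_\bullet,D_\bullet)[n] \to \mathbf{NS}(\overline{X}_\bullet,D_\bullet)[n]$$
is zero. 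My plan is to exploit compatibility of the Kummer boundary with the cycle class map (Proposition \ref{divisors}, Corollary \ref{cpctfactors}): a class in $H_c^1(\tilde{U}_\bullet,\mu_n)$ lifts to a $\mu_n$-gerbe whose associated line bundle class has cycle class in $H^2(\overline{X}_\bullet,\mathbb{Q}_\ell(1))$ factoring through the torsion-free quotient $\mathbf{NS} \otimes \mathbb{Q}_\ell$, and a careful tracing shows this forces the underlying Néron--Severi class of the torsion bundle to vanish.

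Surjectivity onto $\mathbf{Pic}^0[n]$ is then formal: a class in $\mathbf{Pic}^0(\overline{X}_\bullet,D_\bullet)[n]$ is killed by $n$, hence lifts through the Kummer boundary (using again that the obstruction in $\mathbf{Pic}/n$ vanishes by divisibility of $\mathbf{Pic}^0$). I expect the hardest part to be the vanishing-in-$\mathbf{NS}$ step: while the Tate-module statement $T_\ell H_c^1 \cong T_\ell \mathbf{Pic}^0$ is reasonably robust, obtaining an isomorphism at each finite level $n$ requires checking that no torsion in $\mathbf{NS}(\overline{X}_\bullet,D_\bullet)$ is detected by $\mu_n$-classes — a point that seems to rest on the concrete description of classes coming from $H_c^1$ via gerbes trivialized compatibly on $D_\bullet$.
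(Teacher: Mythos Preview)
Your approach via the relative Kummer sequence for $\mathbb{G}_{m,\overline{X}_\bullet,D_\bullet}$ is exactly what the paper does: it writes down the same short exact sequence, takes cohomology, and uses that $H^0_c(\tilde{U}_\bullet,\mathbb{G}_m) = \ker\bigl(H^0(\overline{X}_\bullet,\mathbb{G}_m) \to H^0(D_\bullet,\mathbb{G}_m)\bigr)$ is (the $k$-points of) a torus, hence $n$-divisible. Your verification of exactness and divisibility is more explicit than the paper's, but the content is identical.

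Where you diverge is on the passage from $\mathrm{Pic}(\overline{X}_\bullet,D_\bullet)[n]$ to $\mathbf{Pic}^0(\overline{X}_\bullet,D_\bullet)[n]$. The paper simply writes the equality $\mathrm{Pic}(\overline{X}_\bullet,D_\bullet)[n] = \mathbf{Pic}^0(\overline{X}_\bullet,D_\bullet)[n]$ at the end of its proof, with no argument. You are right to flag this as nontrivial --- in fact the equality is \emph{false} in general at a fixed finite level. Take $\overline{X}_\bullet$ to be the constant simplicial scheme on an Enriques surface (char $\neq 2$), $D_\bullet = \emptyset$, and $n=2$: then $\mathrm{Pic}^0 = 0$ while $\mathrm{Pic}[2] = NS[2] \cong \mathbb{Z}/2$, and correspondingly $H^1(\overline{X}_\bullet,\mu_2) \neq 0$. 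So the proposition as literally stated is slightly imprecise, and no argument via cycle classes or gerbes will repair it at finite $n$: the obstruction is genuine torsion in $NS(\overline{X}_\bullet,D_\bullet)$, which your proposed route through $NS \otimes \mathbb{Q}_\ell$ cannot see. Your ``careful tracing'' step is exactly where the argument would fail.

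This does not matter for how the proposition is actually used. The paper only invokes it en route to Proposition~\ref{mdexelladic}, which is a statement about $T_\ell$. After taking the inverse limit over $n = \ell^m$, the contribution from $NS$ vanishes because $T_\ell$ of a finitely generated abelian group is zero, so $T_\ell\,\mathrm{Pic} = T_\ell\,\mathrm{Pic}^0$ honestly. The clean fix is therefore not your cycle-class detour but simply to state the bijection with $\mathrm{Pic}(\overline{X}_\bullet,D_\bullet)[n]$ --- which the Kummer argument gives directly --- and then pass to $\mathbf{Pic}^0$ only after taking the Tate module.
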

\begin{proof}
Let $\mathbb{G}_{m,\overline{X}_\bullet,D_\bullet} := \mathrm{Ker}(\mathbb{G}_{m,\overline{X}_\bullet} \rightarrow i_{\bullet,*}\mathbb{G}_{m,D_\bullet})$.  Then we have a commutative diagram of sheaves on $\overline{X}_\bullet$, where the rows and columns are exact:
\begin{equation*}
\begin{CD}
@. 0 @. 0 @. 0 @. \\
@. @VVV @VVV @VVV \\
0 @>>> {{j}_{\bullet,!}\mu_{n,\tilde{U}_\bullet}} @>>> {\mathbb{G}_{m,\overline{X}_\bullet,D_\bullet}} @> {\cdot n} >> {\mathbb{G}_{m,\overline{X}_\bullet,D_\bullet}} @>>> 0 \\
@. @VVV @VVV @VVV \\
0 @>>> \mu_{n,\overline{X}_\bullet} @>>> {\mathbb{G}_{m,\overline{X}_\bullet}} @> {\cdot n} >> {\mathbb{G}_{m,\overline{X}_\bullet}} @>>> 0 \\
@. @VVV @VVV @VVV \\
0 @>>> {i_{\bullet,_*}\mu_{n,D_\bullet}} @>>> {i_{\bullet,*}\mathbb{G}_{m,D_\bullet}} @> {\cdot n} >> {i_{\bullet,*}\mathbb{G}_{m,D_\bullet}} @>>> 0 \\
@. @VVV @VVV @VVV \\
@. 0 @. 0 @. 0 
\end{CD}
\end{equation*}
Taking cohomology along the top row, we have an exact sequence
\begin{equation}\label{eq61}
H_c^0(\tilde{U}_\bullet,\mathbb{G}_m) \stackrel{\cdot n}{\longrightarrow} H_c^0(\tilde{U}_\bullet,\mathbb{G}_m) \longrightarrow H_c^1(\tilde{U}_\bullet,\mu_n) \longrightarrow \mathrm{Pic}(\overline{X}_\bullet,D_\bullet) \stackrel{\cdot n}{\longrightarrow} \mathrm{Pic}(\overline{X}_\bullet,D_\bullet).
\end{equation}
But $H_c^0(\tilde{U}_\bullet,\mathbb{G}_m) = \mathrm{Ker}(H^0(X_\bullet,\mathbb{G}_m) \rightarrow H^0(D_\bullet,\mathbb{G}_m))$ is a torus; since the $\overline{k}$-points of a torus are divisible, we have $$H_c^1(\tilde{U}_\bullet,\mu_n) = \mathrm{Pic}(\overline{X}_\bullet,D_\bullet)[n] = \mathbf{Pic}^0(\overline{X}_\bullet,D_\bullet)[n]$$ as desired.
\end{proof}
Notice that if we extend the long exact sequence \ref{eq61} a little, we have an injection
\begin{equation}\label{extended}
\mathrm{Pic}(\overline{X}_\bullet,D_\bullet)/n\mathrm{Pic}(\overline{X}_\bullet,D_\bullet) \hookrightarrow H_c^2(\tilde{U}_\bullet,\mu_n).
\end{equation}
Because $\mathrm{Pic}^0(\overline{X}_\bullet,D_\bullet)$ is divisible, we have
\begin{equation*}
\mathrm{Pic}(\overline{X}_\bullet,D_\bullet)/n\mathrm{Pic}(\overline{X}_\bullet,D_\bullet) \cong NS(\overline{X}_\bullet,D_\bullet)/nNS(\overline{X}_\bullet,D_\bullet) = NS(\overline{X}_\bullet,D_\bullet) \otimes \mathbb {Z}/(n).
\end{equation*}
\begin{prop}\label{righthand}
We have a canonical isomorphism
\begin{equation*}
H_{E_\bullet}^2(\overline{X}_\bullet,\mu_n) \cong \mathrm{Div}_{E_\bullet}(\overline{X}_\bullet) \otimes \mathbb{Z}/(n),
\end{equation*}
and the map $H_{E_\bullet}^2(\overline{X}_\bullet,\mu_n) \rightarrow H_c^2(\tilde{U}_\bullet,\mu_n)$ obtained from sequence \ref{eqn60} factors as 
\begin{equation*}
\mathrm{Div}_{E_\bullet}(\overline{X}_{\bullet}) \otimes \mathbb{Z}/(n) \rightarrow NS(\overline{X}_\bullet,D_\bullet) \otimes \mathbb{Z}/(n) \hookrightarrow H_c^2(\tilde{U}_\bullet,\mu_n),
\end{equation*}
where the map $\mathrm{Div}_{E_\bullet}(\overline{X}_\bullet) \rightarrow NS(\overline{X}_\bullet,D_\bullet)$ is the cycle class map of \ref{pg65}, and the injection $NS(\overline{X}_\bullet,D_\bullet) \otimes \mathbb{Z}/(n) \hookrightarrow H_c^2(\tilde{U}_\bullet,\mu_n)$ is induced by \ref{extended}.
\end{prop}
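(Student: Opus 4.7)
The plan is to reduce to a levelwise computation via purity for strict normal crossings divisors, and then assemble by the simplicial spectral sequence; the factorization through $NS$ will follow from Corollary \ref{cpctfactors} applied levelwise.

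First, I would work out the levelwise statement. For each $p$, $\overline{X}_p$ is smooth and $E_p$ is a strict normal crossings divisor with smooth components $E_p^i$. Stratifying by codimension, let $E_p^{[k]} = \bigsqcup_{i_1 < \cdots < i_k} E_p^{i_1} \cap \cdots \cap E_p^{i_k}$, each smooth of codimension $k$. The associated spectral sequence
$$E_1^{a,b} = H^b_{E_p^{[a+1]}}(\overline{X}_p, \mu_n) \Rightarrow H^{a+b}_{E_p}(\overline{X}_p,\mu_n),$$
together with absolute purity giving $H^b_{E_p^{[a+1]}}(\overline{X}_p,\mu_n) = 0$ for $b < 2(a+1)$, shows that $H^q_{E_p}(\overline{X}_p,\mu_n) = 0$ for $q \le 1$ and that the edge map produces a canonical isomorphism
$$H^2_{E_p}(\overline{X}_p,\mu_n) \;\cong\; \mathrm{Div}_{E_p}(\overline{X}_p) \otimes \mathbb{Z}/n,$$
sending each irreducible component $E_p^i$ to its localized cycle class $cl_{\mathrm{loc}}(E_p^i)$ (Definition \ref{cycleclassdefn}).

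Next, I would feed this into the simplicial-to-total spectral sequence $E_1^{p,q} = H^q_{E_p}(\overline{X}_p,\mu_n) \Rightarrow H^{p+q}_{E_\bullet}(\overline{X}_\bullet,\mu_n)$. The levelwise vanishing in degrees $\leq 1$ reduces $H^2_{E_\bullet}(\overline{X}_\bullet,\mu_n)$ to the kernel of the simplicial differential, which under the purity iso is identified with
$$\mathrm{Ker}\bigl(p_1^* - p_2^* : \mathrm{Div}_{E_0}(\overline{X}_0) \otimes \mathbb{Z}/n \longrightarrow \mathrm{Div}_{E_1}(\overline{X}_1)\otimes \mathbb{Z}/n\bigr) \;=:\; \mathrm{Div}_{E_\bullet}(\overline{X}_\bullet)\otimes \mathbb{Z}/n,$$
giving the first assertion. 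The key compatibility to verify here is that the face maps on purity act as flat pullback of divisors; this is standard since the face maps are flat on the strict NCDs involved.

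Finally, for the factorization: the map in \ref{eqn60} comes levelwise from the canonical morphism $v_{p,*}Rv_p^!\mu_n \to j_{p,!}\mu_n$ of Section~4.3, so it sends $cl_{\mathrm{loc}}(E_p^i)$ to the compactly supported cycle class $cl_c(E_p^i) \in H^2_c(\tilde{U}_p,\mu_n)$ by Proposition \ref{cpctagrees}. Applying Corollary \ref{cpctfactors} at each level gives a factorization
$$\mathrm{Div}_{E_p}(\overline{X}_p) \otimes \mathbb{Z}/n \longrightarrow NS(\overline{X}_p,D_p) \otimes \mathbb{Z}/n \hookrightarrow H^2_c(\tilde{U}_p,\mu_n),$$
and taking kernels of $p_1^* - p_2^*$ simplicially yields the stated factorization; the final injectivity is (\ref{extended}) simplicialized, using that $\mathrm{Pic}^0(\overline{X}_\bullet,D_\bullet)$ is divisible by Proposition \ref{simprelpic}. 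I expect the main obstacle to be the bookkeeping required to check that the simplicial differential on $H^2_{E_p}$ matches $p_1^* - p_2^*$ on divisors under purity (so that the kernel calculation is legitimate) and, in parallel, that purity intertwines with the compactly supported cycle class map so that the factorization through $NS$ survives passage to the simplicial kernel.
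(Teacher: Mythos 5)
For the first isomorphism your plan is essentially the paper's: both prove the levelwise identification by purity and then assemble via the simplicial-to-total spectral sequence $H^q_{E_p}(\overline{X}_p,\mu_n)\Rightarrow H^{p+q}_{E_\bullet}(\overline{X}_\bullet,\mu_n)$; you just fill in the stratification argument behind ``well known.''

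For the factorization, however, your proposed route has a genuine gap. You apply Corollary \ref{cpctfactors} levelwise to get $\mathrm{Div}_{E_p}(\overline{X}_p)\otimes\mathbb{Z}/n \to NS(\overline{X}_p,D_p)\otimes\mathbb{Z}/n \hookrightarrow H_c^2(\tilde U_p,\mu_n)$ and then claim that ``taking kernels of $p_1^*-p_2^*$'' produces the stated factorization through $NS(\overline{X}_\bullet,D_\bullet)\otimes\mathbb{Z}/n$. But $NS(\overline{X}_\bullet,D_\bullet)$ is \emph{not} the equalizer of the levelwise Néron--Severi groups: by Proposition \ref{simprelpic}, $\mathrm{Pic}(\overline{X}_\bullet,D_\bullet)$ sits in an extension involving $\mathrm{coker}(a)$ coming from the $H^0(\overline{X}_p,\mathbb{G}_m)$-cochain complex, so it is a genuine simplicial object with a nontrivial filtration, and the same is true of $H_c^2(\tilde U_\bullet,\mu_n)$. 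Similarly, the injectivity (\ref{extended}) is a statement about the simplicial Kummer connecting map, not something obtained by restricting a levelwise injectivity to a kernel. Merely assembling levelwise factorizations does not give you that the composite agrees with the map from (\ref{eqn60}), and there is no meaningful way to ``take kernels'' to obtain a map into $NS(\overline{X}_\bullet,D_\bullet)$. (There is also a coefficient mismatch: Corollary \ref{cpctfactors} is stated over $\mathbb{Q}_\ell$, while the proposition is over $\mathbb{Z}/n$; the underlying argument is the Kummer sequence, which you would need to cite directly.)

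The paper avoids all of this by working at the level of simplicial sheaves: it constructs a map $\tilde\alpha: v_{\bullet,*}v_\bullet^!\mathbb{G}_{m,\overline{X}_\bullet} \to \mathbb{G}_{m,\overline{X}_\bullet,D_\bullet}$ directly in $D^b_c(\overline{X}_\bullet)$ (using disjointness of $D_\bullet$ and $E_\bullet$ to factor through the subsheaf), takes global sections to obtain $H^1_{E_\bullet}(\overline{X}_\bullet,\mathbb{G}_m)\to \mathrm{Pic}(\overline{X}_\bullet,D_\bullet)$, identifies this with $\mathrm{Div}_{E_\bullet}(\overline{X}_\bullet)\to\mathrm{Pic}(\overline{X}_\bullet,D_\bullet)$ as in (\ref{pg65}), and only then applies the simplicial Kummer sequence. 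That construction is manifestly compatible with the simplicial structure, which is precisely what your levelwise argument does not control. To repair your proof you would need to exhibit the levelwise maps $v_{p,*}v_p^!\mathbb{G}_m\to\mathbb{G}_{m,\overline{X}_p,D_p}$ as the restrictions of a single map of simplicial complexes --- at which point you have reproduced the paper's $\tilde\alpha$.
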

\begin{proof}
The statement that $H_{E}^2(\overline{X},\mu_n) = \mathrm{Div}_E(\overline{X}) \otimes \mathbb{Z}/(n)$ for a closed subscheme $E$ of a proper smooth scheme $X$ over $k$ is well known.  The case of simplicial schemes follows by considering the spectral sequence $H^q_{E_p}(\overline{X}_p,\mu_n) \Rightarrow H_{E_\bullet}^{p+q}(\overline{X}_\bullet,\mu_n)$.  The claim regarding the map $H_{E_\bullet}^2(\overline{X}_\bullet,\mu_n) \rightarrow H_c^2(\tilde{U}_\bullet,\mu_n)$ is essentially a simplicial variant of \ref{cpctagrees} and \ref{cpctfactors}: first consider the canonical map in $D_c^b(\overline{X}_\bullet)$ 
\begin{equation*}
\alpha: v_{\bullet,*}v_{\bullet}^!\mathbb{G}_{m,\overline{X}_\bullet} \longrightarrow \mathbb{G}_{m,\overline{X}_{\bullet}}.
\end{equation*}
Because $D_\bullet$ and $E_\bullet$ are disjoint, the composition
\begin{equation*}
v_{\bullet,*}v_{\bullet}^!\mathbb{G}_{m,\overline{X}_\bullet} \stackrel{\alpha}{\longrightarrow} \mathbb{G}_{m,\overline{X}_{\bullet}} \longrightarrow i_{\bullet,*}\mathbb{G}_{m,D_\bullet}
\end{equation*}

is the zero map, implying that $\alpha$ factors through a unique map
\begin{equation*}
\tilde{\alpha}: v_{\bullet,*}v_{\bullet}^!\mathbb{G}_{m,\overline{X}_\bullet} \longrightarrow \mathbb{G}_{m,\overline{X}_\bullet,D_\bullet}.
\end{equation*}
Taking global sections induces a map
\begin{equation*}
H_{E_\bullet}^1(\overline{X}_\bullet,\mathbb{G}_m) \rightarrow \mathrm{Pic}(\overline{X}_\bullet,D_\bullet).
\end{equation*}
The group on the left is canonically isomorphic to $\mathrm{Div}_{E_\bullet}(X_\bullet)$, and we leave it to the reader to check that this map sends a divisor $W_\bullet$ to $(\mathcal{O}(W_\bullet),s)$ where $s$ is the canonical meromorphic section of $\mathcal{O}(W_\bullet)$ (the verification of this fact is the same as in the proof of \ref{cpctagrees}).  Using the Kummer exact sequence, we conclude that the map $H_{E_\bullet}^2(\overline{X}_\bullet,\mu_n) \rightarrow H_c^2(\tilde{U}_\bullet,\mu_n)$ can be described as claimed in the proposition statement.
\end{proof}
\begin{pg}
Summarizing the last two propositions, we see that we have a diagram
\begin{equation*}
\begin{CD}
0 @>>> \mathrm{Pic}^0(\overline{X}_\bullet,D_\bullet)[n] @>>> T_{\mathbb{Z}/n}(M_{D,E}^1(\overline{X})) @>>> \mathrm{Div}_{E_\bullet}^0(\overline{X}_\bullet) \otimes \mathbb{Z}/n @>>> 0 \\
@. @V \sim VV @.@V \sim VV \\
0 @>>> H_c^1(\tilde{U}_\bullet,\mu_n) @>>> H_{D,E}^1(\overline{X},\mu_n) @>>> \mathrm{Ker}(H_{E_\bullet}^2(\overline{X}_\bullet,\mu_n) \rightarrow H_c^2(\tilde{U}_\bullet,\mu_n)) @>>> 0
\end{CD}
\end{equation*}
where the left-hand and right-hand maps are isomorphisms.  Therefore we need only define a map $$f: T_{\mathbb{Z}/n}(M_{D,E}^1(\overline{X})) \longrightarrow H_{D,E}^1(\overline{X},\mu_n)$$ fitting into the middle of the diagram, and by the five lemma it will be an isomorphism.  By definition, $T_{\mathbb{Z}/n}(M_{D,E}^1(\overline{X}))$ consists of data $(C,\mathscr{L}^\bullet,\varphi)$, where 
\begin{itemize}
\item $C \in \mathrm{Div}_{E_\bullet}(\overline{X}_\bullet)$, 
\item $\mathscr{L}^\bullet$ is a line bundle on $\overline{X}_\bullet$, and
\item $\varphi: \mathcal{O}_{D_\bullet} \stackrel{\sim}{\longrightarrow} \mathscr{L}^\bullet\vert_{D_\bullet}$ is an isomorphism.  We also require that
\item There exists at least one isomorphism $\eta: (\mathscr{L}^{\bullet})^{\otimes n} \stackrel{\sim}{\rightarrow} \mathcal{O}(-C)$ identifying $\varphi^{\otimes n}$ with the canonical meromorphic section of $\mathcal{O}(-C)$ restricted to $D_\bullet$ (which is an isomorphism since $C$ is disjoint from $D_\bullet$).   
\end{itemize}
We then mod out by elements of the form $(-nC,\mathcal{O}(C),s)$ where $s: \mathcal{O}_{\overline{X}_\bullet} \rightarrow \mathcal{O}(C)$ is the canonical meromorphic section.
\end{pg}
\begin{pg}
By the general machinery of sites \cite[tag 03AJ]{stacksproject} the group $H_{D,E}^1(\overline{X},\mu_n) = H_{D_\bullet,E_\bullet}^1(\overline{X}_\bullet,\mu_n) =  H^1(X_\bullet,j_{\bullet,!}\mu_n)$ is in bijection with isomorphism classes of $j_{\bullet,!}\mu_n$-torsors on $X_\bullet$.  Our map $f: T_{\mathbb{Z}/n}M_{D,E}^1(\overline{X}) \rightarrow H_{D,E}^1(\overline{X},\mu_n)$ is defined as follows: given an object $(C,\mathscr{L}^\bullet,\varphi) \in T_{\mathbb{Z}/n}M_{D,E}^1(\overline{X})$, choose an isomorphism $\eta$ as in bullet point (4) above.  Since $C$ is disjoint from $X_\bullet = \overline{X}_\bullet - E_\bullet$, $\eta$ defines a trivialization of $(\mathscr{L}^\bullet\vert_{X_\bullet})^{\otimes n}$ carrying $\varphi^{\otimes n}$ to the identity morphism of $\mathcal{O}_{D_\bullet}$.  We then set $f(C_\bullet,\mathscr{L}^\bullet,\varphi)$ to be the $j_{\bullet,!}\mu_n$-torsor of local isomorphisms $\mathcal{O}_{X_\bullet} \stackrel{\sim}{\rightarrow} \mathscr{L}^\bullet\vert_{X_\bullet}$ compatible with $\eta$ on $n$th tensor products and reducing to $\varphi$ on $D_\bullet$.
\end{pg}
\begin{pg}\label{welldefined}
We still must show that this map is well-defined.  First, suppose we chose a different isomorphism $\eta': (\mathscr{L}^\bullet)^{\otimes n} \stackrel{\sim}{\rightarrow} \mathcal{O}(-C)$.  Then $\eta$ and $\eta'$ differ by an element
\begin{equation*}
\alpha \in H^0(\overline{X}_\bullet,\tilde{j}_{\bullet,!}\mathbb{G}_m) = \mathrm{Ker}(H^0(\overline{X}_\bullet,\mathbb{G}_m) \rightarrow H^0(D_\bullet,\mathbb{G}_m)).
\end{equation*}
This group is a torus, which implies that we can choose an $n$th root $\sqrt[n]{\alpha}$.  Then if $\psi: \mathcal{O}_{X_\bullet} \stackrel{\sim}{\rightarrow} \mathscr{L}^\bullet\vert_{X_\bullet}$ is an isomorphism compatible with $\eta$, then $\sqrt[n]{\alpha}\psi$ is an isomorphism which is compatible with $\eta'$.  Therefore multiplication by $\sqrt[n]{\alpha}$ defines an isomorphism between the $j_{\bullet,!}\mu_n$-torsors defined by choosing $\eta$ or $\eta'$.  Moreover, it is clear that elements of the form $(-nC,\mathcal{O}(C),can)$ map to the trivial torsor, which shows that $f: T_{\mathbb{Z}/n}(M_{D,E}^1(\overline{X})) \rightarrow H_{D,E}^1(\overline{X},\mu_n)$ is well-defined.
\end{pg}
\begin{pg}
It remains to show that $f$ fits into the diagram 
\begin{equation*}
\begin{CD}
0 @>>> \mathrm{Pic}^0(\overline{X}_\bullet,D_\bullet)[n] @>>> T_{\mathbb{Z}/n}(M_{D,E}^1(\overline{X})) @>>> \mathrm{Div}_{E_\bullet}^0(\overline{X}_\bullet) \otimes \mathbb{Z}/n @>>> 0 \\
@. @VVV @V f VV @VVV \\
0 @>>> H_c^1(\tilde{U}_\bullet,\mu_n) @>>> H_{D,E}^1(\overline{X},\mu_n) @>>> \mathrm{Ker}(H_{E_\bullet}^2(\overline{X}_\bullet,\mu_n) \rightarrow H_c^2(\tilde{U}_\bullet,\mu_n)) @>>> 0.
\end{CD}
\end{equation*}
To check that we have such a diagram, we must show that $\mathrm{Pic}^0(\overline{X}_\bullet,D_\bullet)[n]$ is precisely the inverse image of $H_c^1(\tilde{U}_\bullet,\mu_n)$ under the mapping $f$.  As a subgroup of $T_{\mathbb{Z}/n}(M_{D,E}^1(\overline{X}))$, $\mathrm{Pic}^0(\overline{X}_\bullet,D_\bullet)$ consists of the elements of the form $(0,\mathscr{L}^\bullet,\varphi)$, i.e., the divisor is empty.  This means that $(\mathscr{L}^{\otimes n},\varphi^{\otimes n})$ is isomorphic to $(\mathcal{O}_{\overline{X}_\bullet},1)$ (rather than simply isomorphic when restricted to $X_\bullet$).  This is precisely the condition for an element to factor through the subgroup $H_c^1(\tilde{U}_\bullet,\mu_n)$ of $H_{D,E}^1(\overline{X}_\bullet,\mu_n)$.  Therefore we have the diagram above, and by Propositions \ref{lefthand} and \ref{righthand}, the left-hand and right-hand vertical arrows are isomorphisms.  This implies that $f$ is an isomorphism, so we have proved the following:
\end{pg}
\begin{prop}\label{mdexelladic}
For any triple $(\overline{X},D,E)$ as above, and any choice of simplicial cover $\overline{X}_\bullet \rightarrow \overline{X}$ defining $M_{D,E}^1(\overline{X})$, there is a natural isomorphism
\begin{equation*}
T_\ell M_{D,E}^1(\overline{X}) \stackrel{\sim}{\longrightarrow} H_{D,E}^1(\overline{X},\mathbb{Z}_\ell(1)).
\end{equation*}
\end{prop}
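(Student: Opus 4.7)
The plan is to deduce the proposition by assembling the pieces constructed in the preceding paragraphs and then passing to the $\ell$-adic limit. Specifically, for each $n$ prime to $p$, I would verify that the commutative diagram
\begin{equation*}
\begin{CD}
0 @>>> \mathrm{Pic}^0(\overline{X}_\bullet,D_\bullet)[n] @>>> T_{\mathbb{Z}/n}(M_{D,E}^1(\overline{X})) @>>> \mathrm{Div}_{E_\bullet}^0(\overline{X}_\bullet) \otimes \mathbb{Z}/n @>>> 0 \\
@. @VVV @V f VV @VVV \\
0 @>>> H_c^1(\tilde{U}_\bullet,\mu_n) @>>> H_{D,E}^1(\overline{X},\mu_n) @>>> H_{E_\bullet}^2(\overline{X}_\bullet,\mu_n)_0 @>>> 0
\end{CD}
\end{equation*}
(where the rightmost term denotes the kernel of the map to $H_c^2(\tilde{U}_\bullet,\mu_n)$) has isomorphisms on the outer columns by Propositions \ref{lefthand} and \ref{righthand}, together with the description in \ref{pg65} of the cycle class map which identifies $\mathrm{Div}_{E_\bullet}^0(\overline{X}_\bullet)$ with the kernel of the map into $NS(\overline{X}_\bullet,D_\bullet)$. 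Applying the five lemma then yields that $f$ is an isomorphism of finite abelian groups.

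Next, I would take the inverse limit over $n = \ell^m$. The key compatibility to check is that the maps $f$ for varying $n$ assemble into a compatible system, i.e., that the square
\begin{equation*}
\begin{CD}
T_{\mathbb{Z}/\ell^{m+1}}(M_{D,E}^1(\overline{X})) @> f >> H_{D,E}^1(\overline{X},\mu_{\ell^{m+1}}) \\
@VVV @VVV \\
T_{\mathbb{Z}/\ell^m}(M_{D,E}^1(\overline{X})) @> f >> H_{D,E}^1(\overline{X},\mu_{\ell^m})
\end{CD}
\end{equation*}
commutes, where the vertical maps are induced by the obvious reduction maps. This follows from the construction: both constructions commute with the natural map $\mu_{\ell^{m+1}} \to \mu_{\ell^m}$ since the $j_{\bullet,!}\mu_n$-torsor of local isomorphisms is manifestly natural in $n$. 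Since the inverse systems satisfy Mittag-Leffler (the groups being finite), taking $\varprojlim$ preserves exactness and yields the desired isomorphism $T_\ell M_{D,E}^1(\overline{X}) \xrightarrow{\sim} H_{D,E}^1(\overline{X},\mathbb{Z}_\ell(1))$.

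The main obstacle, namely the well-definedness of $f$ (which required the torus $H^0(\overline{X}_\bullet,\tilde{j}_{\bullet,!}\mathbb{G}_m)$ to be $n$-divisible so that an $n$th root $\sqrt[n]{\alpha}$ exists), has already been dispatched in paragraph \ref{welldefined}. The only remaining subtlety is naturality of $f$ with respect to the reduction $\mu_{\ell^{m+1}} \to \mu_{\ell^m}$, which I would verify by unwinding the definition: an element $(C,\mathscr{L}^\bullet,\varphi)$ representing an $\ell^{m+1}$-torsion class maps to $(\ell C,\mathscr{L}^{\bullet,\otimes \ell},\varphi^{\otimes \ell})$ in the $\ell^m$-torsion group, and the chosen trivialization $\eta^{\otimes \ell}$ induces the corresponding torsor structure compatibly.
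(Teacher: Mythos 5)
Your proposal follows essentially the same route as the paper: assemble the exact sequences from Propositions~\ref{lefthand} and~\ref{righthand}, use the explicit torsor construction of $f$ (well-defined by the divisibility argument of~\ref{welldefined}), conclude via the five lemma at each finite level, and pass to the inverse limit. The one thing you make explicit that the paper leaves implicit is the compatibility of $f$ with the transition maps $\mu_{\ell^{m+1}} \to \mu_{\ell^m}$, which is a reasonable and useful thing to check.

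However, your description of the transition map on the 1-motive side is incorrect. From the presentation
\begin{equation*}
T_{\mathbb{Z}/n}(M) = \frac{\{(x,g) \in L \times G(\overline{k}) \mid u(x) = -ng\}}{\{(nx,-u(x)) \mid x \in L\}},
\end{equation*}
the transition $T_{\mathbb{Z}/\ell^{m+1}}(M) \to T_{\mathbb{Z}/\ell^m}(M)$ sends $(x,g) \mapsto (x,\ell g)$, not $(\ell x, \ell g)$: it must restrict to multiplication by $\ell$ on ${_{\ell^{m+1}}}G \to {_{\ell^m}}G$ and to the natural reduction on $L/\ell^{m+1}L \to L/\ell^m L$. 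Translated into the concrete description, the class $(C,\mathscr{L}^\bullet,\varphi)$ maps to $(C,\mathscr{L}^{\bullet,\otimes\ell},\varphi^{\otimes\ell})$, with the divisor $C$ unchanged. Note that $(\ell C,\mathscr{L}^{\bullet,\otimes\ell},\varphi^{\otimes\ell})$ is not even a valid element of $T_{\mathbb{Z}/\ell^m}$, since $u(\ell C) = -\ell^{m+2}(\mathscr{L}^\bullet,\varphi)$ rather than $-\ell^m \cdot \ell(\mathscr{L}^\bullet,\varphi)$. With this correction the compatibility check goes through as you intend, and the rest of the argument is sound.
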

\subsection{Functoriality of $M_{D,E}^1(\overline{X})$}\label{pg63}
Let $(\overline{X},D,E)$ and $(\overline{Y},A,B)$ be triples consisting of a proper scheme $\overline{X}$ (resp. $\overline{Y}$), and two disjoint reduced closed subschemes $D$ and $E$ (resp. $A$ and $B$).  We let $X = \overline{X} - E$, $U = \overline{X} - (D \cup E)$, and $\tilde{U} = \overline{X} - D$.  We similarly let $Y = \overline{Y} - B$, $V = \overline{Y} - (A \cup B)$, and $\tilde{V} = \overline{Y} - A$.  We label the various maps between these spaces as follows:
\begin{equation*}
\xymatrix{
U \ar@{^{(}->}[r]^{\tilde{j}} \ar@{^{(}->}[d]^{\tilde{u}} & X \ar@{^{(}->}[d]^{u} & D \ar@{_{(}->}[l]_{\tilde{i}} \ar@{=}[d]  \\
{\tilde{U}} \ar@{^{(}->}[r]^{j} & {\overline{X}} & D \ar@{_{(}->}[l]_{i} \\
E \ar@{^{(}->}[u]^{\tilde{v}} \ar@{=}[r] & E \ar@{^{(}->}[u]^{v}
}
\end{equation*}
and
\begin{equation*}
\xymatrix{
V \ar@{^{(}->}[r]^{\tilde{b}} \ar@{^{(}->}[d]^{\tilde{r}} & Y \ar@{^{(}->}[d]^{r} & A \ar@{_{(}->}[l]_{\tilde{a}} \ar@{=}[d]  \\
{\tilde{V}} \ar@{^{(}->}[r]^{b} & {\overline{Y}} & A \ar@{_{(}->}[l]_{a} \\
B \ar@{^{(}->}[u]^{\tilde{s}} \ar@{=}[r] & B. \ar@{^{(}->}[u]^{s}
}
\end{equation*}
 We will describe the contravariant functoriality of the 1-motives $M_{D,E}^1(\overline{X})$ and $M_{A,B}(\overline{Y})$.  For this, consider a map $f: \overline{X} \rightarrow \overline{Y}$ such that
\begin{enumerate}
\item $f^{-1}(B) \subseteq E$, and
\item $f^{-1}(A) = D$.
\end{enumerate}
Notice that (2) implies that $f$ restricts to a proper map $\tilde{U} \rightarrow \tilde{V}$.  In this situation it will turn out that there is a well-defined map $M_{A,B}^1(\overline{Y}) \rightarrow M_{D,E}^1(\overline{X})$.

\begin{pg}
Now choose proper hypercovers $\overline{\pi}_\bullet: \overline{X}_\bullet \rightarrow \overline{X}$ and $\overline{\rho}_\bullet: \overline{Y}_\bullet \rightarrow \overline{Y}$ such that there is a map $\overline{f}_\bullet: \overline{X}_\bullet \rightarrow \overline{Y}_\bullet$ compatible with $\overline{f}: \overline{X} \rightarrow \overline{Y}$.  Set $X_\bullet = \overline{X}_\bullet \times_{\overline{X}} X$ and $Y_\bullet = \overline{Y}_\bullet \times_{\overline{Y}} Y$, and similarly for the other subschemes of $\overline{X}$ and $\overline{Y}$.  Using these simplicial covers, we can define
\begin{align*}
M_{D,E}^1(\overline{X}) &:= [\mathbf{Div}_{E_\bullet}^0(\overline{X}_\bullet) \rightarrow \mathbf{Pic}_{\overline{X}_\bullet,D_\bullet}^{0,red}] \ \ \mathrm{and} \\
M_{A,B}^1(\overline{Y}) &:= [\mathbf{Div}_{B_\bullet}^0(\overline{Y}_\bullet) \rightarrow \mathbf{Pic}_{\overline{Y}_\bullet,A_\bullet}^{0,red}].
\end{align*}
Since we have not yet shown that $M_{D,E}^1(X)$ is independent of the choice of hypercover, this is an abuse of notation.
We get a natural map
\begin{equation*}
\hat{f}^*: \mathbf{Pic}^{0,red}_{\overline{Y}_\bullet,A_\bullet} \rightarrow \mathbf{Pic}^{0,red}_{\overline{X}_\bullet,D_\bullet}
\end{equation*}
by pulling back a line bundle $\mathscr{L}$ to $\overline{X}_\bullet$; the trivialization of $\mathscr{L}^\bullet$ on $A_\bullet$ pulls back to a trivialization on $D_\bullet$ because $f^{-1}(A) = D$.  Similarly, because $f^{-1}(B) \subseteq E$, there is an induced pullback map on divisors
\begin{equation*}
\mathrm{Div}_{B_\bullet}^0(\overline{Y}_\bullet) \rightarrow \mathrm{Div}_{E_\bullet}^0(\overline{X}_\bullet).
\end{equation*}
Putting these maps together, we get a map of 1-motives $\hat{f}^*: M_{A,B}^1(\overline{Y}) \rightarrow M_{D,E}^1(\overline{X})$.
\end{pg}
\begin{pg}
In the situation of \ref{pg63}, because $f^{-1}(A) = D$ there is a morphism of pairs $(X,D) \rightarrow (Y,A)$.  Therefore there is an induced morphism of cohomology groups 
\begin{equation*}
H_{A,B}^1(\overline{Y},\mathbb{Z}_\ell(1)) = H^1(Y,b_!\mathbb{Z}_\ell(1)) \rightarrow H^1(X,j_!\mathbb{Z}_\ell(1)) = H_{D,E}^1(\overline{X},\mathbb{Z}_\ell(1)).
\end{equation*}
This morphism is ultimately induced by pullback of line bundles and hence agrees with the $\ell$-adic realization of the map on cohomology groups $\hat{f}^*: M_{A,B}^1(\overline{Y}) \rightarrow M_{D,E}^1(\overline{X})$.  More precisely, we have the following:
\end{pg}
\begin{prop}\label{m1compatibilityladic}
In the notation above, we have a commutative diagram (for $\ell \not= p)$
\begin{equation*}
\begin{CD}
T_\ell M_{D,E}^1(\overline{X}) @> \alpha_{\overline{X}} >> H_{D,E}^1(\overline{X},\mathbb{Z}_\ell(1)) \\
@V T_\ell \hat{f}^* VV @V f^* VV \\
T_\ell M_{A,B}^1(\overline{Y}) @> \alpha_{\overline{Y}} >> H_{A,B}^1(\overline{X},\mathbb{Z}_\ell(1)),
\end{CD}
\end{equation*}
where $\alpha_{\overline{X}}$ and $\alpha_{\overline{Y}}$ are the comparison isomorphisms of Proposition \ref{mdexelladic}.
\end{prop}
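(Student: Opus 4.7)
The plan is to verify compatibility at each finite level $T_{\mathbb{Z}/n}$ for $n$ prime to $p$, and then pass to the inverse limit over $n = \ell^m$. The entire argument is a naturality check unwinding the explicit description of the comparison isomorphism constructed in the proof of Proposition \ref{mdexelladic}.

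First, choose compatible proper hypercovers $\overline{X}_\bullet \to \overline{X}$ and $\overline{Y}_\bullet \to \overline{Y}$ (each with smooth proper terms and with simple normal crossings preimages of the respective closed subschemes), together with a simplicial morphism $\overline{f}_\bullet: \overline{X}_\bullet \to \overline{Y}_\bullet$ lifting $\overline{f}$; by refining if necessary one arranges $\overline{f}_\bullet^{-1}(A_\bullet) = D_\bullet$ and $\overline{f}_\bullet^{-1}(B_\bullet) \subseteq E_\bullet$, so that pullback of line bundles, of trivializations along $A_\bullet$, and of divisors supported on $B_\bullet$ are all well-defined. These are exactly the ingredients that produced the map of 1-motives $\hat{f}^*: M_{A,B}^1(\overline{Y}) \to M_{D,E}^1(\overline{X})$ in the preceding paragraph, and they also produce the cohomological pullback $f^*: H_{A,B}^1(\overline{Y},\mu_n) \to H_{D,E}^1(\overline{X},\mu_n)$ via the identity $\overline{f}_\bullet^{-1}(\tilde{V}_\bullet) = \tilde{U}_\bullet$, which implies $\overline{f}_\bullet^* b_{\bullet,!} \mu_n = j_{\bullet,!} \mu_n$ after further restriction to $\overline{f}_\bullet^{-1}(Y_\bullet) \subseteq X_\bullet$.

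Recall the explicit description from Paragraph \ref{welldefined}: a class $(C,\mathscr{L}^\bullet,\varphi) \in T_{\mathbb{Z}/n} M_{A,B}^1(\overline{Y})$ is sent by $\alpha_{\overline{Y}}$ to the $b_{\bullet,!}\mu_n$-torsor on $Y_\bullet$ of local isomorphisms $\mathcal{O}_{Y_\bullet} \stackrel{\sim}{\to} \mathscr{L}^\bullet|_{Y_\bullet}$ compatible with a chosen trivialization $\eta: (\mathscr{L}^\bullet)^{\otimes n} \stackrel{\sim}{\to} \mathcal{O}(-C)$ on $n$th tensor powers, and reducing to $\varphi$ on $A_\bullet$. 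The map $T_{\mathbb{Z}/n}\hat{f}^*$ sends this triple to $(\overline{f}_\bullet^* C, \overline{f}_\bullet^* \mathscr{L}^\bullet, \overline{f}_\bullet^* \varphi)$, and $\overline{f}_\bullet^* \eta$ provides a canonical trivialization $(\overline{f}_\bullet^* \mathscr{L}^\bullet)^{\otimes n} \stackrel{\sim}{\to} \mathcal{O}(-\overline{f}_\bullet^* C)$ which we use in applying $\alpha_{\overline{X}}$. By functoriality of pullback of $\mu_n$-torsors and of extension-by-zero along disjoint loci, the pullback $f^*$ applied to the $\overline{Y}$-side torsor is exactly the torsor of local isomorphisms $\mathcal{O}_{X_\bullet} \stackrel{\sim}{\to} \overline{f}_\bullet^* \mathscr{L}^\bullet|_{X_\bullet}$ compatible with $\overline{f}_\bullet^* \eta$ and restricting to $\overline{f}_\bullet^* \varphi$ on $D_\bullet$; this coincides tautologically with $\alpha_{\overline{X}}(\overline{f}_\bullet^* C, \overline{f}_\bullet^* \mathscr{L}^\bullet, \overline{f}_\bullet^* \varphi)$.

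Independence of the choice of $\eta$ was already established in Paragraph \ref{welldefined}, and that verification transports verbatim under $\overline{f}_\bullet^*$ because the ambiguity lies in a torus $H^0(\overline{Y}_\bullet, b_{\bullet,!}\mathbb{G}_m)$ whose image under pullback lands in $H^0(\overline{X}_\bullet, j_{\bullet,!}\mathbb{G}_m)$, a compatible torus. Passing to the inverse limit over $n = \ell^m$ then gives commutativity of the square in the proposition. I do not anticipate any essential obstacle; the only bookkeeping subtlety is keeping track of the interaction between the simplicial lifts and the various pullbacks of line bundles, trivializations, and torsors, which is handled by naturality throughout.
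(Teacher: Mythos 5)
The paper gives no proof of Proposition \ref{m1compatibilityladic}, offering only the informal remark preceding it that the cohomology pullback is ``ultimately induced by pullback of line bundles''; your proposal supplies the missing verification, and it is correct. Unwinding the torsor-level description of the comparison isomorphisms from Proposition \ref{mdexelladic} and checking that line bundles, trivializations, the auxiliary isomorphism $\eta$, and the resulting $\tilde{j}_{\bullet,!}\mu_n$-torsor are all carried along by $\overline{f}_\bullet^*$ is exactly what is needed; the base-change identification $f_\bullet^*\tilde{b}_{\bullet,!}\mu_n \cong \tilde{j}_{\bullet,!}\mu_n$ that makes the pullback of torsors meaningful follows from the hypothesis $f^{-1}(A) = D$ (equality, not just containment), which you correctly rely on.
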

\begin{pg}\label{610}
We are now ready to show that the above definition of $M_{D,E}^1(\overline{X})$ is independent of the choice of simplicial cover.  Suppose that $\pi_\bullet : \overline{X}_\bullet \rightarrow \overline{X}$ and $\rho_\bullet : \overline{X}'_\bullet \rightarrow \overline{X}$ are two simplicial covers by proper smooth schemes, and set $E_\bullet = \pi_\bullet^{-1}(E)$, $E'_\bullet = \rho_\bullet^{-1}(E)$, etc.  Then as explained in \cite[pp. 26-31]{desc}, we can choose a third simplicial cover $\tau: \overline{X}''_\bullet \rightarrow \overline{X}$ with the property that there are simplicial maps
\begin{equation*}
\overline{X}_\bullet \stackrel{f_\bullet}{\longleftarrow} \overline{X}''_\bullet \stackrel{g_\bullet}{\longrightarrow} \overline{X}'_\bullet
\end{equation*}
lying over the identity on $\overline{X}$.
\end{pg}
\begin{prop}
Let $M_{D,E}^1(\overline{X})$, $M_{D,E}^1(\overline{X})'$, and $M_{D,E}^1(\overline{X})''$ be the 1-motives constructed by using the simplicial covers $\overline{X}_\bullet$, $\overline{X}'_\bullet$, and $\overline{X}''_\bullet$ respectively.  Then the induced morphisms of 1-motives
\begin{equation*}
M_{D,E}^1(\overline{X}) \stackrel{\hat{f}}{\longrightarrow} M_{D,E}^1(\overline{X})'' \stackrel{\hat{g}}{\longleftarrow} M_{D,E}^1(\overline{X})'
\end{equation*}
are isomorphisms in $\mathscr{M}^1(k)[1/p]$, and the composite isomorphism
\begin{equation*}
\hat{g}^{-1} \circ \hat{f}: M_{D,E}^1(\overline{X}) \stackrel{\sim}{\longrightarrow} M_{D,E}^1(\overline{X})'
\end{equation*}
is the unique isomorphism in $\mathscr{M}^1(k)[1/p]$ inducing the identity map $H_{D,E}^1(\overline{X},\mathbb{Z}_\ell(1)) \rightarrow H_{D,E}^1(\overline{X},\mathbb{Z}_\ell(1))$ for every $\ell \not= p$.
\end{prop}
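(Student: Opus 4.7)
My plan is to deduce both claims from the faithfulness and isomorphism-reflecting property of the realization functor $\hat T_p : \mathscr{M}^1(k)[1/p] \to \prod_{\ell \not= p} \mathrm{Rep}_{\mathbb{Z}_\ell}(\mathrm{Gal}(\overline{k}/k))$ established in Proposition \ref{faithful}, combined with the compatibility statement of Proposition \ref{m1compatibilityladic} applied to the identity morphism $\mathrm{id}: \overline{X} \to \overline{X}$.

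First I will argue that $\hat f$ is an isomorphism in $\mathscr{M}^1(k)[1/p]$. Since $f_\bullet: \overline{X}''_\bullet \to \overline{X}_\bullet$ lies over the identity of $\overline{X}$ (which trivially satisfies conditions (1) and (2) of the setup in \ref{pg63}), Proposition \ref{m1compatibilityladic} applies to this data and, together with the comparison isomorphisms of Proposition \ref{mdexelladic}, identifies $T_\ell \hat f$ with the identity map on $H_{D,E}^1(\overline{X},\mathbb{Z}_\ell(1))$ for every $\ell \not= p$. In particular $T_\ell \hat f$ is an isomorphism for every such $\ell$, so $\hat T_p(\hat f)$ is an isomorphism; the isomorphism-reflecting property from Proposition \ref{faithful} then forces $\hat f$ itself to be an isomorphism in $\mathscr{M}^1(k)[1/p]$. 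The same argument with $g_\bullet: \overline{X}''_\bullet \to \overline{X}'_\bullet$ in place of $f_\bullet$ shows that $\hat g$ is an isomorphism.

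For the uniqueness assertion, let $\psi: M_{D,E}^1(\overline{X}) \to M_{D,E}^1(\overline{X})'$ be any morphism in $\mathscr{M}^1(k)[1/p]$ whose $\ell$-adic realization is the identity on $H_{D,E}^1(\overline{X},\mathbb{Z}_\ell(1))$ for every $\ell \not= p$. The analysis above shows that $\hat g^{-1} \circ \hat f$ also has this property. Hence $\hat T_p(\psi) = \hat T_p(\hat g^{-1} \circ \hat f)$, and the faithfulness of $\hat T_p$ in Proposition \ref{faithful} gives $\psi = \hat g^{-1} \circ \hat f$.

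The main point requiring attention is the invocation of Proposition \ref{m1compatibilityladic} in the case of two different hypercovers of the same base $\overline{X}$. This slightly extends the setup in which that proposition was originally stated (a single hypercover for each of $\overline{X}$ and $\overline{Y}$), but the proof is unchanged: what is really needed is that the comparison isomorphism of Proposition \ref{mdexelladic}—which is constructed via the cohomological descent map $H_{D,E}^1(\overline{X},\mathbb{Z}_\ell(1)) \to H^1(X_\bullet, \tilde{j}_{\bullet,!}\mathbb{Z}_\ell(1))$—is natural under refinement of the hypercover. This naturality is a standard property of cohomological descent, but it is the one non-formal step in the argument.
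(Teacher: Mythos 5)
Your proof is correct and takes essentially the same approach as the paper: apply Proposition \ref{m1compatibilityladic} to deduce that $T_\ell\hat f$ and $T_\ell\hat g$ realize to the identity, then invoke the isomorphism-reflecting and faithfulness properties of the realization functor from Proposition \ref{faithful}. One small remark: the caveat in your final paragraph is unnecessary — Proposition \ref{m1compatibilityladic} is stated for any morphism of triples equipped with compatible hypercovers on source and target, and the case at hand is just $\overline{Y} = \overline{X}$, $f = \mathrm{id}$, with two different hypercovers on source and target; this falls squarely within the stated hypotheses without any extension.
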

Hence the 1-motive $M_{D,E}^1(\overline{X}) \in \mathscr{M}^1(k)[1/p]$ is (up to unique isomorphism) indpendent of the choice of simplicial cover.
\begin{proof}
By Proposition \ref{m1compatibilityladic}, for each $\ell \not= p$, the $\ell$-adic realizations $T_\ell \hat{f}$ and $T_\ell \hat{g}$ induce the identity map on $H_{D,E}^1(\overline{X},\mathbb{Z}_\ell(1))$ (since they lie over the identity on $X$).  Hence by Prop. \ref{faithful}, the maps $f$ and $g$ are isomorphisms in $\mathscr{M}^1(k)[1/p]$.  It is also clear that $f$ and $g$ are the only isomorphisms of 1-motives in $\mathscr{M}^1(k)[1/p]$ inducing the identity on $H_{D,E}^1(\overline{X},\mathbb{Z}_\ell(1))$ for each $\ell \not= p$, because the realizations $T_\ell f$, $T_\ell g$ are uniquely determined by this condition.
\end{proof}
The results of this section can be summarized as follows:
\begin{thm}
Let $(\overline{X},D,E)$ be a triple consisting of a proper finite type $k$-scheme $\overline{X}$, and two disjoint closed subschemes $D,E \subset \overline{X}$.  Then there exists a 1-motive $M_{D,E}^1(\overline{X})$ defined up to unique isomorphism, with a natural isomorphism
\begin{equation*}
T_\ell M_{D,E}^1(\overline{X}) \cong H_{D,E}^1(\overline{X}_{\overline{k}},\mathbb{Z}_\ell(1)) := H^1(X_{\overline{k}},j_!\mathbb{Z}_\ell(1))
\end{equation*}
for all $\ell \not= p$.  $M_{D,E}^1(\overline{X})$ is functorial for morphisms of triples $f: (\overline{X},D,E) \rightarrow (\overline{Y},A,B)$ such that $f^{-1}(D) = A$ and $f^{-1}(E) \subseteq B$.
\end{thm}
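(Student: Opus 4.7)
The plan is to package together what has already been carried out in Section 6. Given a triple $(\overline{X},D,E)$, de Jong's theorem (applied as in \cite{desc}) supplies a proper hypercover $\overline{X}_\bullet \to \overline{X}$ by smooth proper schemes along which $D$ and $E$ pull back to strict normal crossings divisors, and the definition of $M_{D,E}^1(\overline{X})$ relative to such a choice is given in Paragraph \ref{pg65}. The $\ell$-adic comparison isomorphism has been established as Proposition \ref{mdexelladic}. Thus only independence of the choices made in the construction, and the claimed contravariant functoriality, remain to be verified.

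For independence of the hypercover up to unique isomorphism I would invoke the proposition immediately preceding the theorem: any two hypercovers $\overline{X}_\bullet$ and $\overline{X}_\bullet'$ admit a common refinement $\overline{X}_\bullet''$ (Paragraph \ref{610}), and the induced morphisms in $\mathscr{M}^1(k)[1/p]$ are isomorphisms whose composite is the unique isomorphism $M_{D,E}^1(\overline{X}) \to M_{D,E}^1(\overline{X})'$ inducing the identity on $H_{D,E}^1(\overline{X}_{\overline{k}},\mathbb{Z}_\ell(1))$ for every $\ell \neq p$. The uniqueness follows from the faithfulness of $\hat T_p = \prod_{\ell\neq p} T_\ell$ proved in Proposition \ref{faithful}, together with the $\ell$-adic compatibility in Proposition \ref{m1compatibilityladic}.

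For functoriality along a morphism of triples $f:(\overline{X},D,E)\to(\overline{Y},A,B)$ (with $f^{-1}(A)=D$ and $f^{-1}(B)\subseteq E$), I would first choose hypercovers $\overline{X}_\bullet$ and $\overline{Y}_\bullet$ together with a simplicial lift $\overline f_\bullet$ of $f$ (replacing $\overline{X}_\bullet$ by a refinement if necessary; such lifts exist by the cohomological descent machinery cited in Paragraph \ref{610}). Paragraph \ref{pg63} then produces a morphism $\hat f^*: M_{A,B}^1(\overline{Y}) \to M_{D,E}^1(\overline{X})$ in $\mathscr{M}^1(k)[1/p]$, and Proposition \ref{m1compatibilityladic} identifies its $\ell$-adic realization with the geometric pullback $f^*$ on the relative \'etale cohomology. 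To show $\hat f^*$ depends only on $f$, I would compare two different simplicial lifts through a further common refinement: both candidate morphisms have the same $\ell$-adic realization for all $\ell\neq p$, so by the faithfulness of $\hat T_p$ they coincide in $\mathscr{M}^1(k)[1/p]$. Contravariant composability along a composite $(\overline{X},D,E)\to(\overline{Y},A,B)\to(\overline{Z},A',B')$ follows from the same uniqueness principle, since $\widehat{g\circ f}{}^*$ and $\hat f^* \circ \hat g^*$ both realize to $(g\circ f)^*$ $\ell$-adically.

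The main obstacle, and the conceptual heart of the argument, is that because we have inverted $p$ in all Hom-groups none of these uniqueness statements can be obtained from a Yoneda-style rigidity of the sheaves $\mathbf{Pic}^{0,red}_{\overline{X}_\bullet,D_\bullet}$ and $\mathbf{Div}^0_{E_\bullet}(\overline{X}_\bullet)$ directly. Instead, every well-definedness assertion in the theorem is forced to pass through the faithfulness of the $\ell$-adic realization functor combined with Proposition \ref{m1compatibilityladic}; once this principle is accepted, the remainder reduces to checking that each construction produces the expected pullback on $H_{D,E}^1(-,\mathbb{Z}_\ell(1))$, which is built into the comparison isomorphism of Proposition \ref{mdexelladic}.
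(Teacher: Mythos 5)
Your proposal is correct and follows essentially the same route as the paper: the theorem is a summary of Section 6, with Proposition \ref{mdexelladic} furnishing the $\ell$-adic comparison, the proposition of Paragraph \ref{610} (via a common refining hypercover and faithfulness of $\hat{T}_p$, Proposition \ref{faithful}) giving independence of choices, and Paragraph \ref{pg63} together with Proposition \ref{m1compatibilityladic} giving contravariant functoriality. Your extra observation that the map $\hat f^*$ is independent of the chosen simplicial lift, again by faithfulness of the $\ell$-adic realization, is a point the paper leaves implicit but is exactly the right justification; note also that you have silently corrected the typo in the theorem's functoriality condition (it should read $f^{-1}(A)=D$, $f^{-1}(B)\subseteq E$, as in Paragraph \ref{pg63}).
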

\subsection{The 1-motives $M^1(X)$ and $M_c^1(X)$}
As special cases of the above constuction, for any separated finite type $k$-scheme $X$ we can define 1-motives $M^1(X)$ and $M_c^1(X)$ which realize the cohomology groups $H^1(X,\mathbb{Z}_\ell(1))$ and $H_c^1(X,\mathbb{Z}_\ell(1))$ respectively.  To do this, choose a compactification $j: X \hookrightarrow \overline{X}$, and let $i: C \hookrightarrow \overline{X}$ be the closed complement.  Then we set
\begin{align*}
M^1(X) = M_{\emptyset,C}^1(\overline{X}) = [\mathbf{Div}_{C_\bullet}^0(\overline{X}_\bullet) \rightarrow \mathbf{Pic}^{0,red}_{\overline{X}_\bullet}]
\end{align*}
and
\begin{align*}
M_c^1(X) = M_{C,\emptyset}^1(\overline{X}) = [0 \rightarrow \mathbf{Pic}^{0,red}_{\overline{X}_\bullet,C_\bullet}],
\end{align*}
where $\pi_\bullet: \overline{X}_\bullet \rightarrow \overline{X}$ is any proper smooth simplicial hypercover of $\overline{X}$, and $C_\bullet = \pi_\bullet^{-1}(C)$.  
\begin{prop}\label{dimension1motives}
The above definitions are independent (up to unique isomorphism) of the choice of the compactification $\overline{X}$.  They define contravariant functors
\begin{equation*}
M^1(-), M_c^1(-): Sch/k \longrightarrow \mathscr{M}^1(k)[1/p],
\end{equation*}
where $M^1(-)$ is functorial for arbitrary morphisms, while $M_c^1(-)$ is functorial for \emph{proper} morphisms.
\end{prop}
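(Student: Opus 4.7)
The plan is to derive both claims from the functoriality of $M_{D,E}^1(\overline{X})$ established at the end of the previous subsection, combined with the faithfulness of $\ell$-adic realization (Proposition \ref{faithful}).

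For independence of the compactification, given two compactifications $X \hookrightarrow \overline{X}_i$ with complements $C_i$ ($i=1,2$), I would form $\overline{X}_3$ as the scheme-theoretic closure of the diagonal embedding $X \hookrightarrow \overline{X}_1 \times_k \overline{X}_2$ and set $C_3 = \overline{X}_3 - X$. The two projections furnish proper morphisms $\phi_i : \overline{X}_3 \to \overline{X}_i$ restricting to the identity on $X$, with the key property $\phi_i^{-1}(X) = X$ (hence $\phi_i^{-1}(C_i) = C_3$): the diagonal $X \hookrightarrow X \times_k \overline{X}_2$ is closed, being the graph of the morphism $X \hookrightarrow \overline{X}_2$ into a separated scheme, so its closure in $\overline{X}_1 \times_k \overline{X}_2$ meets $X \times_k \overline{X}_2$ in exactly $X$. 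Applying the functoriality theorem to the triples $(\overline{X}_i, \emptyset, C_i)$ and $(\overline{X}_3, \emptyset, C_3)$ (resp.\ $(\overline{X}_i, C_i, \emptyset)$ and $(\overline{X}_3, C_3, \emptyset)$) produces morphisms of 1-motives whose $\ell$-adic realizations, by Proposition \ref{m1compatibilityladic}, are the identities on $H^1(X_{\overline{k}}, \mathbb{Z}_\ell(1))$ and $H_c^1(X_{\overline{k}}, \mathbb{Z}_\ell(1))$. By Proposition \ref{faithful} these morphisms are isomorphisms in $\mathscr{M}^1(k)[1/p]$, and the resulting composite isomorphisms $M^1(X)_{\overline{X}_1} \stackrel{\sim}{\to} M^1(X)_{\overline{X}_2}$ (and similarly for $M_c^1$) are unique with this property.

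For the functoriality of $M^1$ under an arbitrary $g: X \to Y$, I would choose compactifications $X \subset \overline{X}_0$ and $Y \subset \overline{Y}$, and take $\overline{X}$ to be the scheme-theoretic closure of the graph of $g$ in $\overline{X}_0 \times_k \overline{Y}$; the second projection extends $g$ to a morphism $\overline{g}: \overline{X} \to \overline{Y}$ of proper $k$-schemes. Since $g(X) \subseteq Y$, one has $\overline{g}^{-1}(C_Y) \subseteq C_X$, which is exactly the hypothesis required to invoke the functoriality theorem with $D = \emptyset$ and $E = C$, yielding $M^1(Y) \to M^1(X)$. Independence of the chosen extension and compatibility with composition follow from the same third-compactification argument as before, again dominating two extensions by a third constructed as a graph-closure in a product.

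The main obstacle is the functoriality of $M_c^1$ for proper morphisms, since the $M_c^1 = M_{C,\emptyset}^1$ case of the functoriality theorem demands the stronger equality $\overline{g}^{-1}(C_Y) = C_X$, equivalently $\overline{g}^{-1}(Y) = X$. Given proper $g: X \to Y$, I would first apply Nagata's compactification theorem to $X \to \overline{Y}$ to obtain an extension $\overline{g}' : \overline{X}' \to \overline{Y}$ with $\overline{X}'$ proper and $X \subset \overline{X}'$ open. The open immersion $X \hookrightarrow \overline{g}'^{-1}(Y)$ is a morphism over $Y$ between schemes both proper over $Y$ (the target by base change), so factoring through the graph $X \hookrightarrow X \times_Y \overline{g}'^{-1}(Y)$ (a closed immersion into a scheme proper over $\overline{g}'^{-1}(Y)$) shows that this open immersion is also proper, hence clopen. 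Writing $\overline{g}'^{-1}(Y) = X \sqcup Z$ with $Z$ open in $\overline{X}'$, the delicate point is that $Z$ need not be closed in $\overline{X}'$; however $\overline{X} := \overline{X}' - Z$ is a closed subscheme of $\overline{X}'$, hence proper over $k$, contains $X$ as an open subscheme, and by construction satisfies $\overline{g}^{-1}(Y) = (X \sqcup Z) \cap (\overline{X}' - Z) = X$. This furnishes the required compactification, and the functoriality theorem then produces $M_c^1(Y) \to M_c^1(X)$. Independence and composition-compatibility for $M_c^1$ are verified as in the $M^1$ case, with the observation that the graph-closure construction can be modified (by excising the extraneous clopen components of $\overline{g}^{-1}(Y)$) to yield dominating compactifications that also satisfy $\overline{g}^{-1}(Y) = X$.
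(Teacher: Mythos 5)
Your proposal is correct and follows essentially the same strategy as the paper: dominate two compactifications by a third, use the functoriality theorem for triples, and conclude via faithfulness of $T_\ell$ (Proposition \ref{faithful}). The one place you work noticeably harder than necessary is the $M_c^1$ case. After establishing that $X$ is clopen in $\overline{g}'^{-1}(Y)$, you worry that the complementary clopen piece $Z$ might fail to be closed in $\overline{X}'$ and excise it. But $Z$ is open in $\overline{X}'$ and disjoint from $X$; if one arranges (as one always can, by passing to the scheme-theoretic closure) that $X$ is dense in $\overline{X}'$, then $Z = \emptyset$ and there is nothing to excise. The density observation in fact yields the cleaner statement that the paper records as Lemma \ref{cartesianstuff}: for a proper $f: X \to Y$ and \emph{any} extension $\overline{f}: \overline{X} \to \overline{Y}$ of $f$ to compactifications, the square is automatically cartesian, i.e.\ $\overline{f}^{-1}(Y) = X$. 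Having this for arbitrary compactified extensions, rather than only for a specially constructed one, also streamlines your closing paragraph about independence and composition-compatibility for $M_c^1$: the graph-closure argument used for $M^1$ applies verbatim, with no need for the extra excision modification.
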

\begin{proof}
The proof that $M^1(X)$ and $M_c^1(X)$ are independent of the choice of compactification is essentially the same argument as in \ref{610}; any two compactifications are dominated by a third, and the induced morphisms of 1-motives are seen to be isomorphisms by looking at $\ell$-adic realizations.  It is clear that $M^1(-)$ is functorial for arbitrary morphisms.  To check that $M_c^1(-)$ is functorial for proper morphisms, it suffices to show the following lemma:
\begin{lem}\label{cartesianstuff}
Let $f: X \rightarrow Y$ be a proper morphisms of schemes over $k$, and choose compactifications $j: X \hookrightarrow \overline{X}$, $v: Y \hookrightarrow \overline{Y}$, and suppose that there exists a morphism $\overline{f}: \overline{X} \rightarrow \overline{Y}$ compactifying $f$.  Let $C = \overline{X} - X$, and let $D = \overline{Y} - Y$.  Then $\overline{f}^{-1}(D) = C$.  
\end{lem}
\begin{proof}(of lemma) It is equivalent to show that $X = \overline{f}^{-1}(Y)$.  We may assume that $X$, $Y$, $\overline{X}$, and $\overline{Y}$ are connected.   Let $Z = \overline{f}^{-1}(Y)$, which is an open subset of $\overline{X}$ containing $X$. Considering the commutative diagrams
\begin{equation*}
\xymatrix{
X \ar[r]^\alpha \ar[rd]_k & Z \ar[d]^{p_1} & X \ar[r]^\alpha \ar[rd]_f & Z \ar[d]^{p_2} \\
& \overline{X}, & & Y,}
\end{equation*}
we get that $\alpha$ is an open immersion by the first diagram, and that $\alpha$ is proper by the second diagram (since $k$ and $p_1$ are open immersions, while $f$ and $p_2$ are proper).  Therefore $\alpha$ is an isomorphism onto a connected component of $Z$.  Because $X$ and $\overline{X}$ were assumed to be connected and $X \hookrightarrow \overline{X}$ is dense, it follows that $Z$ is connected as well; therefore $\alpha: X \rightarrow Z$ is an isomorphism.
\end{proof}
This completes the proof of Prop. \ref{dimension1motives}.
\end{proof}
\subsection{An alternate construction of $M_{D,E}^1(\overline{X})$ for smooth stacks}
In the next section it will be useful to have a construction of $M_{D,E}^1(\overline{X})$ in case $\overline{X}$ is smooth which does not use simplicial schemes.  More generally, we will need this when $\overline{X} = \overline{\mathfrak{X}}$ is a smooth proper Deligne-Mumford stack, and $\mathcal{E}$ and $\mathcal{D}$ are disjoint closed substacks of $\overline{\mathfrak{X}}$ (not necessarily normal crossings divisors).

Let $\overline{\mathfrak{X}}$ be a smooth proper Deligne-Mumford stack, and $\mathcal{D}$ and $\mathcal{E}$ disjoint closed subschemes of $\mathfrak{X}$.  Consider the relative Picard group of the pair $(\overline{\mathfrak{X}},\mathcal{D})$, defined by the formula
\begin{equation*}
\mathrm{Pic}(\overline{\mathfrak{X}},\mathcal{D}) = H^1(\overline{\mathfrak{X}},\mathrm{Ker}(\mathbb{G}_{m,\overline{\mathfrak{X}}} \rightarrow i_*\mathbb{G}_{m,\mathcal{D}})),
\end{equation*}
where $i: \mathcal{D} \hookrightarrow \overline{\mathfrak{X}}$ is the inclusion.  The elements of $\mathrm{Pic}(\overline{\mathfrak{X}},\mathcal{D})$ are pairs $(\mathscr{L},\varphi)$, where $\mathscr{L}$ is a line bundle on $\overline{\mathfrak{X}}$ and $\varphi: \mathcal{O}_\mathcal{D} \stackrel{\sim}{\longrightarrow} \mathscr{L}\vert_{\mathcal{D}}$ is an isomorphism.  We have the following analogue of Proposition \ref{simprelpic}:
\begin{prop}\label{stackrelpic}
Let $\mathbf{Pic}_{\overline{\mathfrak{X}},\mathcal{D}}$ be the fppf-sheafification of the functor on $Sch/k$,
\begin{equation*}
Y \mapsto \mathrm{Pic}(\overline{\mathfrak{X}} \times Y,\mathcal{D} \times Y).
\end{equation*}
Then $\mathbf{Pic}_{\overline{\mathfrak{X}},\mathcal{D}}$ is representable.  Moreover, let $\mathbf{Pic}_{\overline{\mathfrak{X}},\mathcal{D}}^{0,red}$ be the reduced connected component of the identity.  Then $\mathbf{Pic}_{\overline{\mathfrak{X}},\mathcal{D}}^{0,red}$ is a semi-abelian variety, and there is an exact sequence
\begin{equation*}
0 \rightarrow \mathbf{Pic}_{\overline{\mathfrak{X}},\mathcal{D}}^{0,red} \rightarrow \mathbf{Pic}_{\overline{\mathfrak{X}},\mathcal{D}}^{red} \rightarrow \mathbf{NS}_{\overline{\mathfrak{X}},\mathcal{D}} \rightarrow 0,
\end{equation*}
where $\mathbf{NS}_{\overline{\mathfrak{X}},\mathcal{D}}$ is a finitely generated \'{e}tale-locally constant $k$-group scheme.
\end{prop}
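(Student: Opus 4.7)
The plan is to adapt the argument of Proposition \ref{simprelpic} to the stacky setting, substituting Theorem \ref{stackrepres} for the scheme-level representability statements. Let $i: \mathcal{D} \hookrightarrow \overline{\mathfrak{X}}$ be the inclusion, let $\pi: \overline{\mathfrak{X}} \to \Spec k$ be the structure map, and set $\mathbb{G}_{m,\overline{\mathfrak{X}},\mathcal{D}} := \mathrm{Ker}(\mathbb{G}_{m,\overline{\mathfrak{X}}} \to i_*\mathbb{G}_{m,\mathcal{D}})$, so that $\mathbf{Pic}_{\overline{\mathfrak{X}},\mathcal{D}} = R^1\pi_*\mathbb{G}_{m,\overline{\mathfrak{X}},\mathcal{D}}$. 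Applying $R\pi_*$ to the short exact sequence
\begin{equation*}
0 \to \mathbb{G}_{m,\overline{\mathfrak{X}},\mathcal{D}} \to \mathbb{G}_{m,\overline{\mathfrak{X}}} \to i_*\mathbb{G}_{m,\mathcal{D}} \to 0
\end{equation*}
of fppf sheaves on $\overline{\mathfrak{X}}$ produces an exact sequence of fppf sheaves on $\mathrm{Spec}\, k$
\begin{equation*}
R^0\pi_*\mathbb{G}_{m,\overline{\mathfrak{X}}} \xrightarrow{\;a\;} R^0(\pi \circ i)_*\mathbb{G}_{m,\mathcal{D}} \to \mathbf{Pic}_{\overline{\mathfrak{X}},\mathcal{D}} \to \mathbf{Pic}_{\overline{\mathfrak{X}}} \xrightarrow{\;b\;} \mathbf{Pic}_{\mathcal{D}}.
\end{equation*}

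Next I identify the four outer sheaves. Both $\mathbf{Pic}_{\overline{\mathfrak{X}}}$ and $\mathbf{Pic}_{\mathcal{D}}$ are representable by locally finite type commutative $k$-group schemes by Theorem \ref{stackrepres}(1), since $\mathcal{D}$ is a closed substack of the proper DM stack $\overline{\mathfrak{X}}$ and is therefore itself proper; moreover $\mathbf{Pic}^{0,red}_{\overline{\mathfrak{X}}}$ is an abelian variety by Theorem \ref{stackrepres}(3) (using smoothness of $\overline{\mathfrak{X}}$), and $\mathbf{NS}_{\overline{\mathfrak{X}}}$ is finitely generated and \'{e}tale-locally constant by Theorem \ref{stackrepres}(2). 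For the two left-hand sheaves I would verify the DM-stack analogue of Lemma \ref{lem1}: the finite-dimensionality of $H^0(\overline{\mathfrak{X}},\mathcal{O}_{\overline{\mathfrak{X}}})$ reduces to the coarse moduli space $X$ via the identity $q_*\mathcal{O}_{\overline{\mathfrak{X}}} = \mathcal{O}_X$ (where $q: \overline{\mathfrak{X}} \to X$ is the coarse moduli map, with $X$ a proper algebraic space so that \cite[7.7.6]{ega3} or its algebraic-space analogue applies), and the passage to units then produces tori.

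Granted these inputs, the five-term sequence collapses to a short exact sequence
\begin{equation*}
0 \to \mathrm{coker}(a) \to \mathbf{Pic}_{\overline{\mathfrak{X}},\mathcal{D}} \to \ker(b) \to 0.
\end{equation*}
This already gives representability of $\mathbf{Pic}_{\overline{\mathfrak{X}},\mathcal{D}}$. Passing to reduced identity components, $\mathrm{coker}(a)^{0,red}$ is a torus (quotient of a torus by the image of another) while $\ker(b)^{0,red}$ is an abelian variety (closed subgroup scheme of the abelian variety $\mathbf{Pic}^{0,red}_{\overline{\mathfrak{X}}}$), so $\mathbf{Pic}^{0,red}_{\overline{\mathfrak{X}},\mathcal{D}}$ is an extension of an abelian variety by a torus, i.e. a semi-abelian variety. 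Defining $\mathbf{NS}_{\overline{\mathfrak{X}},\mathcal{D}} := \mathbf{Pic}^{red}_{\overline{\mathfrak{X}},\mathcal{D}}/\mathbf{Pic}^{0,red}_{\overline{\mathfrak{X}},\mathcal{D}}$, the induced exact sequence shows it is an extension of a finite group scheme by (a subgroup of) $\mathbf{NS}_{\overline{\mathfrak{X}}}$, hence is \'{e}tale and finitely generated on $\overline{k}$-points.

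The principal technical obstacle I anticipate is the stack version of Lemma \ref{lem1}, namely establishing that $R^0\pi_*\mathbb{G}_m$ is representable by a torus for a proper DM stack. As sketched above this ultimately reduces to the analogous statement on the coarse moduli space, using $q_*\mathcal{O}_{\overline{\mathfrak{X}}} = \mathcal{O}_X$ (which can be checked \'{e}tale-locally on slice presentations $[U/G]$ with $G$ finite). Once this ingredient is in hand the remainder of the proof is a formal transcription of the argument for Proposition \ref{simprelpic}.
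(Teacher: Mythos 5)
Your proof follows the same route as the paper's, whose proof is a one-liner declaring that the argument for Proposition \ref{simprelpic} goes through word-for-word after substituting $\overline{\mathfrak{X}}$ for $\overline{X}_\bullet$ and $\mathcal{D}$ for $D_\bullet$, using Theorem \ref{stackrepres} for representability of $\mathbf{Pic}_{\overline{\mathfrak{X}}}$ and $\mathbf{Pic}_{\mathcal{D}}$. You have also correctly singled out the one genuinely new ingredient the paper leaves implicit in that transcription, namely the stack analogue of Lemma \ref{lem1} (that $R^0\pi_*\mathbb{G}_m$ is a torus for a smooth proper DM stack), and your reduction to the coarse moduli space via $q_*\mathcal{O}_{\overline{\mathfrak{X}}} = \mathcal{O}_X$ is the right way to supply it.
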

\begin{proof}
The proof of Proposition \ref{simprelpic} goes through word-for-word, replacing $\overline{X}_\bullet$ by $\overline{\mathfrak{X}}$ and $D_\bullet$ by $\mathcal{D}$, and using Theorem \ref{stackrepres} to show that $\mathbf{Pic}_{\overline{\mathfrak{X}}}$ and $\mathbf{Pic}_{\mathcal{D}}$ are representable.
\end{proof}
We then define the group $\mathrm{Div}_{\mathcal{E}}(\overline{\mathfrak{X}})$ to be the group of Weil divisors on $\overline{\mathfrak{X}}$ supported on $\mathcal{E}$.  Because $\mathcal{E}$ and $\mathcal{D}$ are disjoint, there is a natural map
\begin{equation*}
cl: \mathrm{Div}_{\mathcal{E}}(\overline{\mathfrak{X}}) \longrightarrow \mathrm{Pic}(\overline{\mathfrak{X}},\mathcal{D}).
\end{equation*}
Let $\mathrm{Div}^0_{\mathcal{E}}(\overline{\mathfrak{X}})$ be the subgroup of $\mathrm{Div}_{\mathcal{E}}(\overline{\mathfrak{X}})$ mapping to 0 in $NS(\overline{\mathfrak{X}},\mathcal{D})$, and let $\mathbf{Div}^0_{\mathcal{E}}(\overline{\mathfrak{X}})$ be the natural extension of $\mathrm{Div}^0_{\mathcal{E}}(\overline{\mathfrak{X}})$ to an \'{e}tale group scheme over $k$.
\begin{defn}
With the pair $(\overline{\mathfrak{X}},\mathcal{D},\mathcal{E})$ as above, we define
\begin{equation*}
M_{\mathcal{D},\mathcal{E}}^1(\overline{\mathfrak{X}}) := [\mathbf{Div}^0_{\mathcal{E}}(\overline{\mathfrak{X}}) \rightarrow \mathbf{Pic}^{0,red}_{\overline{\mathfrak{X}},\mathcal{D}}].
\end{equation*}
\end{defn}
\begin{prop}\label{stackdim1}
With $M_{\mathcal{D},\mathcal{E}}^1(\overline{\mathfrak{X}})$ defined as above, there is a natural isomorphism (for $\ell \not= p$)
\begin{equation*}
T_\ell M_{\mathcal{D},\mathcal{E}}^1(\overline{\mathfrak{X}}) \cong H_{\mathcal{D},\mathcal{E}}^1(\overline{\mathfrak{X}},\mathbb{Z}_\ell(1)) := H^1(\mathfrak{X},j_!\mathbb{Z}_\ell(1)),
\end{equation*}
where $\mathfrak{X} := \overline{\mathfrak{X}} - \mathcal{E}$, $\mathcal{U} := \overline{\mathfrak{X}} - (\mathcal{D} \cup \mathcal{E})$, and $j: \mathcal{U} \hookrightarrow \mathcal{X}$ is the inclusion.
\end{prop}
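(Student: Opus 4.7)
The plan is to imitate the argument of Proposition \ref{mdexelladic} essentially verbatim, replacing the simplicial scheme $\overline{X}_\bullet$ with the stack $\overline{\mathfrak{X}}$ and the simplicial closed subschemes $D_\bullet, E_\bullet$ with the closed substacks $\mathcal{D}, \mathcal{E}$. We may assume $k = \overline{k}$. Label the inclusions $j: \tilde{\mathcal{U}} := \overline{\mathfrak{X}} - \mathcal{D} \hookrightarrow \overline{\mathfrak{X}}$, $i : \mathcal{D} \hookrightarrow \overline{\mathfrak{X}}$, $v : \mathcal{E} \hookrightarrow \overline{\mathfrak{X}}$, and $\tilde{u} : \mathcal{U} \hookrightarrow \tilde{\mathcal{U}}$, with $\mathcal{U} = \overline{\mathfrak{X}} - (\mathcal{D} \cup \mathcal{E})$.

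First I would establish the fundamental exact sequence in étale cohomology. Since $\mathcal{D}$ and $\mathcal{E}$ are disjoint, one has $Ru_{*} j_{!} \cong j_{!} R\tilde{u}_{*}$ where $u : \mathfrak{X} = \overline{\mathfrak{X}} - \mathcal{E} \hookrightarrow \overline{\mathfrak{X}}$, and the standard exact triangle $v_{*} Rv^{!}\mu_n \to \mu_n \to R\tilde{u}_{*}\mu_n \to [+1]$ on $\tilde{\mathcal{U}}$ pushes forward via $j_{!}$ to yield
\begin{equation*}
0 \to H_c^1(\tilde{\mathcal{U}},\mu_n) \to H_{\mathcal{D},\mathcal{E}}^1(\overline{\mathfrak{X}},\mu_n) \to H_{\mathcal{E}}^2(\overline{\mathfrak{X}},\mu_n) \to H_c^2(\tilde{\mathcal{U}},\mu_n),
\end{equation*}
exactly as in \eqref{eqn60}.

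Next I would identify the outer terms in motivic language. The left end: the same $3 \times 3$ Kummer diagram on $\overline{\mathfrak{X}}$, applied to $\mathbb{G}_{m,\overline{\mathfrak{X}},\mathcal{D}} = \ker(\mathbb{G}_{m,\overline{\mathfrak{X}}} \to i_*\mathbb{G}_{m,\mathcal{D}})$, gives a Kummer sequence whose long exact sequence identifies $H_c^1(\tilde{\mathcal{U}},\mu_n)$ with $\mathrm{Pic}(\overline{\mathfrak{X}},\mathcal{D})[n]$; divisibility of the torus $H^0(\overline{\mathfrak{X}},\mathbb{G}_{m,\overline{\mathfrak{X}},\mathcal{D}})$ and of $\mathrm{Pic}^0(\overline{\mathfrak{X}},\mathcal{D})$ (via Proposition \ref{stackrelpic}) identifies this with $\mathbf{Pic}^{0}(\overline{\mathfrak{X}},\mathcal{D})[n]$. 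The right end: the cycle class theory for smooth Deligne-Mumford stacks developed in Section 4, together with Corollary \ref{cpctfactors}, identifies $H_{\mathcal{E}}^2(\overline{\mathfrak{X}},\mu_n) \cong \mathrm{Div}_{\mathcal{E}}(\overline{\mathfrak{X}}) \otimes \mathbb{Z}/n$ and shows that the map to $H_c^2(\tilde{\mathcal{U}},\mu_n)$ factors through $NS(\overline{\mathfrak{X}},\mathcal{D}) \otimes \mathbb{Z}/n$ with kernel precisely $\mathrm{Div}^0_{\mathcal{E}}(\overline{\mathfrak{X}}) \otimes \mathbb{Z}/n$. Here we rely crucially on the cycle class theory on stacks developed earlier (particularly Proposition \ref{cpctagrees}), which eliminates the need for simplicial reductions; this is where the stack hypotheses on $\overline{\mathfrak{X}}$ get used.

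Finally I would construct the comparison map. By the general torsor description of $H^1$, $H_{\mathcal{D},\mathcal{E}}^1(\overline{\mathfrak{X}},\mu_n)$ classifies $j_{!}\mu_n$-torsors on $\mathfrak{X}$. Given $(C,\mathscr{L},\varphi) \in T_{\mathbb{Z}/n} M_{\mathcal{D},\mathcal{E}}^1(\overline{\mathfrak{X}})$, choose an isomorphism $\eta : \mathscr{L}^{\otimes n} \xrightarrow{\sim} \mathcal{O}(-C)$ identifying $\varphi^{\otimes n}$ with the canonical meromorphic section on $\mathcal{D}$; send $(C,\mathscr{L},\varphi)$ to the $j_{!}\mu_n$-torsor of local trivializations of $\mathscr{L}|_{\mathfrak{X}}$ compatible with $\eta$ and restricting to $\varphi$ on $\mathcal{D}$. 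The same argument as in \ref{welldefined} (using divisibility of the torus $H^0(\overline{\mathfrak{X}},\mathbb{G}_{m,\overline{\mathfrak{X}},\mathcal{D}})$ to extract $n$th roots) shows independence of the choice of $\eta$. The resulting map fits into a diagram whose outer vertical arrows are the isomorphisms identified in the previous paragraph, so the five lemma concludes the isomorphism at level $n$; taking inverse limits over $n = \ell^m$ gives the result.

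The only substantive obstacle beyond the scheme case is justifying the right-hand identification in the second paragraph for stacks, since the usual cycle-class formalism is less standard. This is exactly what Section 4 was designed to provide; in particular Proposition \ref{cpctagrees} and Corollary \ref{cpctfactors} apply verbatim because $\overline{\mathfrak{X}}$ is smooth and proper. Everything else transfers from the simplicial proof without modification.
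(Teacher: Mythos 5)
Your proposal is correct and matches the paper's approach exactly: the paper's own proof of Proposition~\ref{stackdim1} is the single sentence ``The proof is entirely analogous to that of Proposition~\ref{mdexelladic},'' and your sketch is precisely that translation, correctly identifying that the role played by the simplicial reductions in Section~6 is taken over by the stacky cycle-class machinery of Section~4 (Proposition~\ref{cpctagrees}, Corollary~\ref{cpctfactors}). No gaps.
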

\begin{proof}
The proof is entirely analogous to that of Proposition \ref{mdexelladic}.  
\end{proof}

\section{Construction of $M_c^{2d-1}(X)$}

Let $X$ be a $d$-dimensional separated finite type $k$-scheme.  In this section we define a 1-motive $M_c^{2d-1}(X)$ realizing the cohomology group $H_c^{2d-1}(X_{\overline{k}},\mathbb{Q}_{\ell}(d))$.  Because of restrictions related to resolution of singularities, we make the following assumption:
\begin{assumption}
Let the base field $k$ be algebraically closed.
\end{assumption}
In this case, we have the following:
\begin{thm}\label{resolution}
There exists a sequence of maps
\begin{equation*}
\mathfrak{X} \stackrel{p}{\longrightarrow} X'' \stackrel{q}{\longrightarrow} X' \stackrel{r}{\longrightarrow} X,
\end{equation*}
satisfying the following conditions:
\begin{enumerate}
\item $r$ is purely inseparable and surjective, therefore a universal homeomorphism;
\item $q$ is proper and birational;
\item $\mathfrak{X}$ is a smooth Deligne-Mumford stack (in fact a global quotient $[U/G]$ of a smooth $k$-scheme $U$ by a finite group $G$) and $p$ identifies $X''$ with the coarse moduli space of $\mathfrak{X}$.
\end{enumerate}
\end{thm}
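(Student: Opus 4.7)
The theorem is essentially a restatement of de Jong's theorem on Galois alterations \cite[Thm. 7.3]{dejong}, which supplies the deep geometric input; the rest is bookkeeping. First I would reduce to the case in which $X$ is reduced: the closed immersion $X^{\mathrm{red}} \hookrightarrow X$ is a universal homeomorphism (a closed immersion which is surjective and purely inseparable) and can be absorbed into the eventual map $r$ by composition.

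Assuming $X$ is reduced, I would then invoke de Jong's theorem to obtain a finite constant group $G$ and a smooth $k$-scheme $U$ of pure dimension $d$ equipped with a faithful $G$-action, together with a $G$-invariant proper surjective morphism $U \to X$ such that the induced map $\overline{\pi} \colon U/G \to X$ is a purely inseparable alteration, i.e.\ proper and surjective and a universal homeomorphism over some dense open $V \subseteq X$. To produce the decomposition, I would factor $\overline{\pi}$ into a birational part followed by a purely inseparable part: define $r \colon X' \to X$ to be the integral closure of $X$ (componentwise on irreducible components) in the purely inseparable hull of $k(U/G)/k(X)$. Then $r$ is finite, surjective, and purely inseparable, and by construction the induced morphism $q \colon U/G \to X'$ is proper with geometrically integral generic fibers, hence birational onto $X'$.

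Finally I would set $\mathfrak{X} := [U/G]$ and $X'' := U/G$. Since $G$ is a constant finite group acting on the smooth scheme $U$, the stabilizer at every geometric point is a subgroup of $G$, hence an étale (in fact constant) group scheme, so $\mathfrak{X}$ is a smooth Deligne-Mumford stack; its coarse moduli space is canonically $U/G = X''$, and the coarse moduli map is the desired $p$. Properness of $U$ (which one arranges for free in de Jong's construction by first compactifying $X$) transfers to properness of $\mathfrak{X}$ when needed.

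The main obstacle in this plan is the invocation of de Jong's theorem itself, which is the deep substitute for resolution of singularities available in positive characteristic and is precisely the reason for the standing hypothesis that $k$ be algebraically closed; the factorization of $\overline{\pi}$ as $r \circ q$ is a routine Stein-factorization-style construction, and verifying that $[U/G]$ is Deligne-Mumford is automatic given that $G$ is a constant finite group.
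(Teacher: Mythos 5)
The paper's proof of this theorem is a one-line citation to de Jong's Theorem 7.4, which already asserts precisely the claimed factorization and stack-quotient interpretation. Your proposal instead reconstructs the factorization from de Jong's Theorem 7.3 (the Galois alteration theorem), which is a reasonable thing to attempt but contains a genuine gap in the construction of $X'$.

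The gap is in the claim that $r \colon X' \to X$ is purely inseparable. You define $X'$ as the integral closure of $X$ in the purely inseparable hull of $k(U/G)/k(X)$. But de Jong's theorem already arranges that $k(U/G)/k(X)$ is purely inseparable, so this hull is all of $k(U/G)$ and your $X'$ is simply the normalization of $X$ in $k(U/G)$. Normalization is generally \emph{not} purely inseparable over a non-normal base. Concretely, take $X$ a nodal curve and suppose de Jong's construction happens to return $U = \tilde X$ (the normalization) with $G$ trivial; then $k(U/G) = k(X)$, your $X'$ is $\tilde X$, and $\tilde X \to X$ is two-to-one over the node, hence not radicial. (The correct factorization in that case is $X' = X$, $r = \mathrm{id}$, $q = $ normalization.) The integral closure construction fails because it allows \emph{all} integral elements of $k(U/G)$ over $\mathcal{O}_X$, not merely the purely inseparable ones; the difference is invisible at the generic point but visible over non-normal points. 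The correct construction takes $X' := \operatorname{Spec}_X \mathcal{B}$, where $\mathcal{B} \subseteq \pi_*\mathcal{O}_{U/G}$ is the subsheaf of sections $s$ with $s^{p^n}$ landing in the image of $\mathcal{O}_X$ for some $n \geq 0$; this $\mathcal{B}$ is a coherent $\mathcal{O}_X$-algebra, $X' \to X$ is finite, radicial, and surjective, and $U/G \to X'$ is proper birational. This is essentially what de Jong's 7.4 carries out, and it is why the paper can settle the matter with a single citation rather than a Stein-factorization-style argument.
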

\begin{proof}
This is \cite[7.4]{dejong}.
\end{proof}
\begin{defn}\label{resdefn}
Let $X$ be a separated finite type $k$-scheme, and let $\pi: \mathfrak{X} \rightarrow X$  be a map from a smooth proper Deligne-Mumford stack $\mathfrak{X}$ which factors as in Theoreom \ref{resolution}.  Then we call $\pi: \mathfrak{X} \rightarrow X$ a \emph{resolution} of $X$.
\end{defn}
The main fact we will use about such resolutions is the following:
\begin{prop}\label{ratcohthesame}
There exists an open dense subscheme $U \subset X$ such that $\pi\vert_U: \pi^{-1}(U) \rightarrow U$ induces an isomorphism $$\mathbb{Q}_{\ell,U} \stackrel{\sim}{\longrightarrow} R{\pi_U}_*\mathbb{Q}_{\ell,\pi^{-1}(U)}$$ in $D_c^b(U,\mathbb{Q}_\ell)$.
\end{prop}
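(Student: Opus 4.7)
The proof plan is to exploit the factorization $\pi = r \circ q \circ p$ and handle each of the three maps separately, using the observation that each one is (or becomes, after restricting to a suitable open) cohomologically trivial for $\mathbb{Q}_\ell$.

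First I would handle $p \colon \mathfrak{X} \to X''$. This is the coarse moduli space map, so by Lemma \ref{cms} we already have $\mathbb{Q}_{\ell, X''} \xrightarrow{\sim} Rp_*\mathbb{Q}_{\ell, \mathfrak{X}}$ in $D_c^b(X'', \mathbb{Q}_\ell)$, with no restriction to an open needed. Moreover, this property is preserved under restriction to an arbitrary open $W \subseteq X''$, because formation of $Rp_*$ commutes with the open immersion $p^{-1}(W) \hookrightarrow \mathfrak{X}$.

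Next I would handle $r \colon X' \to X$. Since $r$ is a universal homeomorphism, the étale topoi of $X'$ and $X$ are canonically equivalent (SGA 4, Exposé VIII, or \cite[Tag 03SI]{stacksproject}), so $Rr_*$ takes $\mathbb{Q}_\ell$ to $\mathbb{Q}_\ell$ in $D_c^b(X, \mathbb{Q}_\ell)$. This, too, persists after restriction to an open, since if $U \subseteq X$ is open with preimage $V := r^{-1}(U) \subseteq X'$, then $r\vert_V \colon V \to U$ is again a universal homeomorphism.

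The only map requiring a genuine shrinking is $q \colon X'' \to X'$. Since $q$ is proper and birational and $X, X', X''$ are finite-type $k$-schemes, there exists a dense open $V \subseteq X'$ over which $q$ restricts to an isomorphism $q^{-1}(V) \xrightarrow{\sim} V$; in particular $Rq_*\mathbb{Q}_{\ell, q^{-1}(V)} = \mathbb{Q}_{\ell, V}$. Define $U \subseteq X$ to be the image of $V$ under the homeomorphism $r$; then $U$ is dense open in $X$ with $r^{-1}(U) = V$.

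Putting the three steps together, over the open $U$ we compute
\begin{equation*}
R(\pi\vert_U)_* \mathbb{Q}_{\ell, \pi^{-1}(U)} \;\cong\; R(r\vert_V)_* \, R(q\vert_{q^{-1}(V)})_* \, R(p\vert_{\pi^{-1}(U)})_* \mathbb{Q}_\ell,
\end{equation*}
and each of the three pushforwards reduces the argument to $\mathbb{Q}_\ell$ by the three observations above; hence the composite is $\mathbb{Q}_{\ell, U}$, and one verifies that the natural map $\mathbb{Q}_{\ell,U} \to R(\pi\vert_U)_*\mathbb{Q}_\ell$ induces this isomorphism. There is no substantial obstacle here: the only input beyond the factorization is Lemma \ref{cms} and the standard étale-topos triviality of universal homeomorphisms; the mildly delicate point is just keeping track of the fact that all three simplifications survive restriction to the open $U \subseteq X$ cut out by the locus where $q$ is an isomorphism.
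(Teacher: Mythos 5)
Your argument is correct and follows essentially the same route as the paper: factor $\pi = r \circ q \circ p$, invoke Lemma \ref{cms} for the coarse-space map $p$, invariance of \'{e}tale cohomology under the universal homeomorphism $r$, and note that the shrinking to $U$ is forced only by $q$ being an isomorphism over a dense open. The one difference is that the paper's proof states only the two ``global'' facts about $r$ and $p$ (citing \cite[3.12]{freitag} rather than topos-invariance directly) and leaves the elementary observation about the proper birational $q$ --- which is precisely what necessitates passing to the open $U$ --- entirely implicit; you make that step explicit, which is a small improvement in completeness but not a different idea.
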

\begin{proof}
This follows from the following two facts:
\begin{enumerate}
\item The morphism $r: X' \rightarrow X$ induces an isomorphism $\mathbb{Z}_{\ell,X} \stackrel{\sim}{\longrightarrow} Rr_*\mathbb{Z}_{\ell,X'}$.
\item The morphism $p: \mathfrak{X} \rightarrow X''$ induces an isomorphism $\mathbb{Q}_{\ell,X''} \stackrel{\sim}{\longrightarrow} Rp_*\mathbb{Q}_{\ell,\mathfrak{X}}$.
\end{enumerate}
The first fact is well-known \cite[3.12]{freitag}, while the second is Lemma \ref{cms}.
\end{proof}

\begin{pg}\label{setup}
We now construct the 1-motive $M_c^{2d-1}(X)$ as follows.  Choose a compactification $\alpha: X \hookrightarrow \overline{X}$, and a resolution $\overline{\pi}: \overline{\mathfrak{X}} \rightarrow \overline{X}$ in the sense of Definition \ref{resdefn}.  Therefore $\overline{\mathfrak{X}}$ is a smooth proper Deligne-Mumford stack and there exists an open dense subscheme $U \subset X$ such that $\overline{\pi}^{-1}(U) \rightarrow U$ is purely inseparable (in particular, $U$ satisfies the conclusion of Proposition \ref{ratcohthesame}).  We write $\mathcal{U}$ for $\overline{\pi}^{-1}(U)$, and let $\mathfrak{X} = \overline{\mathfrak{X}} \times_{\overline{X}} X$, $C = \overline{X} - X$, and $\mathcal{C}= \overline{\mathfrak{X}} - \mathfrak{X}$. Finally, let $Z = X - U$ and $\mathcal{Z} = \mathfrak{X} - \mathcal{U}$.  Summarizing, we have commutative diagrams  
\begin{equation*}
\xymatrix{
{\mathfrak{X}} \ar@{^{(}->}[r]^{\alpha'} \ar[d]^{\pi} & \overline{\mathfrak{X}} \ar[d]^{\overline{\pi}} & \mathcal{C} \ar@{_{(}->}[l]_{\beta'} \ar[d]^{\pi_C} \\
X \ar@{^{(}->}[r]^{\alpha} & \overline{X} & C \ar@{_{(}->}[l]_{\beta}}
\end{equation*}
and
\begin{equation*}
\xymatrix{
\mathcal{U} \ar@{^{(}->}[r]^{j'} \ar[d]^{\pi_U} & \mathfrak{X} \ar[d]^{\pi} & \mathcal{Z} \ar@{_{(}->}[l]_{i'} \ar[d]^{\pi_Z} \\
U \ar@{^{(}->}[r]^{j} & X & Z. \ar@{_{(}->}[l]_{i}}
\end{equation*}
Let $\overline{Z}$ be the closure of $Z$ in $\overline{X}$, and $\overline{\mathcal{Z}}$ the closure of $\mathcal{Z}$ in $\overline{\mathfrak{X}}$.  Let $\mathrm{Div}_{\mathcal{C} \cup \overline{\mathcal{Z}}}(\overline{\mathfrak{X}})$ be the free abelian group of divisors on $\overline{\mathfrak{X}}$ supported on the closed subscheme $\mathcal{C} \cup \overline{\mathcal{Z}}$.  
\end{pg}
\begin{rem}
Note that it is possible to choose $\overline{\mathfrak{X}}$ so that $\mathcal{C}$ is a (reduced) strict normal crossings divisor \cite[7.4]{dejong}.  However, it is not known whether we can arrange that $\mathcal{Z}$ be a strict normal crossings divisor.  
\end{rem}
\begin{defn}
We define a subgroup $$\mathrm{Div}_{\mathcal{C}\cup \overline{\mathcal{Z}}/\overline{Z}}^0(\overline{\mathfrak{X}}) \subset \mathrm{Div}_{\mathcal{C} \cup \overline{\mathcal{Z}}}(\overline{\mathfrak{X}})$$ to be the divisors $D \in \mathrm{Div}_{\mathcal{C} \cup \overline{\mathcal{Z}}}(\overline{\mathfrak{X}})$ satisfying the following two conditions:
\begin{enumerate}
\item The cycle class $cl(D) = 0$ in $NS(\overline{\mathfrak{X}})$.
\item Write $D$ as $D = D_1 + D_2$ with $D_1$ supported on $\mathcal{C}$ and $D_2$ supported on $\overline{\mathcal{Z}}$.  This decomposition is unique since $\mathcal{C}$ and $\overline{\mathcal{Z}}$ have no codimension-1 irreducible components in common.  Consider the proper pushforward map $$\pi_*: \mathrm{Div}_{\overline{\mathcal{Z}}}(\overline{\mathfrak{X}}) \otimes \mathbb{Q} \longrightarrow \mathrm{Div}_{\overline{Z}}(\overline{X}) \otimes \mathbb{Q}.$$ We then require that $\pi_*D_2 =0$.
\end{enumerate}
\end{defn}
There is a natural map $\mathrm{Div}_{\mathcal{C} \cup \overline{\mathcal{Z}}/\overline{Z}}^0(\overline{\mathfrak{X}}) \rightarrow \mathrm{Pic}^0(\overline{\mathfrak{X}})$ sending a divisor $D$ to its associated line bundle $\mathcal{O}(E)$.  Moreover, this map extends to a map of group schemes
\begin{equation*}
\mathbf{Div}_{\mathcal{C} \cup \overline{\mathcal{Z}}/\overline{Z}}^0(\overline{\mathfrak{X}}) \rightarrow \mathbf{Pic}_{\overline{\mathfrak{X}}}^{0,red},
\end{equation*}
where $\mathbf{Div}_{\mathcal{C} \cup \overline{\mathcal{Z}}/\overline{Z}}^0(\overline{\mathfrak{X}})$ is the \'{e}tale group scheme associated to $\mathrm{Div}_{\mathcal{C} \cup \overline{\mathcal{Z}}/\overline{Z}}^0(\overline{\mathfrak{X}})$.
 
We can now define the 1-motive $M_c^{2d-1}(X)$:
\begin{defn}\label{mc2d-1}
Let $X$ be a separated scheme of finite type, of dimension $d$ over $k$.  Choose a compactification $X \hookrightarrow \overline{X}$ and a resolution $\overline{\mathfrak{X}} \rightarrow \overline{X}$ as above.  We define $M_c^{2d-1}(X)$ to be the 1-motive
\begin{equation*}
M_c^{2d-1}(X) := [\mathbf{Div}^0_{\mathcal{C} \cup \overline{\mathcal{Z}}/\overline{Z}} \rightarrow \mathbf{Pic}^{0,red}_{\overline{\mathfrak{X}}}]^\vee,
\end{equation*}
where $\mathbf{Div}^0_{\mathcal{C} \cup \overline{\mathcal{Z}}/\overline{Z}}$ is the natural extension of $\divCZX$ to an \'{e}tale group scheme, and $(-)^\vee$ indicates taking the Cartier dual of a 1-motive (\ref{cartierduals}).
\end{defn}
We will show that this 1-motive is independent of the choice of $\overline{\mathfrak{X}}$ up to isogeny.
\subsection{$\ell$-adic realization of $M_c^{2d-1}(X)$}
To understand this definition of $M_c^{2d-1}(X)$ (and to show that it is, up to isogeny, independent of the choice of $\overline{X}$ and $\overline{\mathfrak{X}}$), we must discuss the $\ell$-adic realization of $M_c^{2d-1}(X)$.  Choose a prime $\ell \not= p$.  Then in $D_c^b(X,\mathbb{Q}_\ell)$ we have a commuting diagram with exact rows and columns
\begin{equation*}
\begin{CD}
j_!\mathbb{Q}_{\ell,U} @> \sim >> R\pi_*j'_!\mathbb{Q}_{\ell,\mathcal{U}} @>>> 0 @>>> \\
@VVV @VVV @VVV \\
\mathbb{Q}_{\ell,X} @>>> R\pi_*\mathbb{Q}_{\ell,\mathfrak{X}} @>>> A @>>> \\
@VVV @VVV @V \sim VV \\
i_*\mathbb{Q}_{\ell,Z} @>>> R\pi_*i'_*\mathbb{Q}_{\ell,\mathcal{Z}} @>>> i_*i^*A @>>> \\
@VVV @VVV @VVV
\end{CD}
\end{equation*}
The upper left-hand arrow is an isomorphism because it equals the composition
\begin{equation*}
j_!\mathbb{Q}_{\ell,U} \stackrel{\mathrm{ad}}{\longrightarrow} j_!R\pi_*\mathbb{Q}_{\ell,\mathcal{U}} \stackrel{\mathrm{=}}{\longrightarrow} R\pi_*j'_!\mathbb{Q}_{\ell,\mathcal{U}},
\end{equation*}
and the adjoint map $\mathbb{Q}_{\ell,U} \rightarrow R\pi_*\mathbb{Q}_{\ell,\mathcal{U}}$ is an isomorphism in $D_c^b(U,\mathbb{Q}_\ell)$ (Lemma \ref{cms}).
\begin{pg}
If we apply $\alpha_!$ to this diagram and take cohomology of the two left-hand vertical columns, we get a commuting diagram with exact rows
\begin{equation*}
\begin{CD}
H_c^{2d-2}(X,\mathbb{Q}_\ell) @>>> H_c^{2d-2}(Z,\mathbb{Q}_\ell) @>>> H_c^{2d-1}(U,\mathbb{Q}_\ell) @>>> H_c^{2d-1}(X,\mathbb{Q}_\ell) @>>> 0 \\
@VVV @VVV @VV \sim V @VVV \\
H_c^{2d-2}(\mathfrak{X},\mathbb{Q}_{\ell}) @>>> H_c^{2d-2}(\mathcal{Z},\mathbb{Q}_\ell) @>>> H_c^{2d-1}(\mathcal{U},\mathbb{Q}_\ell) @>>> H_c^{2d-1}(\mathfrak{X},\mathbb{Q}_\ell) @>>> 0.
\end{CD}
\end{equation*}
Twisting by $d-1$, taking duals, and applying Poincar\'{e} duality, we get a diagram \\
\begin{equation}\label{bigdiag}
\begin{CD}
0 @>>> H_c^{2d-1}(X,\mathbb{Q}_\ell(d\textrm{-1}))^\vee @>>> H_c^{2d-1}(U,\mathbb{Q}_\ell(d\textrm{-1}))^\vee @>>> \mathbb{Q}_\ell^{I_{d-1}(Z)} @>>> H_c^{2d-2}(X,\mathbb{Q}_\ell(d\textrm{-1}))^\vee \\
@. @AAA @AA \sim A @AAA @AAA \\
0 @>>> H^1(\mathfrak{X},\mathbb{Q}_\ell(1)) @>>> H^1(\mathcal{U},\mathbb{Q}_\ell(1)) @>>> \mathbb{Q}_{\ell}^{I_{d-1}(\mathcal{Z})} @>>> H^2(\mathfrak{X},\mathbb{Q}_\ell(1)).
\end{CD}
\end{equation}
(The isomorphisms $H_c^{2d-2}(\mathcal{Z},\mathbb{Q}_\ell(d-1))^\vee \cong \mathbb{Q}_\ell^{I_{d-1}(\mathcal{Z})}$ and $H_c^{2d-2}(Z,\mathbb{Q}_\ell(d-1))^{\vee} \cong \mathbb{Q}_\ell^{I_{d-1}(Z)}$ are induced by sums of trace maps as in \ref{tracemap}).  Recall that $I_{d-1}(Z)$ is the set of $(d-1)$-dimensional irreducible components of $Z$, so we have $$\mathbb{Q}_{\ell}^{I_{d-1}(Z)} = \mathrm{Div}_Z(X) \otimes \mathbb{Q}_\ell = \mathrm{Div}_{\overline{Z}}(\overline{X}) \otimes \mathbb{Q}_\ell.$$  Similarly, we have $$\mathbb{Q}_\ell^{I_{d-1}(\mathcal{Z})} = \mathrm{Div}_{\mathcal{Z}}(\mathfrak{X}) \otimes \mathbb{Q}_\ell = \mathrm{Div}_{\overline{\mathcal{Z}}}(\overline{\mathfrak{X}}) \otimes \mathbb{Q}_\ell.$$ Then the map $\mathbb{Q}_\ell^{I_{d-1}(\mathcal{Z})} \rightarrow \mathbb{Q}_\ell^{I_{d-1}(Z)}$ of diagram \ref{bigdiag} is induced by proper pushforward $\pi_*: \mathrm{Div}_{\overline{\mathcal{Z}}}(\overline{\mathfrak{X}}) \otimes \mathbb{Q} \rightarrow \mathrm{Div}_{\overline{Z}}(\overline{X}) \otimes \mathbb{Q}$, while the map $\mathbb{Q}_\ell^{I_{d-1}(\mathcal{Z})} \rightarrow H^2(\mathfrak{X},\mathbb{Q}_\ell(1))$ is induced by the divisor class map $\mathrm{Div}_{\mathcal{Z}}(\mathfrak{X}) \rightarrow NS(\mathfrak{X})$ (Proposition \ref{divisors}).  Then diagram \ref{bigdiag} shows that we have 
\begin{equation*}
H_c^{2d-1}(X,\mathbb{Q}_\ell(d-1))^\vee \cong \mathrm{Ker}(H^1(\mathcal{U},\mathbb{Q}_\ell(1)) \rightarrow \mathbb{Q}_\ell^{I_{d-1}(\mathcal{Z})} \stackrel{\pi_*}{\rightarrow} \mathbb{Q}_\ell^{I_{d-1}(Z)}).
\end{equation*}
\end{pg}
\begin{pg}\label{setuptocomp}
Now recall that $\mathcal{U} = \overline{\mathfrak{X}} - (\mathcal{C} \cup \overline{\mathcal{Z}})$.  By Proposition \ref{stackdim1} (with $\mathcal{D} := \emptyset$, $\mathcal{E} := \mathcal{C} \cup \mathcal{Z}$), if we set
\begin{equation*}
M^1(\mathcal{U}) := [\mathbf{Div}_{\mathcal{C} \cup \overline{\mathcal{Z}}}^0(\overline{\mathfrak{X}}) \rightarrow \mathbf{Pic}^0(\overline{\mathfrak{X}})],
\end{equation*}
we have a canonical isomorphism $V_\ell M^1(\mathcal{U}) \cong H^1(\mathcal{U},\mathbb{Q}_\ell(1))$.  Then from Definition \ref{mc2d-1} we have
\begin{equation*}
M_c^{2d-1}(\mathfrak{X})^\vee = \mathrm{Ker}(M^1(\mathcal{U}) \longrightarrow [\mathbf{Div}_{\overline{Z}}(\overline{X}) \rightarrow 0]),
\end{equation*}
where the map is induced by proper pushforward of divisors.  Applying the functor $V_\ell(-)$, we get a commuting diagram
\begin{equation*}
\begin{CD}
0 @>>> V_\ell(M_c^{2d-1}(X)^\vee) @>>> V_\ell(M^1(\mathcal{U})) @>>> \mathrm{Div}_{\overline{Z}}(\overline{X}) \otimes \mathbb{Q}_\ell \\
@. @VVV @VV \sim V @VV \sim V \\
0 @>>> H_c^{2d-1}(X,\mathbb{Q}_\ell(d-1))^\vee @>>> H^1(\mathcal{U},\mathbb{Q}_\ell(1)) @>>> \mathbb{Q}_\ell^{I_{d-1}(Z)}.
\end{CD}
\end{equation*}
By the five lemma, the map on the left is also an isomorphism.  Taking Cartier duals, we have the following:
\end{pg}
\begin{prop}\label{comparison2d-1}
For $X$ a separated scheme of finite type over $k$, of dimension $d$, set $M_c^{2d-1}(X) = [\divCZX \rightarrow \mathrm{Pic}^0(\overline{\mathfrak{X}})]^\vee$.  Then we have a canonical isomorphism
\begin{equation*}
V_\ell M_c^{2d-1}(X) \cong H_c^{2d-1}(X,\mathbb{Q}_\ell(d)).
\end{equation*}
\end{prop}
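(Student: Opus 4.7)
The plan is to assemble the argument already sketched in paragraph \ref{setuptocomp}, reducing everything to a five-lemma comparison followed by Cartier dualization. First I would set up the key sheaf-theoretic input: by Proposition \ref{ratcohthesame} the map $\pi_U:\mathcal{U}\to U$ induces an isomorphism $\mathbb{Q}_{\ell,U}\xrightarrow{\sim} R\pi_{U*}\mathbb{Q}_{\ell,\mathcal{U}}$ in $D_c^b(U,\mathbb{Q}_\ell)$, and extending by zero along $j$ gives $j_!\mathbb{Q}_{\ell,U}\xrightarrow{\sim}R\pi_*j'_!\mathbb{Q}_{\ell,\mathcal{U}}$ in $D_c^b(X,\mathbb{Q}_\ell)$ (using that $j_!$ commutes with $R\pi_*$ up to identifying $j_!R\pi_{U*}$ with $R\pi_*j'_!$, valid since $\pi$ restricts to $\pi_U$ over $U$). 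This yields the commuting diagram of distinguished triangles displayed before \ref{bigdiag}.

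Next I would apply $\alpha_!$ and take cohomology of the two left-hand columns to obtain the pair of long exact sequences comparing $X\supset Z\supset \emptyset$ with $\mathfrak{X}\supset\mathcal{Z}\supset\emptyset$. Twisting by $(d-1)$, dualizing, and applying Poincar\'{e} duality on the smooth proper stack $\overline{\mathfrak{X}}$ produces diagram \ref{bigdiag}. Here the two crucial identifications to verify are: (i) under $H_c^{2d-2}(\mathcal{Z},\mathbb{Q}_\ell(d-1))^\vee\cong\mathbb{Q}_\ell^{I_{d-1}(\mathcal{Z})}$ via the sum of trace maps from Lemma \ref{tracemap}, the map to $H^2(\mathfrak{X},\mathbb{Q}_\ell(1))$ is the rational cycle-class map on divisors supported on $\mathcal{Z}$ (Proposition \ref{divisors}); (ii) the horizontal map $\mathbb{Q}_\ell^{I_{d-1}(\mathcal{Z})}\to\mathbb{Q}_\ell^{I_{d-1}(Z)}$ is, after these trace identifications, precisely proper pushforward of $(d-1)$-cycles, which follows from compatibility of Poincar\'{e} duality with proper pushforward applied to $\pi_Z:\mathcal{Z}\to Z$. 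A diagram chase on \ref{bigdiag} then gives the kernel description
\begin{equation*}
H_c^{2d-1}(X,\mathbb{Q}_\ell(d-1))^\vee\cong\mathrm{Ker}\bigl(H^1(\mathcal{U},\mathbb{Q}_\ell(1))\to\mathbb{Q}_\ell^{I_{d-1}(\mathcal{Z})}\xrightarrow{\pi_*}\mathbb{Q}_\ell^{I_{d-1}(Z)}\bigr).
\end{equation*}

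I would then invoke Proposition \ref{stackdim1} (applied to $\overline{\mathfrak{X}}$ with $\mathcal{D}=\emptyset$, $\mathcal{E}=\mathcal{C}\cup\overline{\mathcal{Z}}$) to identify $V_\ell M^1(\mathcal{U})\cong H^1(\mathcal{U},\mathbb{Q}_\ell(1))$ for $M^1(\mathcal{U})=[\mathbf{Div}_{\mathcal{C}\cup\overline{\mathcal{Z}}}^0(\overline{\mathfrak{X}})\to\mathbf{Pic}^{0,red}_{\overline{\mathfrak{X}}}]$, with the divisor-class side matching $\mathrm{Div}_{\overline{\mathcal{Z}}}(\overline{\mathfrak{X}})\otimes\mathbb{Q}_\ell$ under the trace identification. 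The definition of $M_c^{2d-1}(X)^\vee$ presents it in $\mathscr{M}^1(k)\otimes\mathbb{Q}$ as the kernel of the map of 1-motives $M^1(\mathcal{U})\to[\mathbf{Div}_{\overline{Z}}(\overline{X})\to 0]$ induced by proper pushforward. Since $V_\ell$ is exact on $\mathscr{M}^1(k)\otimes\mathbb{Q}$ by Proposition \ref{faithful}, applying $V_\ell$ to this kernel sequence and comparing with the kernel description above via the five lemma gives $V_\ell(M_c^{2d-1}(X)^\vee)\cong H_c^{2d-1}(X,\mathbb{Q}_\ell(d-1))^\vee$.

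Finally, taking Cartier duals and using the canonical perfect pairing $T_\ell M\otimes T_\ell M^\vee\to\mathbb{Z}_\ell(1)$ of Section \ref{cartierduals} (tensored with $\mathbb{Q}$) together with Poincar\'{e} duality $H_c^{2d-1}(X,\mathbb{Q}_\ell(d-1))^\vee\cong H_c^{2d-1}(X,\mathbb{Q}_\ell(d))$ (via the pairing with $H_c^{2d-1}$, here using that $d-1+1=d$ accounts for the Tate twist shift), I would conclude $V_\ell M_c^{2d-1}(X)\cong H_c^{2d-1}(X,\mathbb{Q}_\ell(d))$. The main obstacle is step two: carefully matching the Poincar\'{e}-dual descriptions of the maps in \ref{bigdiag} with the algebraic maps on divisor groups (cycle class and proper pushforward) so that the resulting kernel diagram is genuinely compatible with the purely algebraic kernel diagram defining $M_c^{2d-1}(X)^\vee$. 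Once that compatibility is in place, exactness of $V_\ell$, the five lemma, and Cartier duality handle the rest.
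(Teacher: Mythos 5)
Your proposal tracks the paper's own argument step for step: the same distinguished-triangle diagram from Proposition \ref{ratcohthesame} and Lemma \ref{cms}, the same dualization to diagram \ref{bigdiag}, the same identifications of the trace/pushforward/cycle-class maps, the same reduction to the kernel of $M^1(\mathcal{U})\to[\mathbf{Div}_{\overline{Z}}(\overline{X})\to 0]$ via Proposition \ref{stackdim1}, and the same appeal to exactness of $V_\ell$ and the five lemma. One small terminological slip at the end: the identification $(H_c^{2d-1}(X,\mathbb{Q}_\ell(d-1))^\vee)^\vee(1)\cong H_c^{2d-1}(X,\mathbb{Q}_\ell(d))$ is not an instance of Poincar\'{e} duality on $X$ (which is not even available for the singular $X$), but simply finite-dimensional linear duality combined with the Tate twist built into the 1-motive Cartier duality pairing $T_\ell M\otimes T_\ell M^\vee\to\mathbb{Z}_\ell(1)$; your parenthetical bookkeeping of the twist shows you understand the mechanism, so the mislabel does not affect the correctness of the argument.
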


\subsection{Functoriality of $M_c^{2d-1}(X)$} We will prove that the 1-motive $M_c^{2d-1}(X)$ is contravariantly functorial for proper morphisms $f: X \rightarrow Y$ between varieties $X$ and $Y$ of equal dimension $d$.  Before this, we must prove a preliminary fact on functoriality for the resolutions of Theorem \ref{resolution}:
\begin{prop}\label{resolvemap}
Let $f: X \rightarrow Y$ be a morphism of separated finite type $k$-schemes, with $k = \overline{k}$.  Then there exist resolutions (in the sense of Definition \ref{resdefn}) $\pi: \mathfrak{X} \rightarrow X$, $\sigma: \mathcal{Y} \rightarrow Y$ with $\mathfrak{X}$ and $\mathcal{Y}$ smooth, and a \emph{representable} map $f': \mathfrak{X} \rightarrow \mathcal{Y}$ making the diagram
\begin{equation*}
\begin{CD}
\mathfrak{X} @> f' >> \mathcal{Y} \\
@V \pi VV @V \sigma VV \\
X @> f >> Y
\end{CD}
\end{equation*}
commute.  
\end{prop}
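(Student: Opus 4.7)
The plan is to first resolve the target $Y$ using Theorem \ref{resolution}, then pull this resolution back to $X$, and finally apply an $H$-equivariant version of Theorem \ref{resolution} to the resulting Deligne--Mumford stack, preserving representability of the induced map. I would begin by applying Theorem \ref{resolution} to $Y$ to obtain a resolution $\sigma : \mathcal{Y} \to Y$ with $\mathcal{Y} \cong [V/H]$ for $V$ a smooth $k$-scheme and $H$ a finite group, factoring as $\mathcal{Y} \to Y'' \to Y' \to Y$.

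Next I would form the fibre product $\mathfrak{X}_0 := X \times_Y \mathcal{Y}$. Since $f$ is a morphism of schemes, the projection $\mathfrak{X}_0 \to \mathcal{Y}$ is automatically representable, and explicitly $\mathfrak{X}_0 \cong [W_0/H]$ where $W_0 := X \times_Y V$ carries the natural $H$-action inherited from $V$. I then apply an $H$-equivariant version of Theorem \ref{resolution} to $W_0$, producing a smooth $H$-scheme $U$ and an $H$-equivariant morphism $U \to W_0$ which factors $H$-equivariantly as the coarse moduli map of $[U/H]$, then a proper birational morphism, then a universal homeomorphism onto $W_0$. Setting $\mathfrak{X} := [U/H]$, the induced morphism $\mathfrak{X} \to \mathfrak{X}_0$ is representable (being the quotient of an $H$-equivariant scheme morphism by the same group), so the composition $f' : \mathfrak{X} \to \mathfrak{X}_0 \to \mathcal{Y}$ is representable. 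The composition $\mathfrak{X} \to X$ factors through $X \times_Y Y' \to X$ (a universal homeomorphism, since $Y' \to Y$ is) followed by a proper birational morphism from the strict transform of $X$ inside $X \times_Y Y''$, followed by the coarse moduli map of $\mathfrak{X}$, giving the factorization required by Definition \ref{resdefn}.

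The main obstacle is the third step: obtaining an $H$-equivariant form of \cite[7.4]{dejong} with a factorization of the prescribed shape. I would derive this from the non-equivariant theorem by a Galois-closure argument: take a non-equivariant resolution of $W_0$, form the schematic closure of its $H$-orbit inside a suitable fibre product over $W_0$, and resolve once more to regain smoothness. Tracking the universal-homeomorphism, proper-birational, and coarse-moduli factors through this process requires additional blow-ups and shrinking steps to ensure everything remains $H$-equivariant and still refines the decomposition inherited from the resolution of $Y$.
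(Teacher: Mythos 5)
Your overall plan tracks the paper's: resolve $Y$ to get $\mathcal{Y}=[V/H]$, form the fibre product $\mathfrak{X}_0 = X\times_Y \mathcal{Y} = [W_0/H]$ (which is automatically a global quotient), and then resolve the source. But the way you resolve the source differs in a way that opens a genuine gap. You insist on an $H$-equivariant alteration $U\to W_0$ with the \emph{same} group $H$, so that $\mathfrak{X}=[U/H]\to[W_0/H]$ is tautologically representable. The problem is that such an exactly-$H$-equivariant version of de Jong's alteration theorem is not available (and in particular \cite[Thm.~7.3]{dejong}, which is the relevant equivariant statement, only produces a quotient stack $[U/G]$ for some possibly \emph{larger} finite group $G$). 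Your proposed fix --- Galois closure of a non-equivariant alteration followed by ``resolve once more to regain smoothness'' --- essentially re-enters the same problem: the Galois closure enlarges the group, and the final re-smoothing step again requires an alteration, which in positive characteristic once more loses control of the group. So the equivariant lemma you need is not something you can straightforwardly bootstrap from the non-equivariant statement; this is precisely what you flagged as ``the main obstacle,'' and it is a real one.

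The paper sidesteps all of this. It applies \cite[Thm.~7.3]{dejong} directly to the quotient stack $\mathfrak{X}_1 = \mathcal{Y}\times_Y X$, accepting the output $\mathfrak{X}=[U/G]$ for whatever finite $G$ de Jong's theorem hands back, together with a proper map $\mathfrak{X}\to\mathfrak{X}_1$ making $\mathfrak{X}\to X$ a resolution. The composite $\mathfrak{X}\to\mathcal{Y}$ then need not be representable. The key observation that repairs this is cheap: replace $\mathcal{Y}$ by $\mathcal{Y}\times_k BG$. Since $\mathfrak{X}=[U/G]$ with $U$ a scheme, the canonical map $\mathfrak{X}\to BG$ is representable (its pullback along $\mathrm{Spec}\,k\to BG$ is $U$), and hence the induced map $\mathfrak{X}\to \mathcal{Y}\times BG$ is representable as well (because its composition with the projection to $BG$ is). Meanwhile $\mathcal{Y}\times BG$ has the same coarse moduli space as $\mathcal{Y}$, so it remains a resolution of $Y$ in the sense of Definition~\ref{resdefn}. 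This $BG$-twisting trick is exactly what replaces the hard equivariant-alteration lemma that your argument would need. If you want to salvage your approach, the cleanest route is to absorb this observation rather than chase an equivariant de Jong with a fixed group.
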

\begin{proof}
By Theorem \ref{resolution}, we can choose a resolution $\sigma: \mathcal{Y} \rightarrow Y$ with $\mathcal{Y} = [V/H]$.  Then let $$\mathfrak{X}_1 = \mathcal{Y} \times_Y X = [(V \times_Y X) /H].$$  By \cite[Thm. 7.3]{dejong}, there exists a quotient stack $\mathfrak{X} = [U/G]$ with $U$ smooth and $G$ finite, together with a proper map $\phi: \mathfrak{X} \rightarrow \mathfrak{X}_1$ such that the composition $\mathfrak{X} \rightarrow \mathfrak{X}_1 \rightarrow X$ is a resolution.  The induced map $f': \mathfrak{X} \rightarrow \mathcal{Y}$ satisfies all the conditions of Proposition \ref{resolvemap} except representability.  To make $f'$ representable, we replace $\mathcal{Y}$ by $\mathcal{Y} \times_k BG$ (note that $\mathcal{Y} \times BG$ has the same coarse moduli space as $\mathcal{Y}$, so it is still a resolution of $Y$).  
\end{proof}
\begin{pg}\label{2d-1functoriality}
Now let $X$ and $Y$ be separated finite type $k$-schemes of dimension $d$, and $f: X \rightarrow Y$ a proper morphism.  We assume $f$ is surjective; if $f$ is not surjective, we simply define $f^*: M_c^{2d-1}(Y) \rightarrow M_c^{2d-1}(X)$ to be the zero map.  Note that since $\mathrm{dim}(X) = \mathrm{dim}(Y)$, we have that $f$ is generically finite flat.  Choose compactifications $k: X \hookrightarrow \overline{X}$, $j: Y \hookrightarrow \overline{Y}$, and a map $\overline{f}: \overline{X} \rightarrow \overline{Y}$ such that the diagram
\begin{equation*}
\xymatrix{
X \ar@{^{(}->}[r]^{k} \ar[d]^{f} & \overline{X} \ar[d]^{\overline{f}} \\
Y \ar@{^{(}->}[r]^{j} & \overline{Y}
}
\end{equation*}
is cartesian (the diagram is cartesian because $f$ is proper; see Lemma \ref{cartesianstuff}).  Then using Proposition \ref{resolvemap}, choose resolutions $\overline{\pi}: \overline{\mathfrak{X}} \rightarrow \overline{X}$ and $\overline{\sigma}: \overline{\mathcal{Y}} \rightarrow \overline{Y}$ and a representable map $\overline{f}': \overline{\mathfrak{X}} \rightarrow \overline{\mathcal{Y}}$ lying over $\overline{f}$.  If we let $\mathfrak{X} = \overline{\mathfrak{X}} \times_{\overline{X}} X$, $\mathcal{Y} = \overline{\mathcal{Y}} \times_{\overline{Y}} Y$, $\mathcal{C} = \overline{\mathfrak{X}} - \mathfrak{X}$ and $\mathcal{D} = \overline{\mathcal{Y}} - \mathcal{Y}$, we then have a diagram
\begin{equation*}
\xymatrix{
\mathfrak{X} \ar@{^{(}->}[r]^{k'} \ar[d]^{f'} & \overline{\mathfrak{X}} \ar[d]^{\overline{f}'} & \mathcal{C} \ar@{_{(}->}[l]_{} \ar[d] \\
\mathcal{Y} \ar@{^{(}->}[r]^{j'} & \overline{\mathcal{Y}} & \mathcal{D}. \ar@{_{(}->}[l]}
\end{equation*}
Let $V \subset Y$ be an open subset of $Y$ such that $\overline{\sigma}^{-1}(V) \rightarrow V$ is purely inseparable, and define $\mathcal{V} := \overline{\sigma}^{-1}(V)$.  By possibly shrinking $V$, we can arrange that $$\mathfrak{X} \times_{\mathcal{Y}} \mathcal{V} \longrightarrow X \times_Y V$$ is purely inseparable, since $\mathfrak{X} \rightarrow X$ is purely inseparable on an open dense subset of $X$.  If we set $\mathcal{U} = \mathfrak{X} \times_{\mathcal{Y}} \mathcal{V}$ and $U = X \times_Y V$, and $\mathcal{Z} = \mathfrak{X} - \mathcal{U}$, $\mathcal{W} = \mathcal{Y} - \mathcal{V}$, we then have a commuting diagram
\begin{equation*}
\xymatrix{
\mathcal{U} \ar@{^{(}->}[r]^{a'} \ar[d]^{f'} & {\mathfrak{X}} \ar[d]^{{f}'} & \mathcal{Z} \ar@{_{(}->}[l]_{} \ar[d] \\
\mathcal{V} \ar@{^{(}->}[r]^{b'} & {\mathcal{Y}} & \mathcal{W} \ar@{_{(}->}[l]}
\end{equation*}
where $a': \mathcal{U} \hookrightarrow \mathfrak{X}$ and $b': \mathcal{V} \hookrightarrow \mathcal{Y}$ are open immersions $\mathcal{Z} \hookrightarrow \mathfrak{X}$ and $\mathcal{W} \hookrightarrow \mathcal{Y}$ are closed immersions, and $\overline{\pi}$ and $\overline{\sigma}$ restrict to pursely inseparable maps $\mathcal{U} \rightarrow U$ and $\mathcal{V} \rightarrow V$ respectively.  Let $\overline{\mathcal{Z}}$ and $\overline{\mathcal{W}}$ be the closures of $\mathcal{Z}$ and $\mathcal{W}$ in $\overline{\mathfrak{X}}$ and $\overline{\mathcal{Y}}$, respectively.  We then can set
\begin{align*}
M_c^{2d-1}(X) &= [\mathbf{Div}^0_{\mathcal{C} \cup \overline{\mathcal{Z}}/\overline{Z}}(\overline{\mathfrak{X}}) \rightarrow \mathbf{Pic}^{0,red}_{\overline{\mathfrak{X}}}]^\vee \ \ \mathrm{and} \\
M_c^{2d-1}(Y) &= [\mathbf{Div}^0_{\mathcal{D} \cup \overline{\mathcal{W}}/\overline{W}}(\overline{\mathcal{Y}}) \rightarrow \mathbf{Pic}^{0,red}_{\overline{\mathcal{Y}}}]^\vee.
\end{align*}
(Note that we have yet to show that $M_c^{2d-1}(X)$ and $M_c^{2d-1}(Y)$ are independent of choice of compactification.)  In order to define a morphism of 1-motives $\hat{f}^*: M_c^{2d-1}(Y) \rightarrow M_c^{2d-1}(X)$, we would like to define a covariant morphism of 1-motives
\begin{equation}\label{map}
\hat{f}_*: [\mathrm{Div}^0_{\mathcal{C} \cup \overline{\mathcal{Z}}/\overline{Z}}(\overline{\mathfrak{X}}) \rightarrow \mathbf{Pic}^{0,red}_{\overline{\mathfrak{X}}}] \longrightarrow [\mathrm{Div}^0_{\mathcal{D} \cup \overline{\mathcal{W}}/\overline{W}}(\overline{\mathcal{Y}}) \rightarrow \mathbf{Pic}^{0,red}_{\overline{\mathcal{Y}}}].
\end{equation}
If $\mathrm{dim}(f(X)) < d$, then we simply define this to be the 0 map.
\end{pg}
\begin{pg}
In the non-trivial case when $\mathrm{dim}(f(X)) = d$, we define $\hat{f}_*$ via proper pushforwards.  Because $\overline{f}'$ is representable, there is an integrally defined pushforward on cycle classes $\overline{f}'_*: \mathrm{Div}(\overline{\mathfrak{X}}) \rightarrow \mathrm{Div}(\overline{\mathcal{Y}})$.  Since taking the associated line bundle of a cycle class commutes with proper pushforward, this restricts to a map
\begin{equation*}
\overline{f}'_*: \mathrm{Div}^0_{\mathcal{C} \cup \overline{\mathcal{Z}}/\overline{Z}}(\overline{\mathfrak{X}}) \longrightarrow \mathrm{Div}^0_{\mathcal{D} \cup \overline{\mathcal{W}}/\overline{W}}(\overline{\mathcal{Y}})
\end{equation*}
which forms the lattice part of the required map \ref{map} of 1-motives.  It remains to define a pushforward map $\overline{f}'_*: \mathbf{Pic}^{0,red}_{\overline{\mathfrak{X}}} \rightarrow \mathbf{Pic}^{0,red}_{\overline{\mathcal{Y}}}$.  Since these are smooth group schemes, to define such a map it suffices to define a map $$\mathbf{Pic}^{0,red}_{\overline{\mathfrak{X}}}(A) \rightarrow \mathbf{Pic}^{0,red}_{\overline{\mathcal{Y}}}(A)$$ functorially in $A$, for \emph{smooth} $A$ (by the Yoneda lemma).  On the level of presheaves, this is just a map $$\mathrm{Pic}^0(\overline{\mathfrak{X}} \times A) \rightarrow \mathrm{Pic}^0(\overline{\mathcal{Y}} \times A)$$ which is functorial in $A$.  By Proposition \ref{cartpic}, every element of $\mathrm{Pic}^0(\overline{\mathfrak{X}} \times A)$ is represented by a divisor (since $\mathrm{Pic}^0(\overline{\mathfrak{X}} \times A)/\mathrm{Div}^0(\overline{\mathfrak{X}} \times A)$ is both finite and divisible).  Therefore we may use proper pushforward of divisors to define the desired map $\mathrm{Pic}^0(\overline{\mathfrak{X}} \times A) \rightarrow \mathrm{Pic}^0(\overline{\mathcal{Y}} \times A)$.  Thus we have defined the required map (\ref{map}).
\end{pg}
\begin{pg}
Let $\hat{f}^*: M_c^{2d-1}(Y) \rightarrow M_c^{2d-1}(X)$ be the morphism of 1-motives obtained by taking the Cartier dual of the map $\hat{f}_*$ of \ref{map}.  We wish to show that $V_\ell \hat{f}^*: V_\ell M_c^{2d-1}(Y) \rightarrow V_\ell M_c^{2d-1}(X)$ agrees with the pullback map on cohomology $f^*: H_c^{2d-1}(Y,\mathbb{Q}_\ell(d)) \rightarrow H_c^{2d-1}(X,\mathbb{Q}_\ell(d))$.  More precisely, we claim the following:
\end{pg}
\begin{prop}\label{prop716}
There is a commutative diagram
\begin{equation*}
\begin{CD}
V_\ell M_c^{2d-1}(Y) @> \alpha_Y >> H_c^{2d-1}(Y,\mathbb{Q}_\ell(d)) \\
@V {V_\ell \hat{f}^*} VV @V f^* VV \\
V_\ell M_c^{2d-1}(X) @> \alpha_X >> H_c^{2d-1}(X,\mathbb{Q}_\ell(d))
\end{CD}
\end{equation*}
where $\alpha_X$ and $\alpha_Y$ are the comparison isomorphisms of Proposition \ref{comparison2d-1} and $f^*$, $\hat{f}^*$ are as defined above.
\end{prop}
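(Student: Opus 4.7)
The plan is to unwind $\hat{f}^{*} = (\hat{f}_{*})^{\vee}$ via the Cartier--Weil pairing of Section \ref{cartierduals} and reduce the compatibility to one for $V_{\ell} \hat{f}_{*}$ on the dimension-one side. By functoriality of the perfect pairing $V_{\ell} M \otimes V_{\ell} M^{\vee} \to \mathbb{Q}_{\ell}(1)$ in $M$, together with Poincaré duality on smooth compactifications as used in Paragraph \ref{setuptocomp}, showing $V_{\ell} \hat{f}^{*} = f^{*}$ is equivalent to showing that $V_{\ell} \hat{f}_{*}$ realizes the Poincaré dual of $f^{*}$. Using Proposition \ref{ratcohthesame} to replace $X, Y$ rationally by the smooth stacks $\mathfrak{X}, \mathcal{Y}$, this Poincaré dual is the proper pushforward $(\overline{f}')_{*}: H^{1}(\overline{\mathfrak{X}}, \mathbb{Q}_{\ell}(1)) \to H^{1}(\overline{\mathcal{Y}}, \mathbb{Q}_{\ell}(1))$, extended to the open pieces $\mathcal{U}, \mathcal{V}$ in the obvious way.

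After this reduction, it suffices to check commutativity of
\begin{equation*}
\begin{CD}
V_{\ell} M^{1}(\mathcal{U}) @> \sim >> H^{1}(\mathcal{U}, \mathbb{Q}_{\ell}(1)) \\
@V {V_{\ell} \hat{f}_{*}} VV @V h VV \\
V_{\ell} M^{1}(\mathcal{V}) @> \sim >> H^{1}(\mathcal{V}, \mathbb{Q}_{\ell}(1))
\end{CD}
\end{equation*}
where the horizontal isomorphisms are those of Proposition \ref{stackdim1} and $h$ is the cohomological map induced by $\overline{f}'$. Using the defining exact sequence $0 \to \mathbf{Pic}^{0,red}_{\overline{\mathfrak{X}}} \to M^{1}(\mathcal{U}) \to \mathbf{Div}^{0}_{\mathcal{C} \cup \overline{\mathcal{Z}}}(\overline{\mathfrak{X}}) \to 0$ of 1-motives, together with the corresponding Kummer--Gysin filtration on $H^{1}(\mathcal{U}, \mathbb{Q}_{\ell}(1))$ that appears in the proof of Proposition \ref{stackdim1}, the square decomposes into two pieces. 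The lattice piece is the standard compatibility of the cycle class map of Proposition \ref{divisors} with proper pushforward of divisors, which is immediate.

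The main obstacle is the Picard piece. The map $V_{\ell} \mathbf{Pic}^{0,red}_{\overline{\mathfrak{X}}} \to V_{\ell} \mathbf{Pic}^{0,red}_{\overline{\mathcal{Y}}}$ is defined in Paragraph \ref{2d-1functoriality} by pushing forward the divisor representatives of line bundles furnished by Corollary \ref{pic0}, rather than by a direct sheaf-theoretic operation, and one must prove that its $\ell$-adic realization agrees with the map on $H^{1}(-, \mathbb{Z}_{\ell}(1))$ induced by the representable morphism $\overline{f}'$. Since $\overline{f}'$ is representable and generically finite flat, the projection formula in $\mathbb{G}_{m}$-cohomology provides a proper pushforward $(\overline{f}')_{*}: H^{1}(\overline{\mathfrak{X}}, \mathbb{G}_{m}) \to H^{1}(\overline{\mathcal{Y}}, \mathbb{G}_{m})$ which on divisor classes is the cycle-theoretic pushforward; composing with the Kummer sequence and invoking Proposition \ref{divisors} matches this with the $\ell$-adic proper pushforward on $H^{1}(-, \mathbb{Z}_{\ell}(1))$. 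Chasing the resulting commutative square and taking Cartier duals then yields the desired compatibility.
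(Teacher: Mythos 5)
Your proposal is essentially correct and follows the same overall strategy as the paper: reduce, via Cartier duality and the comparison of Paragraph \ref{setuptocomp}, to showing that $V_\ell \hat{f}_*$ on $M^1(\mathcal{U}) \to M^1(\mathcal{V})$ matches the Poincar\'e-dual pushforward $f_*$ on $H^1$, and then identify the key step as the compatibility between the norm/trace map on $\mathbb{G}_m$-cohomology and the trace map on torsion coefficients via the Kummer sequence. What you do differently is to split the check into a lattice piece (cycle classes) and a semiabelian piece; the paper instead proves the whole square at once by a case analysis: first the degenerate case $\dim f(\mathcal{U}) < \dim f(\mathcal{V})$ where both maps are zero, then the finite flat case reduced \'etale-locally to schemes, then the generically finite flat case handled by restricting to open substacks where $f$ is finite flat and using injectivity into the restricted cohomology groups. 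Your version is cleaner conceptually but compresses exactly the step that the paper spells out: for merely generically finite flat $\overline{f}'$ there is no ``projection formula'' pushforward defined on all of $H^1(\overline{\mathfrak{X}},\mathbb{G}_m)$; one must first restrict to the locus where $f$ is finite flat with small complement, apply the norm map there, and then argue that the relevant maps factor through the restricted groups (cf.\ Lemma \ref{hardlemma} and the embedding arguments in the paper's proof of Proposition \ref{compatible3}). If you make that restriction step explicit, the two proofs coincide.
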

\begin{proof}
We continue with the notation of \ref{2d-1functoriality}.  If we define
\begin{equation*}
M^1(\mathcal{U}) := [\mathbf{Div}^0_{\mathcal{C} \cup \overline{\mathcal{Z}}} \rightarrow \mathbf{Pic}^0(\overline{\mathfrak{X}})]
\end{equation*}
and
\begin{equation*}
M^1(\mathcal{V}) := [\mathbf{Div}^0_{\mathcal{D} \cup \overline{\mathcal{W}}} \rightarrow \mathbf{Pic}^0(\overline{\mathcal{Y}})].
\end{equation*}
then by Proposition \ref{stackdim1} we have natural isomorphisms $V_\ell M^1(\mathcal{U}) \cong H^1(\mathcal{U},\mathbb{Q}_\ell(1))$ and $V_\ell M^1(\mathcal{V}) \cong H^1(\mathcal{V},\mathbb{Q}_\ell(1))$.  It is clear that we can define a map
\begin{equation*}
\hat{f}_*: M^1(\mathcal{U}) \rightarrow M^1(\mathcal{V})
\end{equation*}
by proper pushforward of divisors in the same way as we defined $\hat{f}^*: M_c^{2d-1}(Y) \rightarrow M_c^{2d-1}(Y)$.  Since we have commutative diagrams
\begin{equation*}
\xymatrix{
V_\ell(M_c^{2d-1}(X)^\vee) \ar@{^{(}->}[r] \ar[d]^{\alpha_X} & V_\ell(M^1(\mathcal{U})) \ar[d]^{\sim} \\
H_c^{2d-1}(X,\mathbb{Q}_\ell(d-1))^\vee \ar@{^{(}->}[r] & H^1(\mathcal{U},\mathbb{Q}_\ell(1))
} \ \ \mathrm{and} \ \ 
\xymatrix{
V_\ell(M_c^{2d-1}(Y)^\vee) \ar@{^{(}->}[r] \ar[d]^{\alpha_Y} & V_\ell(M^1(\mathcal{V})) \ar[d]^{\sim} \\
H_c^{2d-1}(Y,\mathbb{Q}_\ell(d-1))^\vee \ar@{^{(}->}[r] & H^1(\mathcal{V},\mathbb{Q}_\ell(1))
}
\end{equation*}
(see \ref{setuptocomp}), to show that we have a commuting diagram as in Proposition \ref{prop716} it suffices to show the following:

\begin{prop}\label{compatible3}
There is a commutative diagram
\begin{equation*}
\begin{CD}
V_\ell M^1(\mathcal{U}) @> \alpha_U >> H^1(\mathcal{U},\mathbb{Q}_\ell(1)) \\
@V {V_\ell \hat{f}_*} VV  @V f_* VV \\
V_\ell M^1(\mathcal{V}) @> \alpha_V >> H^1(\mathcal{V},\mathbb{Q}_\ell(1))
\end{CD}
\end{equation*}
where $\alpha_U$ and $\alpha_V$ are the comparison isomorphisms of Proposition \ref{stackdim1}, and $f_*$ is the pushforward map on cohomology, i.e., the Poincar\'{e} dual to the map $$f^*: H_c^{2d-1}(\mathcal{V},\mathbb{Q}_\ell(d-1)) \rightarrow H_c^{2d-1}(\mathcal{U},\mathbb{Q}_\ell(d-1)).$$
\end{prop}
\begin{proof}

 First consider the case where $\mathrm{dim}f(\mathcal{U}) < \mathrm{dim}f(\mathcal{V})$.  Then the proper pushforward on cohomology is clearly 0 since the map on $f^*$ on $H_c^{2d-1}$ must be 0.  On the other hand, the proper pushforward map $V_\ell \hat{f}_*: V_\ell M^1(\mathcal{U}) \rightarrow V_\ell M^1(\mathcal{V})$ is also 0 by definition.

Now assume $f: \mathcal{U} \rightarrow \mathcal{V}$ is finite and flat.  Then the pushforward map on cohomology is induced by a trace map
\begin{equation*}
tr_f: f_* f^*\mu_n \rightarrow \mu_n
\end{equation*}
\cite[Thm 4.1]{fujiwara}.  This trace map is shown in loc. cit. to be compatible with \'{e}tale localization, and to agree with the usual trace map in the case when $\mathcal{U}$ and $\mathcal{V}$ are schemes.  To check Proposition \ref{compatible3} we may work \'{e}tale-locally on $\mathcal{V}$ and hence may assume $\mathcal{V}$ is a scheme; since $f$ is representable, $\mathcal{U}$ is a scheme as well.  In this case, the proposition follows because the trace map agrees with the norm map on invertible sections \cite[p. 136]{freitag}, and it is clear that the norm map on invertible sections induces the proper pushforward on divisors.

In the general case $f$ is generically finite flat (since it is proper and representable and $\mathrm{dim}(\mathcal{U}) = \mathrm{dim}(\mathcal{V})$, $\mathrm{dim}f(\mathcal{U}) = \mathrm{dim}f(\mathcal{V})$).  Let $\mathcal{V}' \subset \mathcal{V}$, $\mathcal{U}':= \mathcal{U} \times_{\mathcal{V}} \mathcal{V}' \subset \mathcal{U}$ be open substacks such that $f: \mathcal{U}' \rightarrow \mathcal{V}'$ is finite flat.  Then we have commutative diagrams
\begin{equation*}
\xymatrix{
H^1(\mathcal{U},\mathbb{Q}_\ell(1)) \ar@{^{(}->}[r] \ar[d]^{f_*} & H^1(\mathcal{U}',\mathbb{Q}_\ell(1)) \ar[d]^{f_*} \\
H^1(\mathcal{V},\mathbb{Q}_\ell(1)) \ar@{^{(}->}[r] & H^1(\mathcal{V}',\mathbb{Q}_\ell(1)),} \ \ \ \ \ \ \ \ \ \ 
\xymatrix{
V_\ell M^1(\mathcal{U}) \ar@{^{(}->}[r] \ar[d]^{V_\ell \hat{f}_*} & V_\ell M^1(\mathcal{U}') \ar[d]^{V_\ell \hat{f}_*} \\
V_\ell M^1(\mathcal{V}) \ar@{^{(}->}[r] & V_\ell M^1(\mathcal{V}')}
\end{equation*}
where the left-hand diagram is induced from the restriction maps $\mathcal{U}' \hookrightarrow \mathcal{U}$ etc., and the right-hand diagram is essentially from the definition of the various 1-motives appearing and of the maps $\hat{f}_*$.  Therefore it suffices to prove Proposition \ref{compatible3} for the map $f: \mathcal{U}' \rightarrow \mathcal{V}'$, which is finite flat and hence has already been considered.
\end{proof}
This completes the proof of Proposition \ref{prop716}.
\end{proof}
\begin{pg}
We are finally ready to show that $M_c^{2d-1}(X)$ is independent of compactification (up to isogeny), and therefore induces a functor $$M_c^{2d-1}(-): (Sch_d/k)^{op} \rightarrow \mathscr{M}^1(k) \otimes \mathbb{Q}$$ which is functorial for proper morphisms.  

For a given $X \in Sch_d/k$, suppose that we choose two compactifications $\overline{X}$, $\overline{X}'$ of $X$ and resolutions $\overline{\mathfrak{X}} \rightarrow \overline{X}$, $\overline{\mathfrak{X}}' \rightarrow \overline{X}'$.  Then we aim to show the following:
\end{pg}
\begin{prop}\label{uniqueness7}
Let $M_c^{2d-1}(X)$ and $M_c^{2d-1}(X)'$ be the 1-motives of Definition \ref{mc2d-1} constructed using $\overline{\mathfrak{X}}$ and $\overline{\mathfrak{X}}'$ respectively.  Then there exists a unique isogeny of 1-motives $$f: M_c^{2d-1}(X) \rightarrow M_c^{2d-1}(X)'$$ fitting into a diagram
\begin{equation*}
\begin{CD}
V_\ell M_c^{2d-1}(X) @> \alpha_X >> H_c^{2d-1}(X,\mathbb{Q}_\ell(d)) \\
@V {V_\ell f} VV @V = VV \\
V_\ell M_c^{2d-1}(X)' @> {\alpha_X}' >> H_c^{2d-1}(X,\mathbb{Q}_\ell(d))
\end{CD}
\end{equation*}
for all $\ell \not= p$, where $\alpha_X$ and $\alpha_X'$ are the comparison isomorphisms of Proposition \ref{comparison2d-1}.
\end{prop}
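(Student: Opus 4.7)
The plan is to use the standard ``dominate by a third choice'' strategy combined with Proposition \ref{faithful} (faithfulness of $V_\ell$ on $\mathscr{M}^1(k)\otimes\mathbb{Q}$) and the proper functoriality established in Proposition \ref{prop716}.

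Uniqueness is immediate: if two morphisms $f_1,f_2\colon M_c^{2d-1}(X)\to M_c^{2d-1}(X)'$ both render the diagram commutative for every $\ell\neq p$, then $V_\ell(f_1-f_2)=0$ for all such $\ell$, and faithfulness forces $f_1=f_2$ in $\mathscr{M}^1(k)\otimes\mathbb{Q}$. So only existence remains.

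For existence I would proceed as follows. First, using Nagata or simply the scheme-theoretic closure of the diagonal $X\hookrightarrow\overline{X}\times\overline{X}'$, construct a third compactification $\overline{X}''$ of $X$ together with proper surjective maps $\overline{X}''\to\overline{X}$ and $\overline{X}''\to\overline{X}'$ restricting to the identity on $X$. Then I would iterate the construction of Proposition \ref{resolvemap}: first produce a resolution $\mathfrak{Y}\to\overline{X}''$ equipped with a representable map $\mathfrak{Y}\to\overline{\mathfrak{X}}$, and then, starting from $\mathfrak{Y}$ and $\overline{\mathfrak{X}}'$ and applying the same argument again, build a further resolution $\overline{\mathfrak{X}}''\to\overline{X}''$ carrying representable maps to both $\overline{\mathfrak{X}}$ and $\overline{\mathfrak{X}}'$ over the identity of $X$ (using the trick of replacing the target by a product with $BG$ to force representability, as in the proof of Proposition \ref{resolvemap}).

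With $\overline{\mathfrak{X}}''$ in hand, let $M_c^{2d-1}(X)''$ be the 1-motive built from this third choice. The representable maps over $\mathrm{id}_X$ fall within the scope of the functoriality construction of Paragraph \ref{2d-1functoriality}, producing morphisms
\[
g\colon M_c^{2d-1}(X)''\longrightarrow M_c^{2d-1}(X),\qquad g'\colon M_c^{2d-1}(X)''\longrightarrow M_c^{2d-1}(X)'
\]
in $\mathscr{M}^1(k)\otimes\mathbb{Q}$. By Proposition \ref{prop716} applied to $\mathrm{id}_X$, the realizations $V_\ell g$ and $V_\ell g'$ become the identity on $H_c^{2d-1}(X,\mathbb{Q}_\ell(d))$ via the comparison isomorphisms $\alpha_X$, $\alpha_X'$, $\alpha_X''$. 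Since $V_\ell$ reflects isomorphisms (Proposition \ref{faithful}), both $g$ and $g'$ are invertible in $\mathscr{M}^1(k)\otimes\mathbb{Q}$, and $f:=g'\circ g^{-1}$ is the desired isogeny; compatibility with every $\alpha_X$ and $\alpha_X'$ is built into the construction.

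The main obstacle is the geometric step of producing $\overline{\mathfrak{X}}''$ with simultaneously representable maps to $\overline{\mathfrak{X}}$ and $\overline{\mathfrak{X}}'$ lying over $\mathrm{id}_X$: one application of Proposition \ref{resolvemap} handles one side, but ensuring the second projection remains representable after the second application requires some bookkeeping with $BG$-factors. Once this is arranged, the rest is formal from Propositions \ref{faithful} and \ref{prop716}.
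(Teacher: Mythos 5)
Your proposal follows the same overall strategy as the paper: dominate both choices by a third compactification and resolution, use the functoriality maps $\hat{f}_i^*$ induced by the projections, and invoke the faithfulness of $V_\ell$ (Proposition \ref{faithful}) together with Proposition \ref{prop716} to conclude that these are mutually inverse isogenies. Uniqueness is handled identically.

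The one place where you and the paper diverge is the construction of $\overline{\mathfrak{X}}''$. You propose iterating Proposition \ref{resolvemap} twice, first producing a representable map to $\overline{\mathfrak{X}}$ and then a second time to get a representable map to $\overline{\mathfrak{X}}'$, acknowledging that keeping the first projection representable through the second application is a genuine subtlety. The paper sidesteps this by forming the fiber product of the two resolutions directly: writing $\overline{\mathfrak{X}} = [V/G]$ and $\overline{\mathfrak{X}}' = [V'/G']$, it sets $\mathcal{Y} = \overline{\mathfrak{X}}\times_X\overline{\mathfrak{X}}' = [V\times_X V'/G\times G']$, applies de Jong's theorem once to this global quotient to obtain $\overline{\mathfrak{X}}'' = [W/H]$, and then replaces $\overline{\mathfrak{X}}$ and $\overline{\mathfrak{X}}'$ by $\overline{\mathfrak{X}}\times BH$ and $\overline{\mathfrak{X}}'\times BH$ to make \emph{both} projections representable simultaneously. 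This is cleaner than the iterated construction you describe, since it avoids the bookkeeping issue you flag; but the formal skeleton of the two arguments is the same, and your version would go through once that geometric step is pinned down.
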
 
\begin{proof}
Let $\overline{X}'' = \overline{X} \times_X \overline{X}'$, a third compactification of $X$ which dominates $\overline{X}$ and $\overline{X}'$.  Recall that we can write $\overline{\mathfrak{X}}$ and $\overline{\mathfrak{X}}'$ as global quotient stacks, say $\overline{\mathfrak{X}} = [V/G]$ and $\overline{\mathfrak{X}}' = [V'/G']$.  Then set $$\mathcal{Y}  := \overline{\mathfrak{X}} \times_X \overline{\mathfrak{X}}' = [V \times_X V'/G \times G'].$$  Since this is a global quotient stack, an application of \cite[Thm 7.3]{dejong} gives a smooth proper stack $\overline{\mathfrak{X}}'' \rightarrow \mathcal{Y}$ which is purely inseparable on an open dense substack.  In fact, we can write $\overline{\mathfrak{X}}''$ as a global quotient stack $\overline{\mathcal{X}}'' = [W/H]$ , and then we have commutative diagrams
\begin{equation*}
\begin{CD}
\overline{\mathfrak{X}}'' @> f_1' >> \overline{\mathfrak{X}} \times BH \\
@VVV @VVV \\
\overline{X}'' @> f_1 >> \overline{X}, 
\end{CD} \ \ \ \ \ \ \ \ 
\begin{CD}
\overline{\mathfrak{X}}'' @> f_2 >> \overline{\mathfrak{X}}' \times BH \\
@VVV @VVV \\
\overline{X}'' @> f_2 >> \overline{X}'
\end{CD}
\end{equation*}
where $f_1'$ and $f_2'$ are representable, and $f_1$ and $f_2$ restrict to the identity on $X$.  Note that replacing $\overline{\mathfrak{X}}$ by $\overline{\mathfrak{X}} \times BH$ does not change the 1-motive $M_c^{2d-1}(X)$ constructed from $\overline{\mathfrak{X}}$ (similarly, replacing $\overline{\mathfrak{X}}'$ by $\overline{\mathfrak{X}}' \times BH$ leaves $M_c^{2d-1}(X)'$ unchanged).  Let $M_c^{2d-1}(X)''$ be the 1-motive of Definition \ref{mc2d-1} constructed from $\overline{\mathfrak{X}}'' \rightarrow \overline{X}''$.  Then $f_1$ and $f_2$ induce morphisms of 1-motives
\begin{equation*}
\hat{f}_1^*: M_c^{2d-1}(X) \rightarrow M_c^{2d-1}(X)''
\end{equation*}
and 
\begin{equation*}
\hat{f}_2^*: M_c^{2d-1}(X)' \rightarrow M_c^{2d-1}(X)''
\end{equation*}
which induce the identity on $H_c^{2d-1}(X,\mathbb{Q}_\ell)$ when one applies the functor $V_\ell(-)$.  Therefore, $\hat{f}_1^*$ and $\hat{f}_2^*$ are isogenies of 1-motives by Proposition \ref{faithful}, and 
\begin{equation*}
(\hat{f}_2^*)^{-1} \circ \hat{f}_1^*: M_c^{2d-1}(X) \rightarrow M_c^{2d-1}(X)'
\end{equation*}
is an isogeny of 1-motives fitting into the commuting diagram of Proposition \ref{uniqueness7}.  It is clear that this isomorphism is uniquely defined since $V_\ell$ is a faithful functor.
\end{proof}
Putting the results in this section together, we obtain the following:
\begin{thm}\label{cpctcodim1}
Let $k$ be an algebraically closed field, and let $Sch_d/k$ be the category of $d$-dimensional separated finite type $k$-schemes.  Then there exists a functor
\begin{equation*}
M_c^{2d-1}(-): (Sch_d/k)^{op} \rightarrow \mathscr{M}^1(k) \otimes \mathbb{Q},
\end{equation*}
functorial for proper morphisms and unique up to canonical isomorphism, such that we have
\begin{equation*}
V_\ell M_c^{2d-1}(X) \cong H_c^{2d-1}(X,\mathbb{Q}_\ell(d))
\end{equation*}
for all $\ell \not= p$.
\end{thm}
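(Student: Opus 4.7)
The plan is to assemble Theorem \ref{cpctcodim1} by combining the object-level construction of Definition \ref{mc2d-1}, the comparison isomorphism of Proposition \ref{comparison2d-1}, the morphism-level construction of paragraph \ref{2d-1functoriality}, the compatibility of Proposition \ref{prop716}, and the independence result of Proposition \ref{uniqueness7}. Concretely, for each $X \in Sch_d/k$ I would pick once and for all a compactification $X \hookrightarrow \overline{X}$ together with a resolution $\overline{\pi}\colon \overline{\mathfrak{X}} \to \overline{X}$ in the sense of Definition \ref{resdefn}, and declare $M_c^{2d-1}(X)$ to be the 1-motive of Definition \ref{mc2d-1} built from this data. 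The required isomorphism $V_\ell M_c^{2d-1}(X) \cong H_c^{2d-1}(X,\mathbb{Q}_\ell(d))$ is then immediate from Proposition \ref{comparison2d-1}, and Proposition \ref{uniqueness7} upgrades this to a \emph{canonical} isomorphism class in $\mathscr{M}^1(k) \otimes \mathbb{Q}$, independent of the auxiliary choices.

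For a proper morphism $f\colon X \to Y$ in $Sch_d/k$, I would define $f^*\colon M_c^{2d-1}(Y) \to M_c^{2d-1}(X)$ as follows. If $\dim f(X) < d$, set $f^* := 0$; otherwise, use Lemma \ref{cartesianstuff} to arrange that the chosen compactifications form a cartesian square over $f$, invoke Proposition \ref{resolvemap} to produce a representable morphism $\overline{f}'\colon \overline{\mathfrak{X}} \to \overline{\mathcal{Y}}$ of resolutions lying over $\overline{f}$, and then define the covariant morphism $\hat{f}_*$ of \eqref{map} by proper pushforward of divisors (which makes sense since $\overline{f}'$ is representable) together with pushforward on $\mathbf{Pic}^{0,red}$, the latter being constructed via Weil divisors as in paragraph \ref{2d-1functoriality} using Corollary \ref{pic0}. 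Set $f^* := (\hat{f}_*)^\vee$. Proposition \ref{prop716} shows that $V_\ell(f^*)$ equals the topological pullback $f^*\colon H_c^{2d-1}(Y,\mathbb{Q}_\ell(d)) \to H_c^{2d-1}(X,\mathbb{Q}_\ell(d))$.

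It remains to verify the functoriality identities $(\mathrm{id}_X)^* = \mathrm{id}$ and $(g \circ f)^* = f^* \circ g^*$, and to check that $f^*$ is independent of the choices of compactifications, resolutions, and the representable lift $\overline{f}'$. The strategy is to reduce every such verification to a statement about $\ell$-adic realizations: given two competing morphisms $M_c^{2d-1}(Y) \to M_c^{2d-1}(X)$ arising from different choices, Proposition \ref{prop716} shows both become the same map on $V_\ell$ for all $\ell \neq p$, hence they are equal in $\mathscr{M}^1(k) \otimes \mathbb{Q}$ by the faithfulness of $V_\ell$ from Proposition \ref{faithful}. Concretely, to compare two systems of choices one dominates both by a third system (fibered products of compactifications together with de Jong's theorem plus the trick of replacing $\overline{\mathfrak{X}}$ by $\overline{\mathfrak{X}} \times BH$ to enforce representability, as in the proof of Proposition \ref{uniqueness7}), producing a diamond of 1-motive morphisms all lying over the identity on cohomology. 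The main obstacle I anticipate is precisely this bookkeeping step: producing a common resolution that dominates both of two given composable systems of choices, while preserving representability of the transition maps. The escape hatch, exactly as in Proposition \ref{uniqueness7}, is that one never needs to pin down a preferred such common resolution; the faithfulness in Proposition \ref{faithful} guarantees that any choice yields the same canonical isogeny, and therefore the identities $(g \circ f)^* = f^* \circ g^*$ and $(\mathrm{id})^* = \mathrm{id}$ follow formally once the corresponding identities have been established on compactly supported $\ell$-adic cohomology.
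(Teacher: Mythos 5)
Your proposal assembles exactly the same ingredients the paper uses — the object-level construction from Definition \ref{mc2d-1}, the comparison isomorphism of Proposition \ref{comparison2d-1}, the morphism-level pushforward construction of \ref{2d-1functoriality}, the realization-compatibility of Proposition \ref{prop716}, and the well-definedness from Proposition \ref{uniqueness7}, with Proposition \ref{faithful} supplying the faithfulness of $V_\ell$ needed to verify functoriality identities and independence of choices. This is precisely the paper's argument, made slightly more explicit; the only small inaccuracy is that the pushforward on $\mathbf{Pic}^{0,red}$ rests on Proposition \ref{cartpic} applied to the non-proper stack $\overline{\mathfrak{X}} \times A$ rather than on Corollary \ref{pic0}, which is stated only for proper stacks, but this does not affect the argument.
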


\section{Construction of $M^{2d-1}(X)$}

In this section we construct the 1-motive $M^{2d-1}(X)$ associated to a separated finite type $k$-scheme.  As in section 7, we make the following assumption:
\begin{assumption}
Let the base field $k$ be algebraically closed.
\end{assumption}
\begin{pg}\label{setupnoncompact}
We start with the same setup as in (\ref{setup}): choose a compactification $X \hookrightarrow \overline{X}$, and a resolution $\overline{\pi}: \overline{\mathfrak{X}} \rightarrow \overline{X}$, and commutative diagrams
\begin{equation*}
\xymatrix{
{\mathfrak{X}} \ar@{^{(}->}[r]^{\alpha'} \ar[d]^{\pi} & \overline{\mathfrak{X}} \ar[d]^{\overline{\pi}} & \mathcal{C} \ar@{_{(}->}[l]_{\beta'} \ar[d]^{\pi_C} \\
X \ar@{^{(}->}[r]^{\alpha} & \overline{X} & C \ar@{_{(}->}[l]_{\beta}}
\end{equation*}
and
\begin{equation*}
\xymatrix{
\mathcal{U} \ar@{^{(}->}[r]^{j'} \ar[d]^{\pi_U} & \mathfrak{X} \ar[d]^{\pi} & \mathcal{Z} \ar@{_{(}->}[l]_{i'} \ar[d]^{\pi_Z} \\
U \ar@{^{(}->}[r]^{j} & X & Z \ar@{_{(}->}[l]_{i}}
\end{equation*}
where $\mathcal{U} \rightarrow U$ is purely inseparable.  Let $\overline{Z}$ (resp. $\overline{\mathcal{Z}}$) be the closure of $Z$ in $\overline{X}$ (resp. of $\mathcal{Z}$ in $\overline{\mathfrak{X}}$).  
\end{pg}
\begin{pg}
Consider the relative Picard group of the pair $(\overline{\mathfrak{X}},\mathcal{C})$, defined by the formula
\begin{equation*}
\mathrm{Pic}(\overline{\mathfrak{X}},\mathcal{C}) = H^1(\overline{\mathfrak{X}},\mathrm{Ker}(\mathbb{G}_{m,\overline{\mathfrak{X}}} \rightarrow \beta'_*\mathbb{G}_{m,\mathcal{C}})).
\end{equation*}
The elements of $\mathrm{Pic}(\overline{\mathfrak{X}},\mathcal{C})$ are pairs $(\mathscr{L},\varphi)$, where $\mathscr{L}$ is a line bundle on $\overline{\mathfrak{X}}$ and $\varphi: \mathcal{O}_\mathcal{C} \stackrel{\sim}{\longrightarrow} \mathscr{L}\vert_{\mathcal{C}}$ is an isomorphism.  By Proposition \ref{stackrelpic}, the associated group scheme $\mathbf{Pic}_{\overline{\mathfrak{X}},\mathcal{C}}$ is representable, and we have an exact sequence
\begin{equation*}
0 \rightarrow \mathbf{Pic}_{\overline{\mathfrak{X}},\mathcal{C}}^{0,red} \rightarrow \mathbf{Pic}_{\overline{\mathfrak{X}},\mathcal{C}}^{red} \rightarrow \mathbf{NS}_{\overline{\mathfrak{X}},\mathcal{C}} \rightarrow 0
\end{equation*}
where $\mathbf{Pic}_{\overline{\mathfrak{X}},\mathcal{C}}^{0,red}$ is a semi-abelian variety and $\mathbf{NS}_{\overline{\mathfrak{X}},\mathcal{C}}$ is a finitely generated \'{e}tale-locally constant group scheme.
\end{pg}
\begin{pg}
Now consider the group $\mathrm{Div}_{\mathcal{Z}}(\overline{\mathfrak{X}})$ of divisors on $\overline{\mathfrak{X}}$ supported on $\mathcal{Z}$.  This is not the same as $\mathrm{Div}_{\overline{\mathcal{Z}}}(\overline{\mathfrak{X}})$; if $\mathcal{Z}$ has no proper irreducible components, then $\mathrm{Div}_{\mathcal{Z}}(\overline{\mathfrak{X}})$ is trivial.  Any divisor $D \in \mathrm{Div}_{\mathcal{Z}}(\overline{\mathfrak{X}})$ is disjoint from $\mathcal{C}$; therefore there is a cycle class map
\begin{equation*}
cl: \mathrm{Div}_{\mathcal{Z}}(\overline{\mathfrak{X}}) \rightarrow \mathrm{Pic}(\overline{\mathfrak{X}},\mathcal{C}),
\end{equation*}
sending $D$ to $(\mathcal{O}(D),s\vert_{\mathcal{C}}: \mathcal{O}_C \rightarrow \mathcal{O}(D)\vert_{\mathcal{C}})$ where $s: \mathcal{O}_X \rightarrow \mathcal{O}(D)$ is the meromorphic section associated to $D$.  We define a subgroup $\mathrm{Div}_{\mathcal{Z}/Z}^0(\overline{\mathfrak{X}})$ consisting of the divisors $D \in \mathrm{Div}_{\mathcal{Z}}(\overline{\mathfrak{X}})$ satisfying the following two conditions:
\begin{enumerate}
\item $cl(D) = 0$ in $NS(\overline{\mathfrak{X}},\mathcal{C})$, and
\item $\overline{\pi}_*(D) = 0$ under the proper pushforward map $\overline{\pi}_*: \mathrm{Div}_{\mathcal{Z}}(\overline{\mathfrak{X}}) \rightarrow \mathrm{Div}_Z(\overline{X})$.
\end{enumerate}
Furthermore, we can define an \'{e}tale-locally constant group scheme $\mathbf{Div}_{\mathcal{Z}/Z}^0(\overline{\mathfrak{X}})$ whose $k$-points equal $\mathrm{Div}_{\mathcal{Z}/Z}^0(\overline{\mathfrak{X}})$, and the above map $cl$ extends to a map
\begin{equation*}
cl: \mathbf{Div}_{\mathcal{Z}/Z}^0(\overline{\mathfrak{X}}) \rightarrow \mathbf{Pic}_{\overline{\mathfrak{X}},\mathcal{C}}^{0,red}.
\end{equation*}
\end{pg}
\begin{defn}
Let $X$ be a separated scheme of finite type, and choose a compactification $X \hookrightarrow \overline{X}$ and resolution $\overline{\pi}: \overline{\mathfrak{X}} \rightarrow \overline{X}$ as above.  Then we define
\begin{equation*}
M^{2d-1}(X) := [\mathbf{Div}_{\mathcal{Z}/Z}^0(\overline{\mathfrak{X}}) \rightarrow \mathbf{Pic}_{\overline{\mathfrak{X}},\mathcal{C}}^{0,red}]^\vee,
\end{equation*}
where the superscript $^\vee$ indicates taking the Cartier dual of a 1-motive.
\end{defn}
Of course, we have yet to show that $M^{2d-1}(X)$ is functorial and independent of choice of compactification.  First we discuss the $\ell$-adic realization of $M^{2d-1}(X)$.  
\subsection{$\ell$-adic realization of $M^{2d-1}(X)$.} 
Continuing with the notation of (\ref{setupnoncompact}), we have a commuting diagram in $D_c^b(X,\mathbb{Q}_\ell)$ with exact rows and columns
\begin{equation*}
\begin{CD}
j_!\mathbb{Q}_{\ell,U} @> \sim >> R\pi_*j'_!\mathbb{Q}_{\ell,\mathcal{U}} @>>> 0 @>>> \\
@VVV @VVV @VVV \\
\mathbb{Q}_{\ell,X} @>>> R\pi_*\mathbb{Q}_{\ell,\mathfrak{X}} @>>> A @>>> \\
@VVV @VVV @V \sim VV \\
i_*\mathbb{Q}_{\ell,Z} @>>> R\pi_*i'_*\mathbb{Q}_{\ell,\mathcal{Z}} @>>> i_*i^*A @>>> \\
@VVV @VVV @VVV
\end{CD}
\end{equation*}
where $A$ is simply defined to be $\mathrm{cone}(\mathbb{Q}_{\ell,X} \rightarrow R\pi_*\mathbb{Q}_{\ell,\mathfrak{X}}$).  Taking global sections of the two left-hand vertical columns, we get a diagram with exact rows \\
\begin{equation}\label{maindiagram1}
\begin{CD}
H^{2d-2}(X,\mathbb{Q}_\ell) @>>> H^{2d-2}(Z,\mathbb{Q}_\ell) @>>> H^{2d-1}(X,j_!\mathbb{Q}_\ell) @>>> H^{2d-1}(X,\mathbb{Q}_\ell) @>>> 0 \\
@VVV @VVV @VV \sim V @VVV \\
H^{2d-2}(\mathfrak{X},\mathbb{Q}_\ell) @>>> H^{2d-2}(\mathcal{Z},\mathbb{Q}_\ell) @>>> H^{2d-1}(\mathfrak{X},j'_!\mathbb{Q}_\ell) @>>> H^{2d-1}(\mathfrak{X},\mathbb{Q}_\ell) @>>> 0.
\end{CD}
\end{equation}
We will apply Poincar\'{e} duality to the terms in this diagram to interpret them in terms of divisors and cycle maps.  We start with some preliminary lemmas:
\begin{lem}\label{lem87}
Poincar\'{e} duality induces an isomorphism
\begin{equation*}
H^{2d-2}(Z,\mathbb{Q}_\ell(d-1))^\vee \stackrel{\sim}{\longrightarrow} \mathrm{Div}_{Z}(\overline{X}) \otimes_\mathbb{Z} \mathbb{Q}_\ell,
\end{equation*}
where $\mathrm{Div}_Z(\overline{X})$ is (as usual) the group of Weil divisors on $\overline{X}$ supported on $Z$ (note that $Z$ is not closed in $\overline{X}$).  Similarly, we have 
\begin{equation*}
H^{2d-2}(\mathcal{Z},\mathbb{Q}_\ell(d-1))^\vee \cong \mathrm{Div}_{\mathcal{Z}}(\overline{\mathfrak{X}}) \otimes_\mathbb{Z} \mathbb{Q}_\ell.
\end{equation*}
Finally, let
\begin{equation*}
(\pi^*)^\vee: H^{2d-2}(\mathcal{Z},\mathbb{Q}_\ell(d-1))^\vee \rightarrow H^{2d-2}(Z,\mathbb{Q}_\ell(d-1))^\vee
\end{equation*}
be the map induced by applying Poincar\'{e} duality to the map $\pi^*: H^{2d-2}(Z,\mathbb{Q}_\ell) \rightarrow H^{2d-2}(\mathcal{Z},\mathbb{Q}_\ell)$.  Then under the above isomorphisms, $(\pi^*)^\vee$ corresponds to the proper pushforward map on Weil divisors
\begin{equation*}
\pi_*: \mathrm{Div}_{\mathcal{Z}}(\overline{\mathfrak{X}}) \otimes_\mathbb{Z} \mathbb{Q}_\ell \rightarrow \mathrm{Div}_Z(\overline{X}) \otimes_\mathbb{Z} \mathbb{Q}_\ell.
\end{equation*}
\end{lem}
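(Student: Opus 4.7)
The plan is to establish the two duality identifications by a d\'{e}vissage on irreducible components, reducing to Lemma \ref{tracemap}, and then to check the compatibility of $(\pi^*)^\vee$ with $\pi_*$ by tracking the trace maps through the pullback in top cohomology.

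First I would prove the scheme statement $H^{2d-2}(Z,\mathbb{Q}_\ell(d-1))^\vee \cong \mathrm{Div}_Z(\overline{X}) \otimes \mathbb{Q}_\ell$. Two observations are crucial. (1) A Weil divisor of $\overline{X}$ supported on $Z$ is a codimension-one integral closed subscheme of $\overline{X}$ contained in $Z$; since it is closed in $\overline{X}$ and contained in the open subset $X \subset \overline{X}$, such a subscheme is necessarily proper, so it is exactly a proper $(d-1)$-dimensional irreducible component of $Z$. (2) The group $H^{2d-2}(V,\mathbb{Q}_\ell(d-1))$ vanishes whenever $V$ is a variety of dimension $<d-1$, or a $(d-1)$-dimensional variety that is not proper (the latter follows from Poincar\'{e} duality on the smooth locus combined with $H_c^0 = 0$ on non-proper connected components, plus a d\'{e}vissage on the singular locus as in the proof of Lemma \ref{tracemap}). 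Using (2) and Mayer--Vietoris for the decomposition of $Z$ into its irreducible components (intersections have strictly lower dimension), I would reduce the computation to the union $Z^p$ of the proper $(d-1)$-dimensional irreducible components; then the extension of Lemma \ref{tracemap} to reducible varieties, applied componentwise, produces the desired basis of trace-class functionals, which matches the basis of $\mathrm{Div}_Z(\overline{X}) \otimes \mathbb{Q}_\ell$ given by (1).

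Next I would handle the stack version. By Lemma \ref{cms}, the coarse moduli map $p_\mathcal{Z}: \mathcal{Z} \to |\mathcal{Z}|$ induces an isomorphism $H^{2d-2}(\mathcal{Z},\mathbb{Q}_\ell(d-1)) \cong H^{2d-2}(|\mathcal{Z}|,\mathbb{Q}_\ell(d-1))$, while the underlying topological spaces of $\overline{\mathfrak{X}}$ and its coarse space agree, so that Weil divisors on $\overline{\mathfrak{X}}$ supported on $\mathcal{Z}$ correspond bijectively to Weil divisors on $|\overline{\mathfrak{X}}|$ supported on $|\mathcal{Z}|$. The scheme case just proved then gives the identification for $\mathcal{Z}$.

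Finally, to identify $(\pi^*)^\vee$ with proper pushforward: both sides are free on the basis of proper $(d-1)$-dimensional irreducible components, so the claim reduces to a componentwise computation. For a proper component $W$ of $Z$, its preimage $\pi_Z^{-1}(W) \subset \mathcal{Z}$ contains certain components $\mathcal{W}$ of $\mathcal{Z}$, each mapping generically finitely to $W$ with a well-defined degree $d_{\mathcal{W},W}$; any component of $\mathcal{Z}$ whose image has dimension $<d-1$ contributes $0$ to $\pi_Z^{-1}(W)$. Compatibility of trace with pullback (equivalently, the projection formula $\pi_{Z,*}\pi_Z^* = \deg(\pi_Z|_{\pi_Z^{-1}(W)}) \cdot \mathrm{id}$ on top cohomology) gives $\pi_Z^*[W]^\vee = \sum_\mathcal{W} d_{\mathcal{W},W} [\mathcal{W}]^\vee$, whose dual map is exactly the proper pushforward $\pi_*$ on Weil divisors. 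The main obstacle I anticipate is the bookkeeping in the d\'{e}vissage step---verifying vanishing of $H^{2d-2}$ on non-proper or lower-dimensional pieces of reducible, possibly singular $Z$ and $\mathcal{Z}$---together with a careful invocation of the projection formula in a form valid for representable proper morphisms of Deligne--Mumford stacks of equal dimension.
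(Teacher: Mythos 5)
Your proposal is correct but reaches the conclusion by a genuinely different route than the paper. The paper's proof invokes the resolution theorem (Definition \ref{resdefn}, via de Jong): it chooses a resolution $Z' \to Z$ by a smooth Deligne--Mumford stack, argues that this induces an isomorphism on $H^{2d-2}$, and then applies Poincar\'{e} duality directly on the smooth stack $Z'$ to conclude that $H^{2d-2}(Z',\mathbb{Q}_\ell(d-1))^\vee$ is free on the proper $(d-1)$-dimensional components; the compatibility of $(\pi^*)^\vee$ with pushforward is then ``reduced to the case when $Z$ and $\mathcal{Z}$ are smooth.'' Your approach instead carries out a direct d\'{e}vissage on the possibly singular, reducible $Z$: Mayer--Vietoris over irreducible components, vanishing of top-degree $H^{2d-2}$ on non-proper or lower-dimensional pieces, and stripping off the singular locus as in the proof of Lemma \ref{tracemap}. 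Both routes rest on the same underlying fact---top-degree cohomology of a $(d-1)$-dimensional variety is supported on its proper $(d-1)$-dimensional components, dually spanned by trace functionals---and both handle the stack case by passing to the coarse moduli space via Lemma \ref{cms}. What the paper's route buys is brevity: after invoking resolutions one only needs smooth Poincar\'{e} duality. What your route buys is avoiding de Jong's theorem for this particular lemma, at the cost of the bookkeeping you already flag, and in particular the need to distinguish carefully between $H$ and $H_c$ during the d\'{e}vissage (Lemma \ref{tracemap} is stated for $H_c$, so one must note that $H = H_c$ for the proper components that survive the reduction, and that $H^{2d-2}$ vanishes on everything else). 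Your identification of $(\pi^*)^\vee$ with $\pi_*$ via the projection formula on generically finite components is essentially the ``standard'' smooth-case check the paper refers to, done one component at a time. One small phrasing caveat: the reason a codimension-one integral closed subscheme $W \subset \overline{X}$ contained in $Z$ is proper is simply that $W$ is closed in the proper scheme $\overline{X}$---being contained in the open $X$ is what forces $W$ to be an irreducible component of $Z$, not what gives properness.
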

\begin{proof}
We prove the statement for $Z$; the proof for $\mathcal{Z}$ is the same after passing to the coarse moduli space of $\mathcal{Z}$.  Choose a resolution $Z' \rightarrow Z$ in the sense of Definition \ref{resdefn}; this induces an isomorphism $H^{2d-2}(Z,\mathbb{Q}_\ell(d-1))\cong H^{2d-2}(Z',\mathbb{Q}_\ell(d-1))$.  By Poincar\'{e} duality applied on the smooth stack $Z'$, we have that $H^{2d-2}(Z',\mathbb{Q}_\ell(d-1))^\vee$ is free on the proper $(d-1)$-dimensional connected components of $Z'$.  This can be identified with the set of $(d-1)$-dimensional proper irreducible components of $Z$; hence $H^{2d-2}(Z,\mathbb{Q}_\ell(d-1))^\vee \cong \mathrm{Div}_{Z}(\overline{X})$ as was to be shown.  The fact that $(\pi^*)^\vee$ corresponds to proper pushforward of divisors is then reduced to the case when $Z$ and $\mathcal{Z}$ are smooth, where it is standard.
\end{proof}
Next we give a concrete description of the Poincar\'{e} dual of the map
\begin{equation*}
H^{2d-2}(\mathfrak{X},\mathbb{Q}_\ell) \rightarrow H^{2d-2}(\mathcal{Z},\mathbb{Q}_\ell)
\end{equation*}
induced by the inclusion $\mathcal{Z} \hookrightarrow \mathfrak{X}$.  By the above lemma, this corresponds to a map
\begin{equation*}
g: \mathrm{Div}_{\mathcal{Z}}(\overline{\mathfrak{X}}) \otimes \mathbb{Q}_\ell \longrightarrow H_c^2(\mathfrak{X},\mathbb{Q}_\ell(1)).
\end{equation*}
\begin{lem}\label{thing}
The map $g$ above factors as
\begin{equation*}
\mathrm{Div}_{\mathcal{Z}}(\overline{\mathfrak{X}}) \otimes \mathbb{Q}_\ell \rightarrow NS(\overline{\mathfrak{X}},\mathcal{C}) \otimes \mathbb{Q}_\ell \hookrightarrow H_c^2(\mathfrak{X},\mathbb{Q}_\ell(1)),
\end{equation*}
where the map on the right is an injection and the map $\mathrm{Div}_{\mathcal{Z}}(\overline{\mathfrak{X}}) \rightarrow NS(\overline{\mathfrak{X}},\mathcal{C})$ is the class map defined earlier, sending a divisor $[D]$ to the class of $(\mathcal{O}(D),s: \mathcal{O}_X \rightarrow \mathcal{O}(D))$, where $s$ is the canonical meromorphic section of $D$ (restricted to $\mathcal{C}$, which is disjoint from the support of $D$).
\end{lem}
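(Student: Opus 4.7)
The plan is to identify $g$ with the compactly supported cycle class map $cl_c$ constructed in the subsection preceding Corollary~\ref{cpctfactors}, and then invoke Corollary~\ref{cpctfactors}, which is precisely the factorization we want. Note that every $D \in \mathrm{Div}_{\mathcal{Z}}(\overline{\mathfrak{X}})$ has support in $\mathcal{Z} \subset \mathfrak{X}$, hence disjoint from $\mathcal{C}$, so $D \in \mathrm{Div}_{\mathfrak{X}}(\overline{\mathfrak{X}})$ and Corollary~\ref{cpctfactors} applies without modification to the restriction of $cl_c$ to $\mathrm{Div}_{\mathcal{Z}}(\overline{\mathfrak{X}}) \otimes \mathbb{Q}_\ell$.

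To see that $g = cl_c$, I will compare with the ``second definition'' of $cl_c$ given in the subsection preceding Proposition~\ref{cpctagrees}. Let $v \colon D \hookrightarrow \overline{\mathfrak{X}}$ denote the inclusion of the support of $D$. Since $D \cap \mathcal{C} = \emptyset$, the canonical adjunction map $v_* Rv^!\mathbb{Q}_{\ell,\overline{\mathfrak{X}}}(1) \to \mathbb{Q}_{\ell,\overline{\mathfrak{X}}}(1)$ factors uniquely through $\alpha'_! \mathbb{Q}_{\ell,\mathfrak{X}}(1) = \mathrm{Ker}(\mathbb{Q}_{\ell,\overline{\mathfrak{X}}}(1) \to \beta'_* \mathbb{Q}_{\ell,\mathcal{C}}(1))$; taking global sections and evaluating at the local cycle class $cl_{loc}(D) \in H_D^2(\overline{\mathfrak{X}},\mathbb{Q}_\ell(1))$ produces $cl_c(D) \in H_c^2(\mathfrak{X},\mathbb{Q}_\ell(1))$. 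On the other hand, $g(D)$ is obtained by dualizing the restriction $i'^* \colon H^{2d-2}(\mathfrak{X},\mathbb{Q}_\ell(d-1)) \to H^{2d-2}(\mathcal{Z},\mathbb{Q}_\ell(d-1))$ under the Poincar\'{e} duality isomorphism $H^{2d-2}(\mathfrak{X},\mathbb{Q}_\ell(d-1))^\vee \cong H_c^2(\mathfrak{X},\mathbb{Q}_\ell(1))$, which is valid because $\mathfrak{X}$ is smooth of dimension $d$, combined with the identification of Lemma~\ref{lem87}. The equality $g = cl_c$ is thus a compactly supported analogue of the proposition immediately following Definition~\ref{cycleclassdefn}, and its proof is a Verdier-duality diagram chase parallel to the proof of that proposition: local cycle classes are built precisely to incarnate the component trace maps used in Lemma~\ref{lem87}, while the unique factorization through $\alpha'_! \mathbb{Q}_{\ell,\mathfrak{X}}(1)$ forced by $D \cap \mathcal{C} = \emptyset$ plays the role that Verdier duality on a proper stack plays in the analogue for $H^{2d}(\mathfrak{X})$.

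The main obstacle lies in making this diagram chase rigorous: one must check that the trace-map isomorphism of Lemma~\ref{lem87} really does pair $D = \sum a_i D_i \in \mathrm{Div}_{\mathcal{Z}}(\overline{\mathfrak{X}})$ with the functional $\sum a_i \mathrm{Tr}_{D_i}$ on $H^{2d-2}(\mathcal{Z},\mathbb{Q}_\ell(d-1))$, and that dualizing $i'^*$ under Poincar\'{e} duality reproduces the map $H_D^2(\overline{\mathfrak{X}},\mathbb{Q}_\ell(1)) \to H_c^2(\mathfrak{X},\mathbb{Q}_\ell(1))$ constructed above rather than some twisted variant. Both verifications reduce, by passing to a resolution of $\overline{\mathcal{Z}}$ in the sense of Definition~\ref{resdefn} and localizing \'{e}tale-locally on $\overline{\mathfrak{X}}$, to standard compatibilities of Poincar\'{e} duality with cycle class maps on smooth schemes. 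Once $g = cl_c$ is established, Corollary~\ref{cpctfactors} immediately yields the claimed factorization.
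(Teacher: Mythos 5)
Your proposal is correct and takes essentially the same route as the paper: both identify $g$ with the compactly supported cycle class map $cl_c$ (via its second, Verdier-duality description) and then conclude by Corollary~\ref{cpctfactors}. The paper simply asserts the identification $g = cl_c$ in one line as "$H_c^2$ of the map on complexes $i'_*Ri'^!\mathbb{Q}_{\ell,\mathfrak{X}}(1) \rightarrow \mathbb{Q}_{\ell,\mathfrak{X}}(1)$," whereas you unpack the diagram chase a bit further; both are the same argument.
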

\begin{proof}
Another way of describing $g$ is as $H_c^2$ of the map on complexes
\begin{equation*}
i_*Ri'^!\mathbb{Q}_{\ell,\mathfrak{X}}(1) \rightarrow \mathbb{Q}_{\ell,\mathfrak{X}}(1).
\end{equation*}
Therefore $g$ is precisely the cycle class map \ref{cpctcycle} for compactly supported cohomology.  The lemma then follows from Proposition \ref{cpctfactors}.
\end{proof}
\begin{pg}
At this point, we apply Poincar\'{e} duality to the diagram \ref{maindiagram1}.  Taking into account the previous two lemmas, we get \\
\footnotesize
\begin{equation}\label{dualversion}
\begin{CD}
0 @>>> H^{2d-1}(X,\mathbb{Q}_\ell(d\textrm{-1}))^\vee @>>> H^{2d-1}(X,j_!\mathbb{Q}_\ell(d\textrm{-1}))^\vee @>>> \mathrm{Div}_Z(\overline{X}) \otimes \mathbb{Q}_\ell @>>> H^{2d-2}(X,\mathbb{Q}_\ell(d\textrm{-1}))^\vee \\
@. @AAA @AA \sim A @AAA @AAA \\
0 @>>> H_c^1(\mathfrak{X},\mathbb{Q}_\ell(1)) @>>> H^{2d-1}(\mathfrak{X},j'_!\mathbb{Q}_\ell(d\textrm{-1}))^\vee @>>> \mathrm{Div}_{\mathcal{Z}}(\overline{\mathfrak{X}}) \otimes \mathbb{Q}_\ell @>>> NS(\overline{\mathfrak{X}},\mathcal{C}) \otimes \mathbb{Q}_\ell.
\end{CD}
\end{equation} 
\normalsize

We give a concrete interpretation of the group $H^{2d-1}(\mathfrak{X},j'_!\mathbb{Q}_\ell(d\textrm{-}1))^\vee$.  By Poincar\'{e} duality, this group is isomorphic to $H_c^1(\mathfrak{X},Rj'_*\mathbb{Q}_\ell(1))$.  We will show the following:
\end{pg}
\begin{prop}\label{substep}
Let $M$ be the 1-motive
\begin{equation*}
M := [\mathbf{Div}_{\mathcal{Z}}^0(\overline{\mathfrak{X}}) \rightarrow \mathbf{Pic}^0(\overline{\mathfrak{X}},\mathcal{C})].
\end{equation*}
Then there is a canonical isomorphism $V_\ell M \stackrel{\sim}{\longrightarrow}H_c^1(\mathfrak{X},Rj'_*\mathbb{Q}_\ell(1)) $.
\end{prop}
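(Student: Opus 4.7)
The plan is to follow the strategy of Propositions \ref{mdexelladic} and \ref{stackdim1}: compute $H^1_c(\mathfrak{X}, Rj'_*\mathbb{Q}_\ell(1))$ via an excision long exact sequence, identify each relevant term in terms of $\mathrm{Pic}^0(\overline{\mathfrak{X}},\mathcal{C})$ and $\mathrm{Div}_\mathcal{Z}(\overline{\mathfrak{X}})$, and match the resulting filtration with the filtration on $V_\ell M$ coming from the 1-motive structure.  The new subtlety compared to Section 6 is that $\mathcal{Z}$ is only closed in $\mathfrak{X}$ and not in $\overline{\mathfrak{X}}$, so that divisors in $\mathrm{Div}_\mathcal{Z}(\overline{\mathfrak{X}})$ correspond precisely to those $(d-1)$-dimensional irreducible components of $\overline{\mathcal{Z}}$ which do not meet $\mathcal{C}$; this is the subgroup that the boundary of the long exact sequence will ultimately pick out.

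First I would apply $\alpha'_!$, which is exact since $\alpha'$ is an open immersion, to the excision triangle
\begin{equation*}
i'_*Ri'^!\mathbb{Q}_{\ell,\mathfrak{X}}(1) \longrightarrow \mathbb{Q}_{\ell,\mathfrak{X}}(1) \longrightarrow Rj'_*\mathbb{Q}_{\ell,\mathcal{U}}(1)
\end{equation*}
on $\mathfrak{X}$ and take hypercohomology on $\overline{\mathfrak{X}}$, producing
\begin{equation*}
\cdots \to H^k_c(\mathcal{Z}, Ri'^!\mathbb{Q}_\ell(1)) \to H^k_c(\mathfrak{X}, \mathbb{Q}_\ell(1)) \to H^k_c(\mathfrak{X}, Rj'_*\mathbb{Q}_\ell(1)) \to H^{k+1}_c(\mathcal{Z}, Ri'^!\mathbb{Q}_\ell(1)) \to \cdots,
\end{equation*}
where $H^k_c(\mathcal{Z}, Ri'^!\mathbb{Q}_\ell(1)) := H^k(\overline{\mathfrak{X}}, \alpha'_!i'_*Ri'^!\mathbb{Q}_\ell(1))$.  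Next I would identify the relevant terms.  For $H^1_c(\mathfrak{X}, \mathbb{Q}_\ell(1))$: a stalk computation gives $\alpha'_!\mu_{n,\mathfrak{X}} = \mathrm{Ker}(\mu_{n,\overline{\mathfrak{X}}} \to \beta'_*\mu_{n,\mathcal{C}})$, so the Kummer sequence on $\overline{\mathfrak{X}}$ restricts to the exact sequence
\begin{equation*}
0 \to \alpha'_!\mu_{n,\mathfrak{X}} \to \mathbb{G}_{m,\overline{\mathfrak{X}},\mathcal{C}} \to \mathbb{G}_{m,\overline{\mathfrak{X}},\mathcal{C}} \to 0;
\end{equation*}
taking cohomology and using that $H^0(\overline{\mathfrak{X}}, \mathbb{G}_{m,\overline{\mathfrak{X}},\mathcal{C}})$ is a torus (hence divisible), I obtain $H^1_c(\mathfrak{X}, \mu_n) \cong \mathrm{Pic}(\overline{\mathfrak{X}}, \mathcal{C})[n]$ exactly as in Lemma \ref{lefthand}, and passing to $\ilim$ over $n = \ell^m$ followed by $\otimes \mathbb{Q}_\ell$ yields $H^1_c(\mathfrak{X}, \mathbb{Q}_\ell(1)) \cong V_\ell\mathrm{Pic}^0(\overline{\mathfrak{X}},\mathcal{C})$.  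For $H^k_c(\mathcal{Z}, Ri'^!\mathbb{Q}_\ell(1))$ with $k \le 2$: stratify $\mathcal{Z}$ by smooth locally closed substacks and apply absolute purity on each stratum, showing that $Ri'^!\mathbb{Q}_\ell(1)$ is concentrated in cohomological degrees $\ge 2$, so $H^0_c = H^1_c = 0$; the only contribution to $H^2_c$ comes from the proper $(d-1)$-dimensional components of $\mathcal{Z}$ (since a non-proper smooth variety of positive dimension has vanishing $H^0_c$), giving $H^2_c(\mathcal{Z}, Ri'^!\mathbb{Q}_\ell(1)) \cong \mathrm{Div}_\mathcal{Z}(\overline{\mathfrak{X}}) \otimes \mathbb{Q}_\ell$.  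Finally, identifying the map $H^2_c(\mathcal{Z}, Ri'^!\mathbb{Q}_\ell(1)) \to H^2_c(\mathfrak{X}, \mathbb{Q}_\ell(1))$ with the compactly-supported cycle class map $cl_c$ of Section 4.1, Corollary \ref{cpctfactors} shows that it factors as $\mathrm{Div}_\mathcal{Z}(\overline{\mathfrak{X}}) \otimes \mathbb{Q}_\ell \to NS(\overline{\mathfrak{X}}, \mathcal{C}) \otimes \mathbb{Q}_\ell \hookrightarrow H^2_c(\mathfrak{X}, \mathbb{Q}_\ell(1))$, so its kernel is precisely $\mathrm{Div}^0_\mathcal{Z}(\overline{\mathfrak{X}}) \otimes \mathbb{Q}_\ell$.

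Assembling these computations yields the short exact sequence
\begin{equation*}
0 \to V_\ell\mathrm{Pic}^0(\overline{\mathfrak{X}}, \mathcal{C}) \to H^1_c(\mathfrak{X}, Rj'_*\mathbb{Q}_\ell(1)) \to \mathrm{Div}^0_\mathcal{Z}(\overline{\mathfrak{X}}) \otimes \mathbb{Q}_\ell \to 0,
\end{equation*}
which matches the defining exact sequence for $V_\ell M$.  To promote this into a canonical isomorphism I would construct an explicit comparison map $T_{\mathbb{Z}/\ell^m} M \to H^1_c(\mathfrak{X}, Rj'_*\mu_{\ell^m})$ in the style of Paragraph \ref{welldefined}: given data $(D, (\mathscr{L}, \varphi), \eta)$ representing a class in $T_{\mathbb{Z}/\ell^m} M$, produce the associated $j'_!\mu_{\ell^m}$-torsor on $\mathfrak{X}$ by taking local isomorphisms $\mathcal{O}_\mathfrak{X} \stackrel{\sim}{\to} \mathscr{L}\vert_\mathcal{U}$ compatible with $\eta$ on $\ell^m$th tensor powers and with $\varphi$ on $\mathcal{C}$; well-definedness follows from the divisibility of $H^0(\overline{\mathfrak{X}}, \mathbb{G}_{m,\overline{\mathfrak{X}},\mathcal{C}})$.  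Compatibility of this map with the two filtrations, followed by the five-lemma, gives an isomorphism of Tate modules for each $m$, and passage to the inverse limit and $\otimes \mathbb{Q}_\ell$ produces the desired $V_\ell M \cong H^1_c(\mathfrak{X}, Rj'_*\mathbb{Q}_\ell(1))$.  The main obstacle will be the stratification/purity step: carefully verifying that $H^2_c(\mathcal{Z}, Ri'^!\mathbb{Q}_\ell(1)) \cong \mathrm{Div}_\mathcal{Z}(\overline{\mathfrak{X}}) \otimes \mathbb{Q}_\ell$ for a possibly singular, non-equidimensional $\mathcal{Z}$ inside a smooth DM stack, and in particular that non-proper $(d-1)$-dimensional components of $\mathcal{Z}$ (those whose closures meet $\mathcal{C}$) contribute zero.
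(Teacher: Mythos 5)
Your core strategy --- construct a comparison map $T_{\mathbb{Z}/\ell^m}(M)\to H^1_c(\mathfrak{X},Rj'_*\mu_{\ell^m})$ by sending a triple $(\mathscr{L},a,D)$ together with a compatible trivialization $\eta$ to the torsor of local trivializations of $\mathscr{L}$, verify well-definedness from divisibility of the torus of invertible sections vanishing on $\mathcal{C}$, match the two-step filtrations, and conclude by the five lemma --- is exactly the paper's argument, and the short exact sequence
\begin{equation*}
0\to V_\ell\mathrm{Pic}^0(\overline{\mathfrak{X}},\mathcal{C}) \to H^1_c(\mathfrak{X},Rj'_*\mathbb{Q}_\ell(1)) \to \mathrm{Div}^0_\mathcal{Z}(\overline{\mathfrak{X}})\otimes\mathbb{Q}_\ell \to 0
\end{equation*}
that you aim for is the one the paper uses. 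The genuine divergence is in how you propose to obtain it. The paper already has it in hand (it is the lower row of \ref{dualversion}), produced by Poincar\'{e}-dualizing the $j'_!$-side of diagram \ref{maindiagram1} and invoking Lemmas \ref{lem87} and \ref{thing}; you instead apply $\alpha'_!$ to the excision triangle $i'_*Ri'^!\to\mathrm{id}\to Rj'_*$ and analyze $H^k_c(\mathcal{Z},Ri'^!\mathbb{Q}_\ell(1))$ directly.

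That route can be made to work, but the justification you sketch --- ``stratify $\mathcal{Z}$ by smooth locally closed substacks and apply absolute purity on each stratum'' to show $H^k_c(\mathcal{Z},Ri'^!\mathbb{Q}_\ell(1))$ vanishes for $k\le 1$ and equals $\mathrm{Div}_\mathcal{Z}(\overline{\mathfrak{X}})\otimes\mathbb{Q}_\ell$ for $k=2$ --- has a gap. In the localization sequence attaching a closed lower stratum $c:Z_2\hookrightarrow\mathcal{Z}$ to the smooth open stratum, the complex that appears on $Z_2$ is the $*$-restriction $c^*Ri'^!\mathbb{Q}_\ell(1)$, not $Ri_{Z_2}^!\mathbb{Q}_\ell(1)$, and absolute purity says nothing directly about it; likewise the local cohomology sheaf $\mathcal{H}^2(Ri'^!\mathbb{Q}_\ell(1))$ near the singular locus of $\mathcal{Z}$ is not simply a sum of rank-one skyscrapers along codimension-one components. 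The assertion you want is precisely the Verdier dual of Lemma \ref{lem87}, and the clean argument is the one the paper gives there: dualize to $H^{2d-2}(\mathcal{Z},\mathbb{Q}_\ell(d-1))^\vee$, replace $\mathcal{Z}$ by a resolution in the sense of Definition \ref{resdefn}, and read off the answer from the proper $(d-1)$-dimensional connected components. With that substitution your proposal coincides with the paper's proof.
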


\begin{proof}
 The argument that follows is essentially \cite[Sect. 2.5]{albpic}.  The main point is to define a map $V_\ell M \rightarrow H_c^1(\mathfrak{X},Rj'_*\mathbb{Q}_\ell(1))$.  It will then be easy (using the five lemma) to show that the map is an isomorphism.  For any $n$ prime to $p$, we define a map
\begin{equation*}
\varphi_n: T_{\mathbb{Z}/n}(M) \longrightarrow H_c^1(\mathfrak{X},Rj'_*\mu_n).
\end{equation*}
Recall that $T_{\mathbb{Z}/n}(M)$ is defined as
\begin{equation*}
T_{\mathbb{Z}/n}(M) = \frac{\{ (\mathscr{L},a,D) \in \mathrm{Pic}^0(\overline{\mathfrak{X}},\mathcal{C}) \times \mathrm{Div}^0_{\mathcal{Z}}(\overline{\mathfrak{X}}) \vert (\mathscr{L}^{\otimes n},a^{\otimes n}) \cong (\mathcal{O}(-D),s)\} }{\{(\mathcal{O}(D),s,-nD) \vert D \in \mathrm{Div}_{\mathcal{Z}}^0(\overline{\mathfrak{X}})\} }
\end{equation*}
where
\begin{enumerate}
\item $\mathscr{L}$ is a line bundle on $\overline{\mathfrak{X}}$,
\item $a: \mathcal{O}_{\mathcal{C}} \stackrel{\sim}{\rightarrow} \mathscr{L}\vert_{\mathcal{C}}$ is a trivialization of $\mathscr{L}$ on $\mathcal{C}$, and
\item $D \in \mathrm{Div}_{\mathcal{Z}}^0(\overline{\mathfrak{X}})$ is such that the class of $-D$ in $\mathrm{Pic}^0(\overline{\mathfrak{X}},\mathcal{C})$ is the same as the class of $(\mathscr{L}^{\otimes n},a^{\otimes n})$.
\end{enumerate}
Suppose given $(\mathscr{L},a,D) \in T_{\mathbb{Z}/n}(M)$, and let $D_{red}$ be the support of $D$ viewed as a closed subscheme of $\mathfrak{X}$ (it is also closed in $\overline{\mathfrak{X}})$.  Note that $D_{red}$ is disjoint from $\mathcal{C}$; let $\tilde{\mathcal{U}} = \mathfrak{X} - D_{red}$ and $\mathcal{U}' = \overline{\mathfrak{X}} - D_{red}$.  We then have the following diagram of inclusions:
\begin{equation*}
\xymatrix{
{\mathcal{U}} \ar@{^{(}->}[rd]^{g} \\
& {\tilde{\mathcal{U}}} \ar@{^{(}->}[r]^{u} \ar@{^{(}->}[d]^{\alpha} & {\mathfrak{X}} \ar@{^{(}->}[d]^{\alpha'} & {D_{red}} \ar@{_{(}->}[l]_{v} \ar@{=}[d]  \\
& {\mathcal{U}'} \ar@{^{(}->}[r]^{u'} & {\overline{\mathfrak{X}}} & {D_{red}} \ar@{_{(}->}[l]_{v'} \\
& {\mathcal{C}} \ar@{^{(}->}[u]^{\beta} \ar@{=}[r] & {}\mathcal{C} \ar@{^{(}->}[u]^{\beta'}
}
\end{equation*}
where along every row and column, the term in the middle is the union of the terms on the ends, and each square is cartesian.

Consider the cohomology group $H^1(\mathcal{U}',\alpha_!\mu_n)$, which by general nonsense \cite[Tag 03AJ]{stacksproject} is in bijection with $\alpha_!\mu_n$-torsors on $\mathcal{U}'$.  Given the class $(\mathscr{L},a,D) \in T_{\mathbb{Z}/n}(M)$ as above, choose an isomorphism $$\eta: \mathcal{O}(-D) \stackrel{\sim}{\longrightarrow} \mathscr{L}^{\otimes n}$$ such that $\eta\vert_{\mathcal{C}}: \mathcal{O}(-D)\vert_{\mathcal{C}} \cong \mathcal{O}_{\mathcal{C}} \rightarrow \mathscr{L}^{\otimes n}\vert_{\mathcal{C}}$ agrees with section $a^{\otimes n}: \mathcal{O}_{\mathcal{C}} \stackrel{\sim}{\rightarrow} \mathscr{L}^{\otimes n}\vert_{\mathcal{C}}$.  Such an isomorphism $\eta$ exists by bullet point (3) above.  Notice that $\eta$ restricts to an isomorphism on $\mathcal{U}'$.  Therefore we can define a class $$\psi_n(\mathscr{L},a,D) \in H^1(\mathcal{U}',\alpha_!\mu_n)$$ to be the $\alpha_!\mu_n$-torsor of local isomorphisms $\mathcal{O}_{\mathcal{U}}' \stackrel{\sim}{\longrightarrow} \mathscr{L}$ which are compatible with $\eta$ on $n$th tensor powers and reduce to $a$ on $D$.  By the same argument as in \ref{welldefined}, $\psi_n(\mathscr{L},a,D)$ does not depend on the choice of $\eta$.  

Next notice that because $\mathcal{C}$ and $D_{red}$ are disjoint, we have an isomorphism in $D_c^b(\overline{\mathfrak{X}})$ $$Ru'_* \alpha_! \cong \alpha'_! Ru_*,$$ so we have a sequence of maps
\begin{equation*}
H^1(\mathcal{U}',\alpha_!\mu_n) \stackrel{\sim}{\longrightarrow} H_c^1(\mathfrak{X},Ru_*\mu_n) \longrightarrow H_c^1(\mathfrak{X},Ru_*Rg_*\mu_n) = H_c^1(\mathfrak{X},Rj'_*\mu_n),
\end{equation*}
where we recall that $j' = u \circ g: \mathcal{U} \hookrightarrow \mathfrak{X}$.  We let $\varphi_n(\mathscr{L},a,d) \in H_c^1(\mathfrak{X},j'_!\mu_n)$ be the image of $\psi_n(\mathscr{L},a,D)$ under this sequence.  Taking the limit over $n = \ell^m$, we get an element
\begin{equation*}
\varphi(\mathscr{L},a,D) \in H_c^1(\mathfrak{X},\mathbb{Q}_\ell(1)).
\end{equation*}
It is not hard to show that the map $\varphi$ fits into a commutative diagram
\begin{equation*}
\begin{CD}
0 @>>> V_\ell \mathrm{Pic}^0(\overline{\mathfrak{X}},\mathcal{C}) @>>> V_\ell M @>>> \mathrm{Div}_{\mathcal{Z}}^0(\overline{\mathfrak{X}}) \otimes \mathbb{Q}_\ell @>>> 0 \\
@. @VVV @V \varphi VV @| \\
0 @>>> H_c^1(\mathfrak{X},\mathbb{Q}_\ell(1)) @>>> H_c^1(\mathfrak{X},Rj'_*\mathbb{Q}_\ell(1)) @>>> \mathrm{Div}_{\mathcal{Z}}^0(\overline{\mathfrak{X}}) \otimes \mathbb{Q}_\ell @>>> 0,
\end{CD}
\end{equation*}
where the lower row is given by the lower row of \ref{dualversion}.  Since the left-hand and right-hand vertical arrows are isomorphisms, the five lemma implies that $\varphi$ is an isomorphism.
\end{proof}
 Applying Proposition \ref{substep} to diagram \ref{dualversion}, we get an exact sequence
\begin{equation*}
0 \longrightarrow H^{2d-1}(X,\mathbb{Q}_{\ell}(1))^\vee \longrightarrow V_\ell M \longrightarrow \mathrm{Div}_Z(\overline{X}) \otimes \mathbb{Q}_\ell,
\end{equation*}
where $M = [\mathrm{Div}_{\mathcal{Z}}^0(\overline{\mathfrak{X}}) \rightarrow \mathbf{Pic}^0(\overline{\mathfrak{X}},\mathcal{C})]$.  

We leave it to the reader to check that the map $V_\ell M \rightarrow \mathrm{Div}_Z(\overline{X})$ is the obvious one, defined by the projection $M \rightarrow \mathrm{Div}_{\mathcal{Z}}^0(\overline{\mathfrak{X}})$ followed by the proper pushforward $\mathrm{Div}_{\mathcal{Z}}(\overline{\mathfrak{X}}) \rightarrow \mathrm{Div}_Z(\overline{X})$.  From this it is clear that we have  an isomorphism $V_\ell M^{2d-1}(X)^\vee \cong H^{2d-1}(X,\mathbb{Q}_\ell(1))^\vee$.  Dualizing this statement, we have shown the following:
\begin{prop}\label{m2d-1compatibility}
Let $M^{2d-1}(X)$ be defined as above.  Then for every $\ell \not= p$, there is a canonical  isomorphism
\begin{equation*}
V_\ell M^{2d-1}(X) \stackrel{\sim}{\longrightarrow} H^{2d-1}(X,\mathbb{Q}_\ell(d)).
\end{equation*}
\end{prop}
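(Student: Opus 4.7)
The plan is to build the comparison isomorphism by dualizing a long exact sequence that compares the cohomology of $X$ with that of the resolution $\overline{\pi}:\overline{\mathfrak{X}}\to\overline{X}$, and then to identify the resulting kernel with the Tate module of a concrete 1-motive of divisors and relative line bundles. Since $M^{2d-1}(X)$ is defined as a Cartier dual, it will suffice (by the perfect pairing recalled in Section 5) to produce a canonical isomorphism
\[
V_\ell \bigl[\mathbf{Div}_{\mathcal{Z}/Z}^0(\overline{\mathfrak{X}})\to\mathbf{Pic}_{\overline{\mathfrak{X}},\mathcal{C}}^{0,red}\bigr]\;\cong\; H^{2d-1}(X,\mathbb{Q}_\ell(d-1))^\vee,
\]
and then invoke the functoriality of Cartier duality and the compatibility of the $\ell$-adic realization with it.

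First I would set up the $3\times 3$ diagram of exact triangles in $D_c^b(X,\mathbb{Q}_\ell)$ coming from the decomposition $j_! \oplus i_*$ applied to $\mathbb{Q}_{\ell,X}\to R\pi_*\mathbb{Q}_{\ell,\mathfrak{X}}$; the key input is Proposition \ref{ratcohthesame}, which guarantees the top-left map $j_!\mathbb{Q}_{\ell,U}\stackrel{\sim}{\to}R\pi_*j'_!\mathbb{Q}_{\ell,\mathcal{U}}$ is an isomorphism. Taking the cohomology of the two left-hand columns produces diagram (\ref{maindiagram1}), whose middle vertical arrow $H^{2d-1}(X,j_!\mathbb{Q}_\ell)\stackrel{\sim}{\to} H^{2d-1}(\mathfrak{X},j'_!\mathbb{Q}_\ell)$ is an isomorphism by the five lemma applied to the surrounding squares. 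Poincar\'e dualizing on the smooth stack $\mathfrak{X}$, and using Lemmas \ref{lem87} and \ref{thing} to rewrite the two vertical columns in terms of divisors and the cycle class map into $NS(\overline{\mathfrak{X}},\mathcal{C})$, yields diagram (\ref{dualversion}).

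The heart of the argument is Proposition \ref{substep}: the identification $V_\ell M \cong H_c^1(\mathfrak{X},Rj'_*\mathbb{Q}_\ell(1))$ for $M=[\mathbf{Div}_{\mathcal{Z}}^0(\overline{\mathfrak{X}})\to\mathbf{Pic}^0(\overline{\mathfrak{X}},\mathcal{C})]$. I would follow the torsor-theoretic construction of \cite{albpic}: given a triple $(\mathscr{L},a,D)$ representing an element of $T_{\mathbb{Z}/n}(M)$, choose any isomorphism $\eta:\mathcal{O}(-D)\stackrel{\sim}{\to}\mathscr{L}^{\otimes n}$ restricting along $\mathcal{C}$ to $a^{\otimes n}$, and form the $\alpha_!\mu_n$-torsor on $\mathcal{U}':=\overline{\mathfrak{X}}-\mathrm{Supp}(D)$ of local isomorphisms $\mathcal{O}\stackrel{\sim}{\to}\mathscr{L}$ compatible with $\eta$ on $n$th powers and with $a$ on $\mathcal{C}$. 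Using the disjointness of $\mathcal{C}$ and $\mathrm{Supp}(D)$ one has $Ru'_*\alpha_!\cong \alpha'_!Ru_*$, which transports this class to $H_c^1(\mathfrak{X},Rj'_*\mu_n)$; well-definedness in the $\eta$ chosen follows because the ambiguity lies in $H^0(\overline{\mathfrak{X}},\beta'_*\mathbb{G}_m\text{-kernel})$, a torus, hence is divisible and admits $n$th roots. Passing to the limit over $n=\ell^m$ and fitting the resulting map into the five-term diagram with $V_\ell\mathrm{Pic}^0(\overline{\mathfrak{X}},\mathcal{C})$ on the left and $\mathrm{Div}_{\mathcal{Z}}^0(\overline{\mathfrak{X}})\otimes\mathbb{Q}_\ell$ on the right completes this step by the five lemma.

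With Proposition \ref{substep} in hand, diagram (\ref{dualversion}) gives an exact sequence
\[
0\to H^{2d-1}(X,\mathbb{Q}_\ell(d-1))^\vee \to V_\ell M \to \mathrm{Div}_Z(\overline{X})\otimes\mathbb{Q}_\ell,
\]
where the right-hand arrow is the composition of the projection $M\to \mathbf{Div}_{\mathcal{Z}}^0(\overline{\mathfrak{X}})$ with proper pushforward; its kernel is, by construction, exactly $V_\ell[\mathbf{Div}_{\mathcal{Z}/Z}^0(\overline{\mathfrak{X}})\to\mathbf{Pic}^0(\overline{\mathfrak{X}},\mathcal{C})]$, using that the kernel of the Cartier divisor class map $\mathrm{Div}_{\mathcal{Z}}\to \mathrm{Pic}(\overline{\mathfrak{X}},\mathcal{C})$ controls the $\mathbf{Pic}^0$ part by exactness of $V_\ell$ on $\mathscr{M}^1(k)\otimes\mathbb{Q}$ (Proposition \ref{faithful}). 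Taking Cartier duals then gives the desired $V_\ell M^{2d-1}(X)\cong H^{2d-1}(X,\mathbb{Q}_\ell(d))$. The main obstacle is the explicit torsor construction in Proposition \ref{substep}, since everything else is diagram chasing and invocations of Poincar\'e duality and Lemmas \ref{lem87}, \ref{thing}; in particular one must be careful to verify that the various trivializations on $\mathcal{C}$ glue in a way that lands in $H_c^1$ rather than only $H^1$, which is precisely where the triviality of $H^0(\mathcal{C},\mathbb{G}_m)$-ambiguities via the torus structure is used.
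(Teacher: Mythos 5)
Your proposal follows the paper's own argument step by step: the same $3\times 3$ triangle diagram built from Proposition \ref{ratcohthesame}, the same passage to diagram (\ref{maindiagram1}) and then its Poincar\'e dual (\ref{dualversion}) via Lemmas \ref{lem87} and \ref{thing}, the same torsor-theoretic construction for Proposition \ref{substep}, and the same final identification of $V_\ell M^{2d-1}(X)^\vee$ with $H^{2d-1}(X,\mathbb{Q}_\ell(d-1))^\vee$ using exactness of $V_\ell$ followed by Cartier duality. The only minor quibble is that the middle vertical isomorphism in (\ref{maindiagram1}) is immediate from the isomorphism $j_!\mathbb{Q}_{\ell,U}\stackrel{\sim}{\to}R\pi_*j'_!\mathbb{Q}_{\ell,\mathcal{U}}$ rather than requiring a five lemma, but this doesn't affect the argument.
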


\subsection{Functoriality of $M^{2d-1}(X)$}
Our next goal is to show that $M^{2d-1}(X)$ is contravariantly functorial.  Before doing this, we prove the following:
\begin{prop}\label{pushforward2}
Let $f: X \rightarrow X'$ be a proper surjective, representable morphism between $d$-dimensional smooth proper Deligne-Mumford stacks over an algebraically closed field $k$.  Let $\partial X \subset X$ and $\partial X' \subset X'$ be reduced strict normal crossings divisors (i.e., the irreducible components of $\partial X$, $\partial X'$ are smooth) such that $f^{-1}(\partial X')_{red} \subseteq \partial X$.  Then there is pushforward morphism of algebraic groups
\begin{equation*}
f_*: \mathbf{Pic}_{X,\partial X}^{0,red} \rightarrow \mathbf{Pic}_{X',\partial X'}^{0,red}
\end{equation*}
satisfying the following conditions:
\begin{enumerate}
\item The map $f \mapsto f_*$ is compatible with composition of appropriate proper maps,
\item if $D$ is any divisor on $X$ with support disjoint from $C$, and $cl(D) \in \mathrm{Pic}(X,C)$ is the associated cycle class, then $f_*cl(D) = cl(f_*D)$, where $f_*D$ denotes the proper pushforward of $D$ to a divisor on $X'$.
\item The assignment $f \mapsto f_*$ is compatible with the proper pushforward of line bundles $\mathbf{Pic}^{0,red}_{X} \rightarrow \mathbf{Pic}^{0,red}_{X'}$ induced by proper pushforward of Weil divisors. 
\end{enumerate}
\end{prop}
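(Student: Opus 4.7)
The plan is to extend the Section~7 construction of $f_*\colon \mathbf{Pic}^{0,red}_{\overline{\mathfrak{X}}} \to \mathbf{Pic}^{0,red}_{\overline{\mathcal{Y}}}$ to the relative setting. By the Yoneda lemma and smoothness of both group schemes, I reduce to defining, functorially in a smooth test scheme $A$, a natural map
\[
\mathrm{Pic}^0(X \times A,\, \partial X \times A) \;\to\; \mathrm{Pic}^0(X' \times A,\, \partial X' \times A).
\]
The hypothesis $f^{-1}(\partial X')_{\mathrm{red}} \subseteq \partial X$ is precisely what guarantees that proper pushforward of a divisor disjoint from $\partial X \times A$ lands in the group of divisors disjoint from $\partial X' \times A$.

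The technical heart will be a representability lemma: every class in $\mathrm{Pic}^0(X \times A,\, \partial X \times A)$ can be represented by a pair $(D,\lambda)$, where $D$ is a Weil divisor on $X \times A$ with support disjoint from $\partial X \times A$ (giving the line bundle $\mathcal{O}(D)$ with canonical meromorphic section $s$), and $\lambda \in \Gamma(\partial X \times A, \mathbb{G}_m)/\Gamma(X \times A, \mathbb{G}_m)$ is a unit twisting the trivialization $s|_{\partial X \times A}$. Existence of $D$ will combine Corollary~\ref{pic0} (which gives a Weil-divisor representative of any class in $\mathrm{Pic}^0$) with a moving-lemma argument on the smooth Deligne-Mumford stack $X \times A$ to replace the representative by one disjoint from $\partial X \times A$; the twist $\lambda$ accounts for the torus factor $\mathrm{coker}(a)$ from the proof of Proposition~\ref{simprelpic}.

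Given such a representation I will set $f_*(D,\lambda) := (f_*D,\, \mathrm{Nm}_f \lambda)$, where $f_*D$ is the representable proper pushforward of cycles from \ref{divisors1}, and $\mathrm{Nm}_f \lambda$ is obtained by applying the finite-flat norm on each $(d-1)$-dimensional component of $\partial X$ that maps finitely to a component of $\partial X'$, with components collapsing to lower dimension contributing trivially. Well-definedness will reduce to verifying that two representatives of the same class differ by a principal pair $(\mathrm{div}(\psi),\, \psi|_{\partial X \times A})$, and that $f_*$ carries this to $(\mathrm{div}(\mathrm{Nm}_f \psi),\, \mathrm{Nm}_f(\psi)|_{\partial X' \times A})$, which is again principal. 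Conditions (1) and (2) then fall out of the definition (condition (2) being the special case of a divisor $D$ disjoint from $\partial X$ with trivial twist $\lambda = 1$), while condition (3) amounts to commutativity of the evident forgetful square to $\mathbf{Pic}^{0,red}$, which is built into the recipe.

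The main obstacle I expect is executing the moving lemma in families: arranging that, for every smooth $A$, a Weil-divisor representative of a given class in $\mathrm{Pic}^0(X \times A,\, \partial X \times A)$ can be chosen with support disjoint from $\partial X \times A$. Once that is in place, algebraicity of the resulting morphism of group schemes (rather than merely a set-theoretic map on $A$-points) is automatic, since the recipe is visibly natural in $A$ and both source and target are smooth.
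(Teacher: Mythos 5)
Your proposal runs a genuinely different route from the paper, and I think it has a real gap at the moving-lemma step.

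You model the construction on the pushforward from Section~7 (for $\mathbf{Pic}^{0,red}_{\overline{\mathfrak{X}}}$), where the paper uses Proposition~\ref{cartpic} and divisibility to represent every class in $\mathrm{Pic}^0(\overline{\mathfrak{X}} \times A)$ by a Weil divisor and then pushes forward cycles. That argument does not transfer to the relative setting, because what you now need is a representative disjoint from $\partial X \times A$, and the finiteness-plus-divisibility argument says nothing about disjointness. For $d \geq 2$ this is a genuinely strong requirement: a divisor $D$ disjoint from $\partial X$ must have $\mathcal{O}(D)|_{\partial X}$ canonically trivial, and the subgroup of $\mathrm{Pic}(X)$ generated by components of $X \setminus \partial X$ can be very small---for instance if $\partial X$ is ample then $X \setminus \partial X$ is affine and (for $d\geq 2$) contains no proper divisors at all, so $\mathrm{Div}_{X - \partial X}(X) = 0$. (In that situation $\mathrm{Pic}^0(X,\partial X)$ is also forced to vanish by Lefschetz-type considerations, so I can't hand you a flat counterexample; but the two vanishings have different causes, and nothing in your sketch shows they track each other in general, let alone uniformly over a smooth test scheme $A$.) You flag this as the ``main obstacle,'' but it is not a technicality to be executed later --- it is a nontrivial claim that would need its own proof and may well be false. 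There is a second, more minor, soft spot: your formula for $\mathrm{Nm}_f \lambda$ is only a formula where $f|_{\partial X}$ is finite flat onto a component of $\partial X'$; the stipulation that contracted components ``contribute trivially'' is not a definition and has to be reconciled with the case where distinct components of $\partial X$ map to the same $(d-1)$-dimensional component of $\partial X'$.

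The paper sidesteps both issues. Rather than moving divisors, it fixes an open $U' \subset X'$ over which $f$ is finite flat and whose complement has codimension $\geq 2$, uses the sheaf-level norm $N\colon f_*\mathbb{G}_{m,U,\partial U} \to \mathbb{G}_{m,U',\partial U'}$ to get a map $\mathbf{Pic}^{0,red}_{X,\partial X} \to \mathbf{Pic}^{red}_{U',\partial U'}$, and then proves the key Lemma~\ref{hardlemma}: the restriction $\mathbf{Pic}^{red}_{X',\partial X'} \hookrightarrow \mathbf{Pic}^{red}_{U',\partial U'}$ is an injection, and the norm map lands in this subsheaf (hence in $\mathbf{Pic}^{0,red}_{X',\partial X'}$ by connectedness). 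That lemma is the technical heart; its proof analyzes the cokernel of $\mathrm{Pic}(X',\partial X') \to \mathrm{Pic}(U',\partial U')$ and reduces to the nonexistence of nontrivial maps from an abelian variety to a torus, using the normal-crossings resolution~\eqref{sncresolution}. If you want to keep your approach, you would need to actually prove the moving lemma (over every smooth $A$), which I would expect to be at least as hard as Lemma~\ref{hardlemma}; otherwise, adopting the norm-on-a-flat-open strategy is the cleaner path.
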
  
\begin{proof}
We remark that this is a refinement and generalization \cite[Lemma 6.2]{albpic} to the case when the base field $k$ has positive characteristic.  Because their proof uses resolution of singularities, we use a different approach.

To begin the proof, first note that by considering the obvious restriction functor $$\mathbf{Pic}_{X,\partial X}^{0,red} \rightarrow \mathbf{Pic}_{X,f^{-1}(\partial X')_{red}}^{0,red},$$ we may assume $\partial X = f^{-1}(\partial X')_{red}$.  Next let $U' \subset X$ be an open substack of $X$ such that $f^{-1}(U') \rightarrow U'$ is finite and flat and such that $Z' := X' - U'$ is of codimension 2.  To see that such a $U'$ exists, one easily reduces to the case of schemes since $f$ is representable, and in that case it follows since the dimension of fibers is an upper semi-continuous function \cite[Ex. 3.22]{hartshorne} and $f$ is flat over every codimension-1 point of $X'$.  Let $U = f^{-1}(U')$, $\partial U' = U' \cap \partial X'$, and $\partial U = f^{-1}(\partial U)_{red}$, so we have a commutative diagram of pairs
\begin{equation*}
\xymatrix{
{(U, \partial U)} \ar@{^{(}->}[r] \ar[d]^{f} & {(X,\partial X)} \ar[d]^{f} \\
{(U',\partial U')} \ar@{^{(}->}[r] & {(X',\partial X')}.
}
\end{equation*}

Now consider sheaves $\mathbf{Pic}_{U,\partial U}^{red}$ and $\mathbf{Pic}_{U',\partial U'}^{red}$ on $(Sm/k)_{et}$ defined as in the case when $U, U'$ are proper, namely $\mathbf{Pic}_{U,\partial U}^{red}$ is the sheafification of the functor
\begin{equation*}
W \mapsto \mathrm{Pic}(U \times W,\partial U \times W)
\end{equation*}
(with a similar definition for $\mathbf{Pic}_{U',\partial U'}^{red}$).  As in the case of proper schemes, we have
\begin{equation*}
\mathbf{Pic}_{U,\partial U}^{red} = R^1\pi_*(\mathbb{G}_{m,U,\partial U})
\end{equation*}
where $\pi: U \rightarrow \mathrm{Spec \ }k$ is the structure morphism and $\mathbb{G}_{m,U,\partial U} = \mathrm{Ker}(\mathbb{G}_{m,U} \rightarrow a_*\mathbb{G}_{m,\partial U})$ and $a: \partial U \hookrightarrow U$ is the inclusion.  A similar formula holds for $\mathbf{Pic}_{U,\partial U'}^{red}$.  There is a natural map of sheaves
\begin{equation*}
N: \mathbf{Pic}_{U,\partial U}^{red} \longrightarrow \mathbf{Pic}_{U',\partial U'}^{red}
\end{equation*}
induced by the norm map on sheaves
\begin{equation*}
N: f_* \mathbb{G}_{m,U} \rightarrow \mathbb{G}_{m,U'}
\end{equation*}
and then appliying $R^1\pi_*$ (see \cite[p. 61]{albpic} for a proof that $N$ restricts to a morphism of subsheaves $f_*\mathbb{G}_{m,U,\partial U} \rightarrow \mathbb{G}_{m,U',\partial U'}$).  We then have a map
\begin{equation*}
f_*: \mathbf{Pic}_{X,\partial X}^{0,red} \stackrel{restr}{\longrightarrow} \mathbf{Pic}_{U,\partial U}^{red} \stackrel{N}{\longrightarrow} \mathbf{Pic}_{U',\partial U'}^{red}.
\end{equation*}
We claim the following, which defines the required map $f_*: \mathbf{Pic}_{X,\partial X}^{0,red} \rightarrow \mathbf{Pic}_{X',\partial X'}^{0,red}$:
\begin{lem}\label{hardlemma}
The inclusion $U' \hookrightarrow X'$ induces an injection of sheaves $\mathbf{Pic}_{X',\partial X'}^{red} \hookrightarrow \mathbf{Pic}_{U',\partial U'}^{red}$, and the map $f_*$ defined above factors through this subsheaf.  Since $\mathbf{Pic}_{X,\partial X}^{0,red}$ is connected, this implies that $f_*$ factors through $\mathbf{Pic}_{X',\partial X'}^{0,red}$.
\end{lem}
\begin{rem}
It is clear that the resulting map $f_*$ is independent of the choice of $U' \subset X'$, since any two choices $U_1'$ and $U_2'$ are dominated by the third open subset $U_3' = U_1' \cap U_2'$, and $U_3'$ also has complement of codimension $\geq 2$.
\end{rem}
\begin{proof}(of lemma) It suffices to look at the maps on closed points, i.e., the map $\mathrm{Pic}^0(X',\partial X') \rightarrow \mathrm{Pic}(U',\partial U')$, etc.  Now consider the inclusion $U' \hookrightarrow X'$; it induces a commutative diagram
\begin{equation*}
\xymatrix{
\mathcal{O}^*(X') \ar[r] \ar[d]^{\sim} & \mathcal{O}^*(\partial X') \ar[r] \ar@{^{(}->}[d] & \mathrm{Pic}(X',\partial X') \ar[r] \ar[d] & \mathrm{Pic}(X') \ar[r] \ar[d]^{\sim} & \mathrm{Pic}(\partial X') \ar[d] \\
\mathcal{O}^*(U') \ar[r] & \mathcal{O}^*(\partial U') \ar[r] & \mathrm{Pic}(U',\partial U') \ar[r] & \mathrm{Pic}(U') \ar[r] & \mathrm{Pic}(\partial U'). }
\end{equation*}
The map $\mathrm{Pic}(X') \rightarrow \mathrm{Pic}(U')$ is an isomorphism because the complement $Z' \subset X'$ has codimension $\geq 2$.  This immediately implies that $\mathrm{Pic}(X',\partial X')$ injects into $\mathrm{Pic}(U',\partial U')$; moreover, if we let $C := \mathrm{Coker}(\mathrm{Pic}(X',\partial X') \rightarrow \mathrm{Pic}(U',\partial U'))$, we have an exact sequence
\begin{equation*}
0 \rightarrow \mathcal{O}^*(\partial U')/\mathcal{O}^*(\partial X') \rightarrow C \rightarrow \mathrm{Ker}(\mathrm{Pic}(\partial X') \rightarrow \mathrm{Pic}(\partial U')).
\end{equation*}
We need to show that if $L = (\mathscr{L},\sigma: \mathcal{O}_{\partial X} \stackrel{\sim}{\rightarrow} \mathscr{L}\vert_{\partial X}) \in \mathrm{Pic}^0(X,\partial X)$, then the image of $f_*L$ in $C$ is 0. 

First we show that the image of $L$ in $\mathrm{Ker}(\mathrm{Pic}(\partial X') \rightarrow \mathrm{Pic}(\partial U'))$ is 0.  Let $$K := \mathrm{Ker}(\mathrm{Pic}(\partial X') \rightarrow \mathrm{Pic}(\partial U')).$$  Concretely, the map $\mathrm{Pic}^0(X,\partial X) \rightarrow K$ is defined as follows: given $L = (\mathscr{L},\sigma) \in \mathrm{Pic}^0(X,\partial X)$, we have $(N(\mathscr{L}\vert_U),\mathrm{det}(\sigma\vert_U)) \in \mathrm{Pic}(U',\partial U')$.  Then there exists a line bundle $M \in \mathrm{Pic}(X')$ with $M\vert_{U'} = N(\mathscr{L}\vert_U)$, and the image of $L$ in $K$ is $M\vert_{\partial X'}$.  From this description it is clear that the map $\mathrm{Pic}^0(X,\partial X) \rightarrow \mathrm{Pic}(\partial X')$ factors through $\mathrm{Pic}^0(X)$ (i.e., the image in $K$ only depends on the line bundle $\mathscr{L}$ and not on the trivialization $\sigma$).  We therefore have a factorization
\begin{equation*}
\mathrm{Pic}^0(X,\partial X) \longrightarrow A \longrightarrow K,
\end{equation*}
where $A := \mathrm{Image}(\mathrm{Pic}^0(X,\partial X) \rightarrow \mathrm{Pic}^0(X))$ is an abelian variety (since $\mathrm{Pic}^0(X)$ is).  

We claim that $\mathrm{Ker}(\mathrm{Pic}(\partial X') \rightarrow \mathrm{Pic}(\partial U'))$ is a group variety whose connected component of the identity is a torus.  From this it will follow that the map $\mathrm{Pic}^0(X,\partial X) \rightarrow K$ is zero, since it factors through the abelian variety $A$.  To prove this claim, we first set up some notation: let $C_i$ be the (smooth) irreducible components of $\partial X'$, and for each increasing sequence $i_0 < ... < i_n$, let $C_{i_0...i_n} = C_{i_0} \cap ... \cap C_{i_n}$.  Then \cite[Lemma 3.2]{bakhtary} we have a resolution of sheaves on $(\partial X')_{et}$
\begin{equation*}
0 \longrightarrow \mathcal{O}_{\partial X'} \longrightarrow \bigoplus_i \mathcal{O}_{C_i} \longrightarrow \bigoplus_{i < j} \mathcal{O}_{C_{ij}} \longrightarrow ...,
\end{equation*}
where we have abused notation and written $\mathcal{O}_{C_i}$ instead of $\iota_* \mathcal{O}_{C_i}$ for $\iota: C_i \hookrightarrow \partial X$ the closed immersion  (the reference (loc. cit) only proves this for schemes, but the statement is local for the \'{e}tale topology and so immediately follows for stacks).  This sequence remains exact when one takes units: to see this, we can work locally.  Each morphism of local rings $\mathcal{O}_{C_{i_0...i_n}} \rightarrow \mathcal{O}_{C_{i_0...i_{n+1}}}$ is a surjection of local rings (whenever it is non-zero), and for a surjection of local rings $\pi: R \rightarrow S$, $r \in R$ is a unit if and only if $\pi(r)$ is a unit.  From this the exactness of the above sequence on units is immediate, giving an exact sequence of sheaves of abelian groups
\begin{equation}\label{sncresolution}
0 \longrightarrow \mathbb{G}_{m,\partial X'} \longrightarrow \bigoplus_i \mathbb{G}_{m,C_i} \longrightarrow \bigoplus_{i<j} \mathbb{G}_{m,C_{ij}} \longrightarrow ...
\end{equation}
From this resolution we get an exact sequence
\begin{equation*}
0 \rightarrow T \rightarrow \mathrm{Pic}(\partial X') \rightarrow \bigoplus_i \mathrm{Pic}(C_i),
\end{equation*}
where
\begin{equation*}
T:= \frac{\mathrm{Ker}(\oplus_{i < j} \mathcal{O}^*(C_{ij}) \rightarrow \oplus_{i < j < k} \mathcal{O}^*(C_{ijk}))}{\mathrm{Image}(\oplus_{i} \mathcal{O}^*(C_i) \rightarrow \oplus_{i < j} \mathcal{O}^*(C_{ij}))}
\end{equation*}
 is an extension of a finite abelian group by a torus.  Here the maps are induced by restriction.

We can restrict sequence \ref{sncresolution} to $\partial U'$ and get an exact sequence
\begin{equation*}
0 \rightarrow T_U \rightarrow \mathrm{Pic}(\partial U') \rightarrow \oplus_i \mathrm{Pic}(C_i\vert_U'),
\end{equation*}
where $T_U$ is defined by the same formula as for $\partial X'$ (replacing $C_i$ by $C_i\vert_{U'}$).  Moreover, the inclusion $\partial U' \hookrightarrow \partial X'$ induces a commutative diagram
\begin{equation*}
\begin{CD}
0 @>>> T @>>> \mathrm{Pic}(\partial X') @>>> \bigoplus_i \mathrm{Pic}(C_i) \\
@. @VVV @VVV @VVV \\
0 @>>> T_{U'} @>>> \mathrm{Pic}(\partial U') @>>> \bigoplus_i \mathrm{Pic}(C_i\vert_{U'}).
\end{CD}
\end{equation*}
But notice that since $C_i$ is smooth, the kernel of the map on the right is finitely generated.  Therefore we have an exact sequence
\begin{equation*}
0 \longrightarrow \tilde{T} \longrightarrow K \rightarrow F
\end{equation*}
where $\tilde{T} := \mathrm{Ker}(T \rightarrow T_U)$ is an extension of a torus by a finite group and $F$ is finitely generated.  Therefore the map $\mathrm{Pic}^0(X,\partial X) \rightarrow K$ must be zero, since it factors through the abelian variety $\mathrm{Image}(\mathrm{Pic}^0(X,\partial X) \rightarrow \mathrm{Pic}^0(X))$, and there are no non-zero maps from an abelian variety to a torus.

We have shown that the map $\mathrm{Pic}^0(X,\partial X) \rightarrow C$ factors through $\mathcal{O}^*(\partial U')/\mathcal{O}^*(\partial X')$, where we recall that $C = \mathrm{Coker}(\mathrm{Pic}(X',\partial X') \rightarrow \mathrm{Pic}(U',\partial U'))$.  We want to show that the resulting map 
\begin{equation}\label{lastthing}
f: \mathrm{Pic}^0(X,\partial X) \rightarrow \mathcal{O}^*(\partial U')/\mathcal{O}^*(\partial X')
\end{equation} is zero.  Recall that we have an exact sequence
\begin{equation*}
0 \rightarrow \mathcal{O}^*(\partial X)/\mathcal{O}^*(X) \rightarrow \mathrm{Pic}^0(X,\partial X) \rightarrow \mathrm{Pic}^0(X).
\end{equation*}
To show that the map in \ref{lastthing} is zero, we start by showing that the restriction 
\begin{equation}\label{restrictedmap}
f\vert_{\mathcal{O}^*(\partial X)}: \mathcal{O}^*(\partial X)/\mathcal{O}^*(X) \rightarrow \mathcal{O}^*(\partial U')/\mathcal{O}^*(\partial X')
\end{equation}
 is zero.  To do this, we first explicitly describe this map.  An element of $\mathcal{O}^*(\partial X)$ can be given as follows: first label the connected components of $\partial X$ as $C_1,...,C_n$.  Then for each $i$ we let let $a_i \in k^*$ be the unit which is multiplication by $a_i$ on $C_i$, and the identity on the other connected components.  Then we have a corresponding element $$L := (\mathcal{O}_X,\Pi_i a_i) \in \mathrm{Pic}^0(X,\partial X).$$  We claim that $f_*L \in \mathrm{Pic}(U',\partial U')$ can be described as follows:
\begin{lem}\label{definenorm}
Let $D_i \subset \partial U'$ be any connected component, and let $E_1,...,E_s$ be the connected components of $f^{-1}(D_i)_{red}$, and $d_1,...,d_s$ the degrees of these connected components under the map $\partial U \times_{\partial U'} D_i \rightarrow D_i$, and let $r_1,...,r_s$ be their ramification degrees (so $e_1r_1 + ... + e_sr_s = \mathrm{deg}(f)$).  For each $j, 1 \leq j \leq s$, let $C_{\phi(j)}$ be the connected component of $\partial X$ containing $E_j$.  Then let $$b_i = \prod_{j=1}^s (a_{\phi(j)})^{e_jr_j} \in k^*,$$ which we think of as a unit in $\mathcal{O}^*(\partial U')$ which is multiplication by $b_i$ on $D_i$, and the identity on the other connected components.  We then have $$f_*L = (\mathcal{O}_{U'},\prod_{i} b_i).$$
\end{lem}
\begin{proof}(of lemma \ref{definenorm}) In general, the map $\mathcal{O}^*(\partial U) \rightarrow \mathcal{O}^*(\partial U')$ is obtained by locally lifting an element of $\mathcal{O}^*(\partial U)$ to $\mathcal{O}^*(U)$, and then applying the norm map and restricting to $\partial U'$.  However, if we let $\overline{\partial U}  = U \times_{U'} \partial U'$ (so that $\partial U = \overline{\partial U}_{red})$, we have a commutative diagram \cite[6.4.8]{ega2} 
\begin{equation*}
\begin{CD}
\mathbb{G}_{m,U} @>>> \iota_*\mathbb{G}_{m,\overline{\partial U}} \\
@V N VV @V N VV \\
\mathbb{G}_{m,U'} @>>> \iota'_*\mathbb{G}_{m,\partial U'}
\end{CD}
\end{equation*}
where $\iota, \iota'$ are the inclusions.  This implies that we only have to lift a section to $\mathcal{O}^*(\overline{\partial U})$ and apply the norm map there.  For the section $\Pi_i a_i$ we are interested in, this can be done globally and the resulting formula for $N(\Pi_i a_i)$ given in the lemma statement is immediate.
\end{proof}
We return to showing that the map in \ref{restrictedmap} is zero.  From the description of $f_*L$ given in Lemma \ref{definenorm}, we see that if $D_i$ and $D_j$ are connected components of $\partial U$ which belong to the same connected component of $\partial X'$, then $b_i = b_j$.  This implies that the section $\Pi_i b_i$ extends to an section of $\mathcal{O}^*(\partial X')$, which in turn implies that the image of $f_*L$ under the map \ref{restrictedmap} is zero.

We have shown that the map $f$ of \ref{lastthing} factors through $\mathrm{Pic}^0(X,\partial X)/\mathcal{O}^*(\partial X)$, which is a subvariety of $\mathrm{Pic}^0(X)$ and hence an abelian variety.  But $\mathcal{O}^*(\partial U')/\mathcal{O}^*(\partial X')$ is an extension of finitely generated group by a torus: if $\partial U'$ is smooth then this is clear, while in the general case it follows from the commuting diagram (where the rows are equalizers)
\begin{equation*}
\xymatrix{
\mathcal{O}^*(\partial X') \ar[r] \ar[d] &  \bigoplus_i \mathcal{O}^*(\partial X'_i) \ar@<.7ex>[r] \ar@<-.7ex>[r] \ar[d] & \bigoplus_{i < j} \mathcal{O}^*(\partial X'_{ij}) \ar[d] \\
\mathcal{O}^*(\partial U') \ar[r]  &  \bigoplus_i \mathcal{O}^*(\partial U'_i)  \ar@<.7ex>[r] \ar@<-.7ex>[r] & \bigoplus_{i < j} \mathcal{O}^*(\partial U'_{ij}), }
\end{equation*}
where $\partial X_i'$ are the (smooth) irreducible components of $\partial X$, and similarly for $\partial U'$.  This implies that the resulting map $\mathrm{Pic}^0(X,\partial X)/\mathcal{O}^*(\partial X) \rightarrow \mathcal{O}^*(\partial U')/\mathcal{O}^*(\partial X')$ is zero (since there's no non-zero map from an abelian variety to a torus).  We have finally shown that the map $\mathrm{Pic}^0(X,\partial X) \rightarrow C$ is the zero map, completing the proof of Lemma \ref{hardlemma}.
\end{proof}
This defines the required map $f_*: \mathbf{Pic}_{X,\partial X}^{0,red} \rightarrow \mathbf{Pic}_{X',\partial X'}^{0,red}$.  It is clear that this map satisfies conditions (1)-(3) of the proposition statement, since in each case we can restrict to the case when $f$ is finite and flat (by the way $f_*$ was defined), where it follows from standard compatibility properties between pushforward of divisors and the norm map.
\end{proof}
\begin{pg}\label{m2d-1setup}
We can now define the functoriality of $M^{2d-1}(X)$ as follows.  Let $f: X \rightarrow Y$ be a morphism between $d$-dimensional separated finite type $k$-schemes, and choose a compactified morphism $\overline{f}: \overline{X} \rightarrow \overline{Y}$.  As in \ref{resolvemap} and \ref{2d-1functoriality}, we can choose a commutative diagram
\begin{equation*}
\begin{CD}
\overline{\mathfrak{X}} @> \overline{f}' >> \overline{\mathcal{Y}} \\
@V \pi VV  @V \sigma VV \\
\overline{X} @> \overline{f} >> \overline{Y}
\end{CD}
\end{equation*}
where $\pi: \overline{\mathfrak{X}} \rightarrow \overline{X}$ and $\sigma: \overline{\mathcal{Y}} \rightarrow \overline{Y}$ are resolutions and $\overline{f}'$ is representable.  Let $\mathfrak{X} = X \times_{\overline{X}} \overline{\mathfrak{X}}$ and $\mathcal{Y} := Y \times_{\overline{Y}} \overline{\mathcal{Y}}$.  Then we can arrange that $\mathcal{C} := \overline{\mathfrak{X}} - \mathfrak{X}$ and $\mathcal{D} := \overline{\mathcal{Y}} - \mathcal{Y}$ are reduced strict normal crossings divisors; we have $f^{-1}(\mathcal{D})_{red} \subseteq \mathcal{C}$.  Let $V \subset Y$ be an open subset of $V$ such that $\mathcal{V} := \sigma^{-1}(V) \rightarrow V$ is purely inseparable.  Moreover, by shrinking $V$ we can arrange that $\mathfrak{X} \times_ {\mathcal{Y}} \mathcal{V} \rightarrow X \times_{Y} V$ is puresly inseparable.  Set $\mathcal{U} = \mathfrak{X} - \mathfrak{X} \times_{\mathcal{Y}} \mathcal{V}$ and $U = X - X \times_Y V$.   Finally, let $Z = X - U$, $\mathcal{Z} = \mathfrak{X} - \mathcal{U}$, $W = Y - V$, $\mathcal{W} = \mathcal{Y} - \mathcal{V}$.  We get a commuting diagrams
\begin{equation*}
\xymatrix{
{\mathfrak{X}} \ar@{^{(}->}[r] \ar[d] & {\overline{\mathfrak{X}}} \ar[d] & \mathcal{C} \ar@{_{(}->}[l] \\
{\mathcal{Y}} \ar@{^{(}->}[r] & {\overline{\mathcal{Y}}} & {\mathcal{D}} \ar@{_{(}->}[l]}
\end{equation*}
and 
\begin{equation*}
\xymatrix{
\mathcal{U} \ar@{^{(}->}[r] \ar[d] & \mathfrak{X} \ar[d] & \mathcal{Z} \ar@{_{(}->}[l] \ar[d] \\
{\mathcal{V}} \ar@{^{(}->}[r] & {\mathcal{Y}} & \mathcal{W}. \ar@{_{(}->}[l]}
\end{equation*}

With this notation, to define a map $\hat{f}^*: M^{2d-1}(Y) \rightarrow M^{2d-1}(X)$, we want to define a map 
\begin{equation}\label{m2d-1map}
\hat{f}_*: [\mathbf{Div}^0_{\mathcal{Z}/Z}(\overline{\mathfrak{X}}) \rightarrow \mathbf{Pic}^{0,red}_{\overline{\mathfrak{X}},\mathcal{C}}] \longrightarrow [\mathbf{Div}^0_{\mathcal{W}/W}(\overline{\mathcal{Y}}) \rightarrow \mathbf{Pic}^{0,red}_{\overline{\mathcal{Y}},\mathcal{D}}].
\end{equation}
This is possible by using the proper pushforward map $\mathbf{Div}^0_{\mathcal{Z}/Z}(\overline{\mathfrak{X}}) \rightarrow \mathbf{Div}^0_{\mathcal{W}/W}(\overline{\mathcal{Y}})$ and the pushforward map $f_*: \mathbf{Pic}^{0,red}_{\overline{\mathfrak{X}},\mathcal{C}} \rightarrow \mathbf{Pic}^{0,red}_{\overline{\mathcal{Y}},\mathcal{D}}$ provided by Proposition \ref{pushforward2}.  The dual of the map $\hat{f}_*$ is our desired map $\hat{f}^*: M^{2d-1}(Y) \rightarrow M^{2d-1}(X)$. 
\end{pg}
\begin{pg}
Our next goal is to show that this pullback map $\hat{f}^*: M^{2d-1}(Y) \rightarrow M^{2d-1}(X)$ is compatible with $\ell$-adic realizations; i.e., we claim the following:
\end{pg}
\begin{prop}\label{prop86}
In the notation of \ref{m2d-1setup}, we have a commutative diagram
\begin{equation*}
\begin{CD}
V_\ell M^{2d-1}(Y) @> \alpha_{Y} >> H^{2d-1}(Y,\mathbb{Q}_\ell(d)) \\
@V {V_\ell \hat{f}^*} VV @V f^* VV \\
V_\ell M^{2d-1}(X) @> \alpha_{X} >> H^{2d-1}(X,\mathbb{Q}_\ell(d)),
\end{CD}
\end{equation*}
where $\alpha_Y$ and $\alpha_X$ are the comparison isomorphisms of \ref{m2d-1compatibility}.
\end{prop}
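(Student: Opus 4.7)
(Proof sketch of Proposition \ref{prop86})
Our plan is to reduce the statement to the analogous compatibility for a more tractable $1$-motive computing $H_c^1(\mathfrak{X}, Rj'_*\mathbb{Q}_\ell(1))$, following the pattern of the proof of Proposition \ref{prop716}. First, since $M^{2d-1}(X) = [\mathbf{Div}^0_{\mathcal{Z}/Z}(\overline{\mathfrak{X}}) \to \mathbf{Pic}^{0,red}_{\overline{\mathfrak{X}},\mathcal{C}}]^\vee$ and the comparison isomorphism $\alpha_X$ was constructed by dualizing the isomorphism of Proposition \ref{substep}, applying Cartier duality and using the functoriality of the perfect pairing of Section \ref{cartierduals} reduces the claim to showing that the diagram
\begin{equation*}
\begin{CD}
V_\ell [\mathbf{Div}^0_{\mathcal{Z}/Z}(\overline{\mathfrak{X}}) \to \mathbf{Pic}^{0,red}_{\overline{\mathfrak{X}},\mathcal{C}}] @>>> H^{2d-1}(X,j_!\mathbb{Q}_\ell(d-1))^\vee \\
@V V_\ell \hat{f}_* VV @V (f^*)^\vee VV \\
V_\ell [\mathbf{Div}^0_{\mathcal{W}/W}(\overline{\mathcal{Y}}) \to \mathbf{Pic}^{0,red}_{\overline{\mathcal{Y}},\mathcal{D}}] @>>> H^{2d-1}(Y,j_!\mathbb{Q}_\ell(d-1))^\vee
\end{CD}
\end{equation*}
commutes, where the horizontal maps come from diagram \ref{dualversion} combined with Proposition \ref{substep}.

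Next, as in \ref{setuptocomp}, we would embed the two relative $1$-motives into the larger $1$-motives
\begin{equation*}
M(\mathcal{U}) := [\mathbf{Div}^0_{\mathcal{Z}}(\overline{\mathfrak{X}}) \to \mathbf{Pic}^0(\overline{\mathfrak{X}},\mathcal{C})], \quad M(\mathcal{V}) := [\mathbf{Div}^0_{\mathcal{W}}(\overline{\mathcal{Y}}) \to \mathbf{Pic}^0(\overline{\mathcal{Y}},\mathcal{D})],
\end{equation*}
which by Proposition \ref{substep} realize $H_c^1(\mathfrak{X},Rj'_*\mathbb{Q}_\ell(1))$ and $H_c^1(\mathcal{Y},Rb'_*\mathbb{Q}_\ell(1))$ respectively. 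Since the defining sub-$1$-motives are recovered as kernels of the natural projections to $[\mathbf{Div}_Z(\overline{X}) \to 0]$ and $[\mathbf{Div}_W(\overline{Y}) \to 0]$, a diagram chase analogous to that in the proof of Proposition \ref{prop716} would reduce us to verifying that $V_\ell$ of the pushforward map $\hat{f}_*: M(\mathcal{U}) \to M(\mathcal{V})$ agrees with the Poincar\'{e} dual $f_*$ of the restriction map $f^*: H^1(\mathcal{V},Rb'_*\mathbb{Q}_\ell) \to H^1(\mathcal{U},Rj'_*\mathbb{Q}_\ell)$ on compactly supported cohomology twisted appropriately.

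To verify this second compatibility, I would mimic the proof of Proposition \ref{compatible3}: first dispose of the case $\dim f(\mathfrak{X}) < d$ where both maps are zero by definition, then restrict to an open substack $\mathcal{V}_0 \subseteq \mathcal{V}$ (meeting $\mathcal{D}$ appropriately) such that $\mathcal{U}_0 := f'^{-1}(\mathcal{V}_0) \to \mathcal{V}_0$ is finite flat, using the fact that restriction to smaller $\overline{\mathcal{Y}}_0, \overline{\mathfrak{X}}_0$ only changes the lattice part and leaves the maps in question stable. In the finite flat representable case, Fujiwara's trace map $tr_f : f_* f^*\mu_n \to \mu_n$ of \cite[Thm 4.1]{fujiwara} provides the pushforward, and by \cite[p.~136]{freitag} it agrees with the norm map on $\mathbb{G}_m$-sections, which in turn is compatible with the trivialization along $\mathcal{C}$ via the commutative diagram used in the proof of Lemma \ref{definenorm}. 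Hence the norm construction defining $f_* : \mathbf{Pic}^{0,red}_{\overline{\mathfrak{X}},\mathcal{C}} \to \mathbf{Pic}^{0,red}_{\overline{\mathcal{Y}},\mathcal{D}}$ in Proposition \ref{pushforward2} translates under $V_\ell$ into exactly the Poincar\'{e}-dual pushforward.

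The main obstacle will be checking this last step carefully: while the finite-flat reduction works almost verbatim, the trivialization data along $\mathcal{C}$ and $\mathcal{D}$ must be tracked through the norm construction, and one needs Proposition \ref{pushforward2}(2)--(3) and the fact that $f_*: \mathbf{Pic}^{0,red}_{\overline{\mathfrak{X}},\mathcal{C}} \to \mathbf{Pic}^{0,red}_{\overline{\mathcal{Y}},\mathcal{D}}$ was itself defined by restricting to the locus where $f'$ is finite flat. Since $\hat{f}_*$ was built so that both its divisorial and line-bundle components are compatible with cycle class maps, the compatibility with Fujiwara's trace on the level of $H_c^1$ then follows by \'{e}tale localization, completing the verification.
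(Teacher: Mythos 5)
Your proposal follows essentially the same line of reasoning as the paper: dualize to reduce to compatibility of $\hat{f}_*$ with cohomological pushforward, embed the defining $1$-motives into the larger $1$-motives $M(\mathcal{U}) = [\mathbf{Div}^0_{\mathcal{Z}}(\overline{\mathfrak{X}}) \to \mathbf{Pic}^0(\overline{\mathfrak{X}},\mathcal{C})]$ and $M(\mathcal{V})$ realizing $H_c^1(\mathfrak{X},Rj'_*\mathbb{Q}_\ell(1))$ and $H_c^1(\mathcal{Y},Rk'_*\mathbb{Q}_\ell(1))$ via Proposition \ref{substep}, reduce via the morphism of short exact sequences to the divisor part (clear) and the $\mathbf{Pic}^0$ part, and finally handle the $\mathbf{Pic}^0$ part by restricting to an open substack of $\overline{\mathcal{Y}}$ with codimension-$2$ complement over which $f'$ is finite flat, invoking Fujiwara's trace map and its compatibility with the norm map from \cite[p.~136]{freitag}. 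One small imprecision: the target of the first horizontal arrow in your reduction diagram should be $H^{2d-1}(X,\mathbb{Q}_\ell(d-1))^\vee$ rather than $H^{2d-1}(X,j_!\mathbb{Q}_\ell(d-1))^\vee$ (the latter is the group realized by $M(\mathcal{U})$, not by $M^{2d-1}(X)^\vee$), but this does not affect the argument since you immediately pass to the inclusion into $V_\ell M(\mathcal{U})$.
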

\begin{proof}
Consider the 1-motives
\begin{align*}
M &:= [\mathbf{Div}^0_{\mathcal{Z}}(\overline{\mathfrak{X}}) \rightarrow \mathbf{Pic}^{0,red}_{\overline{\mathfrak{X}},\mathcal{C}}] \ \ \mathrm{and} \\
N &:= [\mathbf{Div}^0_{\mathcal{W}}(\overline{\mathcal{Y}}) \rightarrow \mathbf{Pic}^{0,red}_{\overline{\mathcal{Y}},\mathcal{D}}].
\end{align*}
It is clear that the map $\hat{f}_*$ of \ref{m2d-1map} extends to a map of 1-motives $\hat{f}_*: M \rightarrow N$.  Moreover, in the notation of \ref{dualversion}, we have $V_\ell M \cong H_c^1(\mathfrak{X},Rj'_*\mathbb{Q}_\ell(1))$ and $V_\ell N \cong H_c^1(\mathcal{Y},Rk'_*\mathbb{Q}_\ell(1))$, where $j': \mathcal{U} \hookrightarrow \mathfrak{X}$ and $k': \mathcal{V} \hookrightarrow \mathcal{Y}$ are the inclusions.  Since $H^{2d-1}(X,\mathbb{Q}_\ell(d-1))^\vee$ injects into $H_c^1(\mathfrak{X},Rj'_*\mathbb{Q}_\ell(1))$ and $H^{2d-1}(Y,\mathbb{Q}_\ell(d-1))^\vee$ injects into $H_c^1(\mathcal{Y},Rk'_*\mathbb{Q}_\ell(1))$, it suffices to show that $V_\ell \hat{f}_*: V_\ell M \rightarrow V_\ell N$ is compatible with the proper pushforward $f_*: H_c^1(\mathfrak{X},\mathbb{Q}_\ell(1)) \rightarrow H_c^1(\mathcal{Y},\mathbb{Q}_\ell(1))$.  Note further that $f_*$ and $V_\ell \hat{f}_*$ both induce morphisms  of short exact sequences
\begin{equation*}
\begin{CD}
0 @>>> H_c^1(\mathfrak{X},\mathbb{Q}_\ell(1)) @>>> H_c^1(\mathfrak{X},Rj'_*\mathbb{Q}_\ell(1)) @>>> \mathrm{Div}_{\mathcal{Z}}^0(\overline{\mathfrak{X}}) \otimes \mathbb{Q}_\ell @>>> 0 \\
@. @V {f_*,V_\ell \hat{f}_*} VV  @V {f_*,V_\ell \hat{f}_*} VV @V {f_*,V_\ell \hat{f}_*} VV \\
0 @>>> H_c^1(\mathcal{Y},\mathbb{Q}_\ell(1)) @>>> H_c^1(\mathcal{Y},Rk'_*\mathbb{Q}_\ell(1)) @>>> \mathrm{Div}_{\mathcal{W}}^0(\overline{\mathcal{Y}}) \otimes \mathbb{Q}_\ell @>>> 0.
\end{CD}
\end{equation*}
Therefore it suffices to show that the maps induced by $f_*$ and $V_\ell \hat{f}_*$ agree on $H_c^1(\mathfrak{X},\mathbb{Q}_\ell(1))$ and on $\mathrm{Div}_{\mathcal{Z}}^0(\overline{\mathfrak{X}}) \otimes \mathbb{Q}_\ell$.  Since $f_*$ and $V_\ell \hat{f}_*$ are both defined by proper pushforward on $\mathrm{Div}_{\mathcal{Z}}^0(\overline{\mathfrak{X}}) \otimes \mathbb{Q}_\ell$, it is clear that the action of $V_\ell \hat{f}_*$ and $f_*$ on this group agree.  Therefore we are left with showing that $f_*$ and $V_\ell \hat{f}_*$ induce the same map on $H_c^1(\mathfrak{X},\mathbb{Q}_\ell(1))$.  We state this as the following lemma, which completes the proof of \ref{prop86}.
\end{proof}
\begin{lem}
In the notation above, we have a commutative diagram
\begin{equation*}
\begin{CD}
V_\ell \mathrm{Pic}^0(\overline{\mathfrak{X}},\mathcal{C}) @> \sim >> H_c^1(\mathfrak{X},\mathbb{Q}_\ell(1)) \\
@V {V_\ell \hat{f}_*} VV @V {f_*} VV \\
V_\ell \mathrm{Pic}^0(\overline{\mathcal{Y}},\mathcal{D}) @> \sim >> H_c^1(\mathcal{Y},\mathbb{Q}_\ell(1)),
\end{CD}
\end{equation*}
where the horizontal arrows are the canonical comparison isomorphisms.
\end{lem}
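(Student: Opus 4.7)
The horizontal comparison isomorphisms are both obtained from the Kummer exact sequences for $\mathbb{G}_{m,\overline{\mathfrak{X}},\mathcal{C}}$ and $\mathbb{G}_{m,\overline{\mathcal{Y}},\mathcal{D}}$, as in the proof of Proposition \ref{lefthand}: they arise as the inverse limit over $m$ of the connecting isomorphisms
\begin{equation*}
\mathrm{Pic}^0(\overline{\mathfrak{X}},\mathcal{C})[\ell^m] \stackrel{\sim}{\longrightarrow} H_c^1(\mathfrak{X},\mu_{\ell^m}),\qquad \mathrm{Pic}^0(\overline{\mathcal{Y}},\mathcal{D})[\ell^m] \stackrel{\sim}{\longrightarrow} H_c^1(\mathcal{Y},\mu_{\ell^m}).
\end{equation*}
By naturality of the Kummer boundary map, it suffices to verify commutativity of the analogous square at each finite level $n = \ell^m$ and then pass to the inverse limit tensored with $\mathbb{Q}_\ell$. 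The two vertical arrows then both arise, in principle, from a single sheaf-theoretic morphism: the norm map $N:\overline{f}'_*\mathbb{G}_m \to \mathbb{G}_m$, which reduces modulo $n$-th powers (for $n$ prime to $p$) to the Fujiwara trace map $tr_{\overline{f}'}:\overline{f}'_*\mu_n \to \mu_n$; this is the classical norm-trace compatibility recorded in \cite[p.\ 136]{freitag} and already used in the proof of Proposition \ref{compatible3}.

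\textbf{Reduction to a finite flat locus.} By Proposition \ref{pushforward2}, one chooses an open substack $\mathcal{V}' \subset \overline{\mathcal{Y}}$ whose complement has codimension $\geq 2$ and such that $\overline{f}'$ is finite flat over $\mathcal{V}'$; set $\mathcal{U}' := (\overline{f}')^{-1}(\mathcal{V}')$. The construction of $\hat{f}_*$ in loc.\ cit.\ factors as the composition
\begin{equation*}
\mathbf{Pic}^{0,red}_{\overline{\mathfrak{X}},\mathcal{C}} \xrightarrow{\mathrm{restr}} \mathbf{Pic}^{red}_{\mathcal{U}',\mathcal{U}'\cap\mathcal{C}} \xrightarrow{\;N\;} \mathbf{Pic}^{red}_{\mathcal{V}',\mathcal{V}'\cap\mathcal{D}},
\end{equation*}
where the image of the composite lies in $\mathbf{Pic}^{0,red}_{\overline{\mathcal{Y}},\mathcal{D}} \hookrightarrow \mathbf{Pic}^{red}_{\mathcal{V}',\mathcal{V}'\cap\mathcal{D}}$. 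On the cohomology side, I claim that the Poincar\'{e}-dual pushforward $f_*$ admits an analogous description: write it as the composition of the open restriction/extension by zero relating $\mathfrak{X}$ and $\mathfrak{X}\cap\mathcal{U}'$ with the trace pushforward $tr_{\overline{f}'}: H_c^1(\mathfrak{X}\cap\mathcal{U}',\mu_n) \to H_c^1(\mathcal{Y}\cap\mathcal{V}',\mu_n)$ supplied by \cite[Thm.\ 4.1]{fujiwara}, and then use Poincar\'{e} duality and the codimension-$\geq 2$ hypothesis to identify this composition with the genuine pushforward on $H_c^1(\mathfrak{X},\mu_n)$.

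\textbf{Conclusion and obstacle.} Once both vertical maps have been expressed via restriction to the finite flat locus followed by norm (resp.\ trace), the compatibility on the finite flat piece is immediate: the commutative ladder of Kummer sequences induced by $N$ and $tr_{\overline{f}'}$ identifies the two connecting maps, so the required square commutes modulo $n$ for every $n = \ell^m$, and passing to the limit proves the lemma. The main obstacle is making the reduction step rigorous on the cohomology side: unlike for $\mathbf{Pic}$, compactly supported cohomology does not enjoy an obvious restriction map to opens, so one must argue that the Poincar\'{e}-dual map $f_*$ agrees with the Fujiwara trace in exactly the range needed. This is precisely the kind of dévissage carried out in the last paragraph of the proof of Proposition \ref{compatible3}, applied here to the pair $(\overline{\mathfrak{X}},\mathcal{C})$ and $(\overline{\mathcal{Y}},\mathcal{D})$ in place of single smooth stacks; the codimension-$\geq 2$ condition on $\overline{\mathcal{Y}}\setminus\mathcal{V}'$, together with divisibility of $\mathrm{Pic}^0$ and the identification $\mathrm{Pic}(\overline{\mathcal{Y}},\mathcal{D})\hookrightarrow\mathrm{Pic}(\mathcal{V}',\mathcal{V}'\cap\mathcal{D})$ established in the proof of Lemma \ref{hardlemma}, ensures that no information is lost in the reduction.
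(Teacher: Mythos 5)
Your proposal takes essentially the same route as the paper: restrict to an open $B\subset\overline{\mathcal{Y}}$ with codimension-$\geq 2$ complement over which $\overline{f}'$ is finite flat (with $A=\overline{f}'^{-1}(B)$), and then compare the norm map on $\mathbb{G}_m$ with the trace map on $\mu_{\ell^n}$ via the Kummer ladder, exactly as in the Freitag--Kiehl reference. The ``obstacle'' you flag, however, is not actually one. Since $\overline{\mathfrak{X}}$ is proper, the group $H_c^1(\mathfrak{X},\mathbb{Q}_\ell(1))$ is by definition $H^1(\overline{\mathfrak{X}},\alpha'_!\mathbb{Q}_\ell(1))$, ordinary sheaf cohomology of the extension-by-zero sheaf on the proper stack $\overline{\mathfrak{X}}$; such a group restricts to any open $A\subset\overline{\mathfrak{X}}$ in the usual way (and likewise $H_c^1(\mathcal{Y},\mathbb{Q}_\ell(1))=H^1(\overline{\mathcal{Y}},\beta'_!\mathbb{Q}_\ell(1))$ restricts to $B$). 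The paper observes that these restriction maps $H_c^1(\mathfrak{X},\mathbb{Q}_\ell(1))\hookrightarrow H^1(A,\alpha_!\mathbb{Q}_\ell(1))$ and $H_c^1(\mathcal{Y},\mathbb{Q}_\ell(1))\hookrightarrow H^1(B,\beta_!\mathbb{Q}_\ell(1))$ are injective, so the whole comparison may be carried out over the finite flat locus, where $f_*$ on cohomology is literally the trace mapping and your norm-trace ladder applies directly. There is no need to invoke Poincar\'e duality again, nor to re-run the d\'evissage from the proof of Proposition~\ref{compatible3}; once you replace the vague appeal to ``no information is lost'' with the explicit restriction-and-injectivity step above, your argument matches the paper's.
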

\begin{proof}
Let $B \subset \overline{\mathcal{Y}}$ be an open substack such that $f^{-1}(B) \rightarrow B$ is finite flat, and $\overline{\mathcal{Y}} - B$ has codimension 2.  Let $A = f^{-1}(B)$, and let $\alpha: A \cap \mathfrak{X} \hookrightarrow A$ and $\beta: B \cap \mathcal{Y} \hookrightarrow B$ be the inclusions.  Then the inclusions $A \hookrightarrow \overline{\mathfrak{X}}$ and $B \hookrightarrow \overline{\mathcal{Y}}$ induce a commutative diagram
\begin{equation*}
\xymatrix{ 
H_c^1(\mathfrak{X},\mathbb{Q}_\ell(1)) \ar@{^{(}->}[r] \ar[d]^{f_*} & H^1(A,\alpha_!\mathbb{Q}_\ell(1)) \ar[d]^{f_*} \\
H_c^1(\mathcal{Y},\mathbb{Q}_\ell(1)) \ar@{^{(}->}[r] & H^1(B,\beta_!\mathbb{Q}_\ell(1)) 
}
\end{equation*}
where the horizontal arrows are injections.   Since $f: A \rightarrow B$ is finite flat, $f_*$ is induced by the trace mapping $Tr: f_*f^*\mathbb{Q}_\ell(1) \rightarrow \mathbb{Q}_\ell(1)$.  By \cite[p. 136]{freitag}, we have a commutative diagram of sheaves
\begin{equation*}
\begin{CD}
0 @>>> f_*\alpha_!\mu_{\ell^n} @>>> f_*\mathbb{G}_{m,A,\mathcal{C} \cap A} @> \ell^n >> f_*\mathbb{G}_{m,A,\mathcal{C} \cap A} @>>> 0 \\
@. @V Tr VV @V N VV @V N VV \\
0 @>>> \beta_! \mu_{\ell^n} @>>> \mathbb{G}_{m,B,\mathcal{D} \cap B} @> \ell^n >> \mathbb{G}_{m,B,\mathcal{D} \cap B} @>>> 0.
\end{CD}
\end{equation*}
Here $N: f_*\mathbb{G}_{m,A} \rightarrow \mathbb{G}_{m,B}$ is the norm mapping.  Taking global sections and then inverse limits induces a commutative diagram
\begin{equation*}
\begin{CD}
0 @>>> G @>>> H^1(A,\alpha_!\mathbb{Q}_\ell(1)) @>>> V_\ell \mathrm{Pic}(A,\mathcal{C} \cap A) @>>> 0 \\
@. @VVV @V f_* VV @V N VV \\
0 @>>> 0 @>>> H^1(B,\beta_!\mathbb{Q}_\ell(1)) @>>> V_\ell \mathrm{Pic}(B,\mathcal{D} \cap B) @>>> 0,
\end{CD}
\end{equation*}
where 
\begin{equation*}
G := \ilim_n \frac{\mathrm{Ker}(\mathcal{O}^*(A) \rightarrow \mathcal{O}^*(A \cap \mathcal{C}))}{\ell^n \mathrm{Ker}(\mathcal{O}^*(A) \rightarrow \mathcal{O}^*(A \cap \mathcal{C}))}
\end{equation*}
(note that the corresponding group for $B$ is zero since $B$ is of codimension 2 in the smooth proper Deligne-Mumford stack $\overline{\mathcal{Y}}$).  In summary, we have a commuting diagram
\begin{equation*}
\xymatrix{
H_c^1(\mathfrak{X},\mathbb{Q}_\ell(1)) \ar@{^{(}->}[r] \ar[d]^{f_*} & H^1(A,\alpha_!\mathbb{Q}_\ell(1)) \ar[r] \ar[d]^{f_*} & {V_\ell \mathrm{Pic}(A,\mathcal{C} \cap A)} \ar[d]^N \\
H_c^1(\mathcal{Y},\mathbb{Q}_\ell(1)) \ar@{^{(}->}[r] & H^1(B,\beta_!\mathbb{Q}_\ell(1)) \ar[r]^{\sim} & {V_\ell \mathrm{Pic}(B,\mathcal{D} \cap B)}
}
\end{equation*}
showing that the cohomological pushforward $f_*$ is compatible with taking norms of line bundles.  On the other hand, the pushforward of 1-motives $\hat{f}_*: \mathrm{Pic}^0(\overline{\mathfrak{X}},\mathcal{C}) \rightarrow \mathrm{Pic}^0(\overline{\mathcal{Y}},\mathcal{D})$ is defined so that there is a commutative diagram
\begin{equation*}
\xymatrix{
\mathrm{Pic}^0(\overline{\mathfrak{X}},\mathcal{C}) \ar[d] \ar[r] & \mathrm{Pic}(A,\mathcal{C} \cap A) \ar[d] \\
\mathrm{Pic}^0(\overline{\mathcal{Y}},\mathcal{D}) \ar@{^{(}->}[r] & \mathrm{Pic}(B,\mathcal{D} \cap B).
}
\end{equation*}
Applying $V_\ell(-)$ to this diagram, and combining with the diagram above, shows that $f_*$ and $V_\ell \hat{f}_*$ are compatible in the sense of the proposition statement.
\end{proof}
\begin{pg}
We can finally show that $M^{2d-1}(X)$ is independent of choice of compactification and resolution.  In particular, we have the following: 
\begin{prop}\label{uniqueness8}
Let $X$ be a $d$-dimensional separated finite type $k$-scheme, and let $\overline{\mathfrak{X}}$, $\overline{\mathfrak{X}}'$ be two distinct resolutions of compactifications of $X$.  Let $M^{2d-1}(X)$ and $M^{2d-1}(X)'$ be the 1-motives constructed using $\overline{\mathfrak{X}}$ and $\overline{\mathfrak{X}}'$, respectively.  Then there exists a unique isogeny of 1-motives $$a: M^{2d-1}(X) \rightarrow M^{2d-1}(X)'$$ fitting into a diagram
\begin{equation*}
\begin{CD}
V_\ell M^{2d-1}(X) @> \alpha_X >> H^{2d-1}(X,\mathbb{Q}_\ell(d)) \\
@V {V_\ell a} VV @V = VV \\
V_\ell M^{2d-1}(X)' @> {\alpha_X}' >> H^{2d-1}(X,\mathbb{Q}_\ell(d))
\end{CD}
\end{equation*}
for all $\ell \not= p$, where $\alpha_X$ and $\alpha_X'$ are the comparison isomorphisms of Proposition \ref{m2d-1compatibility}.
\end{prop}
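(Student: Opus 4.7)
The plan is to mimic the proof of Proposition \ref{uniqueness7}, substituting Proposition \ref{prop86} for Proposition \ref{prop716} throughout. More precisely, given two compactifications $X \hookrightarrow \overline{X}$, $X \hookrightarrow \overline{X}'$ together with resolutions $\overline{\mathfrak{X}} \to \overline{X}$, $\overline{\mathfrak{X}}' \to \overline{X}'$, I would first form the third compactification $\overline{X}'' := \overline{X} \times_X \overline{X}'$, which dominates both $\overline{X}$ and $\overline{X}'$ and restricts to the identity on $X$. Writing $\overline{\mathfrak{X}} = [V/G]$ and $\overline{\mathfrak{X}}' = [V'/G']$, form $\mathcal{Y} := \overline{\mathfrak{X}} \times_X \overline{\mathfrak{X}}' = [V \times_X V'/G \times G']$ and then apply \cite[Thm.\ 7.3]{dejong} to obtain a smooth proper global quotient stack $\overline{\mathfrak{X}}'' = [W/H]$ mapping to $\mathcal{Y}$ such that $\overline{\mathfrak{X}}'' \to \overline{X}''$ is a resolution. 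Replacing $\overline{\mathfrak{X}}$ and $\overline{\mathfrak{X}}'$ by $\overline{\mathfrak{X}} \times BH$ and $\overline{\mathfrak{X}}' \times BH$ (which does not change the associated 1-motive $M^{2d-1}(X)$, since passing to $BH$-products does not alter line bundles, Neron--Severi groups, or divisor groups), we arrange that the induced morphisms $\overline{\mathfrak{X}}'' \to \overline{\mathfrak{X}} \times BH$ and $\overline{\mathfrak{X}}'' \to \overline{\mathfrak{X}}' \times BH$ are representable and lie over the maps $\overline{X}'' \to \overline{X}$, $\overline{X}'' \to \overline{X}'$, each restricting to the identity on $X$.

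Let $M^{2d-1}(X)''$ denote the 1-motive of \ref{m2d-1setup} built from $\overline{\mathfrak{X}}''$. The functoriality construction of Section 8 then yields morphisms of 1-motives
\begin{equation*}
\hat{f}_1^* : M^{2d-1}(X) \longrightarrow M^{2d-1}(X)'', \qquad \hat{f}_2^* : M^{2d-1}(X)' \longrightarrow M^{2d-1}(X)''.
\end{equation*}
By Proposition \ref{prop86}, applying $V_\ell$ to each of these maps intertwines the comparison isomorphisms $\alpha_X$, $\alpha_X'$, $\alpha_X''$ with the pullback on $H^{2d-1}(X,\mathbb{Q}_\ell(d))$ induced by $\mathrm{id}_X$, that is, with the identity. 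Hence $V_\ell \hat{f}_1^*$ and $V_\ell \hat{f}_2^*$ are isomorphisms for every $\ell \neq p$, and Proposition \ref{faithful} (which asserts that $V_\ell$ is exact, faithful, and reflects isomorphisms on $\mathscr{M}^1(k)\otimes\mathbb{Q}$) guarantees that $\hat{f}_1^*$ and $\hat{f}_2^*$ are themselves isogenies in $\mathscr{M}^1(k)\otimes\mathbb{Q}$.

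The required isogeny is then the composition
\begin{equation*}
a := (\hat{f}_2^*)^{-1} \circ \hat{f}_1^* : M^{2d-1}(X) \longrightarrow M^{2d-1}(X)'.
\end{equation*}
By construction, $V_\ell a$ is the identity on $H^{2d-1}(X,\mathbb{Q}_\ell(d))$ for every $\ell \neq p$ after identification via the $\alpha$'s, so $a$ fits into the stated diagram. Uniqueness of $a$ with this property is then an immediate consequence of the faithfulness of $V_\ell$: any two candidates would have equal $V_\ell$-image for some (equivalently, every) $\ell$, hence would coincide in $\mathscr{M}^1(k)\otimes\mathbb{Q}$.

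The main obstacle is, as in Proposition \ref{uniqueness7}, producing a common resolution $\overline{\mathfrak{X}}''$ which simultaneously admits representable morphisms to $\overline{\mathfrak{X}}$ and $\overline{\mathfrak{X}}'$ so that the covariant pushforward construction of Proposition \ref{pushforward2} (invoked via \ref{m2d-1setup}) actually applies. This is the point at which the global quotient form $[U/G]$ of de Jong's resolutions and the $BH$-product trick are essential; everything else is a formal transport through Proposition \ref{prop86} and Proposition \ref{faithful}.
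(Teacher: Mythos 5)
Your proposal is correct and is precisely the argument the paper has in mind: the paper's own proof of Proposition~\ref{uniqueness8} simply says to repeat the argument of Proposition~\ref{uniqueness7} verbatim, now that functoriality compatible with $\ell$-adic realizations has been established (Proposition~\ref{prop86}), and your write-up faithfully unpacks that argument with the correct substitution of \ref{prop86} for \ref{prop716}.
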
 
\begin{proof}
Since we have shown that these 1-motives are contravariantly functorial in a way that is compatible with $\ell$-adic realizations, we can prove this proposition by the exact same argument as used for Proposition \ref{uniqueness7}.
\end{proof}
\end{pg}
The end result of our work in the section is the following:
\begin{thm}\label{noncpctcodim1}
Let $k$ be an algebraically closed field, and $Sch_d/k$ the category of $d$-dimensional separated finite type $k$-schemes.  Then there exists a functor
\begin{equation*}
M^{2d-1}(-): (Sch_d/k)^{op} \rightarrow \mathscr{M}^1(k) \otimes \mathbb{Q},
\end{equation*}
unique up to canonical isomorphism, such that we have a natural isomorphism
\begin{equation*}
V_\ell M^{2d-1}(X) \cong H^{2d-1}(X,\mathbb{Q}_\ell(d-1))
\end{equation*}
for all $\ell \not= p$.
\end{thm}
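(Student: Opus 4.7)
(Plan for Theorem \ref{noncpctcodim1})
The proof is essentially a packaging of the work done in this section. The plan is to build $M^{2d-1}(-)$ as a functor on $(Sch_d/k)^{op}$ by choosing, for each object $X$, an auxiliary datum (a compactification $X \hookrightarrow \overline{X}$ together with a resolution $\overline{\pi}: \overline{\mathfrak{X}} \rightarrow \overline{X}$), and to then use Proposition \ref{uniqueness8} to rigidify the construction.

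First, for every $X$, fix an auxiliary choice $(\overline{X},\overline{\mathfrak{X}})$ and let $M^{2d-1}(X)$ be the Cartier dual of $[\mathbf{Div}_{\mathcal{Z}/Z}^0(\overline{\mathfrak{X}}) \rightarrow \mathbf{Pic}_{\overline{\mathfrak{X}},\mathcal{C}}^{0,red}]$. Proposition \ref{m2d-1compatibility} supplies a comparison isomorphism $\alpha_X: V_\ell M^{2d-1}(X) \stackrel{\sim}{\longrightarrow} H^{2d-1}(X,\mathbb{Q}_\ell(d))$ for every $\ell \neq p$. Proposition \ref{uniqueness8} shows that if $(\overline{X}',\overline{\mathfrak{X}}')$ is a second choice, there is a unique isogeny $M^{2d-1}(X) \stackrel{\sim}{\longrightarrow} M^{2d-1}(X)'$ in $\mathscr{M}^1(k)\otimes\mathbb{Q}$ that induces the identity on $H^{2d-1}(X,\mathbb{Q}_\ell(d))$ for every $\ell \neq p$; since $V_\ell$ is faithful and reflects isomorphisms (Proposition \ref{faithful}), these rigidifying isomorphisms automatically satisfy the cocycle condition among three distinct choices. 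Hence $M^{2d-1}(X)$ is well-defined up to canonical isomorphism.

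Next, for a morphism $f: X \rightarrow Y$ in $Sch_d/k$, define $\hat{f}^*: M^{2d-1}(Y) \rightarrow M^{2d-1}(X)$ as follows. Choose compactifications $\overline{X},\overline{Y}$ with $\overline{f}: \overline{X} \rightarrow \overline{Y}$ and then resolutions $\overline{\mathfrak{X}}, \overline{\mathcal{Y}}$ with a representable map $\overline{f}': \overline{\mathfrak{X}} \rightarrow \overline{\mathcal{Y}}$ as in Proposition \ref{resolvemap} and paragraph \ref{m2d-1setup}. Using the proper pushforward on divisors (available because $\overline{f}'$ is representable) and the pushforward on relative Picard schemes of Proposition \ref{pushforward2}, we obtain a morphism $\hat{f}_*$ as in (\ref{m2d-1map}); its Cartier dual is $\hat{f}^*$. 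To see that this map does not depend on the choice of compactified map and resolved representable lift, observe that any two such data are dominated by a third, and the two induced morphisms $V_\ell \hat{f}^*$ agree with $f^*: H^{2d-1}(Y,\mathbb{Q}_\ell(d)) \rightarrow H^{2d-1}(X,\mathbb{Q}_\ell(d))$ via the comparison isomorphisms, by Proposition \ref{prop86}. Faithfulness of $V_\ell$ then forces the two candidate morphisms of 1-motives to agree.

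Finally, functoriality in composition $(g \circ f)^* = \hat{f}^* \circ \hat{g}^*$ is checked by the same principle: both sides are morphisms in $\mathscr{M}^1(k)\otimes\mathbb{Q}$ whose $\ell$-adic realizations agree with $f^* \circ g^*$ on $H^{2d-1}(-,\mathbb{Q}_\ell(d))$ via the $\alpha_X$, so Proposition \ref{faithful} forces them to coincide. The identity morphism is handled the same way. This yields the desired contravariant functor, with the $\ell$-adic realization isomorphism built in; uniqueness of the functor up to canonical isomorphism follows, once again, from Proposition \ref{uniqueness8} applied pointwise together with the faithfulness of $V_\ell$. The main conceptual obstacle is the construction of the pushforward on relative Picard schemes used to define $\hat{f}_*$ in positive characteristic without resolution of singularities, and that obstacle has already been overcome in Proposition \ref{pushforward2}; what remains here is routine bookkeeping.
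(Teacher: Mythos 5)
Your proposal is correct and takes the same approach as the paper: the paper's own proof of Theorem \ref{noncpctcodim1} is the one-line remark ``This is simply a summary of our work in this section,'' and your plan spells out exactly which pieces (Propositions \ref{m2d-1compatibility}, \ref{pushforward2}, \ref{prop86}, \ref{uniqueness8}, and the faithfulness of $V_\ell$ from Proposition \ref{faithful}) are being assembled and why the cocycle, composition, and identity checks reduce to comparisons of $\ell$-adic realizations. (One minor remark: the twist in the theorem statement reads $\mathbb{Q}_\ell(d-1)$, which appears to be a typo for $\mathbb{Q}_\ell(d)$; your version is the one consistent with Proposition \ref{m2d-1compatibility} and with the theorem in the introduction.)
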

\begin{proof}
This is simply a summary of our work in this section.
\end{proof}
\section{Application to Independence of $\ell$}

Let $k$ be an algebraically closed field and $f: X \rightarrow X$ an endomorphism of a separated finite type $k$-scheme (we are primarily thinking of the case $k = \overline{\mathbb{F}}_q$ and $f$ is the geometric Frobenius endomorphism of a scheme $X$ defined over $\mathbb{F}_q$).  Then for any $i$ and $\ell \not= p$ we can define
\begin{equation*}
P_{\ell}^i(f,t) := \mathrm{det}(1 - tf \vert H^i(X,\mathbb{Q}_\ell))
\end{equation*}
and in case $f$ is proper,
\begin{equation*}
P_{\ell,c}^i(f,t) := \mathrm{det}(1 - tf \vert H_c^i(X,\mathbb{Q}_\ell)).
\end{equation*}
An old conjecture is that these polynomials have integer coefficients independent of $\ell$.  Based on our work on 1-motives, we can prove the following:
\begin{prop}\label{lindependence}
Let $f: X \rightarrow X$ be an endomorphism of a separated finite type $k$-scheme.  Then the polynomials $P_{\ell}^i(f,t)$ have integer coefficients independent of $\ell$ for $i = 0,1,2d- 1,2d$.  If $f$ is proper, then the same holds true for the polynomials $P_{\ell,c}^i(f,t)$ for the same values of $i$.
\end{prop}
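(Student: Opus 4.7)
The plan is to split into cases according to $i$, treating $i = 0$ and $i = 2d$ by direct combinatorial arguments on components of $X$, and $i = 1, 2d-1$ by appealing to the $1$-motive constructions $M^1(X)$, $M_c^1(X)$, $M^{2d-1}(X)$, $M_c^{2d-1}(X)$ from Sections~6--8 together with their functoriality in $X$.

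For $i = 0$, the group $H^0(X, \mathbb{Q}_\ell)$ has a $\mathbb{Q}_\ell$-basis indexed by the connected components of $X$, on which $f^*$ acts by permutation; likewise $H_c^0(X, \mathbb{Q}_\ell)$ is indexed by proper connected components with $f^*$ acting by permutation when $f$ is proper. The characteristic polynomial is a product of polynomials of the form $t^n - 1$, one for each $f$-orbit, hence in $\mathbb{Z}[t]$ independently of $\ell$. For $i = 2d$, the Poincar\'{e}-duality identification of $H_c^{2d}(X, \mathbb{Q}_\ell(d))$ with the free $\mathbb{Q}_\ell$-vector space on the $d$-dimensional irreducible components of $X$ (cf.\ the argument of Lemma~\ref{lem87}) shows that $f^*$ is described by an integer matrix (multiplying by the local degrees of $f$ restricted to each component mapping surjectively to a given one), and the analogous analysis via a resolution exhibits $H^{2d}(X, \mathbb{Q}_\ell(d))$ as spanned by the proper $d$-dimensional components; in either case the characteristic polynomial is integer and $\ell$-independent.

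For $i = 1$ and $i = 2d - 1$, I would invoke the contravariant functoriality of the relevant $1$-motives to obtain endomorphisms $M^i(f)$ and $M_c^i(f)$, then use the $\ell$-adic comparison theorems of the preceding sections to identify the action of $f^*$ on cohomology with the $V_\ell$-realization of these endomorphisms. The crux is then the following lemma: for any $1$-motive $M = [L \to G]$ over $k$ with $G$ an extension of an abelian variety $A$ by a torus $T$, and any endomorphism $\varphi$ of $M$, the characteristic polynomial $\det(1 - t V_\ell(\varphi) \mid V_\ell M)$ lies in $\mathbb{Z}[t]$ and is independent of $\ell$. Indeed, by the functorial exact sequences $0 \to V_\ell T \to V_\ell G \to V_\ell A \to 0$ and $0 \to V_\ell G \to V_\ell M \to L \otimes \mathbb{Q}_\ell \to 0$, the polynomial factors as a product of three characteristic polynomials, each of which is integer and $\ell$-independent: the factor for $L$ comes from an endomorphism of a finitely generated free abelian group, the factor for $T$ comes from the induced endomorphism of the cocharacter lattice $\mathrm{Hom}(\mathbb{G}_m, T)$, and the factor for $A$ is the classical theorem of Weil on characteristic polynomials of endomorphisms of abelian varieties.

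The main obstacle is that $M^{2d-1}(X)$ and $M_c^{2d-1}(X)$ live in $\mathscr{M}^1(k) \otimes \mathbb{Q}$, so a priori the induced endomorphism is only a $\mathbb{Q}$-linear combination of integer morphisms of $1$-motives, and the above lemma immediately gives only rational (not integer) coefficients. To upgrade rationality to integrality I would choose compactifications and resolutions adapted to $f$, applying de Jong's alteration theorem to the closure of the graph of $f$ in $\overline{X} \times \overline{X}$ (in the manner of Proposition~\ref{resolvemap}) so that $f$ lifts to a genuine morphism $\overline{\mathfrak{X}} \to \overline{\mathfrak{X}}$ of compactified resolutions; the resulting endomorphism of the $1$-motive is then honestly integer, and the lemma applies. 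The same technique (working in $\mathscr{M}^1(k)[1/p]$) handles $i = 1$, where the integrality obstruction is much milder since $M^1(f)$ is already constructed as an integer morphism in the proof of Proposition~\ref{m1compatibilityladic}.
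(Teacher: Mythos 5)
Your proposal takes essentially the same route as the paper: dispatch $i = 0, 2d$ by identifying the cohomology groups with free modules on connected/irreducible components, and reduce $i = 1, 2d-1$ to the $\ell$-adic realizations of the $1$-motives $M^1_{(c)}(X)$, $M^{2d-1}_{(c)}(X)$ together with the d\'{e}vissage lemma (lattice / torus / abelian variety) on characteristic polynomials of $1$-motive endomorphisms. Two remarks.

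First, a small slip in the $i = 0$ case: $f^*$ on $H^0(X,\mathbb{Q}_\ell) \cong \mathbb{Q}_\ell^{C(X)}$ is not a permutation matrix in general, since $\pi_0(f)$ need not be a bijection (it can identify components), so the characteristic polynomial is not a product of $t^n - 1$'s. It is nonetheless the transpose of an integer $0$--$1$ matrix, so the conclusion (integer, $\ell$-independent) is unaffected.

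Second, and more substantively, you have put your finger on a genuine subtlety that the paper passes over. The paper's supplementary lemma is stated for an honest endomorphism of a $1$-motive in $\mathscr{M}^1(k)$, but $M^{2d-1}_{(c)}(X)$ is only defined up to isomorphism in $\mathscr{M}^1(k)\otimes\mathbb{Q}$ (and $M^1_{(c)}(X)$ only in $\mathscr{M}^1(k)[1/p]$); as you observe, transporting the induced morphism through a comparison isogeny introduces denominators and a priori gives only rational coefficients (resp. coefficients in $\mathbb{Z}[1/p]$). That said, your proposed repair is not obviously correct: applying de Jong's theorem in the manner of Prop.~\ref{resolvemap} to the closure of the graph of $f$ in $\overline{X}\times\overline{X}$ produces a resolution of that closure together with maps to \emph{two} resolutions of (compactifications of) $X$, one from each projection, and nothing forces those two resolutions to coincide; so a lift $\overline{\mathfrak{X}}\to\overline{\mathfrak{X}}$ does not simply drop out of the construction. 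Absent a further argument, the honest conclusion for $i = 2d-1$ is rationality rather than integrality. For the Frobenius endomorphism, which is the case of main interest, rationality suffices, since the eigenvalues are known to be algebraic integers and the paper itself remarks on this after Corollary~\ref{surfaces2}.
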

\begin{proof}
First we handle the cases $i = 0$ and $i = 2d$.  Let $C(X)$ and $PC(X)$ be, respectively, the sets of connected components and proper connected components of $X$.  Also, let $I_d(X)$ and $PI_d(X)$ be, respectively, the sets of $d$-dimensional irreducible components and $d$-dimensional proper irreducible components of $X$.  It is clear that we have functorial isomorphisms $H^0(X,\mathbb{Q}_{\ell}) \cong \mathbb{Q}_{\ell}^{C(X)}$ and $H_c^0(X,\mathbb{Q}_\ell) \cong \mathbb{Q}_{\ell}^{PC(X)}$.  This proves the case $i =0$.   By Lemmas \ref{tracemap} and \ref{lem87}, we have functorial isomorphisms $H^{2d}(X,\mathbb{Q}_\ell(d)) \cong \mathbb{Q}_\ell^{PI_d(X)}$ and $H_c^{2d}(X,\mathbb{Q}_\ell(d)) \cong \mathbb{Q}_\ell^{I_d(X)}$.  This deals with the case $i = 2d$.  We have shown that $M_{(c)}^{i}(X)$ is the realization of a natural 1-motive for $i = 1,2d-1$, so we see that the following proposition completes the proof.
\end{proof}
\begin{prop}
Let $M = [L \rightarrow G]$ be a 1-motive over $k$, and let $f: M \rightarrow M$ be an endomorphism of $M$.  For any $\ell \not= p$ define the polynomial
\begin{equation*}
P_{\ell}^i(t) := \mathrm{det}(1 - tf \vert V_\ell M).
\end{equation*}
Then $P_\ell^i(t)$ has integer coefficients independent of $\ell$. 
\end{prop}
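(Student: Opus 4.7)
The plan is to reduce to the graded pieces of the weight filtration on $M$, where integrality and $\ell$-independence are either immediate or classical. Recall that $M = [L \to G]$ comes with the canonical weight filtration
\[
W_{-2}M = T \;\subseteq\; W_{-1}M = G \;\subseteq\; W_0 M = M,
\]
where $0 \to T \to G \to A \to 0$ is the toric--abelian extension. The graded pieces are $T$, $A$, $L$ in weights $-2,-1,0$. I first observe that any endomorphism $f\colon M \to M$ in $\mathscr{M}^1(k)$ respects this filtration: by the definition of a morphism of 1-motives, $f$ is a pair $(f_L, f_G)$ compatible with the structure map $L \to G$, so $W_{-1}$ is preserved; and any endomorphism of a semi-abelian variety preserves the toric part, because the composition $T \to G \xrightarrow{f_G} G \to A$ is a homomorphism from a torus to an abelian variety and hence vanishes. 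Thus $f$ induces endomorphisms $f_T$, $f_A$, $f_L$ on the graded pieces.

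Next I would apply the $\ell$-adic realization functor $V_\ell$, which is exact (Proposition \ref{faithful}), to obtain a filtration of $V_\ell M$ with graded pieces $V_\ell T$, $V_\ell A$, $V_\ell L$, on which $f$ acts as $V_\ell f_T$, $V_\ell f_A$, $V_\ell f_L$. Characteristic polynomials multiply along filtrations, giving
\[
P_\ell(t) \;=\; \det(1 - tf_T \mid V_\ell T)\cdot \det(1 - tf_A \mid V_\ell A)\cdot \det(1 - tf_L \mid V_\ell L),
\]
so it suffices to establish integrality and $\ell$-independence for each factor separately.

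Each factor is then handled by standard input. For the lattice: $L(\overline{k})$ is a free finitely generated $\mathbb{Z}$-module, $V_\ell L \cong L(\overline{k})\otimes_\mathbb{Z}\mathbb{Q}_\ell$, and $f_L$ is induced from a $\mathbb{Z}$-linear endomorphism of $L(\overline{k})$, so its characteristic polynomial lies in $\mathbb{Z}[t]$ and is visibly $\ell$-independent. For the torus: after base change to $\overline{k}$ (which leaves $V_\ell T$ unchanged as a $\mathbb{Q}_\ell$-vector space with $f_T$-action), the cocharacter lattice $X_*(T) = \mathrm{Hom}_{\overline{k}}(\mathbb{G}_m, T_{\overline{k}})$ is free finitely generated and there is a canonical isomorphism $V_\ell T \cong X_*(T)\otimes_\mathbb{Z}\mathbb{Q}_\ell(1)$, where $f_T$ acts through its induced $\mathbb{Z}$-linear endomorphism of $X_*(T)$ and trivially on the Tate twist; so again the characteristic polynomial is integral and $\ell$-independent. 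For the abelian variety: this is the classical theorem of Weil on the characteristic polynomial of an endomorphism acting on $V_\ell A$.

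No step presents a serious obstacle; the only conceptual point is the observation that morphisms of 1-motives automatically preserve the weight filtration, which reduces the statement to three well-known cases.
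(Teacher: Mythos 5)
Your proposal is correct and takes essentially the same approach as the paper: both reduce, via the exactness of $V_\ell$, to the graded pieces $T$, $A$, $L$ of the weight filtration and then invoke the standard results for lattices, tori, and abelian varieties. Your write-up is a bit more explicit than the paper's (you spell out why the toric part is preserved and give the cocharacter-lattice description of $V_\ell T$, where the paper simply cites Demazure for both the torus and abelian-variety cases), but the underlying argument is identical.
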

\begin{proof}
The endomorphism $f$ induces a commutative diagram
\begin{equation*}
\begin{CD}
0 @>>> V_\ell G @>>> V_\ell M @>>> V_\ell L @>>> 0 \\
@. @V f VV @V f VV @V f VV \\
0 @>>> V_\ell G @>>> V_\ell M @>>> V_\ell L @>>> 0,
\end{CD}
\end{equation*}
so it suffices to prove the proposition individually for $V_\ell G$ and $V_\ell L$.  Since $V_\ell L = L \otimes \mathbb{Q}_\ell$, the statement is clear for $V_\ell L$.  For $V_\ell G$, let $T$ be the torus part of $G$ and $A$ the abelian quotient.  Then $f$ induces a diagram
\begin{equation*}
\begin{CD}
0 @>>> V_\ell T @>>> V_\ell G @>>> V_\ell A @>>> 0 \\
@. @V f VV @V f VV @V f VV \\
0 @>>> V_\ell T @>>> V_\ell G @>>> V_\ell A @>>> 0
\end{CD}
\end{equation*}
So it suffices to prove the proposition individually for a torus $T$ and an abelian variety $A$, where both cases are well known \cite[p. 96]{Dem}.
\end{proof}

\begin{pg}
Now consider the case when $X$ is 2-dimensional.  Then for any endomorphism $f: X \rightarrow X$, we have proved $\ell$-independence for $P_{\ell,(c)}^i(f,t)$ for all $i$ except $i = 2$.  But this single remaining value of $i$ can be dealt with by the trace formula (for certain $f$).  We obtain the following:
\begin{cor}\label{surfaces}
Let $X$ be a 2-dimensional separated finite type $k$-scheme.  If $f: X \rightarrow X$ is any proper endomorphism, then for all values of $i$, the polynomial $P_{\ell,c}^i(t)$ has rational coefficients independent of $\ell$.  If $f: X \rightarrow X$ is any quasi-finite endomorphism, then the polynomial $P_{\ell}^i(t)$ has rational coefficients independent of $\ell$ for all $i$.
\end{cor}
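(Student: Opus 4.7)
The strategy is to reduce to Proposition \ref{lindependence} via a Lefschetz trace formula. Set $d = 2$, so Proposition \ref{lindependence} already gives $\ell$-independence of $P_{\ell,c}^i(f,t)$ (resp.\ $P_\ell^i(f,t)$) for $i \in \{0,1,3,4\}$, leaving only the middle degree $i = 2$ to be handled. The key observation is that the characteristic polynomial of an endomorphism of a finite-dimensional $\mathbb{Q}_\ell$-vector space is determined by the traces of all of its powers via Newton's identities, so it suffices to show $\ell$-independence of $\mathrm{Tr}(f^n \mid H_{(c)}^2(X,\mathbb{Q}_\ell))$ for every $n \geq 1$.

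First I would handle the proper case. By Fujiwara's trace formula \cite[5.4.5]{fuj}, for any proper endomorphism $f : X \to X$ and any $n \geq 1$, the alternating sum
\begin{equation*}
L(f^n) := \sum_{i=0}^{4} (-1)^i \mathrm{Tr}(f^n \mid H_c^i(X,\mathbb{Q}_\ell))
\end{equation*}
equals the naive Lefschetz number computed from the fixed-point data of $f^n$, hence is an integer independent of $\ell$. Combined with Proposition \ref{lindependence} applied to each $f^n$ (which gives $\ell$-independence of $\mathrm{Tr}(f^n \mid H_c^i(X,\mathbb{Q}_\ell))$ for $i \neq 2$), this forces $\mathrm{Tr}(f^n \mid H_c^2(X,\mathbb{Q}_\ell))$ to be $\ell$-independent for every $n$, and thus $P_{\ell,c}^2(f,t)$ has $\ell$-independent (rational) coefficients.

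For the quasi-finite non-proper case, I would run exactly the same argument but use the trace formula for ordinary cohomology $H^i(X,\mathbb{Q}_\ell)$ supplied by \cite[Thm 1.1]{chern}, which is stated precisely for quasi-finite endomorphisms. That reference provides an $\ell$-independent expression for the alternating sum $\sum_i (-1)^i \mathrm{Tr}(f^n \mid H^i(X,\mathbb{Q}_\ell))$, and the same bookkeeping then yields $\ell$-independence of $P_\ell^2(f,t)$.

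The main (essentially notational) obstacle is checking that the hypotheses of the cited trace formulas are indeed satisfied for arbitrary proper, respectively quasi-finite, endomorphisms of a possibly singular $2$-dimensional separated scheme of finite type over $k$; once that is verified, everything else is a purely linear-algebra consequence of Proposition \ref{lindependence}. Note that the coefficients produced this way are a priori only rational: we cannot claim integrality in degree $2$ because, unlike the cases $i \in \{0,1,3,4\}$ where the 1-motivic (or explicit combinatorial) description exhibits $\mathbb{Z}$-structures, here we only know the trace values lie in $\mathbb{Q}$.
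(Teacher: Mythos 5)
Your argument follows exactly the paper's (one-line) proof: reduce to $i = 2$ via Proposition~\ref{lindependence}, then recover the degree-$2$ characteristic polynomial from the $\ell$-independence of alternating traces supplied by the cited trace formulas, using Newton's identities. One correction, though: Fujiwara's theorem \cite[5.4.5]{fuj} does \emph{not} assert that $L(f^n)$ equals a naive Lefschetz number for every $n \geq 1$; that conclusion is obtained only after composing the correspondence with a sufficiently high power $F^m$ of the geometric Frobenius, so what one actually controls directly is the $\ell$-independence of $L(f^n \circ F^m)$ for $m \gg 0$. Passing from the $\ell$-independent family $\mathrm{Tr}(f^n F^m \mid H_c^2(X,\mathbb{Q}_\ell))$, $n \geq 0$, $m \gg 0$, back to the characteristic polynomial of $f$ alone requires an additional step (for instance, separating the joint generalized eigenspaces of the commuting operators $f$ and $F$, using that $F$ acts invertibly and a Vandermonde argument in the exponent $m$). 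That step is standard, and the paper glosses over it just as you do, but it is more than a hypothesis-check and should not be filed under ``essentially notational.'' The same caveat applies to the quasi-finite case via \cite[Thm 1.1]{chern}.
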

\begin{proof}
The statement for $P_{\ell,c}^i(t)$ follows from the trace formula on compactly supported cohomology, known as Fujiwara's theorem \cite[5.4.5]{fuj}.  The statement for $P_\ell^i(t)$ follows from a trace formula for quasi-finite morphisms \cite[Thm 1.1]{chern}.
\end{proof}
\end{pg}
\providecommand{\bysame}{\leavevmode\hbox
to3em{\hrulefill}\thinspace}

\end{document}